\title{\bsc{Généricité au sens probabiliste dans les difféomorphismes du cercle}\\
\vspace{2em}
\[\includegraphics[scale=.8]{languesr6}\]}
\author{\bsc{Michele Triestino}}
\date{}
\begin{document}
\pagenumbering{gobble}
\newgeometry{margin=2cm}
\begin{titlepage}
	\maketitle
	\begin{abstract}
	What kind of dynamics do we observe in general on the circle? It depends somehow on the
interpretation of ``in general''. Everything is quite well understood in the topological (Baire) setting,
but what about the probabilistic sense? The main problem is that on an infinite dimensional group
there is no analogue of the Lebesgue measure, in a strict sense. There are however some analogues,
quite natural and easy to define: the Malliavin-Shavgulidze measures provide an example and constitute the main character of this text. The first results
show that there is no actual disagreement of general features of the dynamics in the topological and
probabilistic frames: it is the realm of hyperbolicity!
The most interesting questions remain however unanswered...

This work, coming out from the author's Ph.D.~thesis, constitutes an opportunity to review interesting results in mathematical
topics that could interact more often: stochastic processes and one-dimensional dynamics.

\bigskip

After an introductory overview, the following three chapters are a pedagogical summary of classical results about
measure theory on topological groups,
Brownian Motion,
theory of circle diffeomorphisms.

Then we present the construction of the Malliavin-Shavgulidze measures on the space of interval and circle $C^1$ diffeomorphisms, and discuss their key property of quasi-invariance.

The last chapter is devoted to the study of dynamical features of a random Malliavin-Shavgulidze diffeomorphism.
	\end{abstract}
\end{titlepage}

\clearpage
\pagenumbering{roman}
\chapter{Introduction}
\label{ch:intro}
\section{Généricité topologique et généricité probabiliste}

Dans la théorie moderne des systèmes dynamiques, l'un des points cruciaux est de comprendre leur propriétés \emph{génériques}, puisqu'une connaissance globale s'avère tout simplement impossible pour l'être humain.  Durant les dernières années, d'importants et nombreux résultats ont élucidé le paysage, pqrticulièrement celui de la dynamique décrite par un difféomorphisme $\C{1}$ générique d'une variété compacte fermée, sous l'impulsion du programme de Palis \cite{beyond}. La notion de généricité communément utilisée est \emph{topologique}: dans un espace de Baire (comme par exemple l'espace $\mathrm{Diff}^r(M)$ des difféomorphismes $\C{r}$ d'une variété, $r\in [1,+\infty]$) un ensemble  est générique (ou \emph{résiduel}) s'il contient une intersection dénombrable d'ouverts denses. 

Cependant, atteindre cette seule description n'offre pas une vision satisfaisante et il faut lui accoster une étude \emph{probabiliste}: dans un espace mesuré $(X,\mu)$ (par exemple le $n$-cube unité avec la mesure de Lebesgue) un ensemble est générique -- au sens probabiliste -- si son complémentaire est de $\mu$-mesure nulle. Problème : dans un espace dynamiquement intéressant, comme $\mathrm{Diff}^r(M)$, quelle mesure $\mu$ choisir ? Il n'y en a pas d'intrinsèque et on doit souvent avoir recours aux approximations de dimension finie (où l'on dispose de la mesure de Lebesgue, justement). À présent, la notion qui formalise ce genre d'approche est celle de \emph{prévalence} \cite{prevalence}, inspirée par Kolmogorov \cite{kol54}. 

Dans ce mémoire nous présentons en quelque sorte une << rareté >> : lorsque la variété $M$ est de dimension $1$ (un intervalle ou un cercle), il existe une classe de candidats jouant le rôle de la mesure de Lebesgue sur $\mathrm{Diff}^1(M)$ : les \emph{mesures de Malliavin-Shavgulidze} (MS), apparues dans la littérature physico-mathématique vers la fin des année 1980. Nous entreprenons l'étude des propriétés génériques d'un difféomorphisme tiré au hasard selon l'une de ces mesures.

Malheureusement, l'existence de classes de mesures possédant des propriétés semblables n'est pas connue, ni pour des groupes de difféomorphismes de variétés de dimension majeure, ni pour des espaces d'endomorphismes différentiables.

\section{Mesures de type Haar}

Les mesures MS ont la propriété remarquable d'être de \emph{type Haar} : elles sont
\begin{enumerate}
\item compatibles avec la topologie (elles sont des \emph{mesures de Radon}),
\item \emph{quasi-invariantes} par l'action de translation (à gauche) d'un sous-groupe suffisamment grand, à l'occasion $\mathrm{Diff}^2(M)$ -- à savoir les translations préservent les ensembles de mesure nulle.
\end{enumerate}
De fait, l'existence d'une mesure de Haar sur un groupe topologique (donc, une mesure de Radon quasi-invariante par l'action de translation du groupe \emph{entier} sur lui-même) force le groupe à être \emph{localement compact}.

Nous ferons des rappels plus détaillés dans ce texte. Expliquons plutôt ici l'avantage théorique et pratique de disposer d'une mesure de Haar sur un groupe, qui est le moteur des travaux autour de Shavgulidze. Il est bien connu que tout groupe fini est isomorphe à un sous-groupe du groupe symétrique sur un nombre d'objets égal à son ordre. Ainsi, le groupe symétrique sur $n$ éléments contient une copie de tous les groupes de cardinal au plus $n$. Notons que l'hypothèse de finitude n'est pas nécessaire : tout groupe s'injecte dans le groupe des bijections d'un ensemble de cardinal égal à l'ordre du groupe. En revanche, lorsque le cardinal n'est plus fini, le groupe symétrique est trop grand pour être compris : nous nous en sortons avec l'emploi de l'analyse. Si un groupe $G$ possède une mesure de Haar $\mu$, on peut regarder l'espace de Hilbert des fonctions $L^2$ sur $G$ :
\[L^2(G,\mu)=\left \{f:G\longrightarrow \mathbf{C}\trm{ mesurable}\,\middle\vert\,\int_G f^2\,d\mu<\infty\right \}.\]
Or, l'action de $G$ par multiplication à gauche induit une action linéaire sur $L^2(G,\mu)$ : étant donné $g\in G$, on associe à $f\in L^2(G,\mu)$ la fonction $L_gf$ qui fait correspondre à tout $x\in G$ la valeur $f(g^{-1}x)$. Puisque la mesure $\mu$ est invariante, cette action est aussi unitaire. La conséquence que l'on en tire est la suivante : tout groupe localement compact est isomorphe à un sous-groupe du groupe des opérateurs unitaires dans un espace de Hilbert. D'après Shavgulidze, on peut développer une telle analyse harmonique aussi pour un groupe de dimension infinie tel que le groupe des difféomorphismes du cercle.

Une telle vision est utile lorsque l'on s'intéresse à la théorie des représentations linéaires d'un groupe, mais elle est rarement d'utilité lorsque l'on veut étudier la dynamique non-linéaire. Une (rare ?) exception apparait lorsque l'on se place dans le monde des groupes de Lie (on pense ici aux grands résultats ergodiques obtenus pour les réseaux depuis Margulis). 
Mais encore, le dynamicien peut-il être satisfait si un problème apparemment simple comme celui de comprendre l'image de la mesure de Haar par l'application d'élévation à puissance $g\mapsto g^{\,k}$, est difficilement abordable ? Même pour le groupe linéaire $GL_n(\R)$ cette question n'est pas si simple (heureusement la décomposition spectrale nous sauve !). Les mesures de Haar ne sont pas proprement dynamiques : les objets naturels en dynamique sont ceux qui sont invariants par conjugaison, non par composition d'un coté !\footnote{On pourrait argumenter qu'une mesure de Haar $\mu$ invariante à gauche permet de construire une mesure bi-invariante (et donc invariante par conjugaison) : si $\iota:G\to G^{op}$ est le morphisme vers le groupe opposé, $\iota(g)=g^{-1}$, alors $\mu*\iota_*\mu$ est bi-invariante. Mais il s'agit d'une construction artificielle.}

Cependant, en reprenant le discours initial, une mesure « homogène » comme la mesure de Haar est assez appropriée pour décrire le comportement générique au sens probabiliste : très souvent on rencontre les mêmes phénomènes génériques sous les deux différents points de vue (topologique et probabiliste), même si parfois les deux descriptions peuvent être contradictoires. Pour donner un exemple fortement relié à la dynamique sur le cercle, les nombres diophantiens forment une partie de mesure de Lebesgue totale dans $\R$, alors que les nombres de Liouville forment un ensemble résiduel.

\smallskip

D'une nature un peu différente était la motivation du travail autour de Malliavin : au lieu d'une analyse harmonique en dimension infinie, le \emph{calcul de Malliavin} offre (entre autres) le bon cadre pour une théorie des distributions de Schwartz en dimension infinie. En dimension finie, l'intégration par parties est au cœur de la définition des distributions : en commençant par comprendre les transformations qui laissent quasi-invariante une mesure sur un espace de dimension infinie, il est possible d'étendre l'intégration par parties et d'étudier la régularité des transformations de l'espace. 

\section{Les mesures de Malliavin-Shavgulidze}

Esquissons brièvement la construction de Malliavin et Shavgulidze : l'idée est brillante, l'explication très simple ! Depuis Wiener, nous savons choisir uniformément une fonction continue sur le cercle de moyenne nulle : nous appelons une telle fonction aléatoire \emph{pont brownien}. Ensuite on définit le difféomorphisme aléatoire par intégration, en choisissant uniformément par rapport à la mesure de Lebesgue l'image d'un point donné (par exemple $0$) : en termes plus précis, si $B=(B_t)_{t\in [0,1]}$ est un pont brownien et $\lambda$ une variable aléatoire uniforme sur $[0,1]$ indépendante de $B$, nous définissons le difféomorphisme aléatoire de Malliavin-Shavgulidze par
\beqn{diffeomorphismeMS}{
f(t)=\dfrac{\int_0^t\exp(B_s)\,ds}{\int_0^1\exp(B_s)\,ds}+\lambda.
}
Par suite, nous disposons d'une mesure $\mu_{MS}$ sur le groupe $\Dc$ que l'on appelle \emph{mesure de Malliavin-Shavgulidze}.

Un résultat de Cameron et Martin classique montre que la mesure de Wiener sur l'espace des fonctions continues $C_0(\T)$ est de \emph{type Haar}.
Il existe un autre théorème, moins connu, de Cameron et Martin \cite{cameron-martin2}, qui implique entre autres que la mesure de Malliavin-Shavgulidze est de type Haar pour $\Dc$ : le sous-groupe des difféomorphismes $C^2$ préserve la classe de la mesure MS.

\smallskip

Dans le chapitre \ref{chapter:1}, nous avons recueilli les résultats principaux sur les mesures MS, en rajoutant des détails aux preuves qui en nécessitent. En particulier nous présentons le théorème de Shavgulidze qui explicite la dérivée de Radon-Nikodym associée à l'action par multiplication à gauche de $\Dt$ sur $(\Dc,\mu_{MS})$ :
\[
\dfrac{d(L_\vf)_*\mu_{MS}}{d\mu_{MS}}(f)=\exp\left \{\int_{\T}\mathcal{S}_\vf(f(t))\,(f'(t))^2\,dt\right \},
\]
puis nous rappelons le théorème de Kosyak qui montre l'ergodicité de cette action.

\smallskip

Bien évidemment, lorsque l'on dispose d'une mesure quasi-invariante à gauche comme $\mu_{MS}$, on peut définir la mesure $d\tilde{\mu}_{MS}(f)=d\mu_{MS}(f^{-1})$, qui est naturellement quasi-invariante \emph{à droite}. Ensuite, par convolution on peut obtenir aussi une mesure \emph{quasi-bi-invariante} $\mu_{MS}*\tilde{\mu}_{MS}$ : elle est peut-être plus intéressante d'un point de vue dynamique, puisqu'elle est également quasi-invariante par l'action de conjugaison. Par rapport à cette mesure, un difféomorphisme aléatoire s'écrit sous la forme $f_+\circ f_-^{-1}$, où les $f_{\pm}$ sont de difféomorphismes aléatoires du type \eqref{eq:diffeomorphismeMS}.

Il ne faudra pas cacher que beaucoup d'autres mathématiciens ont construit des mesures sur les groupes de difféomorphismes des variétés. Malgré tous les efforts, il est très difficile d'obtenir de belles mesures lorsque les variétés sont de dimension plus grande que $1$ : l'une des difficultés majeures est la nature algébrique de ces groupes de Lie de dimension infinie, qui devient trop compliquée, alors qu'une autre provient de la régularité qui s'affaiblit lorsque l'on monte en dimension (comme par exemple dans le cas des immersions de Sobolev). Il serait très intéressant de disposer d'une construction simple de mesure quasi-invariante sur les difféomorphismes du tore ou du disque, qui préservent l'aire. À ce propos, rappelons que Shavgulidze a défini d'autres mesures quasi-invariantes, par exemple sur l'espace des difféomorphismes $\C{4}$ du disque, mais il s'agit de constructions assez indirectes.

Nous n'essaierons pas de décrire d'autres exemples de mesures, car chacune porte ses caractéristiques spécifiques et il est presque impossible de les présenter sous une théorie unificatrice.
La simplicité, la propriété de quasi-invariance et les propriétés de régularité nous ont convaincus que les mesures MS méritent un certain regard.

\section{Étude de la dynamique générique}

Dans le chapitre \ref{chapter:2}, nous entreprenons l'étude \emph{dynamique}. Comme d'habitude, on essaie de décrire le comportement d'un difféomorphisme à conjugaison près.
Depuis le travail fondateur de Poincaré \cite{poincare}, il est très naturel de comprendre, en premier lieu, quelle pourrait être la distribution du \emph{nombre de rotation}, qui est l'invariant de conjugaison le plus célèbre.\footnote{Pour le lecteur qui n'est pas initié à la théorie des difféomorphismes du cercle, nous parcourons les aspects principaux dans le chapitre~\ref{sc:diffeos}.} Il s'avère que ce problème semble difficile d'accès.

À cause de la présence de difféomorphismes avec orbites périodiques hyperboliques, les difféomorphismes avec orbites périodiques (et donc nombre de rotation \emph{rationnel}), constituent une partie de mesure strictement positive.

Que peut-on dire sur les orbites d'un difféomorphisme choisi uniformément par rapport à la mesure MS ? Existe-t-il presque toujours une orbite périodique ?

\begin{figure}[ht]
\[
\includegraphics[scale=.3]{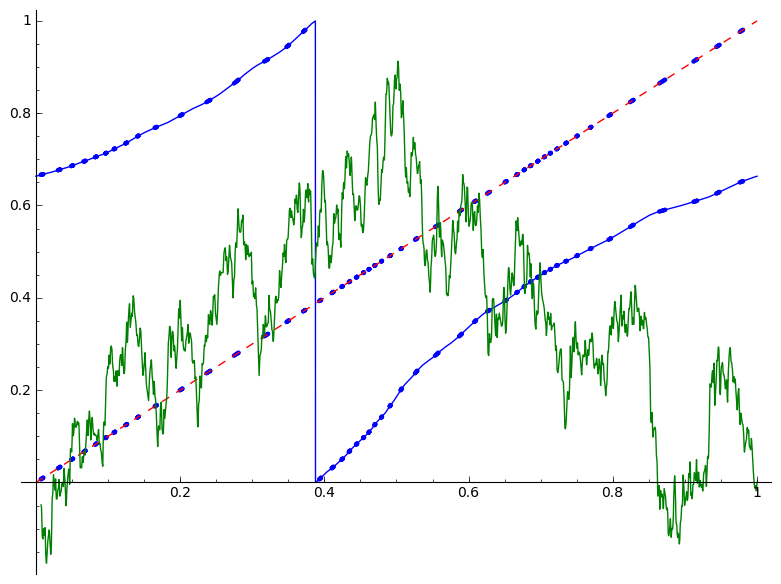}\qquad \includegraphics[scale=.3]{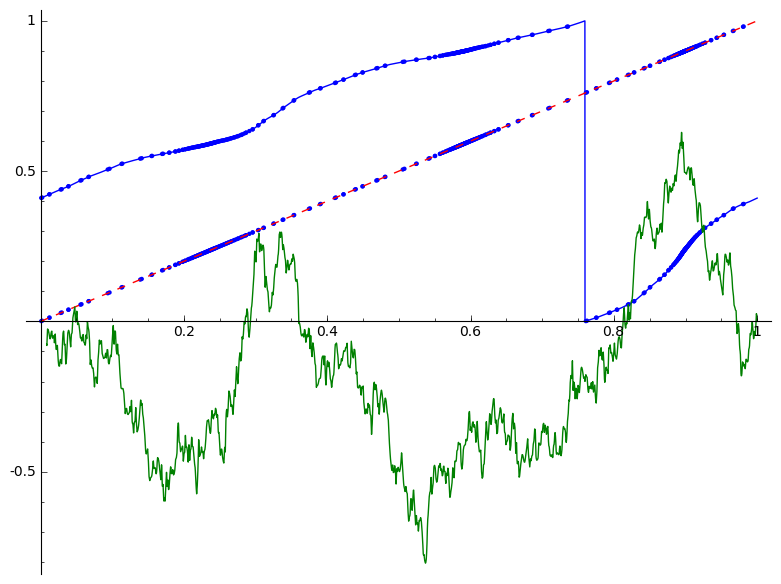}
\]
\caption{Deux exemples de difféomorphismes aléatoires, les ponts browniens qui les définissent et la trace d'une orbite.}
\end{figure}

Nos résultats donnent une bonne description une fois que l'on a conditionné à qu'il y ait des orbites périodiques :

\begin{thm}\label{thm:periodiques_intro}
Les difféomorphismes du cercle qui possèdent un nombre fini d'orbites périodiques forment une partie de mesure MS totale parmi ceux qui ont des points périodiques.
\end{thm}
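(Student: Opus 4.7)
The plan is to partition the set of $f\in\Dc$ with infinitely many periodic orbits according to the minimal period, and to reduce the theorem to showing that MS-almost every $f$ has only hyperbolic fixed points for each iterate $f^{\,q}$. By Poincaré's theorem, $f$ admits a periodic orbit if and only if its rotation number is rational; if this rotation number equals $p/q$ in lowest terms, the periodic orbits of $f$ are exactly the fixed points of $f^{\,q}$. A fixed point $x$ of $f^{\,q}$ with $(f^{\,q})'(x)\neq 1$ is isolated, and compactness of $\T$ forces finiteness. Conversely, any accumulation of fixed points of $f^{\,q}$ at a point $t_0$ automatically produces a parabolic fixed point: applying Rolle's theorem to $g(t)=f^{\,q}(t)-t$ between consecutive fixed points yields a sequence of zeros of $g'$ converging to $t_0$, and continuity of $(f^{\,q})'$ forces $(f^{\,q})'(t_0)=1$. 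The set of $f$ with infinitely many periodic orbits is therefore contained in $\bigcup_{q\geq 1}\mathcal{N}_q$, where
\[
\mathcal{N}_q\;=\;\bigl\{f\in\Dc\,:\,\exists\,t_0\text{ with }f^{\,q}(t_0)=t_0\text{ and }(f^{\,q})'(t_0)=1\bigr\}.
\]
It is enough to prove $\mu_{MS}(\mathcal{N}_q)=0$ for every $q\geq 1$.

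To estimate $\mu_{MS}(\mathcal{N}_q)$ I exploit the product structure of the construction \eqref{eq:diffeomorphismeMS}. Write $f_{B,\lambda}=R_\lambda\circ h_B$, where $h_B(t)=\int_0^t\exp(B_s)\,ds\big/\int_0^1\exp(B_s)\,ds$ is the $C^1$ circle diffeomorphism determined by the Brownian bridge $B$ and $\lambda\in[0,1)$ is uniform, independent of $B$. For a fixed realization of $B$, set
\[
G_B(t,\lambda)\;=\;f_{B,\lambda}^{\,q}(t)-t\qquad\text{on }\T\times[0,1).
\]
Using the recursion $f_{B,\lambda}^{\,k+1}=h_B\circ f_{B,\lambda}^{\,k}+\lambda$, a straightforward induction on $k$ yields
\[
\partial_\lambda f_{B,\lambda}^{\,q}(t)\;=\;1+\sum_{k=1}^{q-1}\prod_{j=k}^{q-1}h_B'\bigl(f_{B,\lambda}^{\,j}(t)\bigr)\;>\;0,
\]
so $\partial_\lambda G_B>0$ everywhere. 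In particular $G_B$ is a $C^1$ submersion in $\lambda$, and the zero set $M_B=\{G_B=0\}$ is a $C^1$ one-dimensional submanifold of $\T\times[0,1)$.

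A fixed point of $f_{B,\lambda}^{\,q}$ is parabolic precisely when $\partial_t G_B(t,\lambda)=0$, that is, exactly when $(t,\lambda)\in M_B$ is a critical point of the projection $\pi_B\colon M_B\to[0,1)$, $(t,\lambda)\mapsto\lambda$. Sard's theorem for $C^1$ maps between $1$-manifolds then guarantees that the critical values of $\pi_B$ form a Lebesgue-negligible subset of $[0,1)$: for each $B$, Lebesgue-almost every $\lambda$ is such that $f_{B,\lambda}^{\,q}$ has no parabolic fixed point. Since $\mu_{MS}$ is the pushforward of the product of the Wiener-bridge law and the uniform measure on $[0,1)$, Fubini's theorem gives $\mu_{MS}(\mathcal{N}_q)=0$, and the countable union over $q\geq 1$ completes the proof.

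The conceptual core, and the only step that is not essentially formal, is the strict monotonicity $\partial_\lambda G_B>0$. It encodes the intuitive fact that rigidly rotating $f$ shifts the fixed points of every iterate monotonically, and is exactly what converts the avoidance of parabolic tangencies into a one-dimensional Sard statement. The low $C^1$ regularity of $h_B$ (the Brownian bridge being merely Hölder) is not an obstacle, since Sard's theorem between $1$-manifolds and the $C^1$ implicit function theorem both hold in this category.
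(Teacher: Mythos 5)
Your proof is correct, but it follows a genuinely different route from the paper's. The paper deduces the theorem from a stronger statement (Theorem \ref{thm:nomultiplicateur} and Proposition \ref{prop:noparabolicorbit}): for \emph{every} fixed value $\la>0$, almost surely no periodic orbit has multiplier exactly $\la$; the proof is purely stochastic, inspired by the fact that planar Brownian motion avoids a given point --- one integrates over the target via Fubini, reduces to showing $\P\left(B(f^q(t))=B(t)\right)=0$ for fixed $t$, and concludes with the Markov property together with time-reversal and rotation couplings to handle the combinatorics of the orbit. You instead condition on the bridge $B$ and exploit only the product structure $\mu_{MS}=B^{-1}_*(\W_0\otimes\Leb)$ of \eqref{eq:MSc}: the strict monotonicity $\partial_\lambda f^q_{B,\lambda}(t)>0$ makes the fixed-point locus $M_B$ a $C^1$ curve, parabolic points become critical points of the projection to the $\lambda$-axis, and Sard's theorem for $C^1$ maps between $1$-manifolds (valid in this low regularity) kills them for Lebesgue-a.e.\ $\lambda$; Fubini finishes. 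What each approach buys: yours is shorter, avoids all fine properties of the Brownian bridge (it works verbatim for any law on the ``shape'' part $h_B$, as long as the rotation parameter is independent and absolutely continuous), and cleanly isolates the transversality mechanism. The paper's argument is heavier but strictly stronger: it excludes arbitrary prescribed multipliers (out of reach of a Sard argument here, since $(f^q)'$ is only continuous, not differentiable), it applies on the interval where no free rotation parameter exists (Theorem \ref{thm:nomultiplicateur}), and the same scheme yields the freeness result (Theorem \ref{thm:free}). For the statement as given --- finiteness of the set of periodic orbits among diffeomorphisms having periodic points --- your parabolic-exclusion via Sard, combined with the standard Rolle/compactness reduction, is a complete and valid proof.
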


La preuve s'appuie fortement sur les propriétés du mouvement brownien et s'inspire de la démonstration du fait suivant: le mouvement brownien planaire ne passe presque sûrement par aucun point donné (que nous avons trouvée dans \cite{peres}\,).

Plus précisément, nous montrons que presque sûrement les orbites périodiques sont \emph{hyperboliques} : s'il existe une orbite périodique, alors on en trouve un nombre pair, une moitié est répulsive, l'autre est attractive. Les points sur le cercle qui n'appartiennent pas à l'une de ces orbites sont attirés vers deux d'entre elles, vers l'une dans le futur et vers l'autre dans le passé.
Ce genre de difféomorphismes prend le nom de \emph{Morse-Smale}. Une telle propriété permet d'étudier de manière satisfaisante la dynamique, même en faible régularité : en généralisant un résultat classique de Sternberg, les difféomorphismes Morse-Smale de classe $\C{1+\alpha}$ peuvent être localement linéarisés, on peut borner leur distorsion et, en suivant Kopell \cite{kopell}, on peut facilement décrire leur centralisateur (et ainsi leur classe de conjugaison).

\begin{thm}
Les difféomorphismes du cercle qui ont centralisateur $\C{1}$ trivial forment une partie de mesure MS totale parmi ceux qui ont des points périodiques.
\end{thm}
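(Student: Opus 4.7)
\medskip
\emph{Esquisse de preuve.} Le plan consiste \`a combiner le th\'eor\`eme \ref{thm:periodiques_intro} avec la description \`a la Kopell du centralisateur des diff\'eomorphismes Morse-Smale, puis \`a traduire la trivialit\'e du centralisateur en une condition MS-g\'en\'erique portant sur les multiplicateurs aux points p\'eriodiques hyperboliques. D'apr\`es le th\'eor\`eme \ref{thm:periodiques_intro}, on peut se restreindre, \`a mesure MS pleine, aux $f$ Morse-Smale de nombre de rotation $p/q$; on note $a_1,p_1,\ldots,a_n,p_n$ les points fixes de $f^q$ en ordre cyclique sur le cercle ($a_j$ r\'epulsifs, $p_i$ attractifs) et $\lambda_{p_i}=(f^q)'(p_i)$, $\lambda_{a_j}=(f^q)'(a_j)$.

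Je commencerais par expliciter le centralisateur $C^1$ de $f$. Tout $g\in\mathrm{Diff}^1_+(\T)$ commutant avec $f$ permute les orbites p\'eriodiques, et quitte \`a remplacer $g$ par une puissance $g^N$, on peut supposer qu'il fixe chaque point de $\mathrm{Per}(f)$; notons $Z_0(f)\subset Z_{C^1}(f)$ le sous-groupe de tels \'el\'ements. Sur chaque intervalle $(a_i,a_{i+1})$ contenant le puits $p_i$, l'analyse de Szekeres-Kopell \cite{kopell} montre que la restriction de $g^N$ est d\'etermin\'ee par un unique r\'eel $t_i\in\R$ v\'erifiant $(g^N)'(p_i)=\lambda_{p_i}^{t_i}$; la compatibilit\'e au bord $a_i$,
\[
t_i=\frac{\log (g^N)'(a_i)}{\log\lambda_{a_i}}=t_{i-1},
\]
force l'\'egalit\'e de tous les $t_i$ \`a un m\^eme $t\in\R$. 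On obtient ainsi un plongement $Z_0(f)\hookrightarrow\R$ d'image contenant $\mathbb{Z}$ (via $f^q$), tandis que le quotient $Z_{C^1}(f)/Z_0(f)$ est un sous-groupe fini du groupe des permutations des orbites p\'eriodiques, contenant au moins $\mathbb{Z}/q\mathbb{Z}$ (induit par $f$). Le centralisateur $C^1$ est trivial (\'egal \`a $\langle f\rangle$) pr\'ecis\'ement lorsque l'image de $Z_0(f)$ dans $\R$ est exactement $\mathbb{Z}$ et que ce quotient vaut exactement $\mathbb{Z}/q\mathbb{Z}$.

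Je passerais ensuite \`a l'\'etape probabiliste : montrer que ces deux conditions sont MS-g\'en\'eriques. La premi\`ere exige que $f^q$ ne soit pas le temps $t\notin\mathbb{Z}$ d'un flot $C^1$ global, ce qui se traduit, apr\`es linearisation de Sternberg en chaque point hyperbolique, par des relations alg\'ebriques non triviales entre les $\log\lambda_{p_i}$, $\log\lambda_{a_j}$ et certaines int\'egrales du processus brownien d\'efinissant $f$ via \eqref{eq:diffeomorphismeMS}; la seconde impose une contrainte analogue de sym\'etrie. J'exploiterais alors la quasi-invariance de $\mu_{MS}$ sous l'action \`a gauche de $\Dt$ (Shavgulidze) pour construire des perturbations de $f$ modifiant ind\'ependamment les multiplicateurs en chaque orbite, et en d\'eduire l'absolue continuit\'e de la loi conditionnelle du vecteur $(\lambda_{p_i},\lambda_{a_j})$ sur la sous-vari\'et\'e naturelle des param\`etres admissibles; toute relation alg\'ebrique non triviale entre ces quantit\'es deviendra alors un \'ev\'enement MS-n\'egligeable.

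Le point le plus d\'elicat sera cette derni\`ere \'etape, car il faut contr\^oler la loi jointe des d\'eriv\'ees de $f^q$ en un nombre fini de points eux-m\^emes d\'etermin\'es par la trajectoire brownienne, sous le conditionnement \`a une structure combinatoire Morse-Smale fix\'ee. On pourra sans doute s'y ramener en d\'ecoupant le cercle en arcs correspondant aux bassins d'attraction et en combinant la propri\'et\'e de Markov forte du pont brownien avec des perturbations locales dans $\Dt$ qui agissent ind\'ependamment sur chaque multiplicateur.
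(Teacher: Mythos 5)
Votre description structurelle du centralisateur (à la Kopell, flots de Szekeres sur chaque bassin, un seul temps $t$ commun) est correcte, mais la réduction qui suit ne peut pas fonctionner : la non-trivialité du centralisateur $\C{1}$ n'impose \emph{aucune} relation algébrique entre les $\log\lambda_{p_i}$, $\log\lambda_{a_j}$. En effet, le temps $1$ d'un champ de vecteurs Morse-Smale générique sur le cercle est un difféomorphisme dont le centralisateur contient tout un groupe à un paramètre, alors que ses multiplicateurs sont arbitraires (et peuvent être rationnellement indépendants) ; inversement, des relations exceptionnelles entre multiplicateurs ne créent aucun élément commutant. L'obstruction réelle est de nature fonctionnelle : d'après la proposition \ref{prop:mather}, le centralisateur s'identifie au stabilisateur du uplet des invariants de Mather $W^{x,y}$ sous l'action \eqref{eq:mather3} de $\R^2$, c'est-à-dire à une condition de sous-périodicité de $W^{x,y}-id$, qui ne se lit pas sur les dérivées de $f^q$ en un nombre fini de points. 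Par conséquent, même si vous parveniez à établir l'absolue continuité de la loi jointe du vecteur des multiplicateurs (étape déjà délicate, comme vous le reconnaissez, puisque les points fixes sont eux-mêmes aléatoires), vous n'auriez pas exclu l'événement \emph{centralisateur non trivial}.

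La preuve du texte contourne cette difficulté en suivant l'argument de densité de Kopell : on conditionne la mesure MS par la donnée du germe de $f$ au voisinage des points fixes (ce qui fixe les champs de Szekeres, donc les flots candidats $\{h^t\}$), et tout le reste de l'aléa est encodé dans le raccord $\psi$ défini sur un domaine fondamental, via $\psi=h\circ f^{-1}$ sur $[x_2,x_1]$. Si le centralisateur était non trivial, il existerait $n>0$ tel que $f^{1/n}=h^{1/n}$ près du point fixe, ce qui forcerait les coïncidences exactes $f(h^{1/n}(x_0))=h^{1/n}(x_1)$ ; or le lemme \ref{l:simple} -- conséquence du caractère non-atomique des lois issues du pont brownien, dans le même esprit que la proposition \ref{prop:noparabolicorbit} -- montre que la probabilité qu'une telle coïncidence ait lieu pour un $n$ quelconque est nulle. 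C'est un argument volontairement \emph{mou}, qui n'utilise que des propriétés qualitatives de la mesure MS et aucune densité explicite. Si vous souhaitez sauver votre stratégie, c'est l'invariant de Mather -- et non le vecteur des multiplicateurs -- qu'il faudrait rendre générique sous la loi MS, ce qui revient essentiellement à l'argument de coïncidences exactes ci-dessus.
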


Cette description confirme l'intuition que l'on a au niveau topologique : dans la topologie $\C{1}$, les difféomorphismes de type Morse-Smale, avec centralisateur $\C{1}$ trivial forment une partie ouverte et dense de~$\Dc$ \cite{kopell,diviseurs}. Les théorèmes précédents montrent que le complémentaire est invisible d'un point de vue de la mesure, au moins parmi les difféomorphismes avec points périodiques.

\smallskip

La preuve du théorème \ref{thm:periodiques_intro} peut être adaptée pour démontrer un résultat qui est intéressant en soi :
\begin{thm}\label{thm:free_intro}
Soient $f_1,\ldots,f_n$ des difféomorphismes MS indépendants. Alors le sous-groupe qu'ils engendrent est presque-sûrement libre (de rang $n$).
\end{thm}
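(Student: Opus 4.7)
Puisque le groupe libre $F_n$ est dénombrable, il suffit par additivité de montrer que pour chaque mot réduit non-trivial $w\in F_n$, on a $\mathbf{P}[w(f_1,\ldots,f_n)=\mathrm{id}]=0$. Fixons un tel $w$ et démontrons l'énoncé plus fort suivant : la loi de la variable aléatoire $X:=w(f_1,\ldots,f_n)(0)\in\T$ est sans atome, ce qui implique en particulier $\mathbf{P}[X=0]=0$, a fortiori $\mathbf{P}[w(f_1,\ldots,f_n)=\mathrm{id}]=0$.

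Exploitons la structure des difféomorphismes MS : chaque $f_j$ se décompose en $f_j=R_{\lambda_j}\circ g_j$, où $\lambda_j=f_j(0)$ est uniforme sur $\T$, indépendante du motif $g_j$ déterminé par le pont brownien $B^{(j)}$. Conditionnellement aux $g_j$, les $\lambda_j$ sont iid uniformes sur $\T^n$, et $X$ s'exprime comme une application $\Phi(\lambda_1,\ldots,\lambda_n)$ de classe $C^1$ de $\T^n$ dans $\T$. Pour conclure à l'absence d'atomes, il suffit de montrer que, presque sûrement en les $g_j$, l'ensemble critique $\{\lambda\in\T^n:\nabla\Phi(\lambda)=0\}$ est Lebesgue-négligeable dans $\T^n$ : la mesure image $\Phi_*(\mathrm{d}\lambda)$ sera alors absolument continue par rapport à Lebesgue sur $\T$, via le théorème de Sard et la formule de co-aire.

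Par dérivation en chaîne le long de la composition $w=f_{i_k}^{\epsilon_k}\circ\cdots\circ f_{i_1}^{\epsilon_1}$, on obtient
\[
\frac{\partial\Phi}{\partial\lambda_j}(\lambda) = \sum_{l:(i_l,\epsilon_l)=(j,+1)} P_l - \sum_{l:(i_l,\epsilon_l)=(j,-1)} \frac{P_l}{g_j'(x_l)},
\]
où les facteurs $P_l=\prod_{m=l+1}^k (f_{i_m}^{\epsilon_m})'(x_{m-1})>0$ et les $x_l$ désignent les points intermédiaires de l'orbite issue de $0$. Dans le cas favorable où il existe un indice $j$ dont toutes les occurrences dans $w$ ont le même signe, la dérivée $\partial\Phi/\partial\lambda_j$ est strictement de signe constant, et $\lambda_j\mapsto\Phi(\lambda)$ est un $C^1$-difféomorphisme de $\T$ à coordonnées extérieures fixées ; la loi de $X$ est alors directement absolument continue. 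Ce cas couvre par exemple tous les mots possédant une lettre d'exposant « aligné ».

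L'obstacle principal apparaît dans le cas contraire, exemplifié par les commutateurs $w=[f_1,f_2]$ où chaque indice apparaît avec des signes mélangés. Pour un indice $j$ donné, l'équation $\partial\Phi/\partial\lambda_j(\lambda)=0$ se réécrit, via l'identité $\log g_l'(t)=B_l(t)-\log Z_l$, comme une équation non-triviale reliant les valeurs des ponts browniens indépendants $B_1,\ldots,B_n$ en des points intermédiaires génériquement distincts (la distinction étant assurée par la non-trivialité du mot réduit $w$). C'est ici qu'on adaptera la technique utilisée dans la preuve du Théorème~\ref{thm:periodiques_intro}, elle-même inspirée par le fait que le mouvement brownien planaire évite presque sûrement tout point donné, afin de montrer que l'ensemble $\{\lambda\in\T^n:\partial\Phi/\partial\lambda_j(\lambda)=0\}$ est Lebesgue-négligeable presque sûrement. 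C'est à cette étape que réside la difficulté technique centrale.
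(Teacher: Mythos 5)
Votre réduction à une famille dénombrable de mots réduits est correcte, et le cas où un générateur n'apparaît qu'avec un seul signe est bien traité : $\partial\Phi/\partial\lambda_j$ y est de signe constant, donc $\lambda_j\mapsto\Phi(\lambda)$ est un revêtement local de $\T$ sans point critique et la loi conditionnelle de $X$ est absolument continue (au passage, nul besoin de Sard — qui d'ailleurs n'est pas disponible pour une application seulement $\C{1}$ de $\T^n$ dans $\T$ dès que $n\ge 2$ ; ce que vous utilisez réellement est que hors de l'ensemble critique $\Phi$ est localement une submersion). Mais le cœur du théorème est précisément le cas restant, celui où chaque lettre apparaît avec les deux signes (les commutateurs en sont le premier exemple), et là votre texte se borne à annoncer qu'« on adaptera » la technique du théorème~\ref{thm:periodiques_intro} : rien n'est démontré, et l'adaptation n'est pas de routine. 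L'équation de criticité $\sum_{l\in I_+}P_l=\sum_{l\in I_-}P_l/g_j'(x_l)$ n'est pas une relation affine entre valeurs des ponts browniens : c'est une somme d'exponentielles de combinaisons linéaires de ces valeurs, dont les termes partagent des facteurs communs (les $P_l$ sont des produits emboîtés) ; conditionner par « tout sauf la valeur du pont en un point bien choisi » modifie plusieurs termes à la fois, déplace les points d'évaluation $x_l$ eux-mêmes, et il faut exclure une annulation identique de la somme exponentielle obtenue en utilisant la réduction du mot $w$. Aucun de ces points n'est traité : en l'état, la proposition ne démontre pas le théorème pour $w=[f_1,f_2]$.

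Pour comparaison, la preuve du texte (théorème~\ref{thm:free}) ne sépare pas $f_j=R_{\lambda_j}\circ g_j$ et n'utilise pas la seule égalité $w(0)=0$ : elle exploite la conséquence bien plus forte $Dw\equiv 1$ de $w=\mathrm{id}$. En un point $t$ fixé, $\log Dw(t)$ s'écrit comme somme signée de valeurs des ponts $B_{i_j}$ en des points de l'orbite, moins des termes $\log\int_{\T}e^{B_{i_j}}$ ; en conditionnant par les valeurs de $\int_{\T}e^{B_i}$, on est ramené à montrer qu'une combinaison \emph{affine} de valeurs des ponts évite une constante, ce qui se fait par couplage (conjugaison par une rotation) pour que le point pertinent soit le dernier à gauche de $0$, sa loi conditionnelle étant alors sans atome. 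C'est exactement la situation linéaire que votre cas difficile n'atteint pas : si vous voulez sauver votre schéma, dérivez l'information $Dw\equiv 1$ plutôt que $w(0)$ par rapport aux paramètres de rotation, ou suivez directement cette voie.
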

Ce théorème confirme l'observation topologique : Ghys démontre dans \cite{ghys-circle} que génériquement dans $\Hom$, deux homéomorphismes engendrent un sous-groupe libre. Il est aussi possible de voir le théorème~\ref{thm:free_intro} comme la généralisation du théorème de Epstein \cite{epstein} dans le contexte des groupes de Lie au groupe de Lie de dimension infinie $\Dc$.

\begin{rem}
L'espace des actions fidèles du groupe libre sur le cercle se sépare qualitativement en deux sous-parties : les actions peuvent être minimales ou laisser un ensemble de Cantor invariant. Par rapport à la topologie naturelle, ces deux sous-parties contiennent des ouverts non-vides. On en déduit que les deux différentes dynamiques sont observables avec probabilité strictement positive.
\end{rem}

\bigskip

Pour ce qui concerne la présence des difféomorphismes sans orbites périodiques, nous pensons qu'ils forment un ensemble négligeable : une partie du deuxième chapitre est dédiée à nos conjectures à ce propos.
Sans rentrer dans le détail, nous terminons cette introduction par une image qui montrent la statistique du nombre de rotation, sur un échantillonnage d'un millier de difféomorphismes « aléatoires ».

\begin{figure}[ht]
\[
\includegraphics[scale=.4]{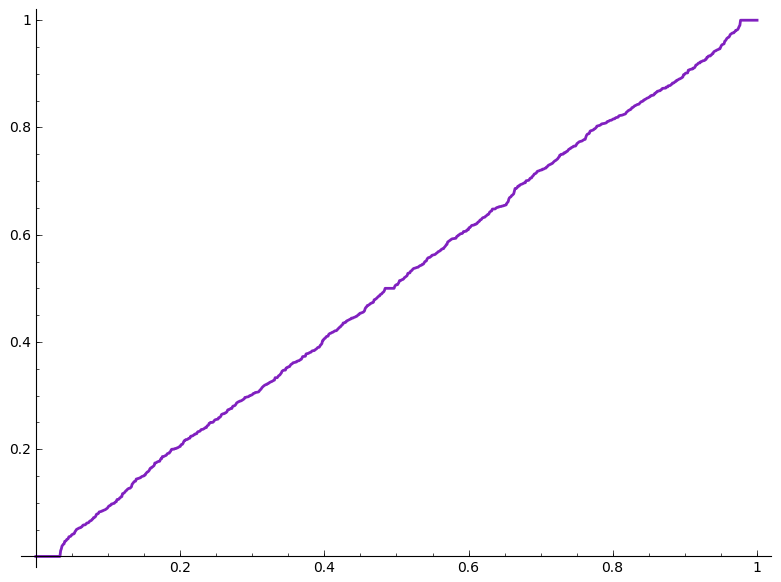}
\]
\caption{Cet escalier représente la fonction de répartition de la variable \emph{nombre de rotation}, selon la mesure MS.}
\end{figure}

\section*{Remerciements}

Ce livre doit beaucoup à Étienne Ghys et aux questions qu'il m'a posées tout au long de la thèse. Également l'ambiance conviviale et multidisciplinaire de l'Unité de Mathématiques Pures et Appliquées de l'École Normale Supérieure de Lyon a été fondamentale : je tiens à remercier en particulier Alessandro Carderi, Christophe Garban, Emmanuel Jacob, Cristóbal Rivas, Bruno Sévennec, Marielle Simon et Romain Tessera pour leur conseils, parfois fondamentaux. En outre, j'ai bénéficié énormément des discussions avec François Béguin, Victor Kleptsyn, Marco Martens et Andrés Navas.

Beaucoup d'arguments sont devenus plus clairs après le cours que j'ai eu le plaisir de donner à la Pontifícia Universidade Católica de Rio de Janeiro pendant l'automne 2014 : je remercie tous les participants, en particulier Sébastien Alvarez, Pablo Barrientos, Dominique Malicet, Carlos Meniño, Maurizio Monge, Isaia Nisoli et Artem Raibekas.
\clearpage
\tableofcontents

\clearpage
\pagenumbering{arabic}

\chapter{Mesures quasi-invariantes et groupes localement compacts}
\label{sc:localement}
Le fil conducteur sera l'interaction (ou la \emph{non}-interaction) entre groupes, mesures et topologie. En dimension finie, ceci se concentre principalement dans la notion de mesure de Haar, bijou mathématique que l'on perd lorsque la dimension est infinie. Ce livre part du problème suivant : comprendre quelles sont les mesures qui pourraient remplacer celle de Haar dans certains espaces trop grands.

Nous nous proposons ici de décrire les grandes lignes de ce paysage : des textes plus que complets sont disponibles dans la littérature comme références plus techniques. Nous  conseillons, à ce propos, le très beau livre de Halmos \cite{halmos} qui porte particulièrement bien ses 60 ans !

\section{Notions de base de théorie de la mesure}
\label{s:basemesure}
Soit $X$ un ensemble. Une famille non-vide $\cB\subset \mathbf{2}^{X}$ de parties de $X$ est une \emph{tribu} (ou \emph{$\sigma$-algèbre}) sur $X$ si $\cB$ est fermée par les opérations suivantes :
\begin{itemize}
\item si $E\in \cB$ alors $X-E\in \cB$,
\item si $(E_n)_{n\in\N}\subset\cB$ alors $\bigcup_{n\in \N}E_n\in\cB$.
\end{itemize}
Un couple $(X,\cB)$ est dit \emph{espace mesurable}. Une fonction $f$ entre deux espaces mesurables $(X,\cB)$ et $(X',\cB')$ est \emph{mesurable} si $f^{-1}(E')\in \cB$ pour tout $E'\in \cB'$.

\begin{ex}
La droite réelle $\R$ munie de sa tribu borélienne $\cB(\R)$(\emph{i.e.}~engendrée par les ouverts) est un espace mesurable. Plus généralement, on peut associer un espace mesurable $(X,\cB(X))$ à tout espace topologique $(X,\tau)$, en considérant la tribu des boréliens sur $X$.
\end{ex}

Un \emph{espace mesuré} $(X,\cB,\mu)$ est la donnée d'un espace mesurable $(X,\cB)$ et d'une mesure $\mu$ définie sur $\cB$ (\textit{i.e.}~$\mu(E)\in[0,+\infty]$ est défini pour tout $E\in\cB$). Lorsque la msure totale $\mu(X)$ est égale à $1$, on dit que $\mu$ est une mesure de probabilité.

Lorsque $f$ est une fonction mesurable de $(X,\cB)$ dans $(X',\cB')$, pour toute mesure $\mu$ définie sur $(X,\cB)$ on peut définir la \emph{mesure image} $f_*\mu$ sur $(X',\cB')$ : $f_*\mu(E'):=\mu(f^{-1}(E'))$ pour tout $E'\in\cB'$.

Ne voulant pas rentrer dans toutes les subtilités de la théorie de la mesure, nous supposerons, dorénavant, que tout espace mesuré $X$ est un espace mesurable \emph{standard} avec une mesure \emph{$\sigma$-finie}. Cela signifie que l'espace~$(X,\cB,\mu)$ est mesurablement isomorphe à l'espace $\left ([0,1],\cB\left ([0,1]\right ),\Leb\right )$. Cette restriction est plutôt fictive pour nous, car dans les exemples que nous traiterons, ces hypothèses seront toujours vérifiées.

\section{Actions sur espaces mesurés}
\label{ssc:actions}

\begin{dfn}
Soit $G$ un groupe qui agit (à gauche) sur un espace mesuré $(X,\cB,\mu)$, avec $\cB$ invariante par l'action de~$G$ et $\mu$ mesure $\sigma$-finie non-identiquement nulle. On dit que la mesure $\mu$ est \emph{quasi-invariante} pour l'action si pour tout~$g\in G$, l'image $gA$ d'un ensemble $A\in \cB$ est de mesure nulle si et seulement si la mesure de $A$ est nulle.

Lorsque $\mu(gA)=\mu(A)$ pour tout $A\in \cB$ et tout $g\in G$, la mesure $\mu$ est \emph{invariante}.
\end{dfn}

Si $G$ agit sur $(X,\cB,\mu)$ et $\mu$ est quasi-invariante, il existe une unique fonction
\[\rho:G\to L^1_{loc}(X,\mu)\]
telle que pour tout $g\in G$, $\int_Xf(x)\,d\left (g_*\mu\right )(x)=\int_X f(x)\,\rho_g(x)\,d\mu(x)$ pour tout $f\in L^1(X,g_*\mu)$. Ce résultat est le bien connu \emph{théorème de Radon-Nikodym} et la fonction $\rho_g$ est plus souvent notée comme la dérivée (de Radon-Nikodym) $\frac{dg_*\mu}{d\mu}$ ; une telle notation est surtout appropriée si l'on veut souligner sa nature de \emph{cocycle} (de Radon-Nikodym) : pour tous $g,h\in G$,
\[\frac{d(gh)_*\mu}{d\mu}(x)=\frac{dg_*\mu}{d\mu}(hx)\cdot \frac{dh_*\mu}{d\mu}(x)\]
en dehors d'un ensemble $\mu$-négligeable de $X$ (\emph{i.e.}~de mesure nulle).

\begin{rem}
Lorsque la mesure totale $\mu(X)$ est finie, alors pour tout élément du groupe $g\in G$, la dérivée de Radon-Nikodym $\rho_g$ est intégrable, de norme $\|\rho_g\|_{L^1}=\mu(X)$.
\end{rem}

\begin{dfn}
Si $\mu$ et $\nu$ sont deux mesures sur $(X,\cB)$ avec la propriété que $A$ est $\mu$-négligeable si et seulement si~$A$ est $\nu$-négligeable, on dit que $\mu$ et $\nu$ définissent la même \emph{classe} de mesure $[\mu]=[\nu]$, ou, également, que $\mu$ est \emph{équivalente} à $\nu$, et on écrit $\mu\sim \nu$.

Lorsqu'une seule des implications est vérifiée, par exemple si $A$ est $\nu$-négligeable seulement lorsque $A$ est~$\mu$-négligeable, on dit que $\nu$ est \emph{absolument continue} par rapport à $\mu$ et on écrit $\nu\preceq\mu$.
\end{dfn}

Une mesure $\mu$ est quasi-invariante sous l'action de $G$ si et seulement si les éléments de $G$ préservent la classe de $\mu$.

\section{Actions continues sur espaces topologiques mesurés}
Lorsque l'on suppose que le groupe $G$ agit de façon continue sur un espace topologique $X$, il est préférable (et plus intéressant) de considérer des mesures sur $X$ qui s'adaptent à la topologie. Dans la suite, on supposera toujours que les espaces topologiques en question sont séparables.

\begin{dfn}
Soit $(X,\tau,\cB)$ un \emph{espace topologique mesurable} (ici $\cB$ est la tribu borélienne\footnote{Normalement on demande que $\cB$ \emph{contienne} la tribu borélienne.}). Une mesure $\mu$ sur~$(X,\tau,\cB)$ est une \emph{mesure de Borel} si $\mu$ est non-identiquement nulle et finie sur les parties compactes de $X$.\footnote{Dans la littérature, la définition de mesure de Borel peut changer selon les différents auteurs : la propriété qu'elle soit finie sur les compacts n'est pas toujours demandée.}
\end{dfn}

Rappelons qu'une mesure de Borel est \emph{régulière à l'extérieur} si pour tout $E\in \cB$,
\[\mu(E)=\inf\{\mu(U)\,:\,U\trm{ ouvert qui contient }E\}.\]
De manière similaire, $\mu$ est dite \emph{régulière à l'intérieur} si pour tout $E\in \cB$,
\[\mu(E)=\sup\{\mu(K)\,:\,K\trm{ compact contenu dans  }E\}.\]
Une mesure régulière dans le sens précédent est essentiellement déterminée par ses valeurs sur les ouverts ou les compacts de $X$ et une telle mesure est donc appropriée dans un contexte topologique. Ces deux notions sont presque équivalentes, mais il s'avère que demander la régularité \emph{intérieure} est préférable.

\begin{dfn}
Soit $(X,\tau,\cB)$ un espace topologique mesurable. Une mesure de Borel $\mu$ est une \emph{mesure de Radon} si elle est finie sur les compacts et régulière à l'intérieur (par rapport aux compacts).
\end{dfn}

Les mesures que nous utiliserons seront de Radon.

\begin{ex}
La mesure de Lebesgue définie sur un ouvert de $\R^n$ est évidemment une mesure de Radon et donne une mesure positive à tout ouvert.
\end{ex}
\begin{ex}
La mesure de Wiener sur l'espace des fonctions continues $C_0([0,1])$ (voir la partie \ref{sc:wiener}\,) est une mesure de Radon et donne une mesure positive à tout ouvert (d'après le théorème de Wiener \ref{thm:wiener} il suffit de le vérifier sur les cylindres).
\end{ex}

Lorsque $G$ agit de manière continue sur $X$, on peut se demander s'il existe une mesure de Radon \mbox{(quasi-)invariante}. Sans hypothèses supplémentaires sur $X$, la réponse à cette question est, en général, négative. En contrepartie, dès que l'on suppose que $X$ est \emph{localement compact}, on trouve des résultats surprenants !

\begin{thm}
\label{thm:steinlage}
Soit $X$ un espace topologique séparable localement compact et soit $G$ un groupe qui agit sur $X$ de façon continue. On fait les deux hypothèses suivantes :
\begin{enumerate}
\item toute orbite est dense,
\item lorsque $K$ et $L$ sont deux compacts disjoints dans $X$, il existe un ouvert $U$ non vide dans $X$ tel que, pour tout $g\in G$, le translaté $gU$ n'intersecte pas $K$ et $L$ en même temps.
\end{enumerate}
Alors il existe une mesure de Radon $\mu$ sur $X$, \emph{invariante} par l'action de $G$.
\end{thm}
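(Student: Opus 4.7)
Plan de démonstration. L'idée est d'adapter la construction haarienne classique \emph{à la} Weil, fondée sur les fonctions de recouvrement, en se servant des hypothèses (1) et (2) pour pallier l'absence de structure de groupe sur $X$. Fixons un compact $K_0\subset X$ d'intérieur non vide, et, pour tout compact $K\subset X$ et tout ouvert non vide relativement compact $U\subset X$, posons
\[(K:U)=\min\Bigl\{n\in\N\,\Big\vert\,\exists\, g_1,\ldots,g_n\in G,\ K\subset \bigcup_{i=1}^n g_iU\Bigr\}.\]
Ce nombre est fini : par l'hypothèse (1), l'ouvert $G$-invariant $GU$ contient une orbite, donc est dense, donc coïncide avec $X$, et la compacité de $K$ permet d'extraire un sous-recouvrement fini. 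On normalise en définissant $\mu_U(K)=(K:U)/(K_0^{\circ}:U)$ ; cela donne une fonctionnelle $G$-invariante, monotone, sous-additive, et uniformément bornée (en $U$) par la quantité finie $(K:K_0^{\circ})$.

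La deuxième étape, décisive, consiste à exploiter l'hypothèse (2) pour obtenir l'additivité \emph{approximative} de $\mu_U$ sur les compacts disjoints. Étant donnés deux compacts disjoints $K$ et $L$, l'hypothèse fournit un ouvert non vide $V$ tel que, pour tout $g\in G$, le translaté $gV$ ne rencontre pas simultanément $K$ et $L$. Dès lors que $U\subset V$, tout recouvrement minimal de $K\cup L$ par translatés de $U$ se scinde en deux sous-familles, l'une recouvrant $K$, l'autre $L$ ; on en déduit $(K\cup L:U)\geq(K:U)+(L:U)$, puis, par sous-additivité, $\mu_U(K\cup L)=\mu_U(K)+\mu_U(L)$.

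La troisième étape est la prise de limite. On regarde la famille $(\mu_U)_U$ comme un point du compact $\prod_{K\text{ compact}}[0,(K:K_0^{\circ})]$, et l'on choisit un ultrafiltre $\mathcal{F}$ sur l'ensemble des ouverts admissibles $U$ qui raffine simultanément, pour chaque paire $(K,L)$ de compacts disjoints, le filtre (non vide d'après l'étape précédente) des $U$ pour lesquels $\mu_U$ est additive sur $(K,L)$. Par le théorème de Tychonoff, $\mathcal{F}$ détermine un point limite $\mu$ qui est monotone, finiment additif sur les compacts disjoints, $G$-invariant, et satisfait $\mu(K_0^{\circ})\leq 1$. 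On étend alors $\mu$ à toute la tribu borélienne par la recette classique de régularité
\[\mu(E)=\inf\bigl\{\mu^*(U)\,:\,U\text{ ouvert},\ E\subset U\bigr\},\quad \mu^*(U)=\sup\bigl\{\mu(K)\,:\,K\subset U\text{ compact}\bigr\},\]
et la mesure ainsi obtenue est de Radon, non nulle et $G$-invariante.

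L'étape la plus délicate est la troisième : il s'agit de garantir qu'un même ultrafiltre rende $\mu$ additive sur \emph{toutes} les paires de compacts disjoints à la fois. Cela tient à la consistance des filtres d'additivité $\{U:\mu_U(K\cup L)=\mu_U(K)+\mu_U(L)\}$ lorsque $(K,L)$ parcourt les paires disjointes, consistance précisément fournie par l'hypothèse (2) appliquée successivement à des unions finies de compacts. C'est ce mécanisme qui joue, dans l'action générale, le rôle que joue dans la preuve haarienne standard la continuité de la multiplication et le passage à des voisinages de l'identité de plus en plus petits.
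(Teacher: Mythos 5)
Le texte ne démontre pas ce théorème : il renvoie à Steinlage et à Fremlin [441C], de sorte que votre tentative ne peut être comparée qu'à la démonstration classique par nombres de recouvrement, dont elle suit d'ailleurs fidèlement le schéma. Vos deux premières étapes sont correctes, à un détail de rédaction près : pour obtenir $\bigcup_{g\in G}gU=X$, l'enchaînement « contient une orbite, donc dense, donc égal à $X$ » ne suffit pas (un ouvert dense n'est pas tout l'espace) ; il faut dire que le complémentaire de $\bigcup_{g\in G}gU$ est fermé et $G$-invariant, et que l'orbite de chacun de ses points, dense par l'hypothèse (1), y serait contenue — ou, plus directement, que toute orbite rencontre $U$, donc tout point appartient à un translaté de $U$.

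La lacune réelle se situe exactement au point que vous signalez comme le plus délicat. Pour que l'ultrafiltre existe, il faut la propriété d'intersection finie de la famille des ensembles $S_{K,L}=\{U\,:\,\mu_U(K\cup L)=\mu_U(K)+\mu_U(L)\}$, c'est-à-dire : pour un nombre fini de paires disjointes $(K_1,L_1),\ldots,(K_n,L_n)$, trouver un \emph{même} ouvert non vide $U$ convenant à toutes ; or votre étape 2 ne donne que la non-vacuité de chaque $S_{K,L}$ séparément. Vous justifiez la compatibilité par « l'hypothèse (2) appliquée à des unions finies de compacts », mais cet argument ne fonctionne pas : $K_1\cup\cdots\cup K_n$ et $L_1\cup\cdots\cup L_n$ ne sont en général pas disjoints (rien n'empêche $K_1$ de rencontrer $L_2$), et l'hypothèse (2) ne s'applique qu'à des paires disjointes. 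La réparation utilise l'hypothèse (1) de façon essentielle : la propriété « aucun $gU$ ne rencontre à la fois $K$ et $L$ » ne dépend que de la famille $\{gU\}_{g\in G}$, donc elle est stable par translation de $U$ et par passage à un sous-ouvert non vide ; si $V_1$ convient à $(K_1,L_1)$ et $V_2$ à $(K_2,L_2)$, comme $\bigcup_{g\in G}gV_2=X$, il existe $g$ tel que $W=V_1\cap gV_2\neq\emptyset$, et $W$ convient aux deux paires ; une récurrence traite $n$ paires, et on rétrécit $W$ pour le rendre relativement compact. Avec cette correction (et la vérification, que vous laissez implicite mais qui est standard, que le contenu limite — minoré sur les compacts d'intérieur non vide par $1/(K_0:K^{\circ})$ — engendre bien une mesure de Radon non nulle et invariante), votre schéma aboutit.
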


Ce théorème est dû à Steinlage \cite{steinlage75} et nous l'avons appris sous cette forme dans l'œuvre (monumentale) de Fremlin \cite[441C]{fremlin}. En fait, cet énoncé généralise le très célèbre théorème d'existence de la \emph{mesure de Haar} pour les groupes localement compacts, que nous allons discuter dans la prochaine partie.

\smallskip

Pour conclure, terminons par un résultat technique qui s'avérera utile pour étendre par approximation les propriétés de quasi-invariance en dimension finie (voir~\cite[Lemma~6.1.8]{bogachev-gaussian}).

\begin{prop}\label{prop:approx}
Soit $X$ un espace topologique métrisable\footnote{Cette hypothèse peut être affaiblie.} avec une mesure de probabilité  de Radon $\mu$, soit $T:X\to X$ une application $\mu$-mesurable telle que $T_*\mu$ est de Radon, et soit $T_n:X\to X$ une suite d'applications mesurables qui convergent $\mu$-presque partout vers $T$. Supposons que les mesures $(T_n)_*\mu$ sont absolument continues par rapport à $\mu$ et que les dérivées de Radon-Nikodym correspondantes forment une suite uniformément intégrable (\textit{i.e.}~relativement compacte dans $L^1$).

Alors la mesure $T_*\mu$ est absolument continue par rapport à $\mu$ et la dérivée de Radon-Nikodym est la limite des dérivées dans la topologie faible de $L^1(X,\mu)$.
\end{prop}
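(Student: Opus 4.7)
Le plan est d'utiliser le théorème de Dunford-Pettis: l'uniforme intégrabilité des densités $\rho_n:=d(T_n)_*\mu/d\mu$ équivaut à leur compacité faible relative dans $L^1(X,\mu)$. On extrait donc une sous-suite $(\rho_{n_k})$ convergeant faiblement vers une limite $\rho\in L^1(X,\mu)$, puis on identifie $\rho$ avec la dérivée de Radon-Nikodym de $T_*\mu$ par rapport à $\mu$; la convergence de la suite entière s'ensuivra par unicité de la limite.

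Pour l'identification, je testerais contre les fonctions continues bornées $f\in C_b(X)$. Par définition des densités,
\[\int_X f\circ T_n\,d\mu=\int_X f\,d(T_n)_*\mu=\int_X f\,\rho_n\,d\mu.\]
Comme $T_n\to T$ $\mu$-presque partout et $f$ est bornée continue, la convergence dominée donne $\int_X f\circ T_n\,d\mu\to \int_X f\,d(T_*\mu)$. Simultanément, $f$ bornée appartient à $L^\infty(X,\mu)$ et définit une forme linéaire continue sur $L^1(X,\mu)$, donc par convergence faible de la sous-suite $\int_X f\,\rho_{n_k}\,d\mu\to \int_X f\,\rho\,d\mu$. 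En recollant, $\int f\,d(T_*\mu)=\int f\,\rho\,d\mu$ pour tout $f\in C_b(X)$. Les deux mesures finies $T_*\mu$ et $\rho\cdot\mu$ sont de Radon (la première par hypothèse, la seconde car $\rho\in L^1(\mu)$ et $\mu$ est de Radon) sur un espace métrisable, et coïncident sur $C_b(X)$: elles sont donc égales, ce qui établit $T_*\mu\ll\mu$ avec $\rho=dT_*\mu/d\mu$.

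Enfin, la limite $\rho$ étant déterminée uniquement par $T_*\mu$, indépendamment de la sous-suite extraite, toute sous-suite faiblement convergente de $(\rho_n)$ tend vers $\rho$. Si la suite entière ne convergeait pas faiblement vers $\rho$, il existerait $\ell\in L^\infty(X,\mu)$ et une sous-suite $(\rho_{n_k})$ telles que $\int_X \ell\,\rho_{n_k}\,d\mu$ reste éloigné de $\int_X \ell\,\rho\,d\mu$; mais cette sous-suite étant elle-même uniformément intégrable, Dunford-Pettis en extrait une sous-sous-suite faiblement convergente, nécessairement vers $\rho$ par le raisonnement précédent, contradiction.

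L'obstacle principal me semble être l'identification de la limite au deuxième pas: il faut concilier deux convergences de natures différentes -- convergence dominée pour les mesures images $(T_n)_*\mu$ d'un côté, dualité faible $L^1$--$L^\infty$ pour les densités $\rho_n$ de l'autre -- via une même famille de fonctions tests. C'est précisément l'uniforme intégrabilité qui rend ces deux points de vue compatibles, en interdisant une fuite de masse ou une concentration pathologique des $\rho_n$ lorsque $n\to\infty$.
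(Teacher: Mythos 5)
Votre démonstration est correcte. Notez d'abord que le texte ne donne aucune preuve de cette proposition : il renvoie simplement à \cite[Lemma~6.1.8]{bogachev-gaussian}. Votre argument --- compacité faible relative dans $L^1$ via Dunford--Pettis, identification de la limite en testant contre $C_b(X)$ (convergence dominée d'un côté, dualité $L^1$--$L^\infty$ de l'autre), puis unicité de la limite par l'argument classique de sous-sous-suites --- est exactement la preuve standard, essentiellement celle de Bogachev, et il comble donc correctement ce renvoi. Deux remarques mineures : la positivité de $\rho$ n'a pas à être vérifiée à part (elle découle de l'identité $\rho\cdot\mu=T_*\mu$, ou du fait qu'une limite faible de fonctions positives est positive presque partout) ; et pour l'étape « deux mesures boréliennes finies qui coïncident sur $C_b(X)$ sont égales », la métrisabilité suffit, puisque toute mesure borélienne finie sur un espace métrique est régulière par rapport aux fermés et que l'indicatrice d'un fermé s'approche par des fonctions du type $\max(0,1-k\,d(x,F))$ ; la propriété de Radon, utile dans les versions plus générales de l'énoncé, n'est pas indispensable à cet endroit de votre argument.
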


\section{Mesures quasi-invariantes sur les groupes topologiques}
Les actions qui nous intéresserons le plus seront celles des groupes topologiques sur eux-mêmes. Tout groupe agit naturellement sur lui-même par translations, de deux cotés différents : il peut agir \emph{à gauche} par l'action
\[\dfcn{L}{G\times G}{G}{(g,x)}{g^{-1}x}\]
et \emph{à droite} par l'action
\[\dfcn{R}{G\times G}{G}{(g,x)}{xg}\]
et ces actions sont dites \emph{régulières}.

\begin{rem}
Les actions $L$ et $R$ commutent toujours. On remarquera aussi que ces actions sont \emph{libres} et \emph{transitives} : si $g^{-1}x=h^{-1}x$ alors $g=h$, et si $x$, $y\in G$ alors, en posant $g=xy^{-1}$, on a $g^{-1}x=y$.
\end{rem}

Une autre action dynamiquement intéressante est l'\emph{action par conjugaison} :
\[\dfcn{\g}{G\times G}{G}{(g,x)}{gxg^{-1}}\]

\begin{rem}
Contrairement aux actions régulières, l'action par conjugaison n'est ni libre, ni transitive (si le groupe n'est pas trivial !). Les orbites forment les \emph{classes de conjugaison} dans $G$ et le stabilisateur d'un élément~$x\in G$ coïncide avec le \emph{centralisateur} de $x$ dans $G$.
\end{rem}

Lorsque le groupe $G$ est un groupe topologique, ces actions sont continues.

\begin{dfn}
Une \emph{mesure de Haar} invariante à  gauche (resp. à droite) sur $G$ est une mesure de Radon sur $G$, invariante par l'action régulière gauche (resp. droite).
\end{dfn}

\begin{ex}
Lorsque $G$ est le groupe abélien $\R^n$ muni de sa topologie euclidienne,  il est classique que toute mesure de Haar doit être proportionnelle à la mesure de Lebesgue.
\end{ex}

\begin{ex}
Soit $\G$ un groupe dénombrable discret. La mesure de comptage définit une mesure de Haar sur $\G$ qui est invariante à gauche et à droite en même temps. Il est facile d'observer que toute autre mesure de Haar sur $\G$ doit être proportionnelle à la mesure de comptage. En outre, la mesure de Haar est finie si et seulement si le groupe $\G$ est fini.
\end{ex}

Ce dernier exemple, même un peu naïf, représente pleinement la situation pour les groupes \emph{localement compacts}. Nous avons déjà annoncé que le théorème de Steinlage \ref{thm:steinlage} est une version « décorée » du théorème de Haar que nous énonçons ici.

\begin{thm}
\label{thm:haar}
Sur un groupe localement compact $G$, il existe toujours une mesure de Haar invariante à gauche, et deux telles mesures sont proportionnelles.

De plus, la mesure est finie si et seulement si le groupe $G$ est compact.
\end{thm}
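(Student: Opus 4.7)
Le plan est de déduire l'existence directement du théorème de Steinlage~\ref{thm:steinlage}, puis de démontrer séparément l'unicité à une constante multiplicative près, et enfin l'équivalence entre compacité du groupe et finitude de la mesure.

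\textbf{Existence.} On applique Steinlage~\ref{thm:steinlage} à l'action régulière à gauche $L$ de $G$ sur lui-même. La première hypothèse (orbites denses) est évidente puisque l'action est transitive. Pour vérifier la deuxième, soient $K,L\subset G$ deux compacts disjoints ; le produit $L^{-1}K$ est compact et ne contient pas l'élément neutre $e$. Par continuité de la multiplication en $e$, il existe un voisinage ouvert symétrique $V$ de $e$ tel que $V^2\cap L^{-1}K=\emptyset$. Si un translaté $gV$ intersectait simultanément $K$ et $L$, on trouverait $u_1,u_2\in V$ avec $gu_1\in K$ et $gu_2\in L$, et alors $u_2^{-1}u_1=(gu_2)^{-1}(gu_1)\in V^{2}\cap L^{-1}K$, une contradiction. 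L'hypothèse (2) de Steinlage est donc satisfaite, ce qui fournit une mesure de Radon $\mu$ invariante par translations à gauche.

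\textbf{Unicité.} On commence par établir un lemme fondamental : toute mesure de Haar à gauche donne une mesure strictement positive à tout ouvert non vide. En effet, si un ouvert $U$ était de mesure nulle, ses translatés $\{gU\}_{g\in G}$ recouvriraient $G$ ; par extraction d'un sous-recouvrement fini d'un compact $K$ de mesure positive (qui existe car $\mu\not\equiv 0$ est régulière à l'intérieur), on obtiendrait $\mu(K)=0$, ce qui est absurde. Soient alors $\mu$ et $\nu$ deux mesures de Haar invariantes à gauche. On démontre, par l'argument classique de Fubini-Tonelli, que pour tout couple $f,h\in C_c(G)$ à valeurs positives non identiquement nuls, le rapport $\int f\,d\mu\,\big/\int h\,d\mu$ est indépendant de la mesure de Haar $\mu$ choisie : on introduit une fonction auxiliaire symétrisée en les deux variables (via l'application d'inversion) et l'on permute l'ordre d'intégration, ce qui élimine $\mu$ de la formule obtenue. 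Il s'ensuit que $\nu=c\,\mu$ pour une constante $c>0$.

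\textbf{Critère de compacité.} Si $G$ est compact, la condition d'être finie sur les compacts donne immédiatement $\mu(G)<\infty$. Réciproquement, supposons $G$ non compact. Fixons un voisinage compact $U$ de $e$ et un voisinage ouvert symétrique $V$ de $e$ tel que $VV^{-1}\subset U$ ; on a $\mu(V)>0$ par le lemme ci-dessus. Par récurrence on construit une suite $(g_n)_{n\geq 1}$ dans $G$ telle que $g_{n+1}\notin g_1U\cup\cdots\cup g_nU$ : ceci est possible car cette union finie de compacts n'est jamais égale à $G$. Les translatés $g_nV$ sont alors deux à deux disjoints : si $g_iv=g_jv'$ avec $i>j$, alors $g_i=g_jv'v^{-1}\in g_jVV^{-1}\subset g_jU$, contradiction. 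Par invariance à gauche, $\mu(g_nV)=\mu(V)>0$ pour tout $n$, d'où $\mu(G)\geq\sum_n\mu(g_nV)=+\infty$.

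La difficulté principale réside dans la preuve d'unicité, qui demande une manipulation soignée de Fubini-Tonelli et s'appuie de façon essentielle sur la positivité de la mesure de Haar sur les ouverts non vides ; les vérifications des hypothèses de Steinlage ainsi que l'argument de compacité sont, en comparaison, des constructions routinières dans un groupe topologique localement compact.
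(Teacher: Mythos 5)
Le texte ne démontre pas le théorème \ref{thm:haar} : il le rappelle comme résultat classique (renvois à Halmos, Weil, Fremlin) et se borne à signaler que le théorème de Steinlage \ref{thm:steinlage} en est une généralisation. Votre partie « existence » explicite donc précisément la réduction que le texte laisse implicite, et elle est correcte : la transitivité de l'action régulière gauche donne la densité des orbites, et la construction d'un voisinage symétrique $V$ de $e$ avec $V^2\cap L^{-1}K=\emptyset$ règle la seconde hypothèse (notez seulement que l'énoncé \ref{thm:steinlage} tel qu'il est cité suppose l'espace séparable, ce qui est couvert par la convention de séparabilité posée au début du chapitre). Le lemme de positivité sur les ouverts non vides et le critère de compacité (translatés deux à deux disjoints $g_nV$, tous de même mesure strictement positive) sont également complets et corrects.

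La seule vraie lacune est l'unicité, que vous désignez vous-même comme la difficulté principale mais que vous ne faites qu'esquisser : « introduire une fonction auxiliaire symétrisée et permuter l'ordre d'intégration » est tout le contenu de la preuve, pas une vérification de routine. Pour la compléter, fixez $f,h\in C_c(G)$ avec $h\ge 0$ non identiquement nulle et posez $\Phi(x,y)=f(x)\,h(yx)\,\bigl(\int_G h(tx)\,d\nu(t)\bigr)^{-1}$ ; il faut vérifier que le dénominateur est strictement positif (c'est votre lemme de positivité) et continu (continuité uniforme de $h$), que $\Phi$ est continue à support compact dans $G\times G$, puis calculer d'une part $\int_G\bigl(\int_G\Phi(x,y)\,d\nu(y)\bigr)d\mu(x)=\int_G f\,d\mu$, et d'autre part, après Fubini et les deux invariances à gauche (substitution $x\mapsto y^{-1}x$ pour $\mu$, puis $y\mapsto xy$ pour $\nu$ dans l'intégrale intérieure), constater que cette intégrale intérieure devient indépendante de $x$ ; on obtient $\int_G f\,d\mu=c(f,h,\nu)\int_G h\,d\mu$ avec une constante qui ne dépend pas de $\mu$, d'où $\int_G f\,d\mu\,/\int_G h\,d\mu=\int_G f\,d\nu\,/\int_G h\,d\nu$ pour tout $f$, et l'on conclut $\mu=c\,\nu$ par la représentation de Riesz (ou par régularité intérieure). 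Sans cette mise en œuvre, la proportionnalité n'est pas établie ; le reste de votre proposition est en revanche exact et conforme à la voie que le texte suggère.
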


\begin{rem}
Toute mesure de Haar $\mu$ invariante à gauche définit une mesure de Haar $\nu$ invariante à droite : si~$E\in \cB$ est un borélien dans $G$, on pose $\nu(E)=\mu(E^{-1})$.
Alors pour tout $g\in G$,
\[\nu(Eg)=\mu(g^{-1}E^{-1})=\mu(E^{-1})=\nu(E).\]
En général, les mesures de Haar invariantes à gauche ne sont pas invariantes à droite.
\end{rem}

De manière encore plus étonnante et remarquable, l'existence d'une mesure de Haar caractérise essentiellement les groupes localement compacts.

\begin{dfn}
Un groupe \emph{mesurable (au sens de Weil)} est un espace mesuré $(G,\cB,\mu)$ tel que
\begin{enumerate}
\item $\mu$ est $\sigma$-finie et non-identiquement nulle,
\item $G$ est un groupe,
\item la tribu $\cB$ et la mesure $\mu$ sont invariantes par translations à gauche,
\item la multiplication dans $G$ est mesurable. Plus précisément, la fonction qui à $(x,y)\in G\times G$ \mbox{associe~$x^{-1}y\in G$} est mesurable.
\end{enumerate}
\end{dfn}

\begin{dfn}
Un groupe mesuré $(G,\cB,\mu)$ est dit \emph{séparé (au sens de Weil)} si pour tout élément $g\in G$ différent de l'identité, il existe $E\in \cB$ de mesure finie strictement positive tel que la mesure de la différence symétrique est positive :
\[\mu(gE\bigtriangleup E)>0.\]
\end{dfn}

Lorsque $(G,\cB,\mu)$ est un groupe mesuré et séparé, il est possible de définir une topologie de groupe topologique sur $G$, la \emph{topologie de Weil} $\tau_{\trm{Weil}}$. En effet, sous ces hypothèses, la famille
$\cN$ des ensembles
\[\left \{ g\in G\,:\,\mu(gE\bigtriangleup E)<\ve \right \}\]
tels que $E\in \cB$ soit de mesure finie strictement positive, $\ve\in ]0,2\,\mu(E)[$,
définit un système de voisinages de l'identité dans $G$.

Le groupe topologique mesuré $(G,\tau_{\trm{Weil}},\cB, \mu)$ est un groupe \emph{localement borné}, où l'on dit que $E\in\cB$ est \emph{borné} si pour tout voisinage $I$ de l'identité, $E$ est recouvert par un nombre fini de translatés de $I$. 

Lorsque l'intérieur d'une partie $E\in \cB$ n'est pas vide, la mesure de $E$ est positive ; lorsque $E\in\cB$ est borné, la mesure de $E$ est finie. Donc $G$ est « presque » un groupe localement compact et $\mu$ sa mesure de Haar. Le « presque » est précisé par le théorème de Mackey et Weil \cite{mackey,weil} que nous allons énoncer.

\begin{dfn}
Un sous-groupe $G$ dans un groupe mesuré $(\hat{G},\hat{\cB},\hat{\mu})$ est dit \emph{épais} si pour tout $\hat{E}\in \hat{\cB}$,
\[\sup \left \{ \hat{\mu}(\hat{K})\,:\,\hat{K}\trm{ est un compact dans }\hat{G}
\trm{ contenu dans }\hat{E}-G\right \}=0.\]
\end{dfn}

\begin{thm}[Mackey~--~Weil]
\label{thm:mackey-weil}
Soit $G$ un groupe mesuré, séparé. Alors il existe un groupe localement compact $\hat{G}$ avec une mesure de Haar $\hat{\mu}$ sur la tribu de Borel
$\hat{\cB}$ de $\hat{G}$, tel que 
\begin{itemize}
\item $G$ soit un sous-groupe épais de $\hat{G}$, 
\item $\cB\supset \hat{\cB}\cap G$,
\item $\mu(E)=\hat{\mu}(\hat{E})$ lorsque $\hat{E}\in \hat{\cB}$ et $E=\hat{E}\cap G$.
\end{itemize}
\end{thm}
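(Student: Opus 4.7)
L'idée directrice est de construire $\hat G$ comme complété de l'espace uniforme $(G,\tau_{\trm{Weil}})$, le phénomène clé étant que la locale bornétude au sens de la mesure se traduit en locale compactité topologique après complétion.

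Je commencerais par vérifier en détail que $\cN$ est un système de voisinages de l'identité pour une topologie de groupe séparée sur $G$. L'inégalité triangulaire pour la différence symétrique, combinée à l'invariance à gauche de $\mu$, donne $\mu(ghE\bigtriangleup E)\leq \mu(gE\bigtriangleup E)+\mu(hE\bigtriangleup E)$, ce qui fournit la continuité de la multiplication ; l'inversion se traite de manière analogue. On munit alors $G$ de l'uniformité bilatère associée.

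L'étape fondamentale consiste à établir la \emph{précompacité} d'un voisinage de l'identité. Pour $E\in\cB$ avec $0<\mu(E)<\infty$, l'application $g\mapsto \mathbf{1}_{gE}$ envoie le voisinage $V_\ve=\{g : \mu(gE\bigtriangleup E)<\ve\}$ dans une boule de rayon $\ve$ dans $L^1(G,\mu)$ ; les fonctions $\mathbf{1}_{gE}$ étant dominées par $\mathbf{1}_F$ pour un $F\in\cB$ de mesure finie (obtenu par $\sigma$-finitude), cette image est relativement compacte dans $L^1$ via un argument d'uniforme intégrabilité dans l'esprit de la proposition~\ref{prop:approx}, ce qui entraîne la précompacité de $V_\ve$ dans l'uniformité de Weil. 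Le complété $\hat G$ hérite alors d'un voisinage de l'identité \emph{compact} et devient localement compact. Les opérations de groupe, uniformément continues sur les bornés, s'étendent à $\hat G$ par densité ; le théorème~\ref{thm:haar} de Haar fournit la mesure $\hat\mu$ sur $\hat{\cB}$, qui coïncide avec $\mu$ sur la trace $\hat{\cB}\cap G$ par unicité après normalisation cohérente sur un borné de mesure positive. L'inclusion $\cB\supset \hat{\cB}\cap G$ s'ensuit directement.

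Reste à établir la condition d'\emph{épaisseur} : tout compact $\hat K\subset \hat G-G$ doit être $\hat\mu$-négligeable. Puisque $G$ est dense dans $\hat G$ et que $\hat\mu$ est régulière à l'intérieur, obtenue par extension/approximation de $\mu$ depuis $G$, aucune masse ne peut se concentrer en dehors de $G$ ; les compacts du complémentaire sont donc tous de masse nulle. L'obstacle principal est le passage de la bornétude mesurable à la précompacité uniforme de $V_\ve$ : c'est là que la $\sigma$-finitude et l'uniforme intégrabilité des indicatrices $\mathbf{1}_{gE}$ doivent être savamment combinées, étape qui concentre toute la subtilité de l'énoncé et qui justifie les contributions conjointes de Mackey et de Weil.
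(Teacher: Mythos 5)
Le texte ne démontre pas ce théorème : il renvoie explicitement à \cite{weil} et \cite{halmos}, si bien que la comparaison se fait avec la preuve classique de Weil--Halmos, dont votre schéma reproduit correctement l'ossature (topologie de Weil, complétion pour l'uniformité bilatère, compacité locale, mesure de Haar via le théorème \ref{thm:haar}, épaisseur). Mais l'étape que vous identifiez vous-même comme le c\oe ur de la preuve ne tient pas telle quelle. La distance de Weil attachée à $E$ est exactement la distance en norme $L^1$, $\mu(gE\bigtriangleup hE)=\|\mathbf{1}_{gE}-\mathbf{1}_{hE}\|_{L^1}$ ; or l'uniforme intégrabilité ne fournit, via Dunford--Pettis, que la compacité relative \emph{faible} dans $L^1$, jamais la compacité en norme, et la domination par un ensemble fixe de mesure finie n'existe d'ailleurs pas pour tous les $g\in V_\ve$ (seule la partie $gE\setminus E$, qui dépend de $g$, est petite). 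Invoquer la compacité en norme de la famille $\{\mathbf{1}_{gE}\}$ revient en fait à supposer ce que l'on veut démontrer. De plus, être « borné » au sens du texte signifie être recouvert par un nombre fini de translatés de \emph{tout} voisinage de l'identité, donc être totalement borné pour toutes les pseudométriques $d_F(g,h)=\mu(gF\bigtriangleup hF)$, $F$ de mesure finie strictement positive, et non pour la seule pseudométrique attachée à $E$ : c'est précisément ce point (recouvrir $V_\ve$ par des translatés d'un voisinage défini par un \emph{autre} ensemble $F$) qui concentre la technique chez Weil et Halmos, par des arguments d'approximation dans l'algèbre de mesure, et votre esquisse n'en dit rien.

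Le second point défaillant est l'épaisseur. « $G$ dense et $\hat\mu$ intérieurement régulière, donc aucune masse hors de $G$ » est un non-sequitur : $\Q$ est dense dans $\R$ sans être épais, des compacts du complémentaire portant une mesure de Lebesgue strictement positive. L'épaisseur doit se déduire de la compatibilité des mesures : si un compact $\hat K\subset\hat G$ disjoint de $G$ vérifiait $\hat\mu(\hat K)>0$, on prendrait $\hat E=\hat K$, dont la trace sur $G$ est vide, et la relation $\mu(\hat E\cap G)=\hat\mu(\hat E)$ serait violée. Mais cette relation elle-même ne s'obtient pas « par unicité de la mesure de Haar après normalisation » comme vous l'écrivez, car $\mu$ vit sur $G$, qui n'est pas localement compact : il faut d'abord montrer que $\hat E\mapsto\mu(\hat E\cap G)$ définit une mesure de Radon invariante à gauche sur $\hat G$ (ce qui utilise la densité de $G$ et l'inclusion $\hat{\cB}\cap G\subset\cB$), puis appliquer l'unicité. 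En l'état, votre proposition est un plan plausible et fidèle à la preuve classique, mais ses deux n\oe uds (bornétude locale totale pour la structure uniforme de Weil, puis épaisseur et identification des mesures) restent non démontrés, et le mécanisme proposé pour le premier est mathématiquement erroné.
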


Pour une preuve de ce théorème, le lecteur pourra consulter \cite{weil}, aussi bien que le déjà cité \cite{halmos}.

\begin{rem}
Dans \cite{weil}, une version plus fine de ce théorème est montrée. En effet, on peut affaiblir les hypothèses du théorème \ref{thm:mackey-weil} pour montrer que le même résultat est valable lorsque l'on suppose seulement que la mesure sur $(G,\cB)$ est \emph{quasi-invariante}.
\end{rem}
\clearpage

\chapter{Le mouvement brownien : la fonction continue générique}
\label{sc:wiener}
Nos premiers vers dans l'étude de la dynamique générique selon le point de vue probabiliste transite naturellement par le \emph{mouvement brownien}, qui représente, dans les mathématiques modernes, l'idée de fonction continue choisie au hasard. Le problème de la définition mathématique de cet objet a nécessité beaucoup d'efforts, achevés principalement grâce aux contributions de Wiener \cite{wiener23}.

Nous présentons les outils probabilistes nécessaires à la compréhension de ce travail. Le lecteur pourra consulter le texte \cite{peres} (voir aussi \cite{revyor, karatzas}\,) comme référence plus que complète sur ce sujet.\footnote{Le lecteur francophone pourra également se faire du plaisir en consultant aussi \cite{levy}.}

\section{Notions de base de probabilité}
Un \emph{espace de probabilité} $(\Omega,\cB,\P)$ est un espace mesuré tel que la mesure totale $\P(\Omega)$ est égale à $1$ (voir \S\ref{s:basemesure} dans le chapitre précédent). Une \emph{variable aléatoire} $X$ sur $(\Omega,\cB,\P)$ est une
  fonction mesurable $X:(\Omega,\cB)\to (\R,\cB(\R))$. Nous noterons $\E[X]$ son espérance, $\E[X]=\int_\Omega X(\omega)\,d\P(\omega)$, et $\mrm{Var}(X)$ sa variance,
  $\mrm{Var}(X)=\E[X^2]-\E[X]^2$. Une variable aléatoire
   $X$ définit une mesure de probabilité $X_*\P$ sur $(\R,\cB(\R))$, que l'on appelle \emph{distribution} de la variable :~$X_*\P(A)=\P(X\in A)$, $A\in\cB(\R)$.

\begin{ex}
Une variable $X$ sur $(\Omega,\cB,\P)$ est dite \emph{gaussienne} si sa distribution est une loi gaussienne : autrement dit, il existe $\sigma>0$, $m\in \R$ tels que
\[\P(X\in A)=\tfrac{1}{\sqrt{2\pi \sigma}}\int_A\exp\left (-\tfrac{(t-m)^2}{2\sigma}\right )\,dt.\]
Les paramètres $m$ est $\sigma$ sont, respectivement, l'\emph{espérance} et la \emph{variance} de la variable $X$, et sa distribution est notée $\mathcal{N}(0,\sigma)$.

L'analyse de Fourier caractérise les variables de distribution gaussienne : une variable $X$ est une variable gaussienne d'espérance $m$ et variance $\sigma$ si et seulement si
\beqn{fourier}{\E\left [ e^{i\xi X}\right ]=\exp\left (i\xi m-\tfrac{\sigma}{2}\xi^2\right )\quad\trm{pour tout }\xi\in \R.}
\end{ex}

Une \emph{filtration} $\cF$ (d'une tribu $\cB$) est une suite monotone croissante de sous-tribus $(\cF_s)_{s\ge 0}$ : $\cF_s\subset\cF_t$ si $s<t$. Une filtration est dite \emph{régulière} si pour tout $s\ge 0$,
\[\bigcap_{t>s}\cF_t = \cF_s.\]
On dit qu'une filtration sur $(\Omega,\cB,\P)$ définit un \emph{espace de probabilité filtré}.

Un \emph{processus} $X=(X_s)_{s\ge 0}$ sur un espace de probabilité filtré $(\Omega,\cB,\P,\cF)$ est la donnée, pour chaque $s\ge 0$, d'une variable aléatoire $X_s:(\Omega,\cF_s,\P)\to (\R,\cB(\R))$. On dit aussi que le processus $X$ est \emph{adapté} à la filtration $\cF$. 

\begin{rem}
Tout processus $X$ est adapté par rapport à sa filtration \emph{naturelle} $(\sigma(X_s,s\le t))_{t\ge 0}$, où $\sigma(X_s, s\le t)$ est la plus petite tribu pour laquelle les variables $(X_s,s\le t)$ sont mesurables, à savoir engendrée par les pré-images~$X_s^{-1}(A)$ des ouverts $A\subset \R$.
\end{rem}

\begin{ex}
Un processus $X=(X_s)$ est dit \emph{gaussien}, si toute combinaison linéaire des $X_s$ est une variable gaussienne par rapport à la loi conjointe. Les processus gaussiens sont déterminés par les espérances et les fonctions de \emph{covariance} $\G(s,t)=\mrm{Cov}(X_s,X_t)=\E[X_sX_t]-\E[X_s]\,\E[X_t]$.
\end{ex}

\section{Mouvement brownien et régularité}
\label{ssc:bm}

\begin{dfn}
\label{dfn:bm}
Soit $(\Omega,\mathcal{B},\P)$ un espace de probabilité. Un \emph{mouvement brownien (standard)} sur l'intervalle $I=[0,1]$ est un processus $B=(B_t)_{t\in I}$ défini sur $\Omega$ qui satisfait les propriétés suivantes :
\begin{enumerate}

\item pour presque tout $\omega\in \Omega$, l'application $t\in I\mto B_t(\omega)$ est une fonction continue,
\item pour tout choix de $t_0=0<t_1<\ldots<t_{n-1}<t_{n}=1$, les variables aléatoires $B_{t_i}-B_{t_{i-1}}$ sont deux à deux indépendantes,
\item pour tout choix $s<t$ de temps dans $I$, la variable $B_t-B_s$ suit une loi gaussienne $\mathcal{N}(0,t-s)$, à savoir
\[\P(a<B_t-B_s<b)=\tfrac{1}{\sqrt{2\pi(t-s)}}\int_a^b\exp\left (-\tfrac{1}{2(t-s)}x^2\right )\,dx,\]
\item pour presque tout $\omega\in \Omega$, on a $B_0(\omega)=0$.
\end{enumerate}
\end{dfn}

Tout mouvement brownien $B$ induit une mesure de probabilité $\W:=B_*\P$, sur l'espace $C_0(I)$ des fonctions continues sur l'intervalle $I$ qui valent $0$ en $0$, muni de la tribu cylindrique $\mathcal{C}$ : elle est engendrée par les \emph{cylindres}~$C\in \mathcal{C}$, parties de la forme
\beqn{cylindre}{C=\{x\in C_0(I)\,|\, x(t_i)-x(t_{i-1})\in A_i,\,i=1,\ldots n\},}
avec $0=t_0<t_1<\ldots<t_{n-1}<t_{n}=1\trm{ et }A_i\in \cB(\R)$.
La mesure $\W$ est dite \emph{mesure de Wiener}. La mesure d'un cylindre est déterminée par les propriétés du mouvement brownien (définition \ref{dfn:bm}\,) : pour $C$ comme à la ligne \eqref{eq:cylindre},
\beqn{cylindre2}{\W(C)=\tfrac{1}{\sqrt{(2\pi)^n(t_{n}-t_{n-1})\cdots(t_1-t_0)}}\int_{A_1}\cdots\int_{A_n}\exp\left (-\tfrac{1}{2}\sum_{i=1}^n\frac{y_i^2}{t_i-t_{i-1}}\right )\,dy_1\cdots dy_n.}

\begin{thm}[Wiener]
\label{thm:wiener}
Les tribus cylindrique $\cC$ et borélienne $\cB(C_0(I))$ sur $C_0(I)$ coïncident. 
\end{thm}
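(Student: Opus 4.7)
Le plan est de démontrer les deux inclusions $\mathcal{C}\subseteq\cB(C_0(I))$ et $\cB(C_0(I))\subseteq\mathcal{C}$ séparément. La première est la plus facile. Les applications d'évaluation $\pi_t:x\in C_0(I)\mapsto x(t)\in\R$ sont $1$-lipschitziennes pour la norme uniforme, donc continues, donc boréliennes. Un cylindre $C$ comme à la ligne \eqref{eq:cylindre} est l'image réciproque d'un borélien de $\R^n$ par l'application continue $x\mapsto \bigl(x(t_i)-x(t_{i-1})\bigr)_{i=1,\ldots,n}$, et appartient donc à $\cB(C_0(I))$. Par stabilité de la tribu borélienne par les opérations dénombrables, on en déduit $\mathcal{C}\subseteq\cB(C_0(I))$.

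Pour l'inclusion réciproque, j'exploiterais la séparabilité de $\bigl(C_0(I),\|\cdot\|_\infty\bigr)$ — par exemple via Stone-Weierstrass, les polynômes à coefficients rationnels s'annulant en $0$ fournissent un sous-ensemble dense dénombrable — de sorte que tout ouvert s'écrit comme réunion dénombrable de boules ouvertes. Il suffit donc de vérifier que toute boule est cylindrique. La clef est que, grâce à la continuité des éléments de $C_0(I)$, pour toute $y\in C_0(I)$ on a $\sup_{t\in I}|y(t)|=\sup_{q\in\mathbf{Q}\cap I}|y(q)|$. Ainsi, pour $x_0\in C_0(I)$ et $r>0$,
\[
\overline{B}(x_0,r)=\bigcap_{q\in\mathbf{Q}\cap I}\bigl\{x\in C_0(I) : |x(q)-x_0(q)|\leq r\bigr\},
\]
intersection dénombrable de cylindres ne portant chacun que sur un seul temps, donc cylindrique ; puis la boule ouverte correspondante s'obtient comme $B(x_0,r)=\bigcup_{n\geq 1}\overline{B}(x_0,r-1/n)$, elle aussi cylindrique. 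En recollant via la séparabilité, tout ouvert de $C_0(I)$ est dans $\mathcal{C}$, et par engendrement, $\cB(C_0(I))\subseteq\mathcal{C}$.

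L'obstacle principal est plutôt doux, mais c'est le cœur de l'argument : il s'agit de réduire l'appartenance à une boule — condition portant \emph{a priori} sur un continuum de temps — à une condition ne portant que sur un ensemble dénombrable de temps. C'est précisément là qu'intervient le choix crucial de se placer sur $C_0(I)$, espace de fonctions \emph{continues}, plutôt que sur un espace plus vaste (comme celui des fonctions seulement mesurables), pour lequel un tel énoncé serait en défaut en général.
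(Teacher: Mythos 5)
Votre démonstration est correcte et c'est l'argument classique : les évaluations sont continues donc les cylindres sont boréliens, et réciproquement la séparabilité de $(C_0(I),\|\cdot\|_0)$ ramène tout ouvert à une réunion dénombrable de boules, chaque boule fermée s'écrivant comme intersection dénombrable de conditions aux temps rationnels grâce à la continuité des trajectoires. Le texte énonce le théorème \ref{thm:wiener} sans démonstration (résultat classique renvoyé aux références), il n'y a donc pas de preuve du papier à laquelle comparer ; notez seulement, pour coller à la définition \eqref{eq:cylindre} par accroissements, que l'ensemble $\{x\,:\,|x(q)-x_0(q)|\le r\}$ est bien un cylindre car $x(0)=0$ (prendre la subdivision $0<q<1$ avec le second accroissement non contraint), ce qui justifie votre phrase sur les « cylindres ne portant que sur un seul temps ».
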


Par conséquent, l'\emph{espace de Wiener (classique)} $(C_0(I),\cB(C_0(I)),\W)$ représente le choix \emph{intrinsèque} d'espace de probabilité sur lequel définir concrètement le mouvement brownien : pour $x\in C_0(I)$, $B_t$ est la variable aléatoire qui correspond à la valeur de la fonction $x$ au temps $t$,
\[B_t(x):=x(t).\]

\begin{cor}
Le mouvement brownien existe et il est uniquement déterminé par ses propriétés.
\end{cor}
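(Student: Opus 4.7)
La loi d'un mouvement brownien $B$, vue comme mesure de probabilité sur $C_0(I)$, est entièrement caractérisée par sa restriction à la tribu cylindrique $\cC$. Or, les propriétés (2) et (3) de la définition \ref{dfn:bm} fixent les distributions conjointes des accroissements $(B_{t_i}-B_{t_{i-1}})_i$ comme produits de gaussiennes centrées indépendantes de variances $t_i-t_{i-1}$ ; ainsi $\W(C)$ prend la valeur explicite \eqref{eq:cylindre2}. Le théorème \ref{thm:wiener} identifiant $\cC$ à la tribu borélienne $\cB(C_0(I))$, un argument de classe monotone entraîne que deux mesures boréliennes coïncidant sur les cylindres sont égales. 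L'unicité en loi en résulte.

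\textbf{Existence.} Il reste à construire effectivement un processus vérifiant (1)--(4), ce que je ferais en suivant l'interpolation dyadique due à Lévy. Sur un espace de probabilité portant une famille indépendante $\{Z_d\}_{d\in \mathbf{D}}$ de gaussiennes standard indexée par les rationnels dyadiques $\mathbf{D}\subset [0,1]$, on construit par récurrence sur $n$ une suite $B^{(n)}$ de fonctions continues affines par morceaux : on pose $B^{(0)}_0=0$, $B^{(0)}_1=Z_1$, puis à l'étape $n+1$, pour chaque intervalle dyadique $[k/2^n,(k+1)/2^n]$, on rehausse la valeur au point milieu $(2k+1)/2^{n+1}$ en lui ajoutant, à l'interpolée linéaire, une contribution gaussienne centrée indépendante de variance $2^{-n-2}$. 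Par construction, les lois finies-dimensionnelles aux points dyadiques sont exactement celles prescrites par \eqref{eq:cylindre2}.

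\textbf{Convergence uniforme.} Le cœur de la preuve réside dans le contrôle presque sûr de $M_n:=\|B^{(n+1)}-B^{(n)}\|_\infty$, maximum de $2^n$ variables gaussiennes centrées indépendantes de variance comparable à $2^{-n}$. L'estimée classique de queue gaussienne $\P(|\mathcal{N}(0,\sigma^2)|>x)\le \exp\left(-x^2/(2\sigma^2)\right)$ fournit, pour $c$ assez grand, la borne $\P\left(M_n>c\sqrt{n\cdot 2^{-n}}\right)\le 2^n\exp(-c^2 n/2)$, dont le terme général est sommable en $n$. Le lemme de Borel--Cantelli garantit alors la convergence uniforme presque sûre de $(B^{(n)})$ vers un processus $B$ à trajectoires continues, coïncidant aux dyadiques avec la construction précédente. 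Les propriétés (2), (3) et (4) se propagent aux temps $t\in [0,1]$ quelconques par approximation dyadique et continuité de la fonction caractéristique \eqref{eq:fourier}.

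\textbf{Principal obstacle.} Le point véritablement délicat est précisément cette régularité des trajectoires : les propriétés (2)--(4) ne déterminent le processus que sur un ensemble dénombrable de temps, et la construction d'une version continue sur tout $[0,1]$ repose sur l'estimée fine de queue gaussienne ci-dessus -- le théorème d'extension de Kolmogorov seul ne livrerait qu'un processus potentiellement irrégulier.
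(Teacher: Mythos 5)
Votre démonstration est correcte et suit essentiellement la même voie que le texte : l'unicité via la détermination de la mesure sur les cylindres \eqref{eq:cylindre2} et le théorème de Wiener \ref{thm:wiener}, et l'existence par la construction de Lévy, qui n'est autre que la série gaussienne \eqref{eq:gaussian_coefficients} avec la base de Haar (les sommes partielles niveau par niveau coïncident avec vos interpolations dyadiques). La seule différence est de présentation : là où le texte renvoie à \cite{peres} pour la continuité presque sûre, vous explicitez l'estimée de queue gaussienne et l'argument de Borel--Cantelli, ce qui est exactement le point délicat que vous identifiez à juste titre.
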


Nous choisirons comme réalisation des trajectoires browniennes, celle du mouvement brownien donné par la mesure de Wiener. 

Nous verrons que la mesure de Wiener est un candidat naturel pour remplacer la mesure de Lebesgue sur le groupe abélien de dimension infinie.

\begin{rem}\label{rem:covariance}
Rappelons que le mouvement brownien est un processus gaussien centré et par conséquent caractérisé par la famille des covariances $\G(s,t)$.
Dans le cas du mouvement brownien, on a $\G(s,t)=\min \{s,t\}$, pour tous \mbox{$s$, $t$}.
\end{rem}

\section{Propriétés génériques de régularité}

Il s'avère que les trajectoires du mouvement brownien ne sont pas différentiables, cependant selle sont un peu plus que continues. Plus précisément nous avons les résultats suivants (voir \cite[chapitres 1 et 10]{peres}\,) :

\begin{thm}[Différentiabilité en aucun point]
Presque sûrement, le mouvement brownien n'est différentiable en aucun point $t\in [0,1]$.
\end{thm}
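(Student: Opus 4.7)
Le plan est de suivre la stratégie classique de Paley--Wiener--Zygmund, qui procède par contrôle quantitatif d'incréments consécutifs. Supposons par l'absurde qu'il existe $t_0 \in [0,1]$ en lequel $B$ est dérivable. Il existe alors un entier $M$ et un voisinage de $t_0$ sur lequel $|B_t - B_{t_0}| \leq M |t - t_0|$. Pour un entier $n$ assez grand, si $k$ est l'unique entier tel que $t_0 \in [k/n, (k+1)/n]$, les quatre points dyadiques $(k+i)/n$, $i=0,1,2,3$, se trouvent dans le voisinage considéré ; par l'inégalité triangulaire, chacun des trois incréments $B_{(k+i+1)/n} - B_{(k+i)/n}$ est majoré en valeur absolue par $CM/n$, avec $C$ une constante universelle.

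Je définirais alors l'événement
\[
A_n(M) = \bigcup_{k=0}^{n-4} \Bigl\{\omega \,:\, |B_{(k+i+1)/n}(\omega) - B_{(k+i)/n}(\omega)| \leq CM/n,\ i=0,1,2 \Bigr\},
\]
et l'objectif est de montrer que $\P(A_n(M)) \to 0$. Par la propriété d'indépendance des incréments (point 2 de la définition~\ref{dfn:bm}) et le fait que $B_{(k+i+1)/n} - B_{(k+i)/n} \sim \mathcal{N}(0,1/n)$, l'estimation standard de la densité gaussienne donne
\[
\P\bigl(|B_{1/n}| \leq CM/n\bigr) \leq \sqrt{\tfrac{2}{\pi}}\cdot \tfrac{CM}{\sqrt{n}},
\]
et donc, en utilisant l'indépendance des trois incréments disjoints,
\[
\P(A_n(M)) \leq n \cdot \Bigl(\tfrac{C'M}{\sqrt{n}}\Bigr)^3 = \tfrac{C''M^3}{\sqrt{n}}\ \xrightarrow[n\to\infty]{}\ 0.
\]
L'ensemble des $\omega$ tels que $B(\omega)$ soit dérivable en un point avec dérivée bornée par $M$ est alors contenu dans $\liminf_n A_n(M)$, donc de $\P$-mesure nulle par lemme de Fatou. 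On conclut en prenant l'union dénombrable sur $M \in \mathbb{N}$.

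Le point délicat est bien l'ajustement du nombre d'incréments consécutifs à contrôler : il en faut au moins \emph{trois}, afin que la sommation sur les $\sim n$ positions possibles de $k$ soit effectivement dominée par le produit des probabilités individuelles, d'ordre $n^{-3/2}$ ; deux incréments donneraient une borne d'ordre $1$, insuffisante. Ce choix reflète exactement la régularité höldérienne d'exposant $1/2$ des trajectoires browniennes, qui empêche justement la dérivabilité. Les détails combinatoires (gestion des bords de $[0,1]$, ajustement précis de la constante $C$ selon la position de $t_0$ dans son intervalle dyadique) sont un peu fastidieux mais sans réelle difficulté.
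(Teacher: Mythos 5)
Votre preuve est correcte : c'est l'argument classique de Paley--Wiener--Zygmund/Dvoretzky--Erd\H{o}s--Kakutani (trois incréments dyadiques consécutifs contrôlés par $CM/n$, borne gaussienne en $n^{-1/2}$ par incrément grâce à l'indépendance, union sur les $\sim n$ positions de $k$ puis sur $M$, et inclusion dans $\liminf_n A_n(M)$), qui est précisément celui exposé dans \cite{peres}, référence à laquelle le texte renvoie puisqu'il énonce ce théorème sans en donner de démonstration. Le seul point à soigner --- que vous signalez du reste --- est le bord droit : pour $t_0$ proche de $1$ (et en particulier $t_0=1$) les trois incréments à droite sortent de $[0,1]$, ce qui se règle en prenant aussi des incréments à gauche ou en traitant l'extrémité séparément, sans conséquence sur l'argument.
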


\begin{thm}[Variation infinie]
\label{thm:variationbrownien}
Presque sûrement, la variation (voir \eqref{eq:variation}) du mouvement brownien est infinie.
\end{thm}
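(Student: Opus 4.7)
Le plan est de s'appuyer sur la \emph{variation quadratique} du mouvement brownien, dont on montrera qu'elle converge presque sûrement vers $1$ le long des partitions dyadiques. Pour chaque $n\in \N$, considérons la partition dyadique $\pi_n=\{k/2^n\,:\,k=0,\ldots,2^n\}$ de l'intervalle $[0,1]$, et posons
\[V_n=\sum_{k=1}^{2^n}\left |B_{k/2^n}-B_{(k-1)/2^n}\right |,\qquad Q_n=\sum_{k=1}^{2^n}\left (B_{k/2^n}-B_{(k-1)/2^n}\right )^2.\]
La variation totale d'une trajectoire est minorée par $\sup_n V_n$, et il suffira donc de montrer que presque sûrement $V_n\to\infty$.

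Je calculerais ensuite les deux premiers moments de $Q_n$. Les accroissements $X_k^{(n)}:=B_{k/2^n}-B_{(k-1)/2^n}$ sont indépendants et de loi $\mathcal{N}(0,2^{-n})$ (définition \ref{dfn:bm}), de sorte que $\E[(X_k^{(n)})^2]=2^{-n}$ et $\mrm{Var}((X_k^{(n)})^2)=2\cdot 4^{-n}$ (moment d'ordre~$4$ d'une gaussienne centrée). Par indépendance, il vient $\E[Q_n]=1$ et $\mrm{Var}(Q_n)=2^{1-n}$. L'inégalité de Chebyshev donne alors $\P(|Q_n-1|>\delta)=O(2^{-n})$ pour tout $\delta>0$ fixé ; la sommabilité de cette suite et le lemme de Borel-Cantelli fournissent la convergence presque sûre $Q_n\to 1$.

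Pour conclure, j'invoquerais la continuité uniforme des trajectoires browniennes sur le compact $[0,1]$, qui entraîne que $M_n:=\max_k|X_k^{(n)}|\to 0$ presque sûrement lorsque le pas de la partition tend vers $0$. L'inégalité élémentaire $Q_n\le M_n\cdot V_n$ montre alors que sur tout événement où $\sup_n V_n<\infty$, on aurait $Q_n\to 0$, ce qui contredit $Q_n\to 1$. Donc presque sûrement $V_n\to\infty$, et la variation des trajectoires browniennes est infinie.

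L'obstacle principal est la convergence \emph{presque sûre} de $Q_n$ (et non seulement en probabilité ou dans $L^2$), qui exige la sommabilité des variances : ici la décroissance géométrique $\mrm{Var}(Q_n)=2^{1-n}$ rend cette étape immédiate via Borel-Cantelli, mais c'est bien ce point qui concentre le contenu probabiliste, le reste reposant uniquement sur l'indépendance des accroissements gaussiens et sur la continuité uniforme des chemins.
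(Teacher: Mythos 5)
Votre preuve est correcte : les moments sont bien calculés ($\E[Q_n]=1$, $\mrm{Var}(Q_n)=2^{1-n}$), l'argument Chebyshev plus Borel--Cantelli donne la convergence presque sûre $Q_n\to 1$, et l'inégalité $Q_n\le M_n\,V_n$, combinée à la continuité uniforme des trajectoires sur $[0,1]$, force $\sup_n V_n=\infty$ presque sûrement (notez d'ailleurs que $V_n$ est croissante le long des raffinements dyadiques, donc $\sup_n V_n=\lim_n V_n$, mais votre argument par contradiction n'en a pas besoin). Le texte, lui, ne démontre pas ce théorème : il renvoie à \cite{peres}, de sorte que votre rédaction — la preuve classique par la variation quadratique le long des partitions dyadiques — en tient lieu. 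Deux remarques de comparaison. D'une part, la variation définie en \eqref{eq:variation} est une somme d'oscillations sur les intervalles d'une partition, donc elle majore la somme $V_n$ prise aux seuls points dyadiques : votre réduction « variation $\ge \sup_n V_n$ » reste valable pour cette définition légèrement plus forte. D'autre part, dans le contexte du chapitre, une route encore plus courte consiste à invoquer le théorème de non-différentiabilité énoncé juste avant : une fonction à variation bornée est différence de deux fonctions monotones, donc différentiable Lebesgue-presque partout, ce qui est presque sûrement exclu pour le mouvement brownien. Votre démonstration demande un peu plus de travail, mais elle est autonome, élémentaire, et fait apparaître la variation quadratique, qui est en soi une information utile.
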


\begin{thm}[Exposant d'Hölder $1/2$]
\label{thm:bmholder}
~

\begin{enumerate}
\item Presque sûrement, le mouvement brownien est une fonction $\alpha$-höldérienne, pour tout $\a<1/2$.
\item Presque sûrement, le mouvement brownien est une fonction qui n'est pas $\alpha$-höldérienne en aucun point, pour tout $\a>1/2$.
\end{enumerate}
\end{thm}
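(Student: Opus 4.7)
Le plan consiste à exploiter, pour les deux assertions, le caractère gaussien des incréments du mouvement brownien, couplé à un argument dyadique culminant en l'application du lemme de Borel-Cantelli.

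Pour la première partie, je définirais, pour chaque $n \ge 1$, la statistique
\[
\Delta_n := \max_{0 \le k < 2^n} \bigl| B_{(k+1)2^{-n}} - B_{k\, 2^{-n}} \bigr|.
\]
Chaque incrément dyadique étant gaussien centré de variance $2^{-n}$, les bornes de queue gaussiennes et un union bound donnent $\P(\Delta_n > c \, 2^{-n/2} \sqrt{n}) \le 2^{n+1} e^{-c^2 n/2}$, quantité sommable en $n$ pour $c$ assez grand. Par Borel-Cantelli, presque sûrement $\Delta_n \le c \, 2^{-n/2} \sqrt{n}$ pour tout $n$ suffisamment grand. Pour en déduire la régularité höldérienne, étant donnés deux dyadiques $s < t$ avec $2^{-(n_0+1)} < t-s \le 2^{-n_0}$, j'écrirais $B_t - B_s$ comme somme télescopique sur les échelles dyadiques supérieures à $n_0$ -- au plus deux incréments par échelle -- d'où
\[
|B_t - B_s| \le 2 \sum_{n \ge n_0} c \, 2^{-n/2} \sqrt{n} \le C \, 2^{-n_0/2} \sqrt{n_0}.
\]
Ce majorant est dominé par $(t-s)^\alpha$ pour tout $\alpha < 1/2$ dès que $n_0$ est assez grand, le facteur polylogarithmique $\sqrt{n_0}$ étant absorbé par la marge $2^{n_0(1/2-\alpha)}$. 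L'extension aux points non dyadiques est automatique par continuité de $B$.

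Pour la seconde partie, je fixerais $\alpha > 1/2$ et choisirais un entier $j$ tel que $j(\alpha - 1/2) > 1$. L'idée-clef est la suivante : si $B$ est $\alpha$-höldérien en un point $t_0 \in [0,1]$ avec constante $K$ sur un voisinage, alors pour tout $n$ assez grand et $k := \lfloor n t_0 \rfloor$, l'inégalité triangulaire autour de $t_0$ force
\[
\bigl| B_{(k+i)/n} - B_{(k+i-1)/n} \bigr| \le 2 K (j+1)^\alpha \, n^{-\alpha} =: C_K \, n^{-\alpha}, \quad i = 1, \ldots, j.
\]
Je définirais alors l'événement
\[
A_{n,k}^{(K)} := \bigl\{ |B_{(k+i)/n} - B_{(k+i-1)/n}| \le C_K n^{-\alpha},\ i = 1, \ldots, j \bigr\}.
\]
L'indépendance des incréments et le scaling brownien (chaque $\sqrt{n}(B_{(k+i)/n} - B_{(k+i-1)/n})$ est une $\mathcal{N}(0,1)$) donnent $\P(A_{n,k}^{(K)}) \le (c_K n^{1/2-\alpha})^j$, puis un union bound sur $k$ entraîne $\P\bigl( \bigcup_k A_{n,k}^{(K)} \bigr) \le C'_K \, n^{1 - j(\alpha - 1/2)} \to 0$. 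L'événement « $B$ est $\alpha$-höldérien en au moins un point de $[0,1]$ » est contenu dans $\bigcup_{K \in \N} \liminf_n \bigcup_k A_{n,k}^{(K)}$, dont la négligeabilité suit du lemme de Fatou appliqué pour chaque $K$, puis de la $\sigma$-additivité.

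Le point le plus délicat est, à mon sens, la traduction quantitative du caractère $\alpha$-höldérien \emph{ponctuel} en $t_0$ en un contrôle simultané de $j$ incréments dyadiques consécutifs ; c'est précisément cette rigidification qui, jointe au choix $j(\alpha - 1/2) > 1$, permet au gain probabiliste par incrément (facteur $n^{1/2 - \alpha}$, qui tend vers $0$) de compenser le coût du union bound sur les positions possibles de $t_0$ dans $[0,1]$.
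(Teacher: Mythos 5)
Votre démonstration est correcte, mais elle suit une route réellement différente de celle du texte. Pour le point 1, le texte ne donne pas d'argument dyadique : il renvoie à Mörters--Peres, puis explique seulement pourquoi l'exposant critique vaut $1/2$ au moyen du critère de Kolmogorov--\v{C}entsov, en exploitant la borne de moment gaussienne $\E|B_t-B_s|^{\alpha}=C\,|t-s|^{\alpha/2}$ et en faisant tendre $\alpha$ vers l'infini ; cette approche est plus « molle » (elle vaut pour tout processus vérifiant une estimée de moments) mais ne fournit qu'une \emph{modification} höldérienne et n'atteint $1/2$ qu'à la limite. Votre argument à la Lévy (incréments dyadiques, queues gaussiennes, Borel--Cantelli, puis télescopage sur les échelles) est plus élémentaire, travaille directement sur les trajectoires et donne la constante explicitement. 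Pour le point 2, le texte ne donne aucune preuve (simple citation) ; votre schéma — rigidifier le caractère höldérien \emph{ponctuel} en un contrôle simultané de $j$ incréments consécutifs indépendants, choisir $j(\alpha-\tfrac12)>1$ pour que le gain $n^{1/2-\alpha}$ par incrément batte le union bound en $n$ sur les positions, puis conclure par Fatou et réunion dénombrable sur $K$ — est exactement l'argument classique de Dvoretzky--Erd\H{o}s--Kakutani (celui de la non-dérivabilité dans Mörters--Peres), et il est correct. Deux détails à verrouiller pour être complet : Borel--Cantelli ne contrôle $\Delta_n$ que pour $n$ assez grand, donc la constante de Hölder globale doit absorber les échelles restantes et les grands écarts $|t-s|$ (borne par $2\sup_{[0,1]}|B|$, fini presque sûrement) ; et lorsque $t_0$ est proche de $1$, les points $(k+i)/n$ peuvent sortir de $[0,1]$, ce que l'on corrige en prenant les $j$ incréments à gauche de $t_0$ ou en prolongeant le mouvement brownien au-delà de $1$.
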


\begin{rem}
Pour compléter l'énoncé du théorème \ref{thm:bmholder}, il est intéressant de savoir qu'il existe presque sûrement des points \emph{lents}, auxquels le mouvement brownien est exactement $1/2$-höldérien, mais ils sont rares (au sens de la dimension de Hausdorff).
\end{rem}

\begin{thm}[Module de continuité de Lévy]\label{thm:modcontinuite}
Soit $w(t)=\sqrt{2t|\log t|}$. Soit $B$ un mouvement brownien standard. On a
\beqn{levy0}{\P\left ( \lim_{\ve\rightarrow 0} \sup_{\substack{
s<t \le 1,\\ t-s\le \ve}
} \frac{|B_t-B_s|}{w(\ve)}=1 \right )=1.}
\end{thm}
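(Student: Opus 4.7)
Mon plan est de démontrer séparément les deux inégalités $\limsup_{\ve\to 0}\sup_{t-s\le \ve}|B_t-B_s|/w(\ve)\le 1$ et $\liminf_{\ve\to 0}\sup_{t-s\le \ve}|B_t-B_s|/w(\ve)\ge 1$ presque sûrement, qui ensemble entraînent l'existence et la valeur de la limite annoncée. L'outil commun sera l'estimation gaussienne classique : pour $Z\sim \mathcal{N}(0,1)$ et $x\ge 1$,
\[
\frac{c}{x}\,e^{-x^2/2}\le \P(|Z|\ge x)\le 2\,e^{-x^2/2}.
\]

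Pour la \emph{borne supérieure}, je fixerai $\delta>0$ et je considérerai les accroissements dyadiques $\Delta_{n,k}=B_{(k+1)2^{-n}}-B_{k2^{-n}}\sim \mathcal{N}(0,2^{-n})$. L'estimation ci-dessus appliquée à $x=(1+\delta)\sqrt{2n\log 2}$ donnera $\P(|\Delta_{n,k}|\ge (1+\delta)w(2^{-n}))\le 2\cdot 2^{-(1+\delta)^2 n}$ ; la somme sur les $2^n$ valeurs de $k$ vaut $2\cdot 2^{-n(2\delta+\delta^2)}$ et reste sommable en $n$. Borel-Cantelli assurera alors que presque sûrement, dès que $n$ est assez grand, tous ces accroissements dyadiques sont majorés par $(1+\delta)w(2^{-n})$. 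Un argument de chaînage -- décomposer un accroissement arbitraire $B_t-B_s$ en série télescopique sur des accroissements dyadiques à des échelles de plus en plus fines -- propagera cette estimation aux paires $(s,t)$ quelconques.

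Pour la \emph{borne inférieure}, je prendrai plutôt la subdivision régulière $0, 1/n, 2/n, \ldots, 1$ (et non les échelles $2^{-n}$, afin que le rapport $w(1/n)/w(1/(n-1))$ tende vers $1$ et évite la perte de $1/\sqrt{2}$ inhérente au doublement). Les $n$ accroissements $B_{(k+1)/n}-B_{k/n}$ sont indépendants, de loi $\mathcal{N}(0,1/n)$. L'estimation gaussienne inférieure fournira $p_n:=\P(|B_{1/n}|\ge (1-\delta)w(1/n))\ge c\,n^{-(1-\delta)^2}/\sqrt{\log n}$, d'où par indépendance
\[
\P\Bigl(\max_{0\le k<n}|B_{(k+1)/n}-B_{k/n}|<(1-\delta)w(1/n)\Bigr)=(1-p_n)^n\le \exp(-np_n),
\]
avec $np_n\ge c\,n^{2\delta-\delta^2}/\sqrt{\log n}\to +\infty$. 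Cette suite est sommable, et Borel-Cantelli fournira presque sûrement l'existence d'un indice $k$ tel que $|B_{(k+1)/n}-B_{k/n}|\ge (1-\delta)w(1/n)$ pour tout $n$ grand ; on conclura en faisant tendre $\delta$ vers $0$ le long d'une suite dénombrable.

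L'obstacle principal sera la finesse du chaînage dans la borne supérieure : une majoration naïve qui additionne les estimations $(1+\delta)w(2^{-m})$ sur toutes les échelles $m\ge n$ produit un facteur multiplicatif constant (de l'ordre de $1/(1-1/\sqrt{2})\approx 3{,}4$), loin de la constante optimale $1$. Je devrai donc répartir astucieusement la tolérance entre échelles : à chaque niveau $m$, n'accepter que $(1+\delta_m)w(2^{-m})$ avec $\delta_m\to 0$ assez vite pour que la contribution cumulée des échelles fines devienne négligeable devant $w(\ve)$. C'est cette ingénierie des échelles, conjuguée au contrôle presque sûr uniforme sur la grille dyadique, qui constitue le cœur technique de la preuve.
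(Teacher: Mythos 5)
Le texte ne démontre pas ce théorème (il renvoie au chapitre 1 de la référence [Mör-Per]) ; je compare donc votre plan à la preuve classique de Lévy qui s'y trouve (théorème 1.14). L'architecture générale (majoration par $\limsup\le 1$, minoration par $\liminf\ge 1$, Borel--Cantelli des deux côtés) est la bonne. Votre moitié « minoration » est correcte et essentiellement complète : estimation gaussienne inférieure, indépendance des accroissements de la subdivision régulière, Borel--Cantelli sur les événements complémentaires, puis $\delta\to 0$ le long d'une suite dénombrable ; le choix du pas $1/n$ pour que $w(1/n)/w(1/(n-1))\to 1$ est précisément la précaution qui donne la minoration à \emph{toutes} les petites échelles, donc la limite et non un simple $\limsup$. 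Votre estimation dyadique pour la majoration est également juste, mais elle ne couvre que les accroissements dyadiques.

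La lacune est dans le chaînage, c'est-à-dire exactement là où vous situez « le cœur technique », et le remède que vous esquissez ne peut pas fonctionner. Dans un télescopage purement dyadique, le seuil utilisable au niveau $m$ ne peut pas être pris en dessous de $(1-\delta)\,w(2^{-m})$ : c'est ce qu'affirme la moitié « minoration » du théorème appliquée aux $2^m$ accroissements dyadiques de niveau $m$. Or $\sum_{m>n}w(2^{-m})=\bigl(\tfrac{1}{\sqrt{2}-1}+o(1)\bigr)\,w(2^{-n})$ : même avec toutes les tolérances $\delta_m$ nulles, la contribution cumulée des échelles fines reste de l'ordre de $2{,}4\,w(2^{-n})$, donc jamais négligeable devant $w(\varepsilon)$ — aucune décroissance des $\delta_m$ n'y change rien. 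La réparation classique change la décomposition elle-même : pour $h=t-s$, on prend le niveau $m$ minimal tel que $2^{-m}\le h^{1/(1-\delta)}$ (grille polynomialement plus fine que $h$), et on écrit $B_t-B_s$ comme \emph{un seul} accroissement principal $B_\tau-B_\sigma$ entre les points de la grille les plus proches de $s$ et $t$ dans $[s,t]$, plus deux termes de bord de longueur au plus $2^{-m}$. L'accroissement principal relie deux points de niveau $m$ séparés d'au plus $2^{\delta m}$ pas environ ; on contrôle simultanément toutes ces paires (au plus $2^{(1+\delta)m}$ environ) par une borne d'union avec le seuil $(1+\delta)\sqrt{2(\tau-\sigma)\,m\log 2}$ : la queue gaussienne $2^{-(1+\delta)^2 m}$ rend la somme convergente, et ce seuil vaut $(1+O(\delta))\,w(\tau-\sigma)$ dans ce régime puisque $\log\bigl(1/(\tau-\sigma)\bigr)\ge (1-\delta)m\log 2$ à une constante près. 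Les termes de bord, de longueur au plus $2^{-m}\le h^{1/(1-\delta)}$, deviennent $o(w(h))$ avec n'importe quelle majoration uniforme grossière à constante près — par exemple celle, avec facteur $3{,}4$, que fournit déjà votre chaînage naïf. C'est cette structure à deux échelles, et non un réglage des tolérances dans le télescopage dyadique, qui produit la constante $1$ ; c'est la preuve donnée dans la référence citée par le texte.
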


L'expression \eqref{eq:levy0} dit en particulier que le mouvement brownien est une fonction avec \emph{module de continuité} $w$. Pour des rappels sur la notion de module de continuité, voir la partie \ref{sssc:modulecontinuite} du chapitre \ref{chapter:1}.

\smallskip

Essayons d'expliquer pourquoi l'exposant critique est exactement $1/2$ : cela provient de la \emph{nature gaussienne} du mouvement brownien. Pour cela nous devons rappeler le \emph{critère de Kolmogorov-\v{C}entsov} (voir \cite[Chapter 1, (1.8)]{rev-yor}\,) :
\begin{thm}
Soit $X=(X_s)_{s\in [0,1]}$ un processus sur un espace de probabilité filtré $(\Omega,\cB,\P,\cF)$ tel qu'il existe des constantes $\alpha$, $\beta$ et $C>0$ telles que pour tout $0\le s<t\le 1$ on ait
\beqn{kc}{\E|X_t-X_s|^\alpha\le C\,|t-s|^{1+\beta}.}
Alors il existe une \emph{modification} $\tilde X=(\tilde X_s)_{s\in [0,1]}$ de $X$ (à savoir un processus $\tilde X$ tel que pour tout $s\in [0,1]$, $\tilde X_s=X_s$ presque sûrement) qui est $\gamma$-hölderienne pour tout $\gamma<\beta/\alpha$.
\end{thm}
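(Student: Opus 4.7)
L'idée est classique : on va établir la régularité höldérienne d'abord sur les rationnels dyadiques $D=\bigcup_n D_n$ (où $D_n=\{k/2^n\,:\,0\le k\le 2^n\}$), puis prolonger par continuité uniforme pour obtenir la modification $\tilde X$. L'outil de départ est l'inégalité de Markov appliquée à $|X_t-X_s|^\alpha$, qui, combinée à l'hypothèse \eqref{eq:kc}, donne
\[\P\left (|X_t-X_s|>\varepsilon\right )\le C\,\varepsilon^{-\alpha}\,|t-s|^{1+\beta}\quad\trm{pour tous }0\le s<t\le 1,\ \varepsilon>0.\]

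Fixons $\gamma<\beta/\alpha$. Le premier pas est d'estimer les accroissements dyadiques : en prenant $\varepsilon=2^{-\gamma n}$ dans l'inégalité précédente, puis en sommant sur les $2^n$ accroissements consécutifs de niveau $n$, on obtient
\[\P\left (\max_{1\le k\le 2^n}\left |X_{k/2^n}-X_{(k-1)/2^n}\right |>2^{-\gamma n}\right )\le C\cdot 2^{-n(\beta-\alpha\gamma)}.\]
Puisque $\beta-\alpha\gamma>0$, cette majoration est sommable en $n$ et le lemme de Borel-Cantelli fournit un entier aléatoire $n_0(\omega)$, presque sûrement fini, tel que pour tout $n\ge n_0(\omega)$ tous les accroissements dyadiques de niveau $n$ soient bornés par $2^{-\gamma n}$.

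Le deuxième pas est un argument de chaînage. Si $s<t$ sont dyadiques avec $|t-s|<2^{-n_0}$, on choisit $n_1=n_1(s,t)$ vérifiant $2^{-(n_1+1)}\le t-s<2^{-n_1}$ et on relie $s$ à $t$ par un chemin de sommets dyadiques, en utilisant à chaque niveau $n\ge n_1$ au plus deux arêtes d'accroissement $2^{-n}$. L'inégalité triangulaire donne alors
\[|X_t(\omega)-X_s(\omega)|\le 2\sum_{n\ge n_1}2^{-\gamma n}\le K(\gamma)\,|t-s|^\gamma,\]
de sorte que la trajectoire $s\mto X_s(\omega)$ est uniformément $\gamma$-höldérienne sur $D$. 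Comme $D$ est dense dans $[0,1]$, cette trajectoire s'étend de manière unique en une fonction continue (et $\gamma$-höldérienne) $\tilde X_\cdot(\omega)$ sur $[0,1]$ ; sur l'ensemble négligeable restant, on pose $\tilde X\equiv 0$.

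Il reste à voir que $\tilde X$ est une modification de $X$. Pour $s\in [0,1]$ fixé et $(t_m)\subset D$ avec $t_m\to s$, la majoration de Markov entraîne $X_{t_m}\to X_s$ en probabilité, tandis que par construction $X_{t_m}\to \tilde X_s$ presque sûrement ; les deux limites coïncident donc presque sûrement. Le point le plus délicat est l'étape de chaînage : il faut s'assurer que tout couple de points dyadiques à distance inférieure à $2^{-n_0}$ peut être relié par un nombre borné d'accroissements à chaque échelle $n\ge n_1$, ce qui constitue le cœur combinatoire de la démonstration.
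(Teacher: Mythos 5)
Votre démonstration est correcte et c'est l'argument classique de Kolmogorov--\v{C}entsov : inégalité de Markov sur les accroissements dyadiques, borne sommable en $n$ dès que $\gamma<\beta/\alpha$, lemme de Borel--Cantelli, chaînage dyadique, extension par densité puis identification de la modification via la convergence en probabilité. Notez que le texte ne démontre pas ce théorème : il l'énonce seulement en renvoyant à Revuz--Yor, dont la preuve est précisément celle que vous esquissez ; votre proposition comble donc ce renvoi plutôt qu'elle ne s'en écarte. Deux points restent à rédiger complètement pour que l'argument soit autonome. D'abord le cœur combinatoire que vous signalez vous-même : le fait que deux dyadiques $s<t$ avec $2^{-(n_1+1)}\le t-s<2^{-n_1}$ se relient par une chaîne utilisant au plus deux accroissements par niveau $n\ge n_1$ se justifie par les développements binaires de $s$ et $t$ (on descend de $s$ et on monte vers $t$ en ajoutant au plus un chiffre binaire par niveau), et il faut vérifier que tous les niveaux utilisés sont bien $\ge n_0(\omega)$, ce qui est le cas puisque $n_1\ge n_0$. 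Ensuite, votre estimation ne donne directement le caractère $\gamma$-höldérien que pour $|t-s|<2^{-n_0(\omega)}$ ; pour conclure à une constante de Hölder (aléatoire) globale sur $[0,1]$, il suffit de découper un intervalle quelconque en au plus $2^{n_0}$ morceaux de longueur $<2^{-n_0}$, ce qui multiplie la constante par $2^{n_0(\omega)}$ sans changer l'exposant. Avec ces deux compléments, la preuve est complète.
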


En effet, si $X$ est une variable gaussienne centrée de variance $t-s>0$, alors $Y=\tfrac{1}{\sqrt{t-s}}X$ est une variable gaussienne de variance $1$, donc $C=\E|Y|^{\alpha}$ ne dépend pas de $t-s$ et on peut écrire
\[\E|X|^{\a}=C\,(t-s)^{\alpha/2}.\]
Donc pour tout $\alpha>2$, la condition \eqref{eq:kc} est vérifiée pour $B=(B_s)_{s\in[0,1]}$, avec  $\beta=\tfrac\a2-1$ et le critère de Kolmogorov-\v{C}entsov implique qu'il existe une modification de $B$ qui est $\gamma$-hölderienne pour tout $\gamma<\tfrac12 -\tfrac1\a$. En prenant $\a$ arbitrairement grand, on retrouve l'exposant critique $1/2$.

\section{L'intégrale de Paley-Wiener}
Il est souvent pratique, et il le sera en effet pour nous, de pouvoir définir en quelque sorte l'intégrale d'une fonction par rapport au mouvement brownien. Si $f$ est une fonction dans $L^2(I)=L^2(I,\Leb)$, ceci se fait sans trop de complications au sens de Stieltjes. Si $f=\mathbf{1}_{[s,t[}$, on pose $\int_0^1\mathbf{1}_{[s,t[}(u)\,dB_u:=B(t)-B(s)$. On étend ensuite la fonctionnelle $\int_0^1\,\cdot\,dB_u$ par linéarité à toutes les fonctions qui s'écrivent sous la forme
\beqn{simple}{f=\sum_{i=1}^ka_i\,\mathbf{1}_{[s_i,t_i[},\quad a_i\in\R.}
On voit alors que pour presque tout $\omega\in \Omega$, $\int_0^1\,\cdot\,dB_u(\omega)$ définit une fonctionnelle linéaire continue (bornée) sur une partie dense dans $L^2(I)$, et qu'elle possède donc une extension à $L^2(I)$ entier.

De plus, pour tout $f$ comme dans \eqref{eq:simple}, $\int_0^1f\,dB$ est une somme de variables aléatoires gaussiennes, donc variable gaussienne elle même. Il est classique qu'une limite de variables gaussiennes est encore gaussienne, donc pour tout $f\in L^2(I)$,
\beqn{integral_wiener}{\int_0^1f\,dB}
est une variable gaussienne centrée, de variance $\int_0^1f^2$ (ceci peut se voir grâce à la caractérisation \eqref{eq:fourier}\,). La variable~$\int_0^1f\,dB$ est connue sous le nom d'\emph{intégrale de Wiener}.

\section{L'espace de Wiener et le théorème de Cameron-Martin}
\label{ssc:espace-wiener}
En suivant la présentation de \cite[\S 1.4]{peres}, nous définissons l'\emph{espace de Dirichlet} $\bD_0(I)$ des fonctions absolument continues $x\in C_0(I)$ avec dérivée dans $L^2(I)$ : il existe $\xi\in L^2(I)$ tel que pour presque tout $t$, $x(t)=\int_0^t\xi(s)\,ds$.

Cet espace s'insère naturellement dans le contexte de la mesure de Wiener, puisqu'il représente exactement le sous-espace de $C_0(I)$ des fonctions $x$ qui préservent la classe de la mesure de Wiener sous l'action de translation~$T_x(y):=y+x$. Plus précisément :

\begin{thm}[Cameron~--~Martin \cite{cameron-martin}]
\label{thm:cameron-martin0}
Soit $x$ une fonction dans $C_0(I)$. Alors la classe de la mesure de Wiener $\W$ est préservée par $T_x$ si et seulement si $x\in\bD_0(I)$ et, sous cette hypothèse, le cocycle de Radon-Nikodym\footnote{voir la partie \ref{ssc:actions} pour la définition.} est
\beqn{cameron-martin0}{\frac{d(T_x)_*\W}{d\W}(y)=\exp\left \{-\frac{1}{2}\int_0^1x'(t)^2\,dt+\int_0^1x'(s)\,dy(s)\right \},\quad y\in C_0(I).}
\end{thm}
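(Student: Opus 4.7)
Le plan est de traiter séparément les deux implications, en établissant simultanément la formule explicite. Pour l'implication directe, supposons $x\in\bD_0(I)$ avec $x(t)=\int_0^t\xi$, et traitons d'abord le cas où $\xi=\sum_{i=1}^n a_i\mathbf{1}_{[t_{i-1},t_i[}$ est étagée sur une partition $0=t_0<\ldots<t_n=1$. Pour un cylindre $C$ aligné sur cette partition comme dans \eqref{eq:cylindre}, un simple changement de variables gaussien dans la formule \eqref{eq:cylindre2} fournit
\[
(T_x)_*\W(C)=\int_C\exp\!\left(\sum_{i=1}^n\tfrac{\Delta_i x}{\Delta_i t}\bigl(z(t_i)-z(t_{i-1})\bigr)-\sum_{i=1}^n\tfrac{(\Delta_i x)^2}{2\Delta_i t}\right)d\W(z),
\]
en notant $\Delta_i x=x(t_i)-x(t_{i-1})$ et $\Delta_i t=t_i-t_{i-1}$. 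L'expression sous l'exponentielle vaut exactement $-\tfrac12\|\xi\|_{L^2}^2+\int_0^1\xi\,dz$, ce qui donne \eqref{eq:cameron-martin0} sur les cylindres, et sur tout borélien par un argument de classe monotone.

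Pour $\xi\in L^2(I)$ général, j'approcherais $\xi$ par une suite $\xi_n$ de fonctions étagées convergeant dans $L^2$, dont les primitives $x_n$ convergent uniformément vers $x$. Les densités $\rho_n(y)=\exp\bigl(-\tfrac12\|\xi_n\|^2+\int_0^1\xi_n\,dy\bigr)$ sont uniformément intégrables : la gaussianité de l'intégrale de Wiener et l'identité \eqref{eq:fourier} donnent $\E[\rho_n^p]=\exp\bigl(\tfrac{p(p-1)}{2}\|\xi_n\|^2\bigr)$, borné en $n$ pour tout $p$. La proposition \ref{prop:approx} entraîne alors que $(T_x)_*\W\ll\W$ avec la densité annoncée, et l'équivalence de mesures s'obtient en appliquant le résultat à $-x$.

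Pour la réciproque, supposons $(T_x)_*\W\ll\W$ avec densité $\rho>0$ $\W$-presque partout. Pour chaque $n$, soit $\cF_n$ la sous-tribu engendrée par les valeurs $y(k/2^n)$, $1\le k\le 2^n$, et soit $M_n=\E[\rho\,|\,\cF_n]$, martingale positive convergeant vers $\rho$ presque sûrement. Les marginales finidimensionnelles étant des produits de gaussiennes, un calcul explicite donne
\[
M_n(y)=\exp\!\left(\sum_{i=1}^{2^n}\tfrac{\Delta_i^n x}{\Delta_i^n t}\bigl(y(t_i^n)-y(t_{i-1}^n)\bigr)-\tfrac12 S_n\right),\quad S_n=\sum_{i=1}^{2^n}\tfrac{(\Delta_i^n x)^2}{\Delta_i^n t},
\]
et un bref calcul gaussien fournit $\E[\sqrt{M_n}]=e^{-S_n/8}$. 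Comme $S_n$ est croissante en $n$ (Cauchy-Schwarz sur chaque dyadique), le lemme de Fatou donne $\E[\sqrt{\rho}]\le\lim_n e^{-S_n/8}$, d'où $\sup_n S_n<\infty$, sans quoi $\rho=0$ presque sûrement. Le principal obstacle est alors de déduire l'appartenance effective $x\in\bD_0(I)$ : j'introduirais les fonctions étagées $f_n=\sum_i\frac{\Delta_i^n x}{\Delta_i^n t}\mathbf{1}_{[t_{i-1}^n,t_i^n[}$, bornées dans $L^2(I)$ par $\sqrt{\sup_n S_n}$, et par compacité faible une sous-suite converge faiblement vers $\xi\in L^2(I)$ ; en testant contre $\mathbf{1}_{[0,t]}$ pour $t$ dyadique et en utilisant la continuité de $x$, on identifie $x(t)=\int_0^t\xi$.
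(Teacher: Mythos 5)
Votre démonstration est correcte, mais elle suit un chemin sensiblement différent de celui du texte. Pour l'implication directe, le texte passe par l'isomorphisme gaussien \eqref{eq:rep_gaus} : la translation $T_x$ devient une translation par une suite de $\ell^2(\N)$ sur $(\R^\N,\g^{\otimes\N})$, que l'on tronque sur les $N+1$ premières coordonnées avant d'appliquer la proposition~\ref{prop:approx} et l'identité de Parseval ; vous discrétisez plutôt le \emph{temps}, en calculant la densité directement sur les cylindres via \eqref{eq:cylindre2} lorsque $x'$ est étagée, puis en passant à la limite par la même proposition~\ref{prop:approx}, l'intégrabilité uniforme étant garantie par votre calcul exact des moments $\E[\rho_n^{\,p}]=\exp\left (\tfrac{p(p-1)}{2}\|\xi_n\|_{L^2}^2\right )$, qui est juste. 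Les deux schémas d'approximation de dimension finie se valent en substance ; celui du texte a l'avantage de mettre en place les coordonnées gaussiennes réutilisées plus loin (partie~\ref{sc:nonlineaire}, preuve du théorème de Kosyak), le vôtre est plus élémentaire et autonome, puisqu'il n'utilise que la formule explicite des cylindres et un changement de variables gaussien. Surtout, vous traitez la réciproque — martingale dyadique $M_n=\E[\rho\,|\,\cF_n]$, calcul de $\E[\sqrt{M_n}]=e^{-S_n/8}$, lemme de Fatou pour obtenir $\sup_n S_n<\infty$, puis compacité faible dans $L^2$ et test contre les indicatrices dyadiques pour identifier $x(t)=\int_0^t\xi$ — alors que la preuve du texte n'établit que le sens « si » (et l'équivalence par la relation symétrique) ; votre argument, de type dichotomie de Kakutani, est correct et complète donc l'énoncé. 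Le seul point que vous laissez implicite est l'identification de la limite faible $L^1$ des $\rho_n$ avec la densité annoncée : il suffit d'observer que $\int_0^1\xi_n\,dy$ converge vers $\int_0^1\xi\,dy$ dans $L^2(\W)$ (isométrie de Wiener), donc presque sûrement le long d'une sous-suite, et de combiner avec l'intégrabilité uniforme — niveau de détail comparable à celui du texte lui-même.
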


L'expression $\int_0^1x'(s)\,dy(s)$ dans \eqref{eq:cameron-martin0} dénote l'intégrale de Wiener. Pour montrer ce théorème, nous allons d'abord donner une construction alternative de la mesure de Wiener.

Considérons l'espace de Hilbert réel séparable $L^2(I)$ et fixons $(g_n)_{n\in \N}$ une base orthonormée.

\begin{ex}
En suivant Wiener \cite{wiener}, on pourra prendre $g_0=1$ et, pour $n>0$,
\beqn{base-fourier}{g_n(t)=\sqrt{2}\cos (\pi nt).}
Ce choix, naturel d'un point de vue de l'analyse de Fourier, est cependant moins adapté quand on veut faire des calculs. Un choix bien meilleur, comme Lévy le fit \cite{levy}, est la \emph{base de Haar} : pour $m\ge 1$ et $1\le k \le 2^{m-1}$ on définit
\[h_{m,k}:=2^\frac{m-1}{2}\left (\mathbf{1}_{\left [\frac{2k-2}{2^m},\frac{2k-1}{2^m}\right ]}-\mathbf{1}_{\left [\frac{2k-1}{2^m},\frac{2k}{2^m}\right ]}\right )\]
et on pose $g_0=1$ et $g_n=h_{m,k}$, pour $n=2^{m-1}-1+k$. En fait, nous n'utiliserons pas la forme explicite des $g_n$ dans ce qui va suivre.
\end{ex}

L'espace $\bD_0(I)$ est un espace de Hilbert réel lorsque l'on définit le produit scalaire
\[\langle x,y\rangle_{\bD_0(I)}:=\langle x',y'\rangle_{L^2}.\]
Une base orthonormée $\Phi_n$ est définie par intégration des $g_n$ :
\[\Phi_n(t):=\int_0^tg_n(s)\,ds.\]

Observons que pour toutes fonctions $x$ et $y$ dans $L^2(I)$, l'inégalité de Cauchy-Schwarz donne
\[\left \vert \int_0^t\left (x(s)-y(s)\right )\,ds\right \vert \le \|x-y\|_{L^1} \le \|x-y\|_{L^2},\]
donc si $\lim_{n\rightarrow\infty}\|x_n-x\|_{\bD_0(I)}=0$ alors la suite $(x_n)$ tend uniformément vers $x$. Par conséquent, pour tout $x\in \bD_0(I)$, la série
\[x=\sum_{n\in\N} \langle x',g_n\rangle_{L^2} \,\Phi_n\]
converge dans $\bD_0(I)$ et donc uniformément.

Soit $(Z_n)$ une suite de variables aléatoires gaussiennes centrées de variance~$1$, indépendantes, identiquement distribuées. Soit $x$ la somme de la série aléatoire de fonctions
\beqn{gaussian_coefficients}{x=\sum_{n\in\N}Z_n\,\Phi_n.}
Il est possible de montrer que $x$ est presque sûrement une fonction continue (et il s'agit du point où le choix de la base $(g_n)$ peut simplifier la preuve, voir \cite{peres}\,).

En utilisant la remarque \ref{rem:covariance}, il est alors clair que $x$ défini par \eqref{eq:gaussian_coefficients} est un mouvement brownien : il s'agit d'un processus gaussien centré, de covariance
\begin{align*}
\G(s,t):=\,&\mrm{Cov}\left (x(s),x(t)\right )=\E \left [\sum_{n\in \N} Z_n^2 \Phi_n(t)\,\Phi_n(s)\right ]\\
=\,&\sum_{n\in\N}\langle  \mathbf{1}_{[0,t]},g_n\rangle^2_{L^2}\,\langle  \mathbf{1}_{[0,s]},g_n\rangle^2_{L^2}=\langle  \mathbf{1}_{[0,s]},\mathbf{1}_{[0,t]}\rangle^2_{L^2}=\min\left \{s,t\right \}.
\end{align*}
Si l'on note $\g$ la mesure gaussienne sur $\R$, $d\gamma(x)=\tfrac{1}{\sqrt{2\pi}}e^{-\frac{1}{2}x^2}\,dx$, on peut ainsi résumer les arguments précédents :
\begin{thm}\label{thm:rep_gaus}
Soit $(g_n)_{n\in\N}$ une base de l'espace $L^2(I)$. Étant donné un mouvement brownien $x$ sur l'intervalle $I$ et une fonction $g\in L^2(I)$, notons $\langle x',g\rangle_{L^2}$ l'intégrale de Wiener $\int_0^1 g\,dx$ définie à la ligne \eqref{eq:integral_wiener}. L'application
\beqn{rep_gaus}{\rdfcn{C_0(I)}{\R^{\N}}{x}{\left (\langle x',g_n \rangle_{L^2}\right )_{n\in\N}}}
définit un isomorphisme (linéaire) mesurable entre l'espace de Wiener classique $(C_0(I),\cB(C_0(I)),\W)$ et l'espace vectoriel gaussien de dimension infinie $(\R^{\N},\cB(\R^{\N}),\g^{\otimes\N})$.
\end{thm}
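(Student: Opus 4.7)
L'idée est de montrer que l'application est définie, linéaire, mesurable, qu'elle envoie la mesure de Wiener sur la mesure gaussienne produit, et qu'elle admet pour inverse (presque sûrement) la série aléatoire de fonctions $\sum_n z_n\Phi_n$ introduite à la ligne \eqref{eq:gaussian_coefficients}. Je notons $T:C_0(I)\to\R^\N$ l'application en question et $S:\R^\N\to C_0(I)$ l'application candidate à être son inverse.

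D'abord je traiterais le sens \emph{direct}. Chaque coordonnée $x\mapsto \langle x',g_n\rangle_{L^2}=\int_0^1 g_n\,dx$ est une intégrale de Wiener bien définie pour $\W$-presque tout $x$, et la discussion qui précède la ligne \eqref{eq:integral_wiener} montre que c'est une variable gaussienne centrée de variance $\|g_n\|_{L^2}^2=1$. Pour voir que $T_*\W=\g^{\otimes\N}$, il suffit de constater que la famille $(\int_0^1 g_n\,dx)_{n\in\N}$ est conjointement gaussienne (car c'est une limite $L^2$ de familles gaussiennes provenant des fonctions étagées \eqref{eq:simple}), centrée, avec matrice de covariance $\E\bigl[\int g_n\,dx \cdot \int g_m\,dx\bigr]=\langle g_n,g_m\rangle_{L^2}=\d_{nm}$. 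Des variables gaussiennes conjointement gaussiennes décorrélées étant indépendantes, la loi jointe est bien $\g^{\otimes\N}$. Quant à la mesurabilité de $T$, elle découle de la mesurabilité borélienne de chaque intégrale de Wiener (qui est une limite en probabilité de fonctionnelles continues des cylindres).

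Pour l'\emph{inverse}, j'appliquerais la construction fournie par le texte : on pose $S(z):=\sum_{n\in\N}z_n\Phi_n$ lorsque la série converge uniformément sur $I$, et $S(z):=0$ sinon. Par l'argument donné juste après \eqref{eq:gaussian_coefficients} (convergence presque sûre de la série aléatoire, via un choix adéquat de base $(g_n)$ comme la base de Haar), l'ensemble $\Omega_0\subset\R^\N$ de convergence est de mesure $\g^{\otimes\N}$ pleine. Sur $\Omega_0$, $S$ est mesurable car limite ponctuelle d'applications continues $z\mapsto \sum_{n\le N}z_n\Phi_n$. On vérifie alors que $T\circ S=\mathrm{id}$ sur $\Omega_0$ (car $(S(z))'=\sum_n z_n g_n$ dans $L^2(I)$, si bien que $\langle (S(z))',g_n\rangle_{L^2}=z_n$) et que $S\circ T=\mathrm{id}$ $\W$-presque partout (car pour $\W$-presque tout $x$, la décomposition $x'=\sum_n\langle x',g_n\rangle_{L^2}\,g_n$ dans $L^2(I)$ s'intègre en une convergence uniforme de $\sum_n\langle x',g_n\rangle_{L^2}\,\Phi_n$ vers $x$, via l'inégalité de Cauchy--Schwarz évoquée plus haut).

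L'obstacle principal est la \textbf{convergence presque sûre} de la série aléatoire $\sum_n Z_n\Phi_n$ dans $C_0(I)$ : la convergence dans $\bD_0(I)$ n'est pas automatique puisque $\sum Z_n^2$ diverge $\g^{\otimes\N}$-presque sûrement, et il faut donc estimer finement $\|\sum_{n=N}^M Z_n\Phi_n\|_\infty$. C'est précisément là qu'intervient le choix de la base (typiquement la base de Haar), qui permet via un argument de type Borel--Cantelli combiné à la décroissance des sup-normes des $\Phi_n$, d'obtenir la sommabilité voulue ; je renverrais ici à la référence \cite{peres} déjà citée dans le texte. Le reste du schéma n'est qu'une vérification routine de la compatibilité des deux applications.
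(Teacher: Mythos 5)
Your overall plan coincides with the paper's: identify the two pushforwards and use the random series \eqref{eq:gaussian_coefficients} as the inverse map, deferring the almost sure uniform convergence of $\sum_n Z_n\Phi_n$ to \cite{peres}. Your forward direction is correct and even complements the text: the paper only computes the covariance of the series to identify $S_*\g^{\otimes\N}$ with $\W$, whereas you check directly, via joint Gaussianity and the covariances $\langle g_n,g_m\rangle_{L^2}=\delta_{nm}$, that $T_*\W=\g^{\otimes\N}$.

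The genuine gap is in the step you dismiss as routine, namely that $S$ and $T$ are mutually inverse almost everywhere: both of your justifications treat almost every path as an element of the Cameron--Martin space. For $T\circ S=\mathrm{id}$ you write $(S(z))'=\sum_n z_n g_n$ in $L^2(I)$; but $\g^{\otimes\N}$-almost surely $\sum_n z_n^2=\infty$ (as you note yourself two lines later), so this series does not converge in $L^2(I)$ and $S(z)$, being a Brownian path, has no derivative: $\langle (S(z))',g_m\rangle_{L^2}$ is only notation for the Wiener integral $\int_0^1 g_m\,d(S(z))$, whose evaluation requires approximating $g_m$ by step functions and passing to the limit in $L^2(\W)$, not a pathwise identity. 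Symmetrically, for $S\circ T=\mathrm{id}$ the decomposition $x'=\sum_n\langle x',g_n\rangle_{L^2}\,g_n$ in $L^2(I)$ is meaningless for $\W$-almost every $x$: the derivative exists in $L^2$ only for $x\in\bD_0(I)$, a set of Wiener measure zero, and the Cauchy--Schwarz argument of the text applies precisely to that null set. That $\sum_n\bigl(\int_0^1 g_n\,dx\bigr)\Phi_n$ converges uniformly to $x$ itself for $\W$-a.e.\ $x$ is a real theorem (Itô--Nisio, or the martingale argument underlying the Lévy--Ciesielski construction), not a formal consequence. Alternatively you can bypass it: since the Wiener integral is an isometry, every increment $x(t)-x(s)=\int_0^1\mathbf{1}_{[s,t[}\,dx$ is an $L^2(\W)$-limit of finite linear combinations of the coefficients $\int_0^1 g_n\,dx$, so the $\sigma$-algebra generated by $T$ contains, modulo $\W$-null sets, the cylinder $\sigma$-algebra, which equals $\cB(C_0(I))$ by Theorem \ref{thm:wiener}; together with $T_*\W=\g^{\otimes\N}$ and the standardness of the two spaces this yields the isomorphism mod $0$ without any pathwise convergence of the series.
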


\begin{proof}[Preuve du théorème de Cameron-Martin~\ref{thm:cameron-martin0}] Soit $x\in \bD_0(I)$, on va montrer que la mesure $\W$ est absolument continue par rapport à la mesure $(T_x)_*\W$. L'image de l'espace de Dirichlet $\bD_0(I)$ est l'espace $\ell^2(\N)\subset\R^\N$ et comme l'isomorphisme \eqref{eq:rep_gaus} est linéaire, l'action induite par $T_x$ sur $(\R^{\N},\cB(\R^{\N}),\g^{\otimes\N})$ est également une action de translation $T_{\mathbf{a}}$, $\mathbf{a}=(a_n)\in\ell^2(\N)$. Donc il nous faut montrer que la mesure gaussienne $\g^{\otimes\N}$ est absolument continue par rapport à $(T_{\mathbf{a}})_*\g^{\otimes\N}$.

Pour calculer la dérivée de Radon-Nikodym de $(T_{\mathbf{a}})_*\g^{\otimes\N}$ par rapport à $\g^{\otimes\N}$ on procède par approximation, en vertu de la proposition~\ref{prop:approx} du chapitre~\ref{sc:localement}. Soit $N\in \N$ assez grand, on pose
\[\mathbf{a}_N:=(a_0,\ldots,a_N,0,\ldots)\]
la projection de $\mathbf{a}$ sur le sous-espace engendré par les premiers $N+1$ vecteurs de la base canonique de $\ell^2(\N)$ (la suite $\mathbf{a}_N$ est égale à $\mathbf{a}$ dans les premières $N+1$ cordonnées et identiquement nulle après). La transformation $T_{\mathbf{a}_N}$ agit uniquement sur les premiers $N+1$ facteurs de l'espace produit $\R^\N$ et par conséquent on peut calculer aisément la densité de  $(T_{\mathbf{a}_N})_*\g^{\otimes\N}$ par rapport à $\g^{\otimes\N}$ :
\[\frac{d(T_{\mathbf{a}_N})_*\g^{\otimes\N}}{d\g^{\otimes\N}}
((z_n))=
\prod_{n=0}^{N}
\tfrac{\exp\left (-(z_n-a_n)^2/2\right )}{\exp\left (-z_n^2/2\right )}=
\exp\left (\sum_{n=0}^Na_n\,z_n-\tfrac{1}{2}\sum_{n=0}^Na_n^2\right ).\]

Puisque l'on a fait l'hypothèse que la suite $(a_n)_{n\in \N}$ appartient à l'espace $\ell^2(\N)$, la limite sur $N$ de la suite~$\frac{d(T_{\mathbf{a}_N})_*\g^{\otimes\N}}{d\g^{\otimes\N}}$ existe presque sûrement et est égale à
\[\exp\left (\langle \mathbf{a},(z_n)\rangle_{\ell^2}-\tfrac{1}{2}\|\mathbf{a}\|^2_{\ell^2}\right )\]
(la convergence presque sûre de $\sum_{n\in\N}a_nz_n$ vers $\langle \mathbf{a},(z_n)\rangle_{\ell^2}$ est, par dualité de Fourier, l'existence presque sûre de l'intégrale de Wiener). D'après la proposition~\ref{prop:approx} du chapitre~\ref{sc:localement}, la limite de dérivées de Radon-Nikodym est la dérivée de Radon-Nikodym de la transformation $T_\mathbf{a}$.

En utilisant l'inverse de l'isomorphisme \eqref{eq:rep_gaus} et l'identité de Parseval, on en conclut (cf. avec \eqref{eq:cameron-martin0} dans l'énoncé)
\[\frac{d(T_x)_*\W}{d\W}(y)=\exp\left (\langle x',y'\rangle_{L^2}-\tfrac{1}{2}\|x'\|^2_{L^2}\right )=\exp\left (\langle x,y\rangle_{\bD_0(I)}-\tfrac{1}{2}\,\|x\|^2_{\bD_0(I)}\right ).\]

Nous avons aussi la relation symétrique $(T_x)_*\W\preceq\W$ puisque la mesure $(T_x)_*\left ((T_{-x})_*\W\right )$ est absolument continue par rapport à la mesure $(T_{-x})_*\W$.
\end{proof}

\section{Le pont brownien}
Il est aussi possible de définir le mouvement brownien sur le cercle $\T$ (i.e.~comme fonction $1$-périodique aléatoire). Dans ce cas on parle de \emph{pont brownien}. Après avoir identifié le cercle au tore $\R/\Z$, l'espace $C_0(\T)$ des fonctions continues sur le cercle qui prennent la valeur $0$ en $0$ peut se voir comme l'espace des fonctions continues sur $\R$ de période $1$ qui valent $0$ en $0$. Puisque l'intervalle $[0,1]$ est un domaine fondamental pour l'action de translation de $\Z$, on pourra écrire
\[C_0(\T)=\left \{ x\in C_0(I)\,:\, x(0)=x(1)=0\right \}\]
et on ne fera pas de distinction entre les domaines de définition $I$ ou $\T$.

On remarquera que l'application $\pi$ définie par
\beqn{proj_pont}{y\in C_0(I)\mto\left \{ x:\,t\mto y(t)-t\,y(1)\right \}\in C_0(\T)}
est une projection (elle est linéaire et vérifie $\pi^2=\pi$) continue et on peut définir la \emph{mesure de Wiener} sur $C_0(\T)$ par
\[\W_0:=\pi_*\W.\]

Le processus canonique $x=\left (x(t)\right )_{t\in \T}$ sur $\left (C_0(\T),\cB\left (C_0(\T)\right ),\W_0\right )$, définit un processus gaussien que l'on appelle \emph{pont brownien} et porte comme covariances
\beqn{cov_pont}{\G(s,t)=\min\{s,t\}-s\,t.}

Puisque la projection $\pi$ modifie $y$ d'une fonction linéaire, les propriétés de régularité presque sûres du pont brownien sont les mêmes que celles du mouvement brownien.

\begin{rem}
Dans la littérature, le pont brownien est aussi présenté en conditionnant le mouvement brownien à revenir en $0$ au temps $1$. On retrouve un processus gaussien, dont les fonctions de covariance sont données par~\eqref{eq:cov_pont}.
\end{rem}

\begin{rem}
Une représentation gaussienne comme \eqref{eq:rep_gaus} est aussi possible pour définir le pont brownien : il suffit de modifier les fonctions $\Phi_n\in\mathbf{D}_0(I)$ en posant (confronter avec \eqref{eq:proj_pont}\,)
\[\tilde{\Phi}_n(t):=\Phi_n(t)-t\,\Phi_n(1)\in\mathbf{D}_0(\T).\]
On remarquera que $\tilde{\Phi}_0\equiv 0$, et on doit donc retirer $\tilde{\Phi}_0$ de la base. 
En posant $\tilde{x}(t)=\sum_{n\in \N^*}Z_n\,\tilde{\Phi}_n(t)$, on a alors
\[ \tilde{\G}(s,t):=\mrm{Cov}\left (\tilde{x}(s),\tilde{x}(t)\right )=\E\sum_{n\in \N^*}Z^2_n\,\tilde{\Phi}_n(t)\,\tilde{\Phi}_n(s)=\min\{s,t\}-s\,t.\]

Le théorème de Cameron-Martin pour la mesure de Wiener sur $C_0(\T)$ s'énonce ainsi :

\begin{thm}
Soit $x$ une fonction dans $C_0(\T)$. Alors la classe de la mesure de Wiener $\W_0$ est préservée par $T_x$ si et seulement si $x\in\bD_0(\T)$ et, sous cette hypothèse, le cocycle de Radon-Nikodym est
\[\frac{d(T_x)_*\W_0}{d\W_0}(y)=\exp\left ( \langle x,y\rangle_{\bD_0(\T)}-\tfrac{1}{2}\,\|x\|^2_{\bD_0(\T)}\right ),\quad y\in C_0(\T).\]
\end{thm}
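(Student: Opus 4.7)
L'idée est de se ramener au théorème de Cameron-Martin classique~\ref{thm:cameron-martin0} via la projection $\pi:C_0(I)\to C_0(\T)$ définie en~\eqref{eq:proj_pont}, en exploitant l'identité $\W_0=\pi_*\W$ par laquelle la mesure de Wiener sur le cercle est construite.

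On observe d'abord que $\bD_0(\T)=\{x\in\bD_0(I):x(1)=0\}$, de sorte que tout $x\in\bD_0(\T)$ appartient déjà à l'espace de Cameron-Martin classique $\bD_0(I)$. De plus, $\pi$ étant linéaire et vérifiant $\pi(x)(t)=x(t)-t\,x(1)=x(t)$ (car $x(1)=0$), on a l'identité d'entrelacement $\pi\circ T_x=T_x\circ\pi$ sur $C_0(I)$, d'où
\[(T_x)_*\W_0=(T_x)_*\pi_*\W=\pi_*(T_x)_*\W.\]
Le théorème~\ref{thm:cameron-martin0} fournit la densité $d(T_x)_*\W/d\W(y)=\exp(\langle x',y'\rangle_{L^2(I)}-\tfrac12\|x'\|_{L^2(I)}^2)$. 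Le cœur du calcul consiste alors à vérifier que cette densité ne dépend de $y$ qu'à travers $\pi(y)$ : en utilisant $(\pi y)'(t)=y'(t)-y(1)$ et l'annulation $\int_0^1x'(t)\,dt=x(1)-x(0)=0$, on obtient
\[\langle x',y'\rangle_{L^2(I)}=\langle x',(\pi y)'\rangle_{L^2(I)}+y(1)\int_0^1x'\,dt=\langle x,\pi y\rangle_{\bD_0(\T)},\]
tandis que $\|x'\|_{L^2(I)}^2=\|x\|_{\bD_0(\T)}^2$ trivialement. Un passage à l'image directe par $\pi$ transfère alors cette densité en une véritable dérivée de Radon-Nikodym sur $(C_0(\T),\W_0)$, donnant la formule annoncée.

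Pour la réciproque, on peut reprendre \emph{in extenso} la preuve du théorème~\ref{thm:cameron-martin0}, en formalisant la représentation gaussienne esquissée dans la dernière remarque : l'isomorphisme $(C_0(\T),\W_0)\simeq(\R^{\N^*},\g^{\otimes\N^*})$ construit à partir des $\{\tilde{\Phi}_n\}$ transforme $T_x$ en une translation sur l'espace produit dont les composantes $(a_n)$ doivent appartenir à $\ell^2(\N^*)$, condition équivalente à $x\in\bD_0(\T)$. La proposition~\ref{prop:approx} appliquée aux densités de dimension finie $\exp(\sum_{n\le N}a_nz_n-\tfrac12\sum_{n\le N}a_n^2)$ fournit l'argument : la non-appartenance de $(a_n)$ à $\ell^2$ empêche l'intégrabilité uniforme et force la singularité.

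Le principal obstacle est précisément ce sens réciproque : le calcul direct n'est qu'une transcription du théorème classique modulo l'annulation sur les primitives de $x'$, tandis que la caractérisation exacte des translations admissibles requiert soit d'invoquer une dichotomie à la Kakutani sur les mesures produits, soit de développer en détail la représentation gaussienne du pont brownien pour reproduire pas à pas l'argument d'approximation de la proposition~\ref{prop:approx}.
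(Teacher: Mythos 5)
Votre preuve du sens direct est correcte, mais elle suit une route réellement différente de celle du texte. Le mémoire justifie l'énoncé sur $C_0(\T)$ en adaptant la représentation gaussienne en série : on remplace la base $\Phi_n$ par $\tilde{\Phi}_n(t)=\Phi_n(t)-t\,\Phi_n(1)$ (en retirant $\tilde{\Phi}_0\equiv 0$), ce qui identifie $(C_0(\T),\W_0)$ à un produit gaussien et permet de reproduire mot pour mot l'argument d'approximation de la proposition~\ref{prop:approx} utilisé pour le théorème~\ref{thm:cameron-martin0}. Vous, au contraire, déduisez le cas du cercle du cas de l'intervalle par transport le long de la projection $\pi$ : l'entrelacement $\pi\circ T_x=T_x\circ\pi$ (valable car $x(1)=0$, donc $\pi x=x$) et surtout l'annulation $\int_0^1x'=0$, qui fait que la densité de Cameron--Martin ne dépend de $y$ qu'à travers $\pi(y)$, donnent immédiatement la formule annoncée pour $(T_x)_*\W_0$. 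C'est plus économique (aucun nouveau calcul en série) et cela rend transparent pourquoi c'est le produit scalaire de $\bD_0(\T)$ qui apparaît ; l'approche du texte fournit en contrepartie le modèle produit gaussien du pont, réutilisé plus loin (preuve d'ergodicité de Kosyak). Deux points de rédaction à soigner : les manipulations avec $(\pi y)'$ doivent être lues au sens des intégrales de Paley--Wiener (on vérifie l'identité $\int_0^1x'\,dy=\int_0^1x'\,d(\pi y)$ d'abord sur les fonctions en escalier, puis par limite $L^2$, en utilisant $\int_0^1 x'=0$), et il faut dire un mot du fait que l'intégrale de Wiener contre le pont est définie $\W_0$-presque sûrement ; ce sont des vérifications de routine.

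Pour la réciproque, vous reconnaissez vous-même ne donner qu'un plan : c'est exact, mais notez que le mémoire ne fait pas mieux --- la preuve du théorème~\ref{thm:cameron-martin0} n'établit que le sens « si », et l'énoncé sur le cercle n'y est justifié que par la remarque sur la base modifiée. Votre plan est le bon : dans la représentation $(C_0(\T),\W_0)\simeq(\R^{\N^*},\g^{\otimes\N^*})$ construite avec les $\tilde{\Phi}_n$, la translation $T_x$ devient $T_{\mathbf{a}}$ et, si $(a_n)\notin\ell^2$, les mesures $\g^{\otimes\N^*}$ et $(T_{\mathbf{a}})_*\g^{\otimes\N^*}$ sont mutuellement singulières (dichotomie de Kakutani pour les produits de gaussiennes, ou échec de l'intégrabilité uniforme des densités partielles), ce qui donne la nécessité de $x\in\bD_0(\T)$. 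Si vous souhaitez un énoncé complet « si et seulement si », c'est cet argument qu'il faut rédiger ; tel quel, votre texte démontre exactement ce que le mémoire démontre, par une voie différente pour la partie directe.
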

\end{rem}
Le phénomène que l'on observe avec le théorème de Cameron-Martin \ref{thm:cameron-martin0} est par conséquent presque optimal : puisque $C_0(I)$ est un groupe qui n'est pas localement compact, il n'y a aucun espoir de définir une mesure quasi-invariante, non seulement sur $C_0(I)$, mais sur un sous-espace quelconque de $C_0(I)$ de mesure strictement positive. Cependant, la mesure de Wiener est quasi-invariante par l'action d'un sous-groupe dense, qui n'est pas localement compact, et en même temps de mesure nulle !

\section{Un deuxième théorème de Cameron-Martin}
\label{sc:nonlineaire}

Terminons cette introduction au mouvement brownien par une partie, plus technique, qui est à la base de la quasi-invariance des mesures sur les groupes de difféomorphismes : il s'agit d'étendre le théorème de Cameron-Martin~\ref{thm:cameron-martin0} à une classe de transformations non-linéaires de l'espace de Wiener $C_0(I)$. 

Pour cela, reprenons l'isomorphisme~\eqref{eq:rep_gaus} entre l'espace de Wiener classique $(C_0(I),\W)$ et l'espace vectoriel gaussien de dimension infinie $(\R^\N,\gamma^{\otimes\N})$. Supposons qu'au lieu d'une translation $T_\mathbf{a}:(z_n)_{n\in\N}\mto (z_n+a_n)_{n\in\N}$, avec $\mathbf{a}=(a_n)\in \ell^2(\N)$, nous considérons maintenant une \emph{translation non-linéaire}
\[T_A=I+A:\mathbf{z}=(z_n)\in\R^\N\mto (z_n+a_n(\mathbf{z}))\in\R^\N,\]
qu'on supposera par simplicité inversible presque partout.

Rappelons que l'hypothèse que $\mathbf{a}$ soit de carré intégrable est fondamentale pour garantir la quasi-invariance de la mesure gaussienne $\g^{\otimes\N}$. On supposera ainsi que la translation non-linéaire $T_A=I+A$ est dans la direction de l'espace de Cameron-Martin, ce qui signifie simplement que l'image de $A$ est contenue dans le sous-espace $\ell^2(\N)$.

Essayons de deviner comment cette translation opère-t-elle sur la mesure gaussienne $\g^{\otimes\N}$ en procédant par approximations finies, comme dans le théorème de Cameron-Martin~\ref{thm:cameron-martin0}.

Pour $N\in\N$ fixé, on définit l'opérateur de projection $\pi_N:\R^\N\to\R^N$ dans le sous-espace vectoriel engendré par les $N+1$ premiers vecteurs de la base standard de $\R^\N$. Notons $A_N=\pi_NA\pi_N$ et par suite $T_{A_N}=I+A_N$, dont la restriction à $\ell^2(\N)$ est une perturbation de rang fini de l'identité.

La densité de la mesure $(T_{A_N} )_*\g^{\otimes\N}$ par rapport à $\g^{\otimes\N}$ est obtenue alors par simple changement de variable :
\beqn{changevar}{
\dfrac{d(T_{A_N})_*\g^{\otimes\N}}{d\g^{\otimes\N}}(T_{A_N}(\mathbf{z}))=\frac{1}{|D_{\mathbf{z}}(I+A_N)|}\,\exp\left (\langle A_N(\mathbf{z}),\mathbf{z}\rangle_{\ell^2}+\tfrac12\|A_N(\mathbf{z})\|_{\ell^2}^2\right ),
}
où $D_{\mathbf{z}}(I+A_N)=I+D_{\mathbf{z}}A_N$ dénote la dérivée jacobienne de $I+A_N$ au point $z\in\R^\N$. Bien évidemment, pour que cela ait du sens, on requiert une hypothèse de différentiabilité pour $A$, que l'on explicitera un peu plus loin.

Observons d'abord que, puisque l'image de l'opérateur $A$ est incluse dans le sous-espace $\ell^2(\N)$, on peut facilement en déduire le candidat pour la limite de l'expression dans l'exponentielle dans \eqref{eq:changevar} (en négligeant les problèmes dans la définition d'une << intégrale stochastique >> $\langle A(\mathbf{z}),\mathbf{z}\rangle_{\ell^2}$).

Regardons la suite des déterminants jacobiens $|D_{\mathbf{z}}(I+A_N)|$. Pour avoir que la limite d'une telle suite existe, il faut imposer des conditions sur l'opérateur $A$. Rappelons qu'un \emph{opérateur compact} $K$ de $\ell^2(\N)$ dans $\ell^2(\N)$ est diagonalisable, son spectre $\sigma(K)=(\lambda_k)_{k\in\N}$ est dénombrable et la multiplicité $\nu(\lambda)$ de toute valeur propre $\la$ non-nulle est finie. Un opérateur compact $K$ est de \emph{classe trace}, si la suite des valeurs propres, considérée avec multiplicité, est sommable. On peut alors définir, pour tout opérateur de classe trace, le \emph{déterminant de Fredholm}
\beqn{fredholm}{\det(I+K)=\prod_{\lambda\in\sigma(K)}(1+\lambda)^{\nu(\lambda)},}
qui naturellement généralise le déterminant usuel (le nom est justifié par le fait que $I+K$ est un opérateur de Fredholm).

\begin{rem}
D'après Ramer \cite{ramer}, le bon cadre pour définir un déterminant en dimension infinie est constitué des \emph{opérateurs de Hilbert-Schmidt}, à savoir les opérateurs compacts dont le spectre est de carré intégrable. Dans ce cas il est possible de définir le \emph{déterminant régularisé de Fredholm-Carleman}
\[\mrm{det}_2(I+K)=\prod_{\lambda\in\sigma(K)}(1+\lambda)^{\nu(\lambda)}e^{-\nu(\lambda)\lambda}.\]
\end{rem}

En revenant à l'opérateur $A$, on souhaite que la limite des jacobiens $|I+D_{\mathbf{z}}A_N|$ soit de la forme \eqref{eq:fredholm}. On fait donc l'hypothèse que pour tout $\mathbf{z}\in\R^\N$, la restriction de $D_{\mathbf{z}}A_N$ à $\ell^2(\N)$ converge vers un opérateur compact de classe trace dans la topologie de la norme, que l'on note $D_{\mathbf{z}}A$ (rappelons qu'une limite d'opérateurs de rang fini est nécessairement compact).

En fait, on peut vérifier que l'opérateur $D_{\mathbf{z}}A:\ell^2(\N)\to\ell^2(\N)$ est défini comme une différentielle directionnelle usuelle : pour tout $\mathbf{v}\in \ell^2(\N)$, on a
\[
D_{\mathbf{z}}A(\mathbf v)=\lim_{\ve\rightarrow 0}\dfrac{A(\mathbf{z}+\ve\,\mathbf{v})-A(\mathbf{z})}{\ve}.
\]

On espère avoir motivé ainsi la formule de changement de la mesure de Wiener sous des transformations non-linéaires, due originairement á Cameron-Martin \cite{cameron-martin2}, puis largement améliorée par des travaux successifs, en commençant par Ramer \cite{ramer} (voir aussi les livres \cite[\S6.6]{bogachev-gaussian} ou \cite{zakai}). Le résultat s'énonce ainsi :

\begin{thm}\label{thm:ramer}
Soit $K:C_0(I)\to \mathbf{D}_0(I)$ un opérateur non-linéaire différentiable dans la direction de $\mathbf{D}_0(I)$ et tel que en tout point $x\in C_0(I)$, la différentielle $D_xK$ définie sur $\mathbf{D}_0(I)$ est un opérateur de Hilbert-Schmidt. Supposons que la transformation $T=I+K$ soit presque partout inversible. Alors la mesure $T_*\W$ est absolument continue par rapport à la mesure $\W$ et la dérivée de Radon-Nikodym est
\[
\frac{dT_*\W}{\W}(T(x))=\frac{1}{\mrm{det}_2(I+D_xK)}\exp\left (\langle K(x),x\rangle_{\mathbf{D}_0(I)}+\tfrac12\|K(x)\|^2_{\mathbf{D}_0(I)}\right ).
\]
\end{thm}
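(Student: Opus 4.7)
The plan is to transport the problem to the infinite-dimensional Gaussian space via the linear isomorphism \eqref{eq:rep_gaus} of Theorem~\ref{thm:rep_gaus}, then to establish the change-of-variables formula by finite-rank approximation, exactly along the lines of the heuristic developed in the preceding pages. Under this identification, $C_0(I)\leftrightarrow(\R^\N,\g^{\otimes\N})$ and $\bD_0(I)\leftrightarrow\ell^2(\N)$, so the operator $K$ corresponds to an operator $A:\R^\N\to\ell^2(\N)$ and $T=I+K$ to $T_A=I+A$; the Hilbert--Schmidt hypothesis on $D_xK$ translates word for word into a Hilbert--Schmidt hypothesis on the differential $D_{\mathbf{z}}A$, and the almost-everywhere invertibility of $T$ into that of $T_A$.

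I would then proceed by approximation. Set $A_N=\pi_NA\pi_N$, so that $T_{A_N}=I+A_N$ acts non-trivially only on the first $N+1$ coordinates of $\R^\N$. The finite-dimensional change of variables formula gives explicitly the density \eqref{eq:changevar}, and the aim is to pass to the limit $N\to\infty$ in each of its three pieces. The square norm $\|A_N(\mathbf{z})\|^2_{\ell^2}$ converges pointwise to $\|A(\mathbf{z})\|^2_{\ell^2}$ since $A(\mathbf{z})\in\ell^2(\N)$. The ordinary Jacobian determinant $|D_{\mathbf{z}}(I+A_N)|$ cannot, on its own, converge to a finite quantity, because $D_{\mathbf{z}}A$ is only Hilbert--Schmidt and not trace-class: one must absorb the trace $\mathrm{tr}(D_{\mathbf{z}}A_N)$ by grouping it with the scalar product $\langle A_N(\mathbf{z}),\mathbf{z}\rangle_{\ell^2}$. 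After this bookkeeping, the determinant factor turns into the Fredholm--Carleman regularised determinant $\det_2(I+D_{\mathbf{z}}A)$, whose continuity under Hilbert--Schmidt convergence is standard; simultaneously the renormalised linear functional $\langle A_N(\mathbf{z}),\mathbf{z}\rangle_{\ell^2}-\mathrm{tr}(D_{\mathbf{z}}A_N)$ converges almost surely to a Paley--Wiener-type integral (a divergence operator in the sense of Malliavin calculus), analogously to the convergence of $\sum a_nz_n$ to $\langle\mathbf{a},(z_n)\rangle_{\ell^2}$ in the proof of Theorem~\ref{thm:cameron-martin0}.

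Finally, to promote the pointwise convergence of the densities to absolute continuity of $(T_A)_*\g^{\otimes\N}$ and to identify the Radon--Nikodym derivative, I would appeal to Proposition~\ref{prop:approx} of Chapter~\ref{sc:localement}: one has to verify that the sequence of densities $\tfrac{d(T_{A_N})_*\g^{\otimes\N}}{d\g^{\otimes\N}}$ is uniformly integrable. Pulling back through the isomorphism then yields the stated formula with $\det_2(I+D_xK)$ in the denominator. The main obstacle is precisely this uniform integrability step: one needs an $L^p$-bound on the finite-dimensional densities, uniform in $N$, which in practice is achieved by controlling the exponential moments of the divergence-type functional and of $\log\det_2$ using only the Hilbert--Schmidt norm of $D_{\mathbf{z}}A$, with the invertibility of $T_A$ ensuring that the pushforward measures are well-defined and that the Jacobian stays bounded away from zero almost everywhere.
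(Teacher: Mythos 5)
You should first be aware that the paper does not prove Theorem~\ref{thm:ramer} at all: the section \ref{sc:nonlineaire} only develops the finite-dimensional heuristic (the density \eqref{eq:changevar}, the emergence of $\mrm{det}_2$) and then states the result with references to Cameron--Martin, Ramer and to \cite[\S 6.6]{bogachev-gaussian} and \cite{zakai} for the actual proof. Your proposal is essentially a rewriting of that same heuristic, ending with an appeal to Proposition~\ref{prop:approx}; so the question is whether the limiting step you describe can really be carried out, and here there is a genuine gap. The uniform integrability of the densities $\frac{d(T_{A_N})_*\g^{\otimes\N}}{d\g^{\otimes\N}}$, which you yourself identify as ``the main obstacle'', is not obtainable from the hypotheses of the theorem: nothing bounds $\|D_xK\|_{HS}$ over $x$, nothing keeps $\mrm{det}_2(I+D_xK)$ away from $0$ (a.e.~invertibility of $T$ does not do this), and the exponential moments of the divergence-type functional $\langle A_N(\mathbf z),\mathbf z\rangle_{\ell^2}-\mrm{tr}(D_{\mathbf z}A_N)$ need not exist. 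This is precisely why the proofs in the literature (Ramer, Kusuoka, Bogachev, \"Ust\"unel--Zakai) do not run a single global finite-rank approximation: they localize, partitioning the space into measurable pieces on which $\|D_xK\|_{HS}\le c<1$ and $x\mapsto D_xK$ is continuous in Hilbert--Schmidt norm, prove injectivity and the formula on each piece, and then sum the contributions. Without such a localization (and without strengthening the pointwise differentiability hypothesis to an $H$-$C^1$-type condition), the scheme you describe cannot be closed.

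Two further points would also need repair even in the localized setting. First, to apply Proposition~\ref{prop:approx} you need $T_{A_N}\to T_A$ almost everywhere; $\pi_N x\to x$ holds a.s.~uniformly, but deducing $A(\pi_N\mathbf z)\to A(\mathbf z)$ requires a continuity of $K$ on $C_0(I)$ (or along Cameron--Martin directions, uniformly) that is not a consequence of directional differentiability alone. Second, the a.s.~convergence of the renormalized pairing $\langle A_N(\mathbf z),\mathbf z\rangle_{\ell^2}-\mrm{tr}(D_{\mathbf z}A_N)$ to a Skorokhod-type divergence is not ``analogous'' to the linear case of Theorem~\ref{thm:cameron-martin0}, where it is a martingale convergence for a fixed $\mathbf a\in\ell^2(\N)$; in the nonlinear case the integrand depends on $\mathbf z$ and the convergence is exactly one of the technical results one has to prove (or quote from Malliavin calculus), not a formal limit. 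So the proposal is a reasonable reading plan for Ramer's theorem, but as a proof it stops where the real difficulties begin.
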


Bien évidemment, étendre ce résultat dans le cas de transformations de la mesure de Wiener $\W_0$ sur $C_0(\T)$ ne présente aucune difficulté.
\clearpage

\chapter{Difféomorphismes du cercle}
\label{sc:diffeos}

Il ne sera pas possible de résumer en quelques pages la théorie des difféomorphismes du cercle : leur étude, inaugurée par Poincaré dans son célèbre article \cite{poincare}, constitue une partie remarquable dans la théorie des systèmes dynamiques, et des mathématiques en général. Nous renvoyons le lecteur aux textes généraux
\mbox{\cite[chapitres 11 et 12]{katok-hasselblatt}}, \cite[chapitre I]{demelo-strien}, et plus spécifiques : \cite{diviseurs,herman} -- pour les aspects analytiques~-- et \cite{ghys-circle,navasEM,booknavas} -- pour une approche plus « géométrique ».

Nous nous limiterons ici à introduire les notions et résultats fondamentaux qui seront utilisés dans la suite de ce travail.

\section{Poincaré et le nombre de rotation}
\label{ssc:poincare}

Soit $f$ un \emph{homéomorphisme} du cercle $\T$, que l'on voit comme le quotient de $\R$ par la translation $x\mto x+1$. Il existe alors un homéomorphisme $F$ de $\R$ dans $\R$ qui vérifie $F(x+1)=F(x)+1$ pour tout $x\in \R$ et $F(x)=f(x)\pmod{1}$. On dira que $F$ \emph{relève} l'homéomorphisme $f$.\footnote{On supposera toujours que $f$ est un homéomorphisme qui préserve l'orientation de $\T$.}

De manière réciproque, tout homéomorphisme $F:\R\to \R$ qui commute avec la translation par $1$, définit par réduction$\pmod{1}$ un homéomorphisme du cercle $\bar{F}$.

En termes plus savants, le morphisme $\pi$ de revêtement universel
\[\xymatrix{\mathbf{0}\ar[r] & \Z\ar[r] & \R\ar[r]^\pi & \T\ar[r]& \mathbf{1}}\]
induit le morphisme de revêtement universel
\[\xymatrix{\mathbf{0}\ar[r] & \Z\ar[r] & \Homt \ar[r]^\pi& \Hom\ar[r]& \mathbf{1}}\]
et $\Homt $ s'identifie aux fonctions $\{F\in \mrm{Hom\acute{e}o}_+(\R)\,:\, F(x+1)=F(x)+1\}$, qui est un groupe topologique lorsqu'on le munit de la topologie induite par la norme $\C{0}$ : un homéomorphisme $F$ est $\ve$-proche de l'identité $id$ si
\[\|F-id\|_{\hspace{.2pt}0}:=\sup_{x\in[0,1[}|F(x)-x|<\ve.\]

\medskip

Parfois nous ne faisons pas de distinction entre un homéomorphisme du cercle et ses relevés. Pour limiter cette ambiguïté, nous supposerons souvent que l'homéomorphisme $F$ qui relève $f\in \Hom$ sera déterminé par la condition $F(0)\in [0,1[$.

\begin{rem}
\label{rem:nombrerotation}
Si $F_1$ et $F_2$ relèvent $f$, leur différence $F_2-F_1$ se projette $\pmod{1}$ sur la fonction identiquement nulle. Deux relevés diffèrent ainsi d'un nombre entier.
\end{rem}

\bigskip

\begin{prop}[Poincaré]
\label{prop:nombrerotation}
Pour tout $f\in \Hom$ donné et pour tout choix du relevé $F$, il existe un unique $\alpha\in \R$ tel que pour tout $x\in \R$
\[|F^{\hspace{0.3pt}n}(x)-x-n\alpha|<1.\]

\end{prop}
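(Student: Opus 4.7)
The plan is to exploit the \emph{displacement cocycle} $\varphi_n(x) := F^n(x) - x$. Since $F$ commutes with the integer translation $x \mapsto x+1$, each $\varphi_n$ is $1$-periodic in $x$, and iterating the definition yields the cocycle identity $\varphi_{n+m}(x) = \varphi_n(F^m(x)) + \varphi_m(x)$. My strategy is the classical one: bound $\mathrm{osc}(\varphi_n) := \sup \varphi_n - \inf \varphi_n$ strictly by $1$, apply Fekete's subadditivity lemma to $\sup \varphi_n$ and $\inf \varphi_n$, and verify that the two resulting time-averages share a common limit, which will be the rotation number $\alpha$.

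First I would establish the \emph{oscillation bound}. By $1$-periodicity it suffices to compare $\varphi_n(x)$ and $\varphi_n(y)$ for $x \le y \le x+1$. Monotonicity of the orientation-preserving homeomorphism $F^n$ yields $0 \le F^n(y) - F^n(x) \le F^n(x+1) - F^n(x) = 1$, while $0 \le y - x \le 1$, hence
\[ \varphi_n(y) - \varphi_n(x) = \bigl(F^n(y) - F^n(x)\bigr) - (y-x) \in [-1, 1]. \]
The extreme values $\pm 1$ would force either $y = x$ with $F^n(y) \ne F^n(x)$, or $y = x+1$ with $F^n(y) = F^n(x)$, each incompatible with $F^n$ being a bijection. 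Since $\varphi_n$ is continuous and $[0,1]$ compact, the sup and inf are attained, hence $\mathrm{osc}(\varphi_n) < 1$.

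Next I would combine the cocycle identity with the $1$-periodicity of $\varphi_m$ to derive $\sup \varphi_{n+m} \le \sup \varphi_n + \sup \varphi_m$ and, symmetrically, $\inf \varphi_{n+m} \ge \inf \varphi_n + \inf \varphi_m$. Fekete's lemma then produces two limits
\[ \alpha_+ := \lim_{n\to\infty} \frac{\sup \varphi_n}{n} = \inf_{n\ge 1} \frac{\sup \varphi_n}{n}, \qquad \alpha_- := \lim_{n\to\infty} \frac{\inf \varphi_n}{n} = \sup_{n\ge 1} \frac{\inf \varphi_n}{n}. \]
Since the two sequences differ by $\mathrm{osc}(\varphi_n)/n < 1/n$, the limits coincide; denote the common value by $\alpha$. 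From the Fekete characterizations one has $\inf \varphi_n \le n\alpha \le \sup \varphi_n$ for every $n$, so for every $x$ the quantity $\varphi_n(x) - n\alpha$ lies in the interval $[\inf \varphi_n - n\alpha,\, \sup \varphi_n - n\alpha]$, which contains $0$ and has length $\mathrm{osc}(\varphi_n) < 1$. This yields $|F^n(x) - x - n\alpha| < 1$, as required.

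Uniqueness is immediate: if two values $\alpha, \alpha'$ both satisfied the bound, then $n|\alpha - \alpha'| < 2$ for every $n\ge 1$, forcing $\alpha = \alpha'$. The one genuinely delicate point in this plan is the \emph{strict} form of the oscillation bound; this is what turns the natural estimate ``$\le 1$'' into ``$<1$'' in the conclusion, and is precisely where one uses that $F$ is a homeomorphism rather than merely an order-preserving continuous lift.
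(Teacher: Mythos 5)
Your argument is correct and complete. Note that the paper itself does not prove this proposition: it is recalled as a classical result of Poincaré in the background chapter on circle diffeomorphisms (the only related remark there is that $\rho$ is continuous as a uniform limit of $\frac{1}{n}(F^{\hspace{0.3pt}n}(x)-x)$), so there is no proof in the text to compare yours against. What you give is the standard subadditivity route, and the delicate points are all handled: the strict oscillation bound $\mathrm{osc}(\varphi_n)<1$ is correctly reduced, via $1$-periodicity and compactness of $[0,1]$ (attainment of the sup and inf is indeed what turns the pointwise strict inequality into a strict bound on the oscillation), to the impossibility of the extreme values $\pm1$, which uses $F^{\hspace{0.3pt}n}(x+1)=F^{\hspace{0.3pt}n}(x)+1$; Fekete applied to the subadditive sequence $\sup\varphi_n$ and the superadditive sequence $\inf\varphi_n$ gives the two characterizations $n\alpha\le\sup\varphi_n$ and $n\alpha\ge\inf\varphi_n$, which combined with the oscillation bound yield $|F^{\hspace{0.3pt}n}(x)-x-n\alpha|<1$ for all $x$ and all $n\ge 1$; and uniqueness follows as you say. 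The only cosmetic point: your contradiction for the value $+1$ needs only that $F^{\hspace{0.3pt}n}$ is a well-defined map (not bijectivity), while the value $-1$ uses the commutation with the integer translation; and if one wants the estimate for negative iterates as well, it follows at once by applying the statement to $F^{-1}$, whose translation number is $-\alpha$, or from the identity $\varphi_{-n}(x)=-\varphi_n(F^{-n}(x))$.
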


En d'autres termes, il existe une unique translation $R_\alpha$ sur $\R$ telle que la fonction $F$ reste à distance $\C{0}$ bornée de $R_\alpha$, ainsi que toutes ses itérées $F^{\hspace{0.3pt}n}$ de $R_{n\alpha}$, avec une borne ne dépendant pas de $n$. Cette remarque justifie la terminologie : le nombre $\alpha$ donné par la proposition \ref{prop:nombrerotation} est appelé le \emph{nombre de translation} de $F$.

Il suit de la remarque \ref{rem:nombrerotation} que le nombre de translation d'un homéomorphisme $f$ est bien défini $\pmod{1}$ : l'application $\rho$ qui à $F\in \Homt $ associe son nombre de translation $\alpha=\rho(F)$ induit l'application
\[
\setlength{\arraycolsep}{1.5pt}\begin{array}{ccccc}
\bar{\rho}:& \Hom & \to  & \T      &\\
           & f    & \mto & \rho(F) & \pmod{1}
\end{array}
\]
qu'on appelle \emph{application de nombre de rotation} et qu'on notera, avec un abus de notation, plus souvent par $\rho$.

Lorsqu'on munit le groupe $\Hom$ de sa topologie naturelle $\C{0}$, l'application $\rho$ est une fonction continue : elle est la limite uniforme de la suite de fonctions continues 
\[\frac{1}{n}(F^{\hspace{0.4pt}n}(x)-x)\pmod{1}.\]

\medskip

L'application nombre de rotation donne beaucoup d'informations sur la dynamique induite par un homéomorphisme $f$ : son nombre de rotation $\rho(f)$ est rationnel si et seulement si $f$ possède une orbite périodique. Plus précisément, si $\rho(f)=\frac{p}{q}$, alors toute orbite périodique de $f$ est de période $q$.

\begin{prop}
Soient $f$ et $g$ deux homéomorphismes du cercle tels qu'il existe $h$, fonction sur $\T$ continue et croissante\footnote{On dit qu'une fonction $h$ sur le cercle est croissante si les fonctions $H$ qui relèvent $h$ à $\R$ sont croissantes.} qui \emph{semi-conjugue} $f$ à $g$,
\[h\circ f=g\circ h.\]
Alors, les nombres de rotations de $f$ et $g$ coïncident :
\[\rho(f)=\rho(g).\]
\end{prop}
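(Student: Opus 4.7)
The plan is to lift everything to $\R$ and then exploit the defining bound of Poincaré's proposition~\ref{prop:nombrerotation} in combination with the fact that any degree-one lift of $h$ commutes with integer translations.

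Concretely, I would first pick lifts: let $F$, $G$ be the lifts of $f$, $g$ chosen as in the text, and let $H:\R\to\R$ be a continuous non-decreasing lift of $h$, normalised so that $H(x+1)=H(x)+1$ (the footnote about monotonicity ensures such a lift exists; a semi-conjugacy is necessarily of degree one, otherwise its image would fail to be all of $\T$). Both $H\circ F$ and $G\circ H$ are continuous lifts of the map $h\circ f=g\circ h:\T\to\T$, so by remark~\ref{rem:nombrerotation} they differ by an integer constant $m$: $H\circ F=G\circ H+m$. Replacing $G$ by the lift $G-m$ (which leaves $\bar\rho(g)$ unchanged modulo~$1$), we may assume
\[
H\circ F=G\circ H,\qquad\text{hence by induction}\qquad H\circ F^{n}=G^{n}\circ H\ \text{for all } n\ge 0.
\]

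The key computation is then to extract $\rho(F)$ from the left-hand side. Fix any $x\in\R$. By proposition~\ref{prop:nombrerotation} there is $r_{n}\in\R$ with $|r_{n}|<1$ such that $F^{n}(x)=x+n\rho(F)+r_{n}$. Writing $n\rho(F)=k_{n}+\theta_{n}$ with $k_{n}\in\Z$ and $\theta_{n}\in[0,1)$ and using the identity $H(y+1)=H(y)+1$, I compute
\[
H(F^{n}(x))=H\bigl(x+k_{n}+\theta_{n}+r_{n}\bigr)=k_{n}+H(x+\theta_{n}+r_{n}).
\]
Since $\theta_{n}+r_{n}$ lies in a bounded interval and $H$ is continuous (hence bounded on compact sets), the remainder $H(x+\theta_{n}+r_{n})-H(x)$ stays bounded uniformly in $n$. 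Therefore
\[
H(F^{n}(x))-H(x)=n\rho(F)+O(1).
\]
Applying the very same proposition~\ref{prop:nombrerotation} to $G$ at the point $H(x)$ yields $G^{n}(H(x))-H(x)=n\rho(G)+O(1)$. The identity $H\circ F^{n}=G^{n}\circ H$ forces these two quantities to be equal, so dividing by $n$ and letting $n\to\infty$ gives $\rho(F)=\rho(G)$. Reducing modulo~$1$ yields $\rho(f)=\rho(g)$ in $\T$.

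I do not anticipate a real obstacle: the only subtle point is the bookkeeping step where one must transfer the $O(1)$ error from $F^{n}(x)-x-n\rho(F)$ into an $O(1)$ error for $H(F^{n}(x))-H(x)-n\rho(F)$, and this is precisely where the normalisation $H(y+1)=H(y)+1$ is used to pull the integer part $k_{n}$ out of $H$. The preliminary normalisation $H\circ F=G\circ H$ (eliminating the integer discrepancy $m$) is also worth emphasising, since without it one only obtains $\rho(F)=\rho(G)+m$, which still gives the desired equality $\bar\rho(f)=\bar\rho(g)$ on~$\T$.
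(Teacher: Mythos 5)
Your proof is correct, and it is the standard argument: the paper states this proposition without proof (it is classical background going back to Poincaré), and your route — lifting $h$ to a continuous $H$ with $H(x+1)=H(x)+1$, normalising away the integer discrepancy so that $H\circ F^{n}=G^{n}\circ H$, and then transferring the bound of proposition~\ref{prop:nombrerotation} through $H$ to get $H(F^{n}(x))-H(x)=n\rho(F)+O(1)=n\rho(G)+O(1)$ — is exactly the expected one. The bookkeeping step pulling $k_{n}$ out of $H$ is handled correctly, and you rightly observe that even without the normalisation one only shifts $\rho(G)$ by the integer $m$, which is invisible modulo $1$. The only caveat worth keeping in mind is that the statement implicitly requires $h$ to be of degree one (equivalently surjective), since a constant $h$ is continuous and non-decreasing yet semi-conjugates any $f$ to any $g$ having a fixed point; your parenthetical remark identifies this correctly, and your argument in fact uses only continuity and the relation $H(x+1)=H(x)+1$, not monotonicity.
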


Quand l'homéomorphisme $f$ ne possède pas d'orbite périodique, deux cas peuvent se produire. La première possibilité est que $f$ soit un homéomorphisme \emph{minimal}, à savoir que toute orbite est dense : dans ce cas il existe un homéomorphisme $h$, unique à rotation près, qui \emph{conjugue} $f$ à la rotation d'angle $\alpha=\rho(f)$,
\[h\circ f=h+\alpha.\]

Sinon, les orbites s'accumulent sur un unique ensemble minimal invariant $K=K_f$, qui est homéomorphe à un Cantor. Dans ce cas, il existe une fonction continue et croissante $h$ qui \emph{semi-conjugue} $f$ à la rotation d'angle~$\alpha=\rho(f)$,
\[h\circ f=h+\alpha.\]
Ce cas est dit \emph{exceptionnel} et l'ensemble minimal $K$ est également dit exceptionnel. Ce terme dérive du fait que, selon Poincaré, de tels exemples devaient être bien rares... Hélas, Poincaré n'a pas vécu assez longtemps pour que Denjoy \cite{denjoy} lui confirme et contredise à la fois cette affirmation : il construit des exemples, que l'on appelle aujourd'hui \emph{de Denjoy}, d'homéomorphismes et difféomorphisme $\C{1}$ qui sont exceptionnels\footnote{La communauté hésite entre l'emploi du terme \emph{exemple} ou \emph{contre-exemple}. Peut-être, parler de \emph{Cantor-exemples} surmonterait ce problème...}. D'un autre coté, il montre que la vision de Poincaré n'était pas entièrement erronée :

\begin{thm}[Denjoy]
\label{thm:denjoy}
Tout difféomorphisme du cercle de classe $\C{1+vb}$ et de nombre de rotation irrationnel est minimal.
\end{thm}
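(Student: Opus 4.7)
L'approche est par contradiction. Supposons que $f\in\C{1+vb}(\T)$ ait un nombre de rotation irrationnel $\alpha=\rho(f)$ mais ne soit pas minimal. D'après la classification de Poincaré rappelée plus haut, $f$ est alors seulement semi-conjugué (et pas conjugué) à la rotation $R_\alpha$, et admet un ensemble minimal exceptionnel de Cantor $K$; toute composante connexe $I$ de $\T\setminus K$ est un \emph{intervalle errant}, au sens que les itérés $I_n:=f^n(I)$, $n\in\Z$, sont deux à deux disjoints. Contenus dans le cercle de longueur $1$, ils vérifient $\sum_{n\in\Z}|I_n|\le 1$, d'où $|I_n|\to 0$ lorsque $|n|\to\infty$.

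L'outil analytique principal est l'\emph{inégalité de distorsion de Denjoy}. Posons $V:=V_{\T}(\log f')$, finie par hypothèse sur la régularité $\C{1+vb}$. La règle de la chaîne
\[
\log\frac{(f^n)'(x)}{(f^n)'(y)}=\sum_{k=0}^{n-1}\left(\log f'(f^kx)-\log f'(f^ky)\right),
\]
combinée à la disjonction de $I,f(I),\ldots,f^{n-1}(I)$, majore le $k$-ième terme par $V_{f^k(I)}(\log f')$; par télescopage on obtient, pour tout $n\in\Z$ et $x,y\in I$,
\[
\left|\log\frac{(f^n)'(x)}{(f^n)'(y)}\right|\le V.
\]
Via le théorème des accroissements finis, on en déduit l'encadrement
\[
e^{-V}\frac{|I_n|}{|I|}\le (f^n)'(y)\le e^{V}\frac{|I_n|}{|I|}, \qquad y\in I,\; n\in\Z.
\]

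Le dernier ingrédient est la combinatoire de la rotation $R_\alpha$: si $p_k/q_k$ désignent les réduites du développement en fraction continue de $\alpha$, alors pour tout $a\in\T$ les itérés $f^{q_k}(a)$ s'accumulent sur $a$ en alternant les côtés, et les itérés intermédiaires $I_j$, $-q_{k-1}<j<q_k$, sont ordonnés cycliquement de manière à mimer l'orbite de la rotation. En choisissant $a\in\partial I$ et en extrayant une sous-suite convenable, $I_{q_k}$ et $I_{-q_{k-1}}$ se placent adjacents à $I$ sur ses deux côtés opposés, les arcs de $K$ qui les séparent de $I$ rétrécissant vers $0$.

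La contradiction s'obtient en combinant ces ingrédients : l'inégalité de distorsion appliquée à $f^{q_k}$ sur $I$ donne $(f^{q_k})'(y)\asymp |I_{q_k}|/|I|$ (qui tend vers $0$), tandis que l'identité de cocycle $(f^{q_k})'(y)\cdot(f^{-q_k})'(f^{q_k}y)=1$, combinée à la distorsion de $f^{-q_k}$ sur $I_{q_k}$, impose à $(f^{q_k})'$ des valeurs d'ordre $|I|/|I_{q_k}|$ aux points adjacents de $I_{q_k}$. En comparant ces estimations à travers les arcs $I_{q_k}$ et $I_{-q_{k-1}}$ qui convergent vers $I$, on obtient une minoration uniforme de la forme $|I_{q_k}|\cdot|I_{-q_{k-1}}|\ge c\,|I|^2$ avec $c=c(V)>0$, ce qui contredit $|I_n|\to 0$ et achève la preuve. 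La difficulté technique principale, sur laquelle une preuve soigneuse doit se concentrer, est l'arrangement combinatoire des itérés et la vérification que les hypothèses de disjonction nécessaires à chaque application de l'inégalité de distorsion sont bien satisfaites le long de la sous-suite des dénominateurs $q_k$.
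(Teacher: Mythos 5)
Le texte ne démontre pas ce théorème : il l'énonce comme résultat classique de Denjoy, preuve renvoyée aux références, le chapitre ne mettant en place que l'outillage voisin (inégalité de Denjoy--Koksma et inégalité de Denjoy $\left \Vert \log Df^{q_n}\right \Vert_0\le \mathrm{Var}(\log Df)$). Votre schéma suit bien la démonstration classique (intervalle errant, contrôle de distorsion, combinatoire des retours proches), et ses deux premiers tiers sont corrects : la distorsion de \emph{tous} les itérés de $f$ sur l'intervalle errant $I$ est bornée par $V$ (puisque les $f^k(I)$ sont deux à deux disjoints), d'où $(f^n)'(y)\asymp |I_n|/|I|$ sur $I$. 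Mais le pas décisif comporte un trou réel : ces estimations, jointes à l'identité de cocycle, ne portent que sur la restriction de $f^n$ à $I$ et à ses itérés ; elles ne relient jamais $|I|$ aux longueurs des composantes voisines du complémentaire du Cantor, et la minoration annoncée $|I_{q_k}|\,|I_{-q_{k-1}}|\ge c(V)\,|I|^2$ ne s'en déduit pas. La phrase sur l'identité de cocycle est d'ailleurs confuse : elle donne $(f^{-q_k})'\asymp |I|/|I_{q_k}|$ \emph{sur} $I_{q_k}$, et non des valeurs de $(f^{q_k})'$ « aux points adjacents de $I_{q_k}$ », points où aucune estimation de distorsion n'a été établie. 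L'argument correct applique le contrôle de distorsion non pas à $I$ mais à l'intervalle $J_k$, enveloppe convexe de $I\cup f^{-q_k}(I)$ : la comparaison des pentes moyennes de $f^{q_k}$ sur les deux sous-intervalles $I$ et $f^{-q_k}(I)$ de $J_k$ donne $|I_{q_k}|\,|I_{-q_k}|\ge e^{-V}|I|^2$, et c'est là qu'intervient de façon essentielle le lemme combinatoire (les $f^j(J_k)$, $0\le j<q_k$, sont deux à deux disjoints, conséquence des retours les plus proches de la rotation) — précisément ce que vous reportez à « une preuve soigneuse ». En l'état, le cœur de la contradiction est donc affirmé mais non démontré.

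Notez qu'avec les énoncés déjà présents dans le chapitre, la conclusion est plus courte : si $I$ est errant, $\sum_n|f^n(I)|\le 1$ entraîne $|f^{q_n}(I)|\to 0$, alors que l'inégalité de Denjoy donne $|f^{q_n}(I)|=\int_I Df^{q_n}\ge e^{-\mathrm{Var}(\log Df)}\,|I|$ pour tout $n$, contradiction immédiate ; la combinatoire des retours proches est alors entièrement absorbée dans la preuve de Denjoy--Koksma.
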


Rappelons qu'une fonction $f$ est dite de classe $\C{1+vb}$ si elle est de classe $\C{1}$ et si la dérivée $f'$ est une fonction à \emph{variation bornée} :
\beqn{variation}{\mrm{Var}(f')=\sup_{\Pi}\sum_{\alpha=1}^{\#\Pi}\sup_{x_\a,y_\a\in I_\a}|f'(x_\a)-f'(y_\a)|<\infty,}
où $\Pi$ est une partition de $\T$ en un nombre fini d'intervalles disjoints $I_\a$, \mbox{$\a=1,\ldots,\#\Pi$.}

\section{Questions de régularité : de Denjoy à nos jours}
\label{ssc:regularite}
Le théorème de Denjoy \ref{thm:denjoy} signe le début de l'étude systématique des \emph{difféomorphismes} du cercle. Quelle est la régularité maximale de l'homéomorphisme $h$, donné par le théorème de Denjoy, qui conjugue $f$ à $R_\a$ ? Cette question a été au cœur de travaux modernes sur les difféomorphismes du cercle parmi les plus fascinants : les résultats à ce propos de Siegel, Arnol'd, Moser, Herman, Yoccoz, Katznelson et Ornstein, Khanin, Sinai et beaucoup d'autres \cite{siegel,arnold-small,herman,moser,yoccoz,katz-ornstein,khanin-sinai,stark,khanin-sinai2,herman-revisited} ont initié des développements majeurs de la théorie. Nous nous occuperons de ce problème un peu plus loin (voir \ref{sss:conjugaison-differentiable}\,), après quelques rappels pour fixer notations et définitions.

\subsection{Espaces de difféomorphismes}

Soit $r\in[0,\infty]$ un nombre réel, éventuellement infini, on notera $\Diff{r}(\T)$ le groupe des difféomorphismes du cercle $f$ qui préservent l'orientation et sont $\C{r}$ réguliers\footnote{Par abus de langage, un difféomorphisme de classe $\C{0}$ est simplement un homéomorphisme.}. Quand $r$ est un entier, cela signifie que $f$ est dérivable~$r$~fois et que sa dérivée $r$-ième est continue. Quand $r$ n'est pas un nombre entier, cela signifie que $f$ est dérivable~$\lfloor r\rfloor$~fois et que sa dérivée $\lfloor r\rfloor$-ième est une fonction $(r-\lfloor r\rfloor)$-höldérienne.

Par analogie au cas continu, on définit le groupe $\wt{\Diff{r}}(\T)=\{F\in \Diff{r}(\R)\,:\,F(x+1)=F(x)+1\}$, revêtement universel du groupe $\Diff{r}(\T)$ :
\[\xymatrix{\mathbf{0}\ar[r] & \Z\ar[r] & \wt{\Diff{r}}(\T) \ar[r]^\pi& \Diff{r}(\T)\ar[r]& \mathbf{1}.}\]

Lorsque $r\in [0,\infty]$, on munit l'espace $\Diff{r}(\T)$ de la topologie induite par la norme $\C{r}$ : si $r$ est un nombre entier ou bien l'infini,
\[\|f\|_{\hspace{.2pt}r}:=\max_{s\le r}\sup_{x\in \T}|D^{\hspace{0.4pt}s}f(x)|\]
et plus généralement, en posant $\tau:=r-\lfloor r\rfloor$,
\[\|f\|_r:=\|f\|_{\lfloor r\rfloor}+\sup_{x,y\in \T}\frac{|D^{\lfloor r\rfloor}f(x)-D^{\lfloor r\rfloor}f(y)|}{|x-y|^\tau}.\]

\begin{rem}
Soit $f$ un difféomorphisme de classe $\C{r}$. Alors $f$ est un difféomorphisme de classe $\C{s}$ pour tout~$s<r$. Notamment, par définition des normes, $\|f\|_{\hspace{.2pt}s}\le 2\,\|f\|_{\hspace{.2pt}r}$ : l'inclusion de $\Diff{r}(\T)$ dans $\Diff{s}(\T)$ est continue.
\end{rem}

Malheureusement, cette topologie n'est pas toujours compatible avec la structure de groupe.

\begin{prop}
\label{prop:topologique}
Le groupe $\Diff{r}(\T)$ des difféomorphismes $\C{r}$ du cercle, muni de la topologie de la norme $\C{r}$, est un groupe topologique si est seulement si $r$ est un entier ou $r=\infty$.
\end{prop}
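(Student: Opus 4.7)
Le plan comporte deux directions. Pour la direction "si" ($r \in \N$ ou $r = \infty$), il s'agit de vérifier que la composition $(f,g) \mapsto f \circ g$ et l'inversion $f \mapsto f^{-1}$ sont des applications continues pour la topologie $\C{r}$. La continuité de la composition repose sur la formule de Faà di Bruno, qui exprime la dérivée $(f \circ g)^{(s)}$ (pour $s \le r$) comme une somme polynomiale de termes du type $f^{(j)} \circ g$ et $g^{(\ell)}$, avec $j, \ell \le s$. Chaque tel terme dépend continûment du couple $(f,g)$ dans la topologie $\C{0}$, par continuité uniforme des dérivées $f^{(j)}$ sur les compacts ; on en déduit que la somme converge en norme $\C{r}$. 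La continuité de l'inversion se déduit par récurrence sur l'ordre de dérivation à partir de l'identité $(f^{-1})'(y) = 1/f'(f^{-1}(y))$, en exploitant la minoration uniforme de $f'$ imposée par la condition de difféomorphisme sur le cercle compact.

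Pour la direction réciproque ($r = k + \tau$ avec $\tau \in (0,1)$), l'obstacle fondamental provient de la \emph{rigidité} de la semi-norme $\tau$-höldérienne : si une suite $(h_n)$ converge uniformément vers $h$ avec des semi-normes höldériennes uniformément bornées, alors $h_n \to h$ en semi-norme $\tau'$-höldérienne pour tout $\tau' < \tau$, mais non nécessairement en semi-norme $\tau$. Le plan consiste à produire un contre-exemple explicite sous la forme d'une suite $(g_n)$ convergeant vers l'identité dans $\C{k+\tau}$ et d'un difféomorphisme fixe $f$, tels que la suite $(f \circ g_n)$ ne converge pas vers $f$ dans $\C{k+\tau}$. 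Un modèle naturel consiste à poser $g_n(x) = x + \varepsilon_n \sin(2\pi n x)$ avec $\varepsilon_n$ calibré pour que la semi-norme höldérienne de $g_n^{(k)}$ tende vers zéro (typiquement $\varepsilon_n = n^{-(k+\tau+\delta)}$ pour un $\delta > 0$ petit), et à choisir un $f$ dont la dérivée $k$-ème admet une structure höldérienne non-triviale (par exemple $f^{(k)}(x) = |x - x_0|^\tau$ localement, régularisé de manière à rester dans $\Diff{k+\tau}(\T)$).

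La difficulté principale réside dans la calibration fine du contre-exemple. Il faut à la fois assurer la convergence $g_n \to \mathrm{id}$ en norme $\C{k+\tau}$ (ce qui contraint $\varepsilon_n$ relativement aux fréquences des oscillations choisies), et démontrer la non-convergence de $f \circ g_n$ vers $f$ dans cette même norme ; ce dernier point s'obtient en analysant, \emph{via} Faà di Bruno, le terme dominant $f^{(k)}(g_n(x)) \cdot (g_n'(x))^k$ pour y isoler une contribution höldérienne qui persiste asymptotiquement. Accessoirement, la même obstruction empêche la continuité de l'inversion $f \mapsto f^{-1}$, ce qui fournirait une preuve alternative de la direction réciproque.
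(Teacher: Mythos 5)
Votre stratégie pour la direction « seulement si » repose exactement sur le mécanisme du texte : on fixe un difféomorphisme dont la dérivée $k$-ième présente une singularité höldérienne exacte d'exposant $\tau$, on le compose à droite par une famille qui tend vers l'identité en norme $\C{r}$ et qui déplace le point singulier, et on détecte la non-convergence par le terme dominant de Faà di Bruno, c'est-à-dire par la semi-norme $\tau$-höldérienne de $f^{(k)}\circ g_n-f^{(k)}$. Le texte procède ainsi, après réduction à l'intervalle par un lemme de plongement, avec le difféomorphisme $g(x)=\tfrac{x+x^{k}|x|^{\tau}}{2}$ et la famille polynomiale $f_\ve(x)=x-\ve+\ve x^{q}$, la minoration s'obtenant en évaluant $D^k(g\circ f_\ve)-D^kg$ aux deux points $0$ et $\ve$. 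Pour la direction « si », votre argument (Faà di Bruno, plus une récurrence pour l'inversion) coïncide avec celui du texte, qui ne détaille d'ailleurs que la composition.

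Le point faible est que l'estimation décisive -- la minoration uniforme de la semi-norme höldérienne de $f\circ g_n-f$ -- n'est qu'annoncée, et c'est là que réside tout le contenu de la preuve. Avec votre famille oscillante $g_n(x)=x+\ve_n\sin(2\pi nx)$, la convergence $g_n\to id$ en $\C{k+\tau}$ impose $\ve_n=o(n^{-(k+\tau)})$ ; le déplacement du point singulier vaut alors environ $\ve_n\sin(2\pi nx_0)$ et il faut (i) garantir qu'il ne s'annule pas (choix de $x_0$ ou extraction d'une sous-suite), (ii) vérifier qu'à l'échelle de ce déplacement $g_n$ est presque une translation (ce qui marche, car $\|g_n'-1\|_0\sim\ve_n n\to 0$), (iii) contrôler les autres termes de Faà di Bruno, qui tendent bien vers $0$ en norme $\C{\tau}$ puisque $\|\ve_n\sin(2\pi n\,\cdot)\|_{k+\tau}\to 0$. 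Tout cela peut être mené à bien, mais la haute fréquence ne sert à rien et complique inutilement la calibration : sur le cercle, il suffit de précomposer par les rotations $R_\ve$, qui tendent vers l'identité dans toutes les normes et pour lesquelles les termes parasites disparaissent, puisque $D^k(f\circ R_\ve)=f^{(k)}(\cdot+\ve)$ ; la non-convergence se réduit alors au fait élémentaire que la translation n'agit pas continûment sur $x\mapsto|x-x_0|^{\tau}$ en norme $\C{\tau}$. C'est l'analogue direct de la famille $x-\ve+\ve x^{q}$ utilisée dans le texte sur l'intervalle, où les rotations ne sont pas disponibles.
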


\begin{rem}
Dorénavant, lorsque l'on souhaitera voir $\Diff{r}(\T)$ comme un groupe, la topologie  sur $\Diff{r}(\T)$ sera toujours la topologie $\C{\lfloor r\rfloor}$.
\end{rem}

Nous donnerons une preuve de la proposition \ref{prop:topologique} en appendice à ce chapitre.

\medskip

Un autre espace, très important, est l'espace des difféomorphismes analytiques. Dans ce contexte, le cercle est identifié à l'ensemble des nombres complexes de module $1$. Rappelons qu'une fonction du cercle $f$ est dite \emph{analytique} s'il existe un nombre $\rho>1$ tel que $f$ possède un prolongement analytique sur la couronne circulaire~$A_r:=\{z\in \CC\,:\, 1/\rho<|z|<\rho\}$. L'espace des difféomorphismes analytiques sur le cercle est noté $\Diff{\om}(\T)$ et forme un groupe, qui est de plus topologique, lorsque l'on le munit de la topologie qui est la limite inductive des topologies de la convergence uniforme $\C{0}$ sur les fonctions analytiques des couronnes $A_r$.

\subsection{Arithmétique du nombre de rotation}
\label{ssc:arith}

Soit $\alpha\in \R-\Q$ un nombre irrationnel. On notera par $\left (\frac{p_n}{q_n}\right )_{n\in \N}$ la suite des \emph{approximations rationnelles} de $\alpha$ : pour tout $n$,
\beqn{approximation-rationnelle1}{0<\left \vert \alpha -\frac{p_n}{q_n}\right \vert<\frac{1}{q_n^2},}
et pour tout $p$ et tout $0<q<q_n$
\beqn{approximation-rationnelle2}{\left \vert \alpha -\frac{p}{q}\right \vert>\left \vert \alpha -\frac{p_n}{q_n}\right \vert.}

Si $[a_0,a_1,a_2,\ldots]$ est le développement en fraction continue de $\alpha$, à savoir
\beqn{continued}{\alpha=a_0+\frac{1}{a_1+\displaystyle\frac{1}{a_2+\frac{1}{a_3+ \cdots}}},}
la fraction $\frac{p_n}{q_n}$ coïncide avec la troncature au niveau $n$ de \eqref{eq:continued} :
\[\frac{p_n}{q_n}=[a_0,a_1,\ldots,a_n]=a_0+\frac{1}{a_1+\displaystyle\frac{1}{\ddots+\frac{1}{a_n}}}.\]

Les conditions \eqref{eq:approximation-rationnelle1} et \eqref{eq:approximation-rationnelle2} se traduisent dynamiquement : le point $q_n\alpha\pmod{1}$ est le point de l'orbite de $0$ par $R_\alpha$ qui se rapproche le mieux de $0$.

Rappelons qu'une mesure $\mu$ sur $\T$ est \emph{invariante} par un homéomorphisme $f$ si elle est invariante par l'action du groupe cyclique $\Z\cong \langle f\rangle$ : $f_*\mu=\mu$. La théorie des espaces de Banach assure toujours l'existence d'une mesure de probabilité invariante, que l'on peut obtenir comme valeur d'adhérence de la suite $\left (\frac{1}{n}\,\sum_{k=0}^{n-1}\delta_{f^k(x)}\right )_{n\in \N}$, pour un point $x\in \T$ fixé. Toute mesure invariante a pour support une partie compacte invariante de $\T$ : une réunion d'orbites périodiques, lorsque $f$ en possède, ou le compact minimal invariant, dans tout autre cas.

Les mesures invariantes donnent ainsi beaucoup d'informations sur la dynamique. Par exemple, si l'on pose~\mbox{$h(x)=\mu\left ([x_0,x[\right )$}, pour $\mu$ mesure $f$-invariante et $x_0\in \T$ point arbitraire, on a
\[h\circ f=h+\mu \left ([x_0,f(x_0)[\right ).\]
Autrement dit, la fonction $h$ semi-conjugue $f$ à la rotation d'angle $\mu \left ([x_0,f(x_0)[\right )$ et, en particulier, le nombre de rotation de $f$ est égal à $\mu \left ([x_0,f(x_0)[\right )$, pour tout choix de $\mu$ et $x_0$.

Le lien entre mesures invariantes et dynamique est bien plus profond.

\begin{thm}[Inégalité de Denjoy-Koksma]
\label{thm:denjoy-koksma}
Soit $f$ un difféomorphisme du cercle de classe $\C{1}$ et de nombre de rotation irrationnel $\a$. Soit $q=q_n$ le dénominateur d'une approximation rationnelle de $\alpha$. Alors, pour toute fonction~$u\in \C{1+vb}(\T)$ et $\mu$  mesure de probabilité $f$-invariante sur $\T$, on a l'inégalité
\beqn{denjoy-koksma}{\left \Vert \sum_{k=0}^{q-1}u\circ f^k-q\int_{\T}u\,d\mu \right \Vert_0\le \mrm{Var}(u).}
\end{thm}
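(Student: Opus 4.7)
The plan is to combine the semi-conjugacy of $(f,\mu)$ with an irrational rotation $R_\a$ (pushing $\mu$ onto Lebesgue measure) with the combinatorial structure of the orbit partition at convergent denominators, as given by the three-distance theorem, so as to reduce the problem to a pairwise-disjoint covering of $\T$ by iterates of a small arc.

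First, I would invoke the semi-conjugacy $h:\T\to\T$ with $h\circ f=R_\a\circ h$ and $h_*\mu=\mrm{Leb}$ (the latter by uniqueness of the invariant measure for the irrational rotation $R_\a$), as recalled in Section~\ref{ssc:poincare}. Under $h$, the orbit $\{f^k(x)\}_{k=0}^{q-1}$ corresponds order-preservingly to the rotation orbit $\{h(x)+k\a\}_{k=0}^{q-1}\subset\T$, and the $\mu$-measure of any arc equals the Lebesgue length of its $h$-image. Writing $\beta_j:=\|q_j\a\|$ for the convergent error at step $j$ and taking $q=q_n$, the three-distance theorem (equivalently, a direct continued-fraction computation exploiting the identity $q_{n-1}p_n-q_np_{n-1}=\pm 1$) gives that this $q$-point partition has at most two distinct arc lengths, both bounded above by $\beta_{n-1}+\beta_n<2/q$, while the minimum orbit spacing is $\beta_{n-1}>\beta_n$.

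Next I would establish the key local estimate. Let $I:=[x,f^q(x))$, an arc of $\mu$-measure $\beta_n$. For every $y\in I$, the oriented arcs $[f^k(x),f^k(y)]$ for $k=0,\ldots,q-1$ are pairwise disjoint: each is contained in the translate $A_k:=f^k(I)=[f^k(x),f^{k+q}(x))$, and via $h$ the family $(A_k)_{k=0}^{q-1}$ becomes $q$ arcs of common length $\beta_n$ centred at the rotation orbit points $h(x)+k\a$, whose minimum spacing $\beta_{n-1}$ strictly exceeds $\beta_n$. Writing $S_q u:=\sum_{k=0}^{q-1}u\circ f^k$, summing the elementary bounds $|u(f^k(x))-u(f^k(y))|\le\mrm{Var}(u;[f^k(x),f^k(y)])$ and using additivity of total variation over pairwise disjoint arcs then yields $|S_q u(x)-S_q u(y)|\le\mrm{Var}(u)$ for every $y\in I$.

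To conclude, the $f$-invariance of $\mu$ gives $\int S_q u\,d\mu=q\int u\,d\mu$, which places $q\int u\,d\mu$ between $\inf_\T S_q u$ and $\sup_\T S_q u$; it thus suffices to bound the oscillation $\sup S_q u-\inf S_q u$ by $\mrm{Var}(u)$. The main obstacle lies exactly in this globalisation: the local estimate only applies to $y\in I$, an arc of $\mu$-measure $\beta_n<1$, so naive chaining along a cover of $\T$ by such arcs would only give a bound of the form $C\,\mrm{Var}(u)$ with $C>1$. Recovering the tight constant $1$ requires exhibiting, for every pair $x,y\in\T$, a single disjoint family of oriented arcs $[f^k(x),f^k(y)]$; via semi-conjugacy this reduces to a combinatorial fact about the interlacing of two rotation orbits, controlled by the Ostrowski decomposition of $\T$ with respect to the basis $\{\beta_{n-1},\beta_n\}$.
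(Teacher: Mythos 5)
Your preliminary steps are sound: the semi-conjugacy with $h_*\mu=\mrm{Leb}$, the three-distance facts ($\min_{0<j<q_n}\|j\alpha\|=\beta_{n-1}>\beta_n$), and the local estimate $|S_qu(x)-S_qu(y)|\le\mrm{Var}(u)$ for $y\in[x,f^q(x))$ (the arcs $[f^k(x),f^k(y)]$ sit in the pairwise disjoint arcs $f^k([x,f^q(x)))$) are all correct, as is the observation that $q\int_\T u\,d\mu=\int_\T S_qu\,d\mu$ lies between $\inf S_qu$ and $\sup S_qu$. But the proof stops exactly where you say it does, and the route you propose for closing the gap cannot work. You reduce the theorem to the oscillation bound $\sup_\T S_qu-\inf_\T S_qu\le\mrm{Var}(u)$, to be obtained by exhibiting, for \emph{every} pair $x,y$, a pairwise disjoint family of oriented arcs $[f^k(x),f^k(y)]$, $k=0,\ldots,q-1$. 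Since $\mu$ is $f$-invariant and $f$ preserves orientation, each arc $[f^k(x),f^k(y)]$ (for either fixed choice of orientation) has the same $\mu$-measure as the corresponding arc from $x$ to $y$; hence as soon as both arcs bounded by $x$ and $y$ have $\mu$-measure larger than $1/q$, the $q$ connecting arcs have total measure greater than $1$, whatever orientations you assign, and no disjoint family exists. So the "interlacing of two rotation orbits" is not a deferrable technicality: the statement you would need is false for most pairs $x,y$, and the intermediate oscillation claim (which is anyway stronger than what the theorem requires) is left unproved.

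The classical way to finish uses the same combinatorial input but compares $S_qu(x)$ directly with $q\int u\,d\mu$ instead of with another Birkhoff sum. Since $|k\alpha-kp_n/q_n|=k\beta_n/q_n<\beta_n\le 1/q_n$ for $0\le k<q_n$, with a deviation of constant sign, and since $k\mapsto kp_n \bmod q_n$ is a bijection of $\Z/q_n\Z$, one can choose a partition of $\T$ into $q$ consecutive arcs of Lebesgue length exactly $1/q$, each containing exactly one point $h(x)+k\alpha$. Pulling back by $h$ gives disjoint arcs $I_k\ni f^k(x)$ with $\mu(I_k)=1/q$, and then
\[
\Bigl\vert S_qu(x)-q\int_\T u\,d\mu\Bigr\vert
=\Bigl\vert\sum_{k=0}^{q-1}q\int_{I_k}\bigl(u(f^k(x))-u(y)\bigr)\,d\mu(y)\Bigr\vert
\le\sum_{k=0}^{q-1}\mrm{Var}(u;I_k)\le\mrm{Var}(u).
\]
(The paper states the theorem without proof, so this is the standard argument rather than a comparison with the text.) Your local estimate, by contrast, is essentially the direct proof of the Denjoy inequality; to get Denjoy--Koksma you should replace the oscillation reduction by the equal-measure partition above.
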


\begin{rem}
L'inégalité \eqref{eq:denjoy-koksma} permet de voir facilement qu'il y a une unique mesure $f$-invariante, qu'on note~$\mu_f$.
\end{rem}

\begin{lem}[Denjoy]
Soit $f$ un difféomorphisme du cercle de classe $\C{1}$, $\mu$ une mesure $f$-invariante sur $\T$. On a
\[\int_{\T}\log Df\,d\mu=0.\]
\end{lem}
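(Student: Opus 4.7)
Le plan est de combiner l'invariance de $\mu$ avec l'inégalité de Denjoy-Koksma (théorème~\ref{thm:denjoy-koksma}) appliquée à l'observable $u=\log Df$, dans le cadre naturel suggéré par la remarque qui précède : $\rho(f)$ irrationnel et $\mu=\mu_f$ l'unique mesure $f$-invariante. La première étape consistera à écrire, grâce à la règle de chaîne $\log Df^n=\sum_{k=0}^{n-1}\log Df\circ f^k$ et à l'invariance de $\mu$, l'identité
\[
n\int_\T \log Df\,d\mu=\int_\T \log Df^n\,d\mu,\qquad n\in\N^*,
\]
qui ramène l'énoncé à l'estimation $\int_\T \log Df^n\,d\mu=O(1)$ le long d'une sous-suite bien choisie.

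Ensuite, j'observerai que $f^n$ est un difféomorphisme de $\T$ de degré un, donc $\int_\T Df^n\,d\mrm{Leb}=1$ ; la continuité et la positivité de $Df^n$, combinées au théorème des valeurs intermédiaires, fournissent un point $y_n\in\T$ tel que $Df^n(y_n)=1$, soit $\log Df^n(y_n)=0$. Le cœur de l'argument sera alors l'application de Denjoy-Koksma à $u=\log Df$ avec $n=q_k$, le dénominateur de la $k$-ème approximation rationnelle de $\alpha=\rho(f)$, qui donne uniformément en $x\in\T$
\[
\Bigl|\log Df^{q_k}(x)-q_k\int_\T \log Df\,d\mu\Bigr|\le \mrm{Var}(\log Df).
\]
Il suffira d'évaluer cette estimation au point $x=y_{q_k}$ pour en tirer $\bigl|q_k\int_\T\log Df\,d\mu\bigr|\le \mrm{Var}(\log Df)$, puis de diviser par $q_k$ et de passer à la limite $k\to\infty$.

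Le point délicat sera la vérification que $\mrm{Var}(\log Df)<\infty$, hypothèse requise pour appliquer Denjoy-Koksma à $u=\log Df$. Cela est immédiat dès que $f\in\C{1+vb}$, car $Df$ est alors à variation bornée et minorée strictement positivement sur le compact $\T$ ; mais ce n'est pas automatique sous la seule hypothèse $f\in\C{1}$ de l'énoncé. Cette tension suggère que le lemme est implicitement lu dans le cadre du théorème de Denjoy à suivre (donc $f\in\C{1+vb}$) ; pour un $f$ purement $\C{1}$ il faudrait contourner Denjoy-Koksma en passant par le théorème ergodique de Birkhoff et en exploitant directement la normalisation $\int_\T Df^n\,d\mrm{Leb}=1$, ce qui est bien plus subtil dans le régime exceptionnel où $\mu$ est singulière par rapport à Lebesgue.
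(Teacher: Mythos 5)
Votre démonstration, telle qu'elle est écrite, n'établit pas le lemme au niveau de généralité annoncé (le texte, lui, le rappelle sans preuve comme un fait classique) : l'application de Denjoy--Koksma à $u=\log Df$ exige $\mrm{Var}(\log Df)<\infty$, c'est-à-dire essentiellement $f\in\C{1+vb}$, et vous laissez explicitement ouvert le cas purement $\C{1}$. Ce n'est pas une restriction anodine ici : le lemme est énoncé en classe $\C{1}$ précisément parce qu'un difféomorphisme de Malliavin--Shavgulidze n'est presque sûrement \emph{pas} de classe $\C{1+vb}$ (corollaire « Presque jamais $\C{1+vb}$ » du chapitre~\ref{chapter:1}), de sorte qu'une preuve valable seulement en $\C{1+vb}$ rend l'énoncé inutilisable là où il doit servir. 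En revanche, votre restriction implicite au nombre de rotation irrationnel est, elle, légitime et même nécessaire : si $p$ est un point fixe hyperbolique et $\mu=\delta_p$, l'intégrale vaut $\log Df(p)\ne 0$ ; le lemme doit donc bien se lire, comme vous le faites, dans le cadre environnant ($\rho(f)\notin\Q$, $\mu=\mu_f$).

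La bonne nouvelle est que vous détenez déjà l'ingrédient qui permet de conclure en classe $\C{1}$. Pour $\rho(f)$ irrationnel, $f$ est uniquement ergodique (cf.~la remarque qui suit l'inégalité de Denjoy--Koksma), et l'unique ergodicité fournit la convergence \emph{uniforme} des moyennes de Birkhoff de la fonction continue $\log Df$ : $\tfrac1n\log Df^n\to c:=\int_\T\log Df\,d\mu$ uniformément sur $\T$. Vos points $y_n$ vérifiant $Df^n(y_n)=1$, obtenus de $\int_\T Df^n\,d\Leb=1$ par le théorème des valeurs intermédiaires, terminent alors la preuve en une ligne : $\tfrac1n\log Df^n(y_n)=0$ pour tout $n$, donc $c=0$ (de façon équivalente, si $c\ne0$, alors $Df^n$ tend uniformément vers $0$ ou vers $+\infty$, ce qui contredit $\int_\T Df^n\,d\Leb=1$). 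Cet argument n'utilise que $f\in\C{1}$ et se passe entièrement de Denjoy--Koksma ; celle-ci intervient ensuite dans l'autre sens, combinée au lemme, pour donner l'inégalité de Denjoy $\left\Vert \log Df^q\right\Vert_0\le\mrm{Var}(\log Df)$ dans le cadre $\C{1+vb}$ — il n'y a donc aucune circularité à craindre, mais l'ordre logique naturel est celui-là, et non celui que vous proposez.
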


\begin{cor}[Inégalité de Denjoy]
Soit $f$ un difféomorphisme du cercle de classe $\C{1+vb}$ et de nombre de rotation irrationnel $\a$. Soit $q=q_n$ le dénominateur d'une approximation rationnelle de $\alpha$. Alors,
\beqn{inegalite-denjoy}{\left \Vert \log Df^q \right \Vert_0\le \mrm{Var}(\log Df).}
\end{cor}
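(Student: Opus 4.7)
Le plan est de combiner trois ingrédients déjà disponibles dans le texte : la règle de dérivation composée, le lemme de Denjoy sur la moyenne de $\log Df$ par rapport à une mesure invariante, et l'inégalité de Denjoy-Koksma appliquée judicieusement à la fonction $u=\log Df$.

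Premièrement, j'observerais que par la règle de dérivation composée, pour tout $x\in\T$ et tout entier $q\ge 1$,
\[
\log Df^q(x)=\sum_{k=0}^{q-1}\log Df\bigl(f^k(x)\bigr).
\]
Il s'agit donc de majorer la somme de Birkhoff de la fonction $u:=\log Df$ évaluée le long de l'orbite de $x$, uniformément en $x$.

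Deuxièmement, je vérifierais que $u=\log Df$ est à variation bornée. Puisque $f$ est de classe $\C{1+vb}$, sa dérivée $Df$ est à variation bornée ; par ailleurs, $f$ étant un difféomorphisme du cercle (espace compact) de classe $\C{1}$, $Df$ est bornée inférieurement par une constante strictement positive. La composition avec $\log$, fonction lipschitzienne sur tout intervalle $[c,C]\subset \mathbf{R}_{>0}$, fournit alors $\mrm{Var}(\log Df)<\infty$, et l'on dispose bien sûr aussi de l'estimation classique $\mrm{Var}(\log Df)\le c^{-1}\mrm{Var}(Df)$.

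Troisièmement, je rappellerais que, d'après le lemme de Denjoy cité juste avant l'énoncé, pour toute mesure de probabilité $\mu$ invariante par $f$, $\int_\T \log Df\,d\mu=0$ ; il existe d'ailleurs, comme indiqué dans la remarque faisant suite au théorème \ref{thm:denjoy-koksma}, une unique telle mesure $\mu_f$ dès que $\rho(f)$ est irrationnel. L'inégalité de Denjoy-Koksma \eqref{eq:denjoy-koksma}, appliquée à $u=\log Df$ avec cette mesure $\mu_f$ et à $q=q_n$ dénominateur d'une approximation rationnelle de $\alpha$, donne alors
\[
\left\| \sum_{k=0}^{q-1} \log Df\circ f^k \right\|_0 \le \mrm{Var}(\log Df),
\]
puisque le terme $q\int_\T \log Df\,d\mu_f$ est nul. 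En réinjectant l'identité initiale issue de la règle de dérivation composée, on conclut
\[
\|\log Df^q\|_0 \le \mrm{Var}(\log Df).
\]

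La seule petite difficulté potentielle est celle, mineure, concernant la régularité exigée dans l'énoncé du théorème \ref{thm:denjoy-koksma} : l'énoncé écrit $u\in\C{1+vb}$, alors que $\log Df$ est en général seulement dans $\C{vb}$. Il s'agit cependant d'une formulation un peu trop restrictive : la preuve standard de l'inégalité de Denjoy-Koksma n'exige en réalité que la variation bornée de $u$ (on la prolonge par densité ou on l'établit directement sur les fonctions escalier), et c'est bien sous cette forme qu'elle s'applique ici. Mis à part ce point, la démonstration est essentiellement une juxtaposition de résultats antérieurs.
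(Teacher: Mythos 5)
Votre démonstration est correcte et suit exactement la voie que le texte sous-entend pour ce corollaire : règle de dérivation composée $\log Df^q=\sum_{k=0}^{q-1}\log Df\circ f^k$, lemme de Denjoy pour annuler le terme intégral, puis inégalité de Denjoy-Koksma appliquée à $u=\log Df$ avec l'unique mesure invariante. Votre remarque finale est également pertinente : l'hypothèse $u\in\C{1+vb}(\T)$ dans l'énoncé du théorème \ref{thm:denjoy-koksma} est une coquille trop restrictive, l'inégalité valant pour toute fonction à variation bornée, ce qui légitime son application à $\log Df$, fonction continue et à variation bornée dès que $f\in\Diff{1+vb}(\T)$.
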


Ces inégalités montrent que si l'on connaît la vitesse de divergence de $(q_n)$, ou en d'autres termes la qualité d'approximation de $\alpha$ par des rationnels, on peut envisager de bonnes estimations dans la topologie $\C{1}$. Pour cela, on rappelle des notions arithmétiques.

Un nombre irrationnel $\alpha$ vérifie une condition \emph{diophantienne} d'ordre $\delta$ s'il existe une constante $C>0$ telle que pour tout $p/q\in \Q$,
\[0<\left \vert \alpha -\frac{p}{q}\right \vert\ge \frac{C}{q^{2+\delta}}.\]

Lorsque $\alpha$ vérifie une condition diophantienne d'ordre $0$, on dit que $\a$ est de \emph{type constant}. Un nombre $\a$ est de \emph{type Roth} s'il satisfait une condition diophantienne d'ordre $\ve$ pour tout $\ve>0$. Un nombre $\a$ qui vérifie une condition diophantienne quelconque est dit \emph{diophantien}. Les nombres irrationnels qui  ne sont pas diophantiens sont de \emph{type Liouville} et forment une partie résiduelle (au sens de Baire) dans $\R$. En contrepartie, presque tout nombre (au sens de la mesure de Lebesgue) est de type Roth.

\subsection{Le théorème de conjugaison différentiable}
\label{sss:conjugaison-differentiable}
Après 50 années écoulées depuis les premiers résultats, la compréhension de la régularité du difféomorphisme conjugant à la rotation est presque optimale. Le résultat le plus avancé s'énonce ainsi \cite{katz-ornstein} (voir aussi \cite{demelo-strien}\,) :

\begin{thm}[Conjugaison différentiable]
\label{thm:conjugaison-differentiable}
Soit $f$ un difféomorphisme du cercle de classe $\C{r}$, soit $\alpha=\rho(f)$ son nombre de rotation. On suppose que $\alpha$ vérifie une condition diophantienne d'ordre $\delta$. Si $\delta+2<r$, alors l'homéomorphisme $h$ qui conjugue $f$ à $R_\a$ est un difféomorphisme de classe $\C{r-1-\delta-\ve}$ pour tout $\ve>0$.
\end{thm}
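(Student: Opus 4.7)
Le plan est de combiner les estimations \emph{a priori} de l'inégalité de Denjoy~\eqref{eq:inegalite-denjoy} avec une analyse fine de la densité de la mesure invariante. Notons $\mu_f$ l'unique mesure $f$-invariante (unicité garantie par l'inégalité de Denjoy--Koksma~\ref{thm:denjoy-koksma}) et posons $\psi:=d\mu_f/d\mrm{Leb}$. La relation $f_*\mu_f=\mu_f$ équivaut à l'équation multiplicative $\psi\circ f\cdot Df=\psi$, qui, passée au logarithme puis itérée, donne
\[
\log\psi\circ f^n-\log\psi=-\log Df^n.
\]
Une primitive de $\psi$ est précisément le difféomorphisme $h$ qui conjugue $f$ à la rotation $R_\alpha$, de sorte que la régularité $C^{r-1-\delta-\ve}$ à démontrer pour $h$ équivaut à la régularité $C^{r-2-\delta-\ve}$ de $\log\psi$.

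La première étape est le contrôle $C^0$ : évaluée aux temps de retour les plus proches $q_n$, l'équation ci-dessus, combinée à l'inégalité de Denjoy $\|\log Df^{q_n}\|_0\le \mrm{Var}(\log Df)$ et à l'équirépartition des itérés, force $\log\psi\in L^\infty$. Le difféomorphisme $h$ est donc déjà bi-lipschitzien. La deuxième étape consiste à dériver la somme télescopique $\log Df^{q_n}=\sum_{k=0}^{q_n-1}\log Df\circ f^k$ jusqu'à l'ordre $s\le \lfloor r\rfloor-1$ : la règle de dérivation en chaîne fait intervenir des puissances des dérivées $Df^k$, que l'on majore sur chaque atome de la partition dynamique $\mathcal P_n$ engendrée par l'orbite $\{f^k(0)\}_{k=0}^{q_n-1}$, en exploitant la distorsion bornée acquise précédemment.

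L'étape cruciale, qui constitue le principal obstacle, est la conversion de ces estimations $C^{r-1}$ sur les $Df^{q_n}$ en régularité höldérienne optimale de $\log\psi$. L'outil arithmétique est la condition diophantienne d'ordre $\delta$, qui entraîne la croissance contrôlée $q_{n+1}\le C\,q_n^{1+\delta}$, régissant la dégénérescence des échelles successives $1/q_n$ et $1/q_{n+1}$ des partitions $\mathcal P_n$. En effectuant une sommation dyadique le long des $q_n$, chaque terme, mesuré sur les atomes de $\mathcal P_n$, apporte une contribution d'ordre $q_n^{-(r-1)}\cdot q_n^{1+\delta}=q_n^{-(r-2-\delta)}$. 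La série converge exactement sous l'hypothèse $r>\delta+2$, et l'exposant de Hölder obtenu pour $\log\psi$ peut être rendu arbitrairement proche de $r-2-\delta$ ; d'où la conclusion.

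Une approche alternative est le schéma KAM (Arnol'd--Moser--Herman--Yoccoz) : à partir de la conjugaison préliminaire fournie par la première étape, on linéarise $h\circ f=R_\alpha\circ h$ autour de la rotation et l'on résout à chaque étape l'équation cohomologique $\vf\circ R_\alpha-\vf=u$, dont la solution en Fourier $\hat\vf(k)=\hat u(k)/(e^{2\pi ik\alpha}-1)$ perd précisément $1+\delta$ dérivées en raison de la minoration $|e^{2\pi ik\alpha}-1|\ge C|k|^{-1-\delta}$ ; la convergence quadratique de l'itération absorbe cette perte et mène à la même régularité limite $C^{r-1-\delta-\ve}$.
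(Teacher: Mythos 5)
Le théorème~\ref{thm:conjugaison-differentiable} n'est pas démontré dans ce texte : il y est cité comme résultat de Katznelson--Ornstein \cite{katz-ornstein} (voir aussi \cite{herman,yoccoz,demelo-strien}), et sa preuve complète occupe des dizaines de pages ; votre texte doit donc être jugé comme une esquisse, et elle comporte une lacune réelle dès la première étape. Vous déduisez $\log\psi\in L^\infty$ (c'est-à-dire que $h$ est bi-lipschitzien, autrement dit que la mesure invariante est absolument continue à densité bornée) de l'inégalité de Denjoy $\|\log Df^{q_n}\|_0\le \mrm{Var}(\log Df)$ évaluée aux seuls temps $q_n$. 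Ceci ne peut pas fonctionner : l'inégalité de Denjoy vaut pour tout difféomorphisme $\C{1+vb}$ de nombre de rotation irrationnel, sans aucune hypothèse arithmétique, alors que pour des nombres de Liouville la conjugaison peut être singulière (et même pour $f$ de classe $\C{\infty}$, par la méthode d'Anosov--Katok rappelée à la fin de la partie~\ref{sss:conjugaison-differentiable}) ; l'hypothèse diophantienne doit donc intervenir dès ce stade. Ce qu'il faut contrôler est l'invariant de Herman $H_1(f)=\sup_{n}\|\log Df^n\|_0$, c'est-à-dire tous les temps intermédiaires et pas seulement les $q_n$ --- c'est exactement la remarque faite après \eqref{eq:eq_cohomologique2} : si les $q_n$ sont trop écartés, l'inégalité de Denjoy ne contrôle pas $H_1(f)$. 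Ce contrôle est le cœur de la démonstration (estimations de distorsion de Katznelson--Ornstein, ou schéma de Herman--Yoccoz), et votre esquisse le présuppose au lieu de l'établir.

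La suite appelle les mêmes réserves : le décompte de la « sommation dyadique », avec la contribution $q_n^{-(r-1)}\cdot q_n^{1+\delta}$, est affirmé sans justification --- pourquoi la perte est exactement $1+\delta$, et comment des bornes sur les dérivées des $Df^{q_n}$ sur les atomes de la partition dynamique se convertissent en régularité höldérienne de $\log\psi$, c'est précisément la partie technique du théorème, qui ne découle pas de la seule croissance $q_{n+1}\le C\,q_n^{1+\delta}$. Le paragraphe KAM alternatif décrit correctement la stratégie classique, mais « la convergence quadratique absorbe la perte » est le contenu même des travaux de Herman et Yoccoz (y compris le passage délicat du théorème local, au voisinage des rotations, à l'énoncé global), et non un argument. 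En l'état, votre proposition est un plan de preuve plausible dans ses grandes lignes, mais pas une démonstration.
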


\begin{rem}
Lorsque $\delta+2<r<4$, Khanin et Teplins'kyi \cite{herman-revisited} et Teplins'kyi \cite{teplinsky} ont montré que le difféomorphisme $h$ est plus précisément de classe $\C{r-1-\delta}$.
\end{rem}

Essayons de comprendre, de manière un peu naïve, pourquoi une condition diophantienne sur $\a$ influence autant la régularité. Résoudre l'équation $h\circ f = h+\a$ dans l'espace $\C{1}$, revient à s'intéresser à l'\emph{équation cohomologique}
\beqn{eq_cohomologique}{u\circ f-u = -\log Df,}
dans l'espace $\C{0}$. Le théorème suivant, dû à Gottschalk et Hedlund \cite{gottschalk-hedlund} donne une condition nécessaire et suffisante pour que \eqref{eq:eq_cohomologique} ait des solutions.

\begin{thm}[Gottschalk~--~Hedlund]
Soit $(X,d)$ un espace métrique compact et $f$ un homéomorphisme minimal de~$X$ sur $X$. Soit $\vf$ une fonction continue sur $X$. Les deux affirmations suivantes sont équivalentes :
\begin{enumerate}
\item il existe $u\in \C{0}(X)$ tel que $u\circ f-u=\vf$,
\item il existe un point $x_0$ dans $X$ tel que $\sup_{n\in \N}\left \vert \sum_{k=0}^{n-1} f\left (f^k(x_0)\right )\right \vert <+\infty$.
\end{enumerate}
\end{thm}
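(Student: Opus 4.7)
The implication (1) $\Rightarrow$ (2) is immediate by a telescoping argument. If $u \in \C{0}(X)$ satisfies $u\circ f - u = \vf$, then for any $x_0\in X$,
\[\sum_{k=0}^{n-1}\vf(f^k(x_0)) = u(f^n(x_0))-u(x_0),\]
which is bounded in $n$ because $u$ is continuous on the compact space $X$.

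For the nontrivial direction (2) $\Rightarrow$ (1), my plan is to introduce the skew product $T:X\times\R\to X\times\R$ defined by $T(x,t)=(f(x),\,t+\vf(x))$. An induction shows $T^n(x,t)=(f^n(x),\,t+\sum_{k=0}^{n-1}\vf(f^k(x)))$, so hypothesis (2) asserts exactly that the forward orbit of $(x_0,0)$ under $T$ is bounded in the vertical direction, say contained in $X\times[-M,M]$. Since $T$ is a homeomorphism of $X\times\R$ and $X\times[-M,M]$ is compact, the orbit closure $\overline{\{T^n(x_0,0):n\ge 0\}}$ is a nonempty compact $T$-invariant set. By Zorn's lemma applied to the family of nonempty closed $T$-invariant subsets ordered by reverse inclusion, it contains a minimal nonempty closed $T$-invariant subset $K\subset X\times[-M,M]$.

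The heart of the proof is then to show that $K$ is the graph of a continuous function $-u:X\to\R$, from which $T$-invariance of $K$ translates immediately into the cohomological equation $u\circ f - u = \vf$. I would proceed in two steps. First, the projection $\pi:X\times\R\to X$ sends $K$ to a nonempty closed $f$-invariant subset of $X$; by minimality of $f$, one has $\pi(K)=X$. Second, I would exploit the symmetry given by the vertical translations $\tau_s(x,t)=(x,t+s)$: each $\tau_s$ commutes with $T$, so $\tau_s(K)$ is again a compact $T$-invariant set. I would consider the set $S=\{s\in\R:\tau_s(K)\cap K\ne\emptyset\}$ and show, using minimality of $K$, that any $s\in S$ must actually satisfy $\tau_s(K)=K$. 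Since $K$ is contained in the bounded strip $X\times[-M,M]$, the subgroup $S\subset\R$ of ``periods'' must be bounded, hence $S=\{0\}$. This rules out $K$ containing two distinct points $(x,t_1)\ne(x,t_2)$ over the same $x$, giving the graph property; upper semicontinuity of fibers and compactness of $K$ then upgrade the graph to the graph of a continuous function.

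The main obstacle is precisely showing that the ``slice'' of $K$ above each point $x\in X$ is a single point, i.e.\ the graph property. The translation-symmetry argument above is the delicate point: one needs both the minimality of $K$ to promote nonempty intersections into equalities, and the boundedness of $K$ in the vertical direction to exclude nontrivial vertical periods. Once the graph is obtained, setting $u(x):=-t$ where $(x,t)\in K$, the $T$-invariance of $K$ rewrites as $(f(x), -u(x)+\vf(x))\in K$, hence $-u(f(x))=-u(x)+\vf(x)$, which is the desired equation $u\circ f - u = \vf$.
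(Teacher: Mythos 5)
The paper does not give a proof of this theorem: it is quoted as a classical result and used as a black box, with a reference to \cite{gottschalk-hedlund} (note in passing that item 2 of the statement as printed contains a typo, the sum should bear on $\vf$ and not on $f$, as you correctly read it). Your argument is the classical skew-product proof of Gottschalk--Hedlund and is correct in outline: the telescoping argument gives (1) $\Rightarrow$ (2), and for the converse the chain -- bounded forward orbit of $(x_0,0)$ under $T(x,t)=(f(x),t+\vf(x))$, hence a compact invariant set, hence a minimal subset $K$; $\pi(K)=X$ by minimality of $f$; the vertical translations $\tau_s$ commute with $T$ and send minimal sets to minimal sets, so $\tau_s(K)\cap K\ne\emptyset$ forces $\tau_s(K)=K$, and the set of such $s$ is a subgroup of $\R$ which is bounded because $K$ lies in a bounded strip, hence trivial -- is exactly the standard route; a compact graph is automatically the graph of a continuous function, which settles continuity. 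Two small points deserve cleaning up, neither of which is a genuine gap. First, the closure of the \emph{forward} orbit of $(x_0,0)$ is a priori only forward invariant; either note that a compact set minimal among nonempty closed forward-invariant sets satisfies $T(K)=K$ (since $T(K)$ is again such a set), or run the argument on the $\omega$-limit set of $(x_0,0)$, which is fully invariant. Second, your sign convention at the end is off: with $u(x):=-t$ for $(x,t)\in K$, invariance of $K$ gives $-u(f(x))=-u(x)+\vf(x)$, that is $u\circ f-u=-\vf$; you should take $u$ to be the graph function itself, $u(x):=t$, which yields $u\circ f-u=\vf$ directly.
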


Puisque pour tout $n\in \N$, $\log Df^n=\sum_{k=0}^{n-1}\log Df\circ f^k$, une condition nécessaire et suffisante pour qu'il existe une solution continue pour \eqref{eq:eq_cohomologique} est
\beqn{eq_cohomologique2}{\sup_{n\in \N}\|\log Df^n\|_0<+\infty.}
La fonction qui à $f\in\Dc$ associe $H_1(f)= \sup_{n\in \N}\|\log Df^n\|_0$ est appelée \emph{invariant de conjugaison $\C{1}$} par Herman. On confrontera avec \eqref{eq:inegalite-denjoy} que si les dénominateurs $q_n$ sont trop écartés l'un de l'autre, l'inégalité de Denjoy n'influence pas trop l'invariant $H_1(f)$. Au contraire, les inégalités du type  \eqref{eq:inegalite-denjoy} sont fondamentales pour la preuve du théorème de conjugaison différentiable. Pour expliquer de manière limpide ce dernier commentaire, nous citons les résultats que l'on peut obtenir grâce à la \emph{méthode de conjugaison rapide} due à Anosov et Katok \cite[\S12.5]{katok-hasselblatt}.

\begin{thm}
Pour tout entier $r\ge 1$, il existe un difféomorphisme minimal $\C{\infty}$ du cercle qui est $\C{r-1}$-conjugué à la rotation, mais qui n'est pas conjugué de manière $\C{r}$.
\end{thm}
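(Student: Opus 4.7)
The strategy is the \emph{méthode de conjugaison rapide} d'Anosov-Katok : on construit $f$ comme limite en topologie $\C{\infty}$ d'une suite $f_n=H_n\circ R_{\alpha_n}\circ H_n^{-1}$, où $\alpha_n=p_n/q_n$ est une suite de rationnels qui converge vers un nombre irrationnel $\alpha$ de type Liouville bien choisi, et les $H_n$ sont des difféomorphismes lisses construits récursivement de sorte que leur limite $h=\lim H_n$ (dans une topologie plus faible) soit exactement de régularité $\C{r-1}$. Puisque chaque $f_n$ est par construction $\C{\infty}$-conjugué à une rotation rationnelle, la vraie dynamique n'émerge qu'à la limite, et la Liouvillianité de $\alpha$ empêchera l'existence d'une conjugaison $\C{r}$.

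\textbf{Construction récursive.} Partant de $H_0=id$ et $\alpha_0\in \Q$ arbitraire, supposons $H_n$ et $\alpha_n=p_n/q_n$ déjà donnés. On choisit $g_{n+1}$ dans le centralisateur $\C{\infty}$ de $R_{\alpha_n}$ (qui contient par exemple tous les difféomorphismes $x\mto x+q_n^{-1}\varphi(q_nx)$ avec $\varphi$ lisse, $1$-périodique, de moyenne nulle) et on pose $H_{n+1}=H_n\circ g_{n+1}$. La commutation $g_{n+1}R_{\alpha_n}=R_{\alpha_n}g_{n+1}$ donne alors $H_{n+1}R_{\alpha_n}H_{n+1}^{-1}=f_n$, donc la modification de $H_n$ ne change pas $f_n$. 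On choisit ensuite $\alpha_{n+1}$ assez proche de $\alpha_n$ pour que
\[\|f_{n+1}-f_n\|_{n+1}=\|H_{n+1}(R_{\alpha_{n+1}}-R_{\alpha_n})H_{n+1}^{-1}\|_{n+1}\le 2^{-n},\]
ce qui assure la convergence $f_n\to f$ dans $\C{\infty}$. Par continuité du nombre de rotation on a $\rho(f)=\alpha$ ; puisque $\alpha$ est irrationnel et $f\in \C{\infty}$, le théorème de Denjoy \ref{thm:denjoy} garantit que $f$ est minimal.

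\textbf{Régularité de la conjugaison et obstacle principal.} On règle les $g_{n+1}$ pour que $\sum_n \|g_{n+1}-id\|_{r-1}<\infty$ (donc $H_n$ converge dans $\C{r-1}$ vers un difféomorphisme $h$ qui conjugue $f$ à $R_\alpha$) mais que $\|g_{n+1}\|_r$ croisse assez vite pour que, le long de la sous-suite des $q_n$-itérées, l'invariant de conjugaison $\C{r}$ analogue à $H_1$ -- à savoir $\sup_n\|\log D^rf^{q_n}\|_0$ convenablement renormalisé -- diverge. Par l'unicité (à rotation près) de la conjugaison topologique pour un difféomorphisme minimal, toute éventuelle conjugaison $\C{r}$ doit coïncider avec $h$ : donc $h\notin \C{r}$ suffira à exclure toute conjugaison $\C{r}$. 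Le point délicat est le couplage entre la vitesse d'approximation rationnelle (qui fait de $\alpha$ un nombre de Liouville pour que l'obstruction à la régularité soit activée) et la croissance prescrite de $\|g_{n+1}\|_{r+1}$ : il faut choisir $q_{n+1}$ astronomiquement grand par rapport à $\|H_{n+1}\|_{r+2}$ pour préserver la convergence $\C{\infty}$ de $f_n$, tout en gardant le contrôle fin qui force la divergence de $\|H_n\|_r$. C'est ce balancier -- rendre $g_{n+1}$ « juste assez sauvage » en régularité $\C{r}$ sans casser la convergence $\C{\infty}$ de $f_n$ -- qui constitue la difficulté technique principale de la démonstration.
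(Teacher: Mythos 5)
Votre schéma est bien celui auquel le texte renvoie : le théorème n'est pas démontré dans le mémoire, qui cite la méthode de conjugaison rapide d'Anosov--Katok (\cite[\S12.5]{katok-hasselblatt}), et votre construction ($f_n=H_n\circ R_{\alpha_n}\circ H_n^{-1}$, modification de $H_n$ par un élément du centralisateur de $R_{\alpha_n}$, puis choix de $\alpha_{n+1}$ si proche de $\alpha_n$ que la convergence $\C{\infty}$ est acquise) en est une description fidèle. La continuité du nombre de rotation, le théorème de Denjoy pour la minimalité et l'unicité à rotation près de la conjugaison topologique sont employés correctement.

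Il reste cependant un vrai trou dans la conclusion « pas de conjugaison $\C{r}$ ». Tel qu'écrit, vous voulez déduire $h\notin\C{r}$ de la divergence de $\|H_n\|_{r}$ (ou de $\|g_{n+1}\|_r$) : c'est un non sequitur, car la limite $\C{r-1}$ d'une suite dont les normes $\C{r}$ explosent peut parfaitement être de classe $\C{r}$ — la régularité de $h$ n'est pas contrôlée par celle des approximations. L'obstruction correcte est un invariant de conjugaison porté par $f$ lui-même : si $f=h\circ R_\alpha\circ h^{-1}$ avec $h\in\Diff{r}(\T)$, alors $f^m=h\circ R_{m\alpha}\circ h^{-1}$ pour tout $m$, donc $\sup_m\|f^m\|_{r}<\infty$ ; il suffit donc de rendre $\|f^{m_n}\|_{r}$ non bornée le long d'une sous-suite. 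Vous effleurez cette idée (votre « invariant analogue à $H_1$ »), mais le cœur de la preuve est précisément de montrer que la « sauvagerie » $\C{r}$ injectée par $g_{n+1}$ au rang $n+1$ survit dans les itérées du difféomorphisme \emph{limite} : il faut des estimations quantitatives garantissant que les perturbations ultérieures (qui sont petites en norme $\C{n}$ mais composées un très grand nombre de fois, $q_n$ croissant de façon incontrôlée) ne détruisent pas l'obstruction construite au rang $n$, typiquement en évaluant $f^{q_n}$ sur un intervalle où il est $C^0$-proche de $H_{n+1}\circ R_{q_n\alpha_{n+1}}\circ H_{n+1}^{-1}$ avec contrôle des dérivées d'ordre $r$. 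C'est exactement ce que vous reléguez à « la difficulté technique principale », or c'est là que réside toute la démonstration ; au même endroit il faut aussi justifier que $H_n^{-1}$ converge (minoration uniforme de $DH_n$ sur les échelles pertinentes) pour que la limite $h$ soit bien un difféomorphisme de classe $\C{r-1}$ et non une simple application. En l'état, la proposition décrit correctement la stratégie mais ne contient pas l'argument qui fait fonctionner le balancier entre convergence $\C{\infty}$ de $f_n$ et divergence de l'invariant $\C{r}$.
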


\subsection*{Démonstration de la proposition \ref{prop:topologique}}

Montrons d'abord un lemme intermédiaire.

\begin{lem}
\label{lem:topologique}
Il existe un plongement du groupe des difféomorphismes $\C{r}$ de l'intervalle ouvert $]0,1[$ dans le groupe des difféomorphismes $\C{r}$ du cercle.
\end{lem}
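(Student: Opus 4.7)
Le projet est de construire un morphisme injectif de groupes
\[\Phi:\Diff{r}(]0,1[)\longrightarrow \Diff{r}(\T),\]
en réalisant l'intervalle $]0,1[$ comme un sous-arc propre de $\T$ puis en étendant par l'identité. Concrètement, je fixerais un arc ouvert $J=]a,b[\subsetneq\T$ (par exemple $J=]1/4,3/4[$) et un $\C{\infty}$-difféomorphisme $\phi:]0,1[\to J$, puis, pour $f\in\Diff{r}_+(]0,1[)$, je poserais
\[\Phi(f)(x)=\begin{cases}\phi\circ f\circ\phi^{-1}(x)&\trm{ si }x\in J,\\ x&\trm{ si }x\in\T\setminus J.\end{cases}\]
La propriété de morphisme $\Phi(f\circ g)=\Phi(f)\circ\Phi(g)$ est immédiate, puisque la conjugaison par $\phi$ respecte la composition et que l'extension par l'identité est compatible avec cette opération ; l'injectivité est claire car on récupère $f$ à partir de $\Phi(f)|_J$ par $f=\phi^{-1}\circ\Phi(f)|_J\circ\phi$.

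L'enjeu de la preuve se réduit alors à un seul point : vérifier que $\Phi(f)$ est bien un difféomorphisme de classe $\C{r}$ de $\T$. À l'intérieur de $J$ et sur $\T\setminus\overline{J}$, la régularité $\C{r}$ est immédiate ; tout le travail se concentre aux deux points du bord $\partial J=\{a,b\}$, où il faut que la conjuguée $\phi f\phi^{-1}$ soit $\C{r}$-tangente à l'identité à tous les ordres jusqu'à $r$. C'est précisément cette exigence qui contraint le choix du difféomorphisme $\phi$ et qui constitue l'unique obstacle sérieux de la preuve.

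La stratégie pour surmonter cette difficulté consiste à choisir $\phi$ s'étendant continûment à $[0,1]\to[a,b]$ avec $\phi(0)=a$, $\phi(1)=b$, et dont les dérivées $\phi^{(k)}$ de tous les ordres $k\ge 1$ s'annulent à toute vitesse imposée aux bords $0$ et $1$ de son domaine (un prototype est $\phi'(x)\propto\exp\bigl(-1/(x(1-x))\bigr)$, à renormaliser). Le cœur du calcul est alors une inspection par la formule de Faà di Bruno des dérivées successives $(\phi f\phi^{-1})^{(k)}$, $k\le r$ : la décroissance extrêmement rapide des facteurs en $\phi^{(j)}$ aux bords domine tout comportement éventuellement sauvage des dérivées $f^{(j)}$, et force ainsi chaque $(\phi f\phi^{-1}-\trm{id})^{(k)}$ à tendre vers zéro en $a$ et $b$, uniformément sur des parties bornées de $\Diff{r}(]0,1[)$. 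Une fois cette régularité au bord établie, la continuité de $\Phi$ pour la topologie $\C{r}$ découle immédiatement de la continuité de la pré- et post-composition par le $\C{\infty}$-difféomorphisme fixe $\phi$, ce qui achève la construction du plongement.
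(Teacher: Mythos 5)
Votre construction suit la même voie que le texte : réaliser $]0,1[$ comme un arc du cercle au moyen d'une carte fixe $\phi$, conjuguer, puis prolonger par l'identité, l'image étant le sous-groupe des difféomorphismes $\C{r}$-tangents à l'identité aux extrémités de l'arc. La démonstration du texte se réduit elle aussi à l'existence d'une telle carte (le $\vf$ de la preuve, qui n'y est d'ailleurs pas construit). Vous avez donc correctement localisé l'unique point délicat ; mais c'est précisément là que votre argument ne tient pas.

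D'abord, la carte $\phi$ doit nécessairement dégénérer au bord : si elle se prolongeait en difféomorphisme $\C{1}$ jusqu'aux extrémités, la conjugaison préserverait le multiplicateur au point fixe du bord, et un $f$ de germe $x\mapsto x/2$ en $0$ ne deviendrait jamais tangent à l'identité. Ensuite — et c'est l'erreur — votre prototype exponentiellement plat surcorrige : près de l'extrémité $a$ on a $\phi(x)-a\asymp x^{2}e^{-1/x}$, de sorte que pour ce même germe anodin $f(x)=x/2$ le conjugué $F(u)=\phi\bigl(f(\phi^{-1}(a+u))\bigr)-a$ vérifie $F(u)/u\to 0$ (de l'ordre de $u^{2}$ à des facteurs sous-exponentiels près) : dérivée nulle en $a$, donc $\Phi(f)$ n'est même pas un difféomorphisme $\C{1}$ du cercle ; le germe $x\mapsto 2x$ donne symétriquement une dérivée infinie. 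Le mécanisme « la platitude de $\phi$ domine les dérivées de $f$ via Faà di Bruno » n'est donc pas le bon : ce qui compte n'est pas la taille des $f^{(j)}$, mais la vitesse à laquelle $f$ déplace les points vers les bouts, comparée à la dégénérescence de $\phi$. Et cette compétition ne peut pas être gagnée uniformément : quelle que soit la carte $\phi$, en posant $\psi(x)=\phi(x)-a$ on construit un difféomorphisme $\C{\infty}$ de $]0,1[$ tel que $\psi(f(x))\le \psi(x)^{2}$ près de $0$, et son conjugué a encore dérivée nulle en $a$. Aucune carte fixe ne convient donc pour le groupe entier $\Diff{r}(]0,1[)$ : la construction conjugaison-prolongement ne peut aboutir qu'en se restreignant à des difféomorphismes dont le germe aux extrémités est contrôlé (par exemple ceux qui se prolongent à l'intervalle fermé), avec une carte de type logarithme itéré plutôt qu'exponentiellement plate, et avec de véritables estimations de dérivées à la place de l'argument de domination — ce qui est du reste tout ce dont l'application à la proposition \ref{prop:topologique} a besoin.
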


\begin{proof}
Soit $\vf:[0,1]\to[0,1]$ un difféomorphisme de classe $\C{\infty}$, infiniment tangent à l'identité en $0$ et~$1$ et vérifiant la condition suivante : pour tout $f$, difféomorphisme de classe $\C{r}$ de $]0,1[$, le difféomorphisme conjugué
\[\tilde{f}:=\vf\circ f\circ \vf^{-1}\]
est un difféomorphisme de l'intervalle fermé $[0,1]$, $\C{r}$-tangent à l'identité en $0$ et $1$.

L'application qui à $f$ associe $\tilde{f}$ définit un homomorphisme continu de $\Diff{r}(]0,1[)$ dans le groupe
\[H:=\left \{g\in\Diff{r}([0,1])\,:\,D^{\hspace{0.4pt}s}g(0)=D^{\hspace{0.4pt}s}g(1)=1\trm{ pour tout }0<s\le r\right \}\]
et ce groupe s'identifie naturellement au sous-groupe $\bar{H}$ de $\Diff{r}(\T)$ composé des difféomorphismes du cercle $\C{r}$-tangents à l'identité en $0\in \T$.
\end{proof}

La démonstration qui suit est une adaptation de la preuve du lemme 3.4 dans \cite{brin}.

\begin{proof}[Démonstration de la proposition \ref{prop:topologique}]
D'après le lemme \ref{lem:topologique}, il suffit de montrer que le groupe des difféomorphismes de classe $\C{r}$ de l'intervalle ouvert $]-1,1[$ n'est pas un groupe topologique, lorsque $r$ n'est pas un nombre entier.

On écrit $r=k+\tau$, avec $k$ entier et $\tau\in ]0,1[$ et on définit le difféomorphisme de classe $\C{r}$
\[g(x)=\begin{cases}
\dfrac{x+x^{\hspace{0.4pt}k}|x|^{\hspace{0.2pt}\tau}}{2} & \text{si }k\text{ impair,} \\

\vspace{-10pt} \\

\dfrac{x+x^{\hspace{0.4pt}k-1}|x|^{\hspace{0.2pt}1+\tau}}{2}& \text{si }k\text{ pair.}
\end{cases}\]
On fixe $q>k$ un nombre pair et pour tout $\ve>0$, on définit le difféomorphisme analytique 
\[f_{\ve}(x)=x-\ve +\ve\hspace{0.2pt} x^{\hspace{0.4pt}q}.\]

Lorsque $\ve\rightarrow 0$, la suite $f_{\ve}$ tend vers $id$ dans la topologie $\C{r} $. On se propose de montrer que la norme $\C{r}$ de~$g\circ f_{\ve} -g$ ne tend pas vers $0$, ce qui impliquera que le groupe $\Diff{r}(]-1,1[)$ n'est pas un groupe topologique par rapport à la topologie $\C{r}$. Pour cela, nous allons donner une minoration uniforme en $\ve$ de la norme $\tau$-hölderienne de la fonction 
\[\phi_\ve=D^{\hspace{0.4pt}k}(g\circ f_{\ve})-D^{\hspace{0.4pt}k}g\]
en regardant dans l'intervalle $[0,\ve]$. À ce propos, nous aurons besoin de connaître les valeurs de $f_{\ve}$ en $0$ et $\ve$ :
\[f_{\ve}(0)=-\ve,\quad f_{\ve}(\ve)=\ve^{q+1}.\]

\smallskip

Comme échauffement, considérons d'abord le cas $k=1$. Nous avons
\[Dg(x)=\frac{1+(1+\tau)\,|x|^{\tau}}{2},\quad Df_{\ve}(x)=1+\ve\,qx^{q-1},\]
d'où
\begin{align*}
\phi_\ve(\ve)-\phi_\ve(0)=\, & Dg(f_{\ve}(\ve))\,Df_{\ve}(\ve)-Dg(\ve)-Dg(f_{\ve}(0))\,Df_{\ve}(0)+Dg(0) \\
=\,& Dg(\ve^{q+1})\,Df_{\ve}(\ve)-Dg(\ve)-Dg(-\ve)\,Df_{\ve}(0)+Dg(0) \\
=\,& Dg(\ve^{q+1})\,Df_{\ve}(\ve)-2Dg(\ve)+\tfrac12 \\
=\,& \tfrac{1+(1+\tau)\ve^{\tau(q+1)}}{2}\,\left (1+q\ve^{q}\right )-\left( 1+(1+\tau)\ve^{\tau}\right )+\tfrac12 \\
=\,&-(1+\tau)\,\ve^{\tau}+o(\ve^{\tau q}).
\end{align*}
On en déduit la minoration
\beqn{minoration_holder}{\|g\circ f_{\ve}-g\|_{1+\tau}\ge \frac{|\phi_\ve(\ve)-\phi_\ve(0)|}{\ve^{\tau}}=(1+\tau)+o(\ve^{\tau(q-1)}),}
donc la suite $(g\circ f_{\ve}-g)_{\ve>0}$ ne tend pas vers $0$ dans la topologie $\C{r}$ lorsque $\ve$ tend vers $0$.

\medskip
Lorsque $k>1$, les estimations sont moins directes, ceci étant dû à la complexité de prendre une dérivée d'ordre supérieure d'une composition. En laissant les vérifications au lecteur, en se restreignant aux termes d'ordre inférieur à $\tau$ en $\ve$, on trouve
\begin{align*}
\phi_{\ve}(\ve)-\phi_{\ve}(0)=\,&-D^kg(\ve)-D^kg(f_\ve(0))+o(\ve^{\tau}) \\
=\,&C_r\,\ve^{\tau}+o(\ve^{\tau}),
\end{align*}
avec $C_r:=-(k+\tau)\cdots(1+\tau)$. On peut donc en déduire une estimation de la norme $\C{r}$ comme à la ligne \eqref{eq:minoration_holder}.

\medskip

Quand $r$ est un nombre entier, la continuité de la composition à gauche et à droite suit de la \emph{formule de Faà di Bruno}
\[D^k (g\circ f)=\sum_{\nu=1}^k\frac{D^\nu g\circ f}{\nu!}\sum_{i_1+\ldots+i_\nu=k}{k \choose {i_1,\ldots,i_\nu}}\,D^{i_1}f\cdots D^{i_\nu}f,\]
où ${k \choose {i_1,\ldots,i_\nu}}$ est le coefficient multinomial $\frac{k!}{i_1!\cdots i_\nu!}$.
\end{proof}
\clearpage
\chapter{Les mesures de Malliavin-Shavgulidze : difféomorphismes génériques en dimension~$1$}
\label{chapter:1}

\section{Un panorama sur les mesures MS}

\subsection{Petite histoire}
\label{ssc:histoire}

Dans l'esprit d'une certaine partie des mathématiques des années 1980, un intérêt particulier apparût dans la recherche de \emph{mesures quasi-invariantes} sur des groupes de Lie de dimension infinie, comme $\Di$ et $\Dc$, notamment avec les finalités illustrées dans l'introduction. Le théorème de Mackey et Weil \ref{thm:mackey-weil} (chapitre~\ref{sc:localement}) caractérise les groupes localement compacts comme étant les groupes qui possèdent une mesure quasi-invariante par l'action régulière gauche \cite{mackey,weil} : nous n'avons donc aucun espoir de trouver une telle mesure sur $\Di$ (et $\Dc$). Pourtant, Shavgulidze donne des exemples de mesures quasi-invariantes pour les actions \emph{restreintes} à des sous-groupes (notamment par les difféomorphismes de classe $\C{2}$).

Les premiers exemples dans \cite{shav78} ne sont guère utilisables, mais on y voit déjà les outils que l'auteur utilisera dans ses futurs travaux. C'est une dizaine d'années plus tard que Shavgulidze reprendra son travail en définissant des mesures analogues pour les difféomorphismes de variétés riemanniennes arbitraires \cite{shav88}.

Probablement vers la fin des années 1980, les mathématiques de Shavgulidze rencontrent celles de P.~Malliavin qui s'intéresse alors aux processus de diffusion sur les variétés riemanniennes. Ils parviennent à la construction des mesures que nous appelons aujourd'hui mesures de \ms (\emph{mesures MS}) sur les difféomorphismes de l'intervalle et du cercle. Même si chacun est capable de construire des mesures boréliennes avec des propriétés d'invariance dans des contextes plus généraux, seules ces mesures possèdent des propriétés vraiment remarquables, en particulier pour leur simplicité \cite{mall90,mall91,shav97}.

En 1990, P.~Malliavin et M.~P.~Malliavin montrent une quasi-invariance \emph{infinitésimale} sous l'action des difféomorphismes $\C{3}$ (avec des techniques d'analyse stochastique). D'un autre coté, la propriété de quasi-invariance est claire dans l'environnement autour de Shavgulidze (par exemple un esquisse de preuve apparaît  dans l'article de Kosyak \cite{kosyak}\,). Une preuve plus détaillée apparaît en 1997 \cite{shav97} (en russe) et \cite{shav00} (en anglais) (voir aussi la thèse de l'auteur \cite{these}). Une exposition plus moderne se trouve dans le livre de Bogachev \cite[\S11.5]{bogachev-differentiable}, que nous allons suivre ici : il s'agit simplement de vérifier que l'on peut appliquer le théorème \ref{thm:ramer} présenté dans le chapitre \ref{sc:wiener}.
On espère que cette méthode plus conceptuelle puisse aider à comprendre géométriquement les cocycles de Radon-Nikodym associés aux mesures MS.

\medskip

Plus récemment Kuzmin  \cite{kuzm07} reprend le résultat de Shavgulidze et l'utilise pour affaiblir le plus possible la régularité des fonctions qui quasi-préservent la mesure.

D'autres mesures quasi-invariantes sur $\Dc$ ont été introduites par Neretin \cite{neretin} ; celui-ci cite les travaux de Malliavin et Shavgulidze, mais ne parvient pas à décrire le lien avec le sien.

Une liste très complète de constructions de mesures sur les groupes de difféomorphismes se trouve dans \cite[\S11.5]{bogachev-differentiable}.

\begin{rem}
Jones a défini une mesure sur les homéomorphismes du cercle et Astala, Jones, Kupiainen, Saksman \cite{homeoGFF} (voir aussi Sheffield \cite{zipper}\,) ont montré que, presque sûrement cette mesure donne un homéomorphisme höldérien. La construction étant assez proche de celle de Malliavin et Shavgulidze, nous croyons fortement que la mesure de Jones est quasi-invariante par l'action des difféomoprhismes $\C{2}$.
\end{rem}

\subsection{Définition des mesures}

Construire une mesure quasi-invariante sur un groupe topologique est un problème assez compliqué en général. Rappelons que, d'après ce que nous avons vu dans le chapitre~\ref{sc:localement}, lorsque $G$ est un groupe non localement compact, il ne peut pas exister de mesure de Radon sur $G$, quasi-invariante par rapport à l'action de translation d'un sous-groupe de mesure strictement positive.  Une mesure de Radon $\mu$ sur $G$ sera dite \emph{quasi-invariante} si elle est quasi-invariante par rapport à l'action par translation d'un sous-groupe $G_0\le G$, qui est de mesure nulle mais pourtant dense. Une telle définition amène la propriété suivante :

\begin{prop}
Soit $\mu$ une mesure de Radon non identiquement nulle sur le groupe topologique $G$. On suppose qu'il existe un sous-groupe dense $G_0\le G$ qui préserve la classe de la mesure $\mu$. Alors la mesure de tout ouvert $U\subs G$ est strictement positive : $\mu(U)>0$.
\end{prop}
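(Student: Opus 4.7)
Le plan est de raisonner par l'absurde en supposant qu'il existe un ouvert non-vide $U\subs G$ vérifiant $\mu(U)=0$. La quasi-invariance de la classe de $\mu$ sous l'action de $G_0$ donne alors immédiatement $\mu(gU)=0$ pour tout $g\in G_0$, et tout le travail consistera à en déduire que $\mu$ est identiquement nulle sur tous les compacts, ce qui contredira l'hypothèse via la régularité intérieure.

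L'étape cruciale, qui constitue le cœur de l'argument, est de montrer que la famille $\{gU\}_{g\in G_0}$ recouvre tout $G$ : c'est ici et seulement ici que la densité de $G_0$ est utilisée. Pour cela, étant donné un point arbitraire $h\in G$, j'observerai que $hU^{-1}$ est un ouvert non-vide de $G$ (puisque $U$ est un ouvert non-vide et que l'inversion ainsi que la translation à gauche sont des homéomorphismes du groupe topologique $G$). La densité de $G_0$ fournit alors un élément $g\in G_0\cap hU^{-1}$, ce qui se réécrit $g^{-1}h\in U$, autrement dit $h\in gU$.

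Une fois ce recouvrement établi, la suite est standard. Étant donné un compact $K\subs G$, on extrait du recouvrement ouvert $\{gU\}_{g\in G_0}$ un sous-recouvrement fini $K\subs \bigcup_{i=1}^{n}g_iU$, et la sous-additivité donne
\[\mu(K)\le \sum_{i=1}^n\mu(g_iU)=0.\]
En invoquant enfin la régularité intérieure de la mesure de Radon $\mu$, on obtient, pour tout borélien $E\subs G$,
\[\mu(E)=\sup\{\mu(K)\,:\,K\trm{ compact contenu dans }E\}=0,\]
donc $\mu\equiv 0$, contradiction.

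La seule étape véritablement subtile est la première : c'est elle qui transforme l'hypothèse topologique (densité) en l'information mesurable souhaitée ; les deux étapes suivantes ne reposent respectivement que sur la compacité et sur la définition même d'une mesure de Radon.
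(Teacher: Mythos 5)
Votre démonstration est correcte : le recouvrement de $G$ par les translatés $gU$ obtenu via la densité de $G_0$ (en piochant $g\in G_0\cap hU^{-1}$, ouvert non vide car inversion et translations sont des homéomorphismes), l'extraction d'un sous-recouvrement fini sur chaque compact, puis la régularité intérieure, s'enchaînent sans lacune ; et l'usage de la quasi-invariance pour obtenir $\mu(gU)=0$ est légitime puisque $G_0$ est un sous-groupe (donc contient aussi $g^{-1}$, quel que soit le côté de translation retenu dans la définition de l'action). La preuve du texte est plus condensée mais repose sur la même mécanique : on y considère le support de $\mu$, défini comme le complémentaire du plus grand ouvert négligeable ; c'est un fermé invariant par $G_0$, et comme l'orbite sous $G_0$ d'un point du support est dense (la translation à droite par ce point étant un homéomorphisme), le support coïncide avec $G$ tout entier, donc aucun ouvert non vide n'est négligeable. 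Votre argument de recouvrement est en réalité ce qui se cache derrière cette phrase : pour que le support soit non vide lorsque $\mu\not\equiv 0$ — et même pour qu'il soit bien défini comme complémentaire d'un ouvert négligeable maximal — il faut savoir que la réunion des ouverts négligeables est encore négligeable, ce qui se démontre par compacité et régularité intérieure, exactement comme dans vos deux dernières étapes. Votre rédaction a donc l'avantage d'être autonome et de ne pas invoquer la notion de support ; celle du texte gagne en brièveté et met en évidence l'objet invariant pertinent, le support vu comme fermé $G_0$-invariant.
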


\begin{proof}
En effet, le support de la mesure $\mu$ (défini comme étant le complémentaire du plus grand ouvert de mesure nulle) est un fermé $G_0$-invariant et coïncide donc avec $G$ entier.
\end{proof}

D'après le théorème de Cameron-Martin \ref{thm:cameron-martin0}, la mesure de Wiener sur $C_0(I)$ ou $C_0(\T)$ est quasi-invariante. En dehors de cet exemple, on n'en connait pas beaucoup d'autres (et ils sont certainement moins célèbres).

La construction des mesures de Malliavin-Shavgulidze s'appuie si fortement sur la mesure de Wiener, que d'un point de vue mesurable elles sont presque les mêmes.

\subsubsection{Les mesures sur le groupe des difféomorphismes de l'intervalle}
\label{ssc:MSi}

La première étape consiste à identifier topologiquement le groupe de difféomorphismes de l'intervalle de classe $\C{1}$ qui préservent l'orientation, à l'espace de Wiener $C_0(I)$. Il y a certainement beaucoup de manières de le faire, puisque si $h:\Di\to C_0(I)$ est un homéomorphisme, alors en composant $h$ à la source par~$\phi\in \mrm{Hom\acute{e}o}_+(I)$, on obtient un autre homéomorphisme entre les deux espaces. Pourtant, il existe un choix particulièrement naturel.

En effet, pour connaître un difféomorphisme de l'intervalle $f$, il suffit de connaître sa dérivée $f'$, qui est une fonction continue strictement positive, de moyenne égale à $1$. Réciproquement, soit $h$ une fonction continue strictement positive sur l'intervalle $I$, on peut alors lui associer un difféomorphisme $f$ défini par
\[f(t)=\dfrac{\int_0^th(s)\,ds}{\int_0^1h(s)\,ds}.\]

On remarquera que dans l'expression précédente un multiple constant de la fonction $h$ définit le même $f$. On résume ceci.

\begin{prop}\label{prop:homeo}
L'espace $\Di$ des difféomorphismes $\C{1}$ de l'intervalle qui préservent l'orientation s'identifie à l'espace $C_1(I,\R_+)$ des fonctions continues positives sur l'intervalle vérifiant $h(0)=1$. Cette application induit un homéomorphisme entre les deux espaces, munis respectivement des topologies $\C{1}$ et $\C{0}$.
\end{prop}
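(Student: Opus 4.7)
The plan is to exhibit an explicit pair of mutually inverse maps between $\Di$ and $C_1(I,\R_+)$ and to verify their continuity with respect to the indicated topologies. First I would set
\[
\Phi(h)(t) := \frac{\int_0^t h(s)\,ds}{\int_0^1 h(s)\,ds}
\qquad\textrm{and}\qquad
\Psi(f)(t) := \frac{f'(t)}{f'(0)}.
\]
Well-definedness of $\Phi$ is immediate: since $h$ is continuous and strictly positive on the compact interval $I$, the denominator is a strictly positive constant and $\Phi(h)$ is a $\C{1}$ map with $\Phi(h)(0)=0$, $\Phi(h)(1)=1$ and derivative $h(\cdot)/\int_0^1 h$ everywhere positive, so $\Phi(h)\in \Di$. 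Well-definedness of $\Psi$ is equally immediate: since $f'$ is continuous and strictly positive, $f'(0)>0$ and $\Psi(f)$ is a continuous, strictly positive function taking the value $1$ at the origin.

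Next I would check that these two maps are mutually inverse. For $\Psi\circ\Phi=\textrm{id}$: setting $f=\Phi(h)$, one computes $f'(t)=h(t)/\int_0^1 h$, hence $f'(0)=1/\int_0^1 h$ (this is where the normalisation $h(0)=1$ enters), and therefore $\Psi(f)(t)=f'(t)/f'(0)=h(t)$. For $\Phi\circ\Psi=\textrm{id}$: setting $h=\Psi(f)$, the common factor $1/f'(0)$ in the numerator and denominator of $\Phi(h)(t)$ cancels, leaving
\[
\Phi(\Psi(f))(t)=\frac{f(t)-f(0)}{f(1)-f(0)}=f(t),
\]
since $f$ fixes the endpoints of $I$.

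Finally, the continuity of each map reduces to uniform convergence together with the continuity of division away from zero. If $h_n\to h$ in $\C{0}$, then $\int_0^t h_n(s)\,ds\to\int_0^t h(s)\,ds$ uniformly in $t$ and $\int_0^1 h_n\to\int_0^1 h>0$, whence $\Phi(h_n)\to\Phi(h)$ uniformly; simultaneously $\Phi(h_n)'=h_n/\int_0^1 h_n\to h/\int_0^1 h=\Phi(h)'$ uniformly, and combining the two estimates yields $\C{1}$ convergence. Conversely, if $f_n\to f$ in $\C{1}$, then $f_n'\to f'$ uniformly and $f_n'(0)\to f'(0)>0$, so $\Psi(f_n)=f_n'/f_n'(0)\to f'/f'(0)=\Psi(f)$ uniformly on $I$.

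The statement contains no serious obstacle; the only subtle point is the choice of normalisation $h(0)=1$ (rather than $\int_0^1 h=1$), which is precisely what makes the formula $\Psi(f)=f'/f'(0)$ a genuine inverse to $\Phi$ instead of merely an inverse up to a multiplicative constant, and which will turn out to be dynamically natural when the mesures de Malliavin-Shavgulidze are defined through $h=\exp(B)$ with $B$ a Brownian motion starting from $0$.
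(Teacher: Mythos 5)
Your proposal is correct and follows essentially the same route as the paper: the same pair of mutually inverse maps $h\mapsto \int_0^t h/\int_0^1 h$ and $f\mapsto f'/f'(0)$, with bicontinuity checked directly. The only difference is cosmetic — the paper produces explicit $\delta(\ve)$ estimates in both directions, while you invoke sequential continuity in these metric spaces, which is equally valid.
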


\begin{proof}
Nous avons déjà démontré la première partie de l'énoncé.

Soit $h$ une fonction fixée et soit $\ve>0$. Nous voulons montrer qu'il existe $\delta>0$ tel que si $h$ et $k$ se trouvent à une  distance $\C{0}$ inférieure à $\delta$, alors les difféomorphismes associés sont à distance $\C{1}$ inférieure à $\ve$. Nous avons
\begin{multline*}
\sup_{t\in I}\left\vert \tfrac{\int_0^t h(s)\,ds}{\int_0^1 h(s)\,ds}-\tfrac{\int_0^t k(s)\,ds}{\int_0^1 k(s)\,ds}\right \vert \\
= \sup_{t\in I}\left\vert \tfrac{\int_0^t[ h(s)-k(s)]\,ds}{\int_0^1 h(s)\,ds}+\int_0^t k(s)\,ds\left (\tfrac{1}{\int_0^1 h(s)\,ds}-\tfrac{1}{\int_0^1 k(s)\,ds}\right )\right \vert \\
\le \sup_{t\in I}\tfrac{t}{\int_0^1h(s)\,ds}\delta+\sup_{t\in I}\tfrac{\int_0^tk(s)\,ds}{\int_0^1h(s)\,ds\cdot\int_0^1k(s)\,ds}\delta\le \tfrac{2}{\int_0^1h(s)\,ds}\delta,
\end{multline*}
et pour l'estimation sur la dérivée
\begin{multline*}
\sup_{t\in I}\left\vert \tfrac{h(t)}{\int_0^1 h(s)\,ds}-\tfrac{k(t)}{\int_0^1 k(s)\,ds}\right \vert\\
\le \sup_{t\in I}\left\vert \tfrac{h(t)-k(t)}{\int_0^1 k(s)\,ds}+k(t)\left (\tfrac{1}{\int_0^1 h(s)\,ds}-\tfrac{1}{\int_0^1 k(s)\,ds}\right)\right \vert\\ \le \left (1+\sup_{t\in I}\tfrac{k(t)}{\int_0^1 k(s)\,ds}\right )\tfrac{\delta}{\int_0^1h(s)\,ds}\\
\le \left (1+\tfrac{\sup_{t\in I}h(t)+\delta}{\left \vert\int_0^1 h(s)\,ds-\delta\right \vert}\right )\tfrac{\delta}{\int_0^1h(s)\,ds}=\left( 1+\sup_{t\in I} \tfrac{h(t)}{\int_0^1h(s)\,ds}\right )\tfrac{\delta}{\left\vert\int_0^1h(s)\,ds-\delta\right \vert}.
\end{multline*}
En posant
\[\delta = \frac{\ve \int_0^1h(s)\,ds}{1+\mathrm{sup}_I\frac{h}{\int_0^1h(s)\,ds}+\ve},\]
nous déduisons des estimations précédentes que la norme $\C{1}$ de la différence des difféomorphismes associés à $h$ et $k$ est inférieure à $2\ve$.

\medskip

Réciproquement, soit $F$ un difféomorphisme fixé et $\ve>0$. Nous cherchons $\delta>0$ tel que, pour tout $G$ à distance $\C{1}$ de $F$ inférieure à $\delta$, les fonctions $F'/F'(0)$ et $G'/G'(0)$ soient à distance $\C{0}$ inférieure à $\ve$.

Pour tout $\delta\in ]0,F'(0)[$ nous avons
\begin{multline*}
\sup_I\left \vert \tfrac{F'}{F'(0)}-\tfrac{G'}{G'(0)}\right\vert \\
\le \tfrac{1}{F'(0)}\delta+\sup_I G' \left \vert\tfrac{1}{F'(0)}-\tfrac{1}{G'(0)}\right \vert\le \left (1+\tfrac{\sup_I G'}{G'(0)}\right )\tfrac{1}{F'(0)}\delta\\
\le \left (1+\tfrac{\sup_I F'+\delta}{F'(0)-\delta}\right )\tfrac{1}{F'(0)}\delta=\left (1+\tfrac{\sup_I F'}{F'(0)}\right )\tfrac{\delta}{F'(0)-\delta}
\end{multline*}
et par conséquent on peut choisir
\[\delta = \frac{\ve F'(0)}{1+\frac{\sup_I F'}{F'(0)}+\ve}\]
afin d'avoir la propriété de continuité cherchée.
\end{proof}

Cette proposition termine « presque » notre première étape, puisque $C_1(I,\R_+)$ n'est pas très différent de l'espace de Wiener $C_0(I)$ : en effet,
par composition à gauche par la fonction logarithme, les deux espaces s'identifient topologiquement.

Dans la suite, on notera 
\beqn{A}{A\,:\,\Di\to C_0(I)}
l'application qui à $f\in \Di$ associe la fonction continue $\log f'-\log f'(0)$. D'après la proposition~\ref{prop:homeo}, la fonction $A$ est un homéomorphisme lorsque l'on munit les espaces de leurs topologies naturelles.

Observons que l'application inverse $A^{-1}$ est définie par
\[A^{-1}x(t)=\frac{\int_0^te^{x(s)}\,ds}{\int_0^1e^{x(s)}\,ds},\]
et grâce à cette application, nous arrivons à la deuxième étape de la construction : la définition des mesures de Malliavin-Shavgulidze.

\medskip

Il est peut-être temps de rappeler que la mesure de Wiener définie dans la partie \ref{ssc:bm} du chapitre \ref{sc:wiener} fait partie d'une famille à un paramètre de mesures : en notant $\sigma>0$ ce paramètre, nous retrouvons avec $\sigma=1$ la mesure de Wiener classique $\W$. Les autres mesures $\W_\sigma$ diffèrent par la \emph{variance} que l'on considère pour les accroissements du mouvement brownien : si $x\in C_0(I)$ est distribué selon la mesure $\W$, la fonction $\sigma\hspace{0.3pt}x\in C_0(I)$ est distribuée selon la mesure $\W_{\sigma}$. Plus précisément, la mesure $\W_{\sigma}$ d'un cylindre 
\[C=\{x\in C_0(I)\,|\, x(t_i)-x(t_{i-1})\in A_i,\,i=1,\ldots n\},\]
avec $0=t_0<t_1<\ldots<t_{n}<t_{n+1}=1$  et $A_i\in \cB(\R)$,
est donnée par (voir \eqref{eq:cylindre2})
\[\W_\sigma(C)=\tfrac{1}{\sqrt{(2\pi\sigma)^n(t_{n}-t_{n-1})\cdots(t_1-t_0)}}\int_{A_1}\cdots\int_{A_n}\exp\left (-\tfrac{1}{2\sigma}\sum_{i=1}^n\tfrac{y_i^2}{t_i-t_{i-1}}\right )\,dy_1\cdots dy_n.\]

\begin{rem}
Le paramètre $\sigma$ mesure entre autres la distance de la fonction aléatoire à la fonction identiquement nulle. Par exemple, lorsque $\sigma$ est proche de zéro, les fonctions aléatoires se trouvent assez proches de la fonction nulle. Pour rendre cette affirmation plus formelle rappelons le \emph{principe de réflexion} \cite[Theorem 2.19]{peres} : pour tout $r>0$, on a
\[\W_\sigma\left (\left \{ \|x\|_0\le r\right \}\right )\le \tfrac{4}{\sqrt{2\pi}}\int_{r/\sigma}^\infty e^{-s^2/2}\,ds\le \tfrac{8}{\sqrt{2\pi}}\,\tfrac{\sigma}{r}\exp\left (-\tfrac12 \,\tfrac{r^2}{\sigma^2}\right ).\]

Mais $\sigma$ mesure aussi la distance locale : en effet, pour $\sigma$ proche de $0$, les oscillations locales d'une fonction typique s'affaiblissent, comme l'explique la \emph{loi des logarithmes itérés}
\beqn{iteratedlog}{
\W_{\sigma}\left (\left \{
	\varlimsup_{\ve\rightarrow 0} \frac{x(t+\ve)-x(t)}{\sqrt{2\ve\,\log\log (1/\ve)}}=\sigma
\right \}\right )=1\quad \trm{pour tout }t\in I.
}
\end{rem}

\begin{dfn}
Soit $\sigma>0$. On définit la mesure de \ms de paramètre $\sigma$ sur les difféomorphismes de l'intervalle, comme l'image de la mesure de Wiener par l'application $A^{-1}$ :
\[\nu_\sigma= (A^{-1})_*\W_\sigma.\]

Quand $\sigma=1$, on parlera de \emph{la mesure de Malliavin-Shavgulidze}, que l'on notera $\nu_{MS}$ ou bien simplement $\nu$.
\end{dfn}

\subsubsection{Les mesures sur le groupe des difféomorphismes du cercle}

En modifiant la construction de la partie \ref{ssc:MSi} on trouve des mesures définies sur le groupe des difféomorphismes du cercle.
Dans ce contexte, le \emph{cercle} $\T$ est paramétré par le tore euclidien $\R/\Z$.

\medskip

Munis de leurs topologies naturelles, l'espace $\Dc$ des difféomorphismes $\C{1}$ du cercle s'identifie topologiquement à l'espace $C_0(\T)\times \T$ : à tout difféomorphisme $f$ on associe le couple 
\beqn{B}{B(f)=(B_0(f),f(0)),}
avec
\[B_0(f)(t)=\log f'(t)-\log f'(0).\]

\begin{rem}
L'application inverse est donnée par
\[B^{-1}(x,\alpha)(t)=c(x)\int_0^te^{x(s)}\,ds+\alpha,\]
où $c(x)=\left (\int_0^1e^{x(s)}\,ds\right )^{-1}$.
\end{rem}

Le lecteur étant passé par la preuve de la proposition~\ref{prop:homeo} n'aura pas de difficulté à montrer que $B$ définit en effet un homéomorphisme.

\medskip

Nous avons introduit à la ligne \eqref{eq:proj_pont} une application $\pi$ qui projette l'espace $C_0(I)$ sur $C_0(\T)$, grâce à laquelle nous avons pu définir la mesure de Wiener $\W_0$ sur $C_0(\T)$. Par la même méthode, on dispose des mesures $\W_{0,\sigma}$, définies par
\[\W_{0,\sigma}:=\pi_*\W_{\sigma}.\]

\begin{dfn}
Soit $\sigma>0$. On définit la mesure de \ms de paramètre $\sigma$ sur les difféomorphismes du cercle, comme l'image du produit de la mesure de Wiener par la mesure de Lebesgue sur le cercle, par l'application $B^{-1}$ : 
\beqn{MSc}{\mu_\sigma= B^{-1}_*\left (\W_{0,\sigma}\otimes \Leb\right ).}

Quand $\sigma=1$, on parlera de \emph{la mesure de Malliavin-Shavgulidze}, que l'on notera $\mu_{MS}$ ou bien simplement $\mu$.
\end{dfn}

\subsection{Modules de continuité}
\label{sssc:modulecontinuite}

Nous l'avons vu dans la partie \ref{ssc:regularite} du chapitre~\ref{sc:wiener}, la classe de régularité d'un difféomorphisme du cercle  influence les phénomènes dynamiques. Il est donc important, si l'on souhaite étudier les mesures MS d'un point de vue dynamique, de connaître la régularité d'un difféomorphisme aléatoire. Les propriétés de la mesure de Wiener que l'on a rencontrées dans la partie \ref{ssc:bm} nous permettent une description assez exhaustive. Nous avons de nouveau besoin de quelques rappels.

\medskip

Soit $w\,:\,[0,1]\to [0,+\infty[$ une fonction continue en $0$ et vérifiant $w(s)=0$ si et seulement si $s=0$. On supposera aussi que $w$ est croissante. 

On définit l'espace $\C{w}(I)$ des fonctions avec \emph{module de continuité} $w$ :
\[\C{w}(I)=\left\{ x\in C(I)\,\left |\,\sup_{s,t\in I,\,s\ne t}\frac{|x(s)-x(t)|}{w(|s-t|)}<\infty\right. \right\}.\]

Si $x$ est une fonction de module continuité $w$, on définit sa \emph{norme} $\C{w}$ :
\[\|x\|_w=\sup_{s,t\in I,\,s\ne t}\frac{|x(s)-x(t)|}{w(|s-t|)}.\]

Une fonction de module de continuité est automatiquement continue. Réciproquement, toute fonction sur $I$ continue possède un module de continuité : si $x$ est une fonction continue, on pose
\[w_x(\ve)=\sup_{|s-t|\le\ve}|x(s)-x(t)|.\]

\begin{ex}
Quand $w(s)=s^\alpha$, une fonction de module de continuité $w$ est une fonction $\alpha$-höldérienne.
\end{ex}

Dans la suite, on notera $\Diff{1+w}(I)$ l'espace des difféomorphismes de l'intervalle dont la dérivée est une fonction de module de continuité $w$. Si $f$ est un difféomorphisme, on définit sa \emph{norme} $\C{1+w}$ :
\[\|f\|_{1+w}=\|f\|_1+\|f'\|_w.\]

\medskip

La proposition suivante est la clef de ce qui va suivre.

\begin{prop}\label{prop:modcont}
Soit $w$ un module de continuité. L'application $A$ définie dans \eqref{eq:A} induit un homéomorphisme entre les difféomorphismes de l'intervalle de classe $\C{1+w}$ et les fonctions de classe $\C{w}$, lorsque l'on munit ces espaces des topologies induites par leurs normes.
\end{prop}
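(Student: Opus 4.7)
La proposition~\ref{prop:homeo} traite déjà le cas $\C{1}/\C{0}$ ; le plan consiste à en adapter les estimations pour contrôler également la semi-norme $w$-hölderienne. La stratégie se décompose en deux étapes : vérifier d'abord que $A$ envoie $\Diff{1+w}(I)$ dans $\C{w}(I)\cap C_0(I)$ (et que $A^{-1}$ fait le chemin inverse), puis établir la continuité dans les deux sens.

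Pour la première étape, soit $f\in\Diff{1+w}(I)$ : le caractère $1/\min_I f'$-lipschitzien de $\log$ sur le compact $f'(I)\subset\,]0,\infty[$ donne immédiatement
\[|A(f)(s)-A(f)(t)|=|\log f'(s)-\log f'(t)|\le \frac{\|f'\|_w}{\min_I f'}\,w(|s-t|),\]
d'où $\|A(f)\|_w<\infty$. De façon symétrique, pour $x\in\C{w}(I)\cap C_0(I)$, la formule $(A^{-1}x)'=e^{x}/\int_0^1 e^{x(s)}\,ds$ et le caractère lipschitzien de $\exp$ sur $[-\|x\|_0,\|x\|_0]$ donnent une majoration analogue de la semi-norme $\|(A^{-1}x)'\|_w$.

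L'étape de continuité repose sur le lemme de composition suivant, qui est l'observation cruciale : si $\phi$ est une fonction de classe $C^2$ sur un intervalle compact $K\subset\R$ et si $(g_n)$ est une suite à valeurs dans $K$ vérifiant $\|g_n-g\|_0\to 0$, $\|g_n-g\|_w\to 0$ et $\sup_n\|g_n\|_w<\infty$, alors $\phi\circ g_n\to\phi\circ g$ dans la même topologie. La preuve repose sur l'identité $\Delta_{s,t}=T_1+T_2$, où
\begin{align*}
\Delta_{s,t} &= [\phi(g_n(s))-\phi(g_n(t))]-[\phi(g(s))-\phi(g(t))],\\
T_1 &= \int_0^1 [\phi'(P_n)-\phi'(P)]\,[g_n(s)-g_n(t)]\,du,\\
T_2 &= \int_0^1 \phi'(P)\,[(g_n-g)(s)-(g_n-g)(t)]\,du,
\end{align*}
avec $P(u)=g(t)+u[g(s)-g(t)]$ et $P_n(u)$ défini analogiquement. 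Les majorations $|T_1|\le \|\phi''\|_\infty\,\|g_n-g\|_0\,\|g_n\|_w\,w(|s-t|)$ et $|T_2|\le \|\phi'\|_\infty\,\|g_n-g\|_w\,w(|s-t|)$ fournissent alors la convergence souhaitée. On applique ensuite ce lemme à $\phi=\log$ avec $g=f'$ pour la continuité de $A$, et à $\phi=\exp$ (plus un contrôle séparé élémentaire du facteur de normalisation $c(x)=1/\int_0^1 e^{x(s)}\,ds$) pour celle de $A^{-1}$.

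Le point délicat est le caractère local de ce dernier argument : le compact $K$ intervenant dans le lemme doit pouvoir être choisi uniformément au voisinage d'un point fixé $f_0\in\Diff{1+w}(I)$ (resp.\ $x_0\in \C{w}(I)\cap C_0(I)$). Cette subtilité est toutefois facile à surmonter : la convergence dans $\C{1+w}$ entraîne la convergence uniforme des dérivées, donc les $f_n'$ pour $n$ grand restent dans un voisinage compact de $f_0'(I)$ inclus dans $\,]0,\infty[$ ; la situation pour $A^{-1}$ est encore plus directe. Cela suffit à conclure que $A$ et $A^{-1}$ sont continues en tout point, et donc qu'elles définissent ensemble un homéomorphisme.
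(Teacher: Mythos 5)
Votre preuve est correcte et suit essentiellement la même route que le texte : la démonstration du livre se réduit à la remarque que $\log$ (et donc $\exp$) est lipschitzienne sur l'image compacte d'une fonction continue strictement positive, ce qui donne la comparaison bilatérale de $\|\log h\|_w$ avec $\|h\|_w$, le reste (la continuité dans les deux sens) étant laissé au lecteur. Votre lemme de composition avec la décomposition $T_1+T_2$, joint au contrôle du facteur de normalisation $c(x)$ et au choix uniforme du compact $K$, n'est qu'une rédaction soigneuse de cette étape omise, sans différence de fond.
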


\begin{proof}
Remarquons que si $h$ est une fonction strictement positive définie sur l'intervalle, alors on a
\[\sup_I h\,\|h\|_w\le\|\log h\|_w\le \frac{1}{\inf_I h}\|h\|_w,\]
puisque la fonction $\log$ est lipschitzienne sur l'intervalle compact $h(I)\subs ]0,+\infty[$. Le lecteur pourra alors facilement terminer la preuve.
\end{proof}

\begin{cor}[$\C{2}$ en aucun point]
L'ensemble des difféomorphismes $f\in \Di$ tels que $f'$ n'est continue en aucun point est de mesure $\nu_\sigma$ totale.
\end{cor}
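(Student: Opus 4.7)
The plan is to transport the classical nowhere-differentiability of Brownian sample paths through the homeomorphism $A:\Di\to C_0(I)$ defined in \eqref{eq:A}. Recall that $\nu_\sigma=(A^{-1})_*\W_\sigma$, so a property $P$ of $f\in\Di$ holds $\nu_\sigma$-almost surely if and only if the corresponding property $P\circ A^{-1}$ holds $\W_\sigma$-almost surely on $C_0(I)$.

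First, I would invoke the theorem stated in Section~2.3 of the excerpt: $\W_1$-almost surely, a Brownian path is differentiable at no point. For arbitrary $\sigma>0$, the measure $\W_\sigma$ is the law of $\sigma\,x$ where $x$ is a standard Brownian motion, and multiplication by the positive constant $\sigma$ preserves the set of points of differentiability. Hence $\W_\sigma$-almost every $x\in C_0(I)$ is nowhere differentiable on $I$.

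Second, I would push this back to $\Di$: for $\nu_\sigma$-almost every $f\in\Di$, the function $A(f)=\log f'-\log f'(0)$ is nowhere differentiable. Subtracting the constant $\log f'(0)$ does not affect differentiability, so $\log f'$ is itself nowhere differentiable. Finally, since $f'$ is a positive continuous function on the compact interval $I$, its image lies in a compact subset of $(0,+\infty)$, on which the logarithm is a $C^\infty$ diffeomorphism. If $f'$ were differentiable at some point $t_0$, then by the chain rule $\log f'=\log\circ f'$ would also be differentiable at $t_0$, contradicting the preceding step. Therefore $f'$ is nowhere differentiable, i.e.\ $f$ is nowhere of class $C^2$.

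There is no real obstacle here: the argument is a direct application of the pushforward structure $\nu_\sigma=(A^{-1})_*\W_\sigma$ combined with the Brownian regularity theorem. The only point requiring a line of care is the reverse direction of the chain rule used in the last paragraph — namely, that post-composition by a $C^1$ diffeomorphism cannot create differentiability where there is none — and this is immediate because $\log^{-1}=\exp$ is also $C^1$, so differentiability of $f'=\exp(\log f')$ at $t_0$ would transfer back to differentiability of $\log f'$ at $t_0$.
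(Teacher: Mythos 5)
Your argument is correct and is essentially the paper's own (implicit) proof: one pushes the almost-sure nowhere differentiability of Brownian paths through the homeomorphism $A$, using $\nu_\sigma=(A^{-1})_*\W_\sigma$, the harmless multiplication by a positive constant relating $\W_\sigma$ to $\W$, and the fact that subtracting the constant $\log f'(0)$ and composing with $\log$/$\exp$ (a $C^1$ diffeomorphism on a neighbourhood of the compact range of $f'$) preserve pointwise (non)differentiability. Your reading of the statement as ``$f'$ is differentiable at no point'', i.e.\ $f$ is $C^2$ at no point, is indeed the intended one.
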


\begin{cor}[Presque jamais $\C{1+vb}$]
Soit $\mrm{Diff}^{\hspace{0.8pt} 1+vb}_+(I)$ l'ensemble des difféomorphismes $f\in \Di$ tels que $f'$ est une fonction de variation bornée sur $I$. Alors $\nu_\sigma\left (\mrm{Diff}^{\hspace{0.8pt} 1+vb}_+(I)\right )=0$.
\end{cor}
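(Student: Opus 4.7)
Mon plan est de passer par l'homéomorphisme $A : \Di \to C_0(I)$, $f \mto \log f' - \log f'(0)$, qui transforme $\nu_\sigma$ en la mesure de Wiener $\W_\sigma$ par construction (définition de $\nu_\sigma = (A^{-1})_*\W_\sigma$). L'identité
\[\nu_\sigma\!\left (\mrm{Diff}^{\hspace{0.8pt}1+vb}_+(I)\right ) = \W_\sigma\!\left (A\!\left (\mrm{Diff}^{\hspace{0.8pt}1+vb}_+(I)\right )\right )\]
ramène l'énoncé à montrer que l'ensemble $A\left (\mrm{Diff}^{\hspace{0.8pt}1+vb}_+(I)\right )$ est contenu dans l'ensemble des fonctions à variation bornée de $C_0(I)$ -- qui est $\W_\sigma$-négligeable d'après le théorème \ref{thm:variationbrownien} sur la variation infinie du mouvement brownien.

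L'étape-clé est donc de vérifier l'équivalence : \emph{$f'$ est à variation bornée si et seulement si $A(f)$ est à variation bornée}. Pour un difféomorphisme $f\in\Di$, la dérivée $f'$ est continue et strictement positive sur le compact $I$, donc à valeurs dans un intervalle fermé $[m,M]\subs\,]0,+\infty[$. Puisque $\log$ est lipschitzienne sur $[m,M]$ et $\exp$ lipschitzienne sur $[\log m,\log M]$, on a l'inclusion (avec constantes dépendant seulement de $m$ et $M$)
\[\mrm{Var}(\log f') \le \tfrac{1}{m}\,\mrm{Var}(f'),\qquad \mrm{Var}(f')\le M\,\mrm{Var}(\log f').\]
Comme $A(f) = \log f' - \log f'(0)$ diffère de $\log f'$ d'une constante additive, $\mrm{Var}(A(f)) = \mrm{Var}(\log f')$, ce qui établit l'équivalence attendue.

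Il suffit alors de conclure en invoquant le théorème \ref{thm:variationbrownien} : presque sûrement, une fonction tirée selon $\W_\sigma$ est à variation infinie (pour $\sigma\ne 1$ il suffit de remarquer que $\W_\sigma$ est l'image de $\W$ par la dilatation $x\mto \sigma x$, qui préserve évidemment la classe des fonctions à variation bornée). Par conséquent $\W_\sigma\!\left (\{x\in C_0(I)\,:\,\mrm{Var}(x)<\infty\}\right ) = 0$, d'où le résultat.

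Aucune difficulté technique sérieuse n'est à prévoir : l'argument est entièrement « par transport » de la propriété classique de variation infinie du mouvement brownien à travers l'homéomorphisme naturel $A$. Le seul point qui mérite une vérification soigneuse est l'équivalence $f'\in BV \Leftrightarrow A(f)\in BV$, qui repose uniquement sur la compacité de $I$ et la positivité de $f'$ (compacité qui garantit les bornes $0<m\le f'\le M<\infty$ indispensables pour appliquer le caractère localement lipschitzien de $\log$ et $\exp$).
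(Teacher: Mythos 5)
Votre démonstration est correcte et suit essentiellement la même voie que le texte : transporter l'énoncé par l'homéomorphisme $A$ (la mesure $\nu_\sigma$ étant par définition $(A^{-1})_*\W_\sigma$), constater que $f'\in BV$ équivaut à $A(f)\in BV$ grâce au caractère lipschitzien de $\log$ et $\exp$ sur les compacts $[\inf f',\sup f']$ et $[\log\inf f',\log\sup f']$ -- c'est exactement l'argument de la proposition~\ref{prop:modcont} adapté à la variation -- puis invoquer le théorème~\ref{thm:variationbrownien} sur la variation infinie presque sûre du mouvement brownien. Votre remarque sur le passage de $\W$ à $\W_\sigma$ par dilatation est un détail utile mais sans conséquence, la dilatation préservant la classe $BV$.
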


\begin{cor}[Dérivée $1/2$-höldérienne]
~

\begin{enumerate}
\item Pour tout $\tau<1/2$, $\nu_\sigma\left (\mrm{Diff}^{\hspace{0.8pt} 1+\tau}_+(I)\right )=1$.
\item Pour tout $\tau>1/2$, $\nu_\sigma\left (\mrm{Diff}^{\hspace{0.8pt} 1+\tau}_+(I)\right )=0$.
\end{enumerate}
\end{cor}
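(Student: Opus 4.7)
The plan is to reduce the statement about the MS measure $\nu_\sigma$ on $\Diff{1+\tau}_+(I)$ to a statement about the Wiener measure $\W_\sigma$ on $\C{\tau}(I)$, and then invoke the Hölder regularity of Brownian trajectories (Theorem \ref{thm:bmholder}).

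First I would apply Proposition \ref{prop:modcont} with the specific choice of module of continuity $w(s) = s^{\tau}$. This proposition tells me that the homeomorphism $A:\Di \to C_0(I)$ defined in \eqref{eq:A} restricts to a homeomorphism between $\Diff{1+\tau}_+(I)$ and $\C{\tau}(I)\cap C_0(I)$; in particular both sets are Borel. Since by definition $\nu_\sigma = (A^{-1})_*\W_\sigma$, this gives
\[
\nu_\sigma\!\left(\Diff{1+\tau}_+(I)\right) \;=\; \W_\sigma\!\left(A\left(\Diff{1+\tau}_+(I)\right)\right) \;=\; \W_\sigma\!\left(\C{\tau}(I)\cap C_0(I)\right).
\]

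Next, I would observe that $\W_\sigma$ and $\W = \W_1$ are related by the linear dilation $x\mapsto \sigma x$, which is a homeomorphism of $C_0(I)$ preserving the space $\C{\tau}(I)$. Consequently the almost-sure Hölder properties of a trajectory under $\W_\sigma$ are identical to those under $\W$, and Theorem \ref{thm:bmholder} applies verbatim: for $\tau<1/2$, $\W_\sigma$-almost every $x$ is globally $\tau$-Hölder, whereas for $\tau>1/2$, $\W_\sigma$-almost every $x$ fails to be $\tau$-Hölder at any point, and a fortiori fails to lie in $\C{\tau}(I)$. Combining the two yields $\W_\sigma(\C{\tau}(I)) = 1$ in the first case and $\W_\sigma(\C{\tau}(I)) = 0$ in the second.

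There is essentially no obstacle: the whole content is bookkeeping between the two sides of the homeomorphism $A$. The only mild point to check is the elementary observation that global $\tau$-Hölder regularity implies $\tau$-Hölder regularity at every point, so that the nowhere-Hölder statement in Theorem \ref{thm:bmholder} indeed rules out membership in $\C{\tau}(I)$. I would include this as a one-line remark before concluding. No new probabilistic input is needed beyond Theorem \ref{thm:bmholder}, and in particular the proof of the two corollaries just above (nowhere $\C{2}$, almost never $\C{1+vb}$) is structurally identical, making this one a direct analogue.
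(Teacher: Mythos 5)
Your argument is correct and is exactly the route the paper intends: the corollary is read off from Proposition~\ref{prop:modcont} applied with $w(s)=s^{\tau}$, the definition $\nu_\sigma=(A^{-1})_*\W_\sigma$, and the almost-sure Hölder dichotomy of Theorem~\ref{thm:bmholder}, the passage from $\W$ to $\W_\sigma$ being harmless since a constant dilation preserves the Hölder classes. No further input is needed, so your proof matches the paper's.
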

Et pour être encore plus précis,

\begin{cor}[Module de continuité de Lévy]
\label{cor:levy}
Soit $w(t)=\sqrt{2t|\log t|}$ le module de continuité de Lévy, alors~$\nu_\sigma\left (\mrm{Diff}^{\hspace{0.8pt} 1+w}_+(I)\right )=1$.
\end{cor}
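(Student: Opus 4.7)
Le plan est de combiner directement la proposition~\ref{prop:modcont} avec le théorème~\ref{thm:modcontinuite} de Lévy, en tirant parti de la définition de $\nu_\sigma$ comme image de la mesure de Wiener par l'homéomorphisme $A^{-1}$.

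D'abord, j'observerais que la proposition~\ref{prop:modcont}, appliquée au module de continuité $w(t)=\sqrt{2t|\log t|}$, identifie $\Diff{1+w}(I)$ à l'ensemble $\C{w}(I)\cap C_0(I)$ via $A$. Puisque $\nu_\sigma = (A^{-1})_*\W_\sigma$, il suffit donc de démontrer que
\[\W_\sigma\bigl(\C{w}(I)\cap C_0(I)\bigr) = 1.\]

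Pour $\sigma=1$, ceci est une conséquence immédiate du théorème~\ref{thm:modcontinuite} : celui-ci garantit que, presque sûrement, $\varlimsup_{\ve\rightarrow 0}\sup_{|s-t|\le\ve}|B_t-B_s|/w(\ve) = 1$, ce qui entraîne en particulier que $\|B\|_w < +\infty$ presque sûrement. Pour un $\sigma > 0$ arbitraire, j'exploiterais la propriété d'échelle rappelée dans le texte : si $x$ suit la loi $\W$, alors $\sigma\,x$ suit la loi $\W_\sigma$. L'égalité $\|\sigma\,x\|_w = \sigma\,\|x\|_w$ montre que l'appartenance à $\C{w}(I)$ est invariante sous cette dilatation, d'où $\W_\sigma\bigl(\C{w}(I)\bigr) = \W\bigl(\C{w}(I)\bigr) = 1$.

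Aucun obstacle sérieux n'est à franchir : tout le contenu non-trivial est déjà emballé, d'une part dans la proposition~\ref{prop:modcont} qui traduit la régularité $\C{1+w}$ d'un difféomorphisme en la régularité $\C{w}$ de son image par $A$, et d'autre part dans le théorème de Lévy qui fournit l'information probabiliste cruciale. La seule vérification résiduelle concerne la compatibilité avec le paramètre $\sigma$, qui se résout par l'argument d'échelle élémentaire ci-dessus.
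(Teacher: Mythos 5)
Votre démarche est exactement celle du texte : la proposition~\ref{prop:modcont} ramène l'énoncé à $\W_\sigma\left(\C{w}(I)\right)=1$, que l'on tire du théorème~\ref{thm:modcontinuite}, l'argument d'échelle réglant le cas $\sigma\ne 1$. Le seul point que vous passez sous silence — et que le texte traite dans la remarque qui suit le corollaire — est le passage de l'énoncé \emph{local} \eqref{eq:levy0} à la finitude de la norme globale $\|x\|_w$ (continuité uniforme de $\frac{x(t)-x(s)}{w(|t-s|)}$ hors d'un voisinage de la diagonale, et modification de $w$ pour le rendre croissant, puisque $w(1)=0$) ; c'est une vérification mineure mais nécessaire pour que « $\|B\|_w<\infty$ » soit littéralement correct.
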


\begin{rem}
L'expression dans \eqref{eq:levy0} exprime une propriété locale. Cependant, comme la fonction $\frac{x(t)-x(s)}{w(|t-s|)}$ est uniformément continue sur le carré $I\times I$ privé d'un voisinage de la diagonale, on déduit de \eqref{eq:levy0} que la norme~$\|x\|_w$ est finie.\footnote{Pour être plus précis, il faudrait modifier la fonction $w$ pour la rendre croissante.}
\end{rem}

Remarquons que rien ne change dans les énoncés précédents si l'on s'intéresse aux mesures MS sur les difféomorphismes du cercle. 

\begin{prop}
Soit $w$ un module de continuité. L'application $B$ définie dans \eqref{eq:B} induit un homéomorphisme entre les difféomorphismes du cercle de classe $\C{1+w}$ et les fonctions de classe $\C{w}$, quand on munit ces espaces des topologies induites par leurs normes.
\end{prop}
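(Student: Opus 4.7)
Pour démontrer cette proposition, l'idée naturelle consiste à l'adapter de la démonstration de la proposition~\ref{prop:modcont} déjà établie dans le cadre de l'intervalle, en traitant séparément les deux coordonnées de l'application $B$. Je commencerai par observer que $B(f)=(B_0(f),f(0))$ : l'application $f\mapsto f(0)$ est trivialement un morphisme continu (d'inverse continu à gauche) entre $\Diff{1+w}(\T)$ et $\T$, et il suffit donc d'établir que $B_0:f\mapsto \log f'-\log f'(0)$ induit un homéomorphisme entre $\Diff{1+w}(\T)$ (quotienté par l'action de $\T$ par rotation à l'arrivée, ou bien restreint aux difféomorphismes fixant $0$) et le sous-espace de $C_0(\T)$ constitué des fonctions admettant $w$ comme module de continuité.

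La première vérification à faire est que $B_0(f)$ appartient bien à $C_0(\T)$ quand $f\in \Diff{1+w}(\T)$. Comme $f$ est un difféomorphisme du cercle, sa dérivée $f'$ est périodique sur $\R$, donc $f'(0)=f'(1)$ et par suite $B_0(f)(0)=B_0(f)(1)=0$ ; de plus $f'$ étant de module de continuité $w$ et strictement positive, la fonction $\log f'$ est aussi de module $w$ d'après la même estimation
\[
\tfrac{1}{\sup_\T f'}\,\|f'\|_w\;\le\;\|\log f'\|_w\;\le\;\tfrac{1}{\inf_\T f'}\,\|f'\|_w
\]
utilisée dans la preuve de la proposition~\ref{prop:modcont}, où l'on exploite que $\log$ est bi-Lipschitzienne sur l'intervalle compact $f'(\T)\subset\,]0,+\infty[$.

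L'étape suivante sera de vérifier la continuité de $B_0$ et de son inverse. Pour l'inverse, la formule explicite donnée dans la remarque suivant \eqref{eq:B},
\[
B^{-1}(x,\alpha)(t)=c(x)\int_0^t e^{x(s)}\,ds+\alpha,\qquad c(x)=\left(\int_0^1 e^{x(s)}\,ds\right)^{-1},
\]
montre que $\bigl(B^{-1}(x,\alpha)\bigr)'=c(x)\,e^{x(\cdot)}$ est encore de module $w$ (puisque $\exp$ est bi-Lipschitzienne sur un compact et que $x(0)=x(1)$ assure la périodicité), avec des constantes qui dépendent continûment des bornes $\sup_\T x$ et $\inf_\T x$, elles-mêmes continues en $x$ pour la norme $\|\cdot\|_w+|x(0)|$. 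Des estimations strictement analogues à celles menées ligne par ligne dans la preuve de la proposition~\ref{prop:homeo} pour la topologie $\C{1}$, mais transposées à la norme $\C{1+w}$ grâce à l'équivalence bi-Lipschitz ci-dessus, permettront de conclure à la continuité de $B$ et de $B^{-1}$.

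L'obstacle principal est essentiellement de nature comptable : il faut s'assurer que les constantes intervenant dans les majorations restent uniformes sur les voisinages $\C{1+w}$-bornés de $\Diff{1+w}(\T)$, en particulier que $\inf_\T f'$ reste borné inférieurement lorsque $f$ varie dans un tel voisinage (ce qui résulte de la continuité de $f\mapsto \inf_\T f'$ pour la norme $\C{1}$, a fortiori pour la norme $\C{1+w}$). Aucune difficulté conceptuelle nouvelle n'apparaît par rapport au cas de l'intervalle, la contrainte de périodicité $x(0)=x(1)=0$ étant automatiquement compatible avec la bi-Lipschitzianité du logarithme et de l'exponentielle.
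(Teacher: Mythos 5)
Votre démonstration est correcte et suit essentiellement la même voie que le texte : on se ramène au cas de l'intervalle (propositions~\ref{prop:homeo} et~\ref{prop:modcont}) en traitant séparément la coordonnée $f(0)$ et en utilisant le caractère bi-lipschitzien de $\log$ et $\exp$ sur les compacts de $]0,+\infty[$ pour comparer les normes $\C{w}$ de $f'$ et de $\log f'$, la périodicité de $f'$ assurant que $B_0(f)\in C_0(\T)$. Le texte ne détaille d'ailleurs pas ce cas (il renvoie au lecteur via la preuve de la proposition~\ref{prop:homeo} et note que « rien ne change ») ; vos estimations en sont la transposition attendue.
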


\begin{cor}
Soit $w(t)=\sqrt{2t|\log t|}$ le module de continuité de Lévy. Pour tout $\sigma>0$, les difféomorphismes du cercle de classe $\C{1+w}$ ont mesure $\mu_\sigma$ totale. Il en résulte qu'un difféomorphisme MS est $\C{1+\alpha}$-régulier pour tout $\alpha<1/2$.
\end{cor}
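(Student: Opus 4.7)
Le plan est de transporter le résultat déjà obtenu sur les difféomorphismes de l'intervalle (corollaire \ref{cor:levy}) au cadre du cercle, en utilisant la proposition précédente qui identifie homéomorphiquement les difféomorphismes de classe $\C{1+w}$ du cercle aux fonctions de classe $\C{w}$ sur $\T$. Il suffira donc de montrer que l'ensemble $\C{w}(\T)\subset C_0(\T)$ est de mesure $\W_{0,\sigma}$ totale, puis de conclure sur la régularité höldérienne grâce à la comparaison $w(t)=o(t^\alpha)$ quand $t\to 0$ pour tout $\a<1/2$.

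Plus précisément, la mesure $\W_{0,\sigma}$ étant définie comme l'image de $\W_\sigma$ par la projection linéaire continue $\pi:y\mto\{t\mto y(t)-t\,y(1)\}$, je vérifierai d'abord que $\pi$ préserve la classe $\C{w}$. L'application $\pi$ retranche une fonction linéaire $t\mto t\,y(1)$ qui est lipschitzienne, donc pour tout $y\in C_0(I)$,
\[
\sup_{s\ne t}\frac{|\pi(y)(s)-\pi(y)(t)|}{w(|s-t|)}\le \|y\|_w+|y(1)|\cdot\sup_{s\ne t}\frac{|s-t|}{w(|s-t|)},
\]
et le dernier supremum est fini car $w(r)\ge \sqrt{2r|\log r|}$ tend vers $0$ plus lentement que $r$ au voisinage de $0$, tout en restant minorée loin de $0$. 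Par conséquent $\pi^{-1}(\C{w}(\T))\supset \C{w}(I)$, et il suffit de vérifier que $\C{w}(I)$ est de $\W_\sigma$-mesure totale. Pour $\sigma=1$, c'est le contenu du théorème~\ref{thm:modcontinuite} (complété par la remarque sur la norme globale) ; pour $\sigma>0$ quelconque, le changement d'échelle $x\mto\sigma x$ préserve l'espace $\C{w}(I)$ et envoie $\W$ sur $\W_\sigma$, donc la conclusion reste valable (la limite ponctuelle dans \eqref{eq:levy0} vaut alors $\sigma$ au lieu de $1$, mais ceci n'affecte pas l'appartenance à $\C{w}(I)$).

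Pour la seconde affirmation, j'utiliserai le fait que pour $\alpha<1/2$,
\[
\lim_{t\to 0^+}\frac{w(t)}{t^\alpha}=\lim_{t\to 0^+}\sqrt{2|\log t|}\cdot t^{1/2-\alpha}=0.
\]
En particulier il existe une constante $C_\alpha$ telle que $w(t)\le C_\alpha\,t^\alpha$ pour tout $t\in [0,1]$, d'où $\C{w}(\T)\subset \C{\alpha}(\T)$ et $\Diff{1+w}(\T)\subset \Diff{1+\alpha}(\T)$. Par conséquent l'ensemble $\Diff{1+\alpha}(\T)$ est lui-même de $\mu_\sigma$-mesure totale. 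Aucun obstacle technique sérieux n'est à prévoir dans cette preuve : la seule subtilité réside dans le passage, \emph{via} la proposition précédente et la projection $\pi$, de l'énoncé local \eqref{eq:levy0} à une borne uniforme sur la norme $\|\cdot\|_w$, déjà signalé dans la remarque qui suit le corollaire \ref{cor:levy}.
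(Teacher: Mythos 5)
Votre preuve est correcte et suit essentiellement la voie du texte, qui énonce ce corollaire comme conséquence immédiate de la proposition précédente (l'homéomorphisme $B$ entre $\Diff{1+w}(\T)$ et $\C{w}(\T)$), du cas de l'intervalle (théorème de Lévy \eqref{eq:levy0} et corollaire \ref{cor:levy}), du fait que le pont ne diffère du brownien que d'une fonction linéaire, et du changement d'échelle pour $\sigma$. Seule petite réserve : votre affirmation que $w$ « reste minorée loin de $0$ » est littéralement fausse puisque $w(1)=0$, de sorte que $\sup_{s\ne t}|s-t|/w(|s-t|)$ n'est fini qu'après la modification de $w$ (la rendre croissante, ou se restreindre à $|s-t|\le 1/2$) que le texte signale lui-même en note de bas de page — précision sans incidence sur la conclusion.
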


\newpage
\section{Quasi-invariance des mesures MS}

\subsection{Le théorème de Cameron-Martin-Shavgulidze}

Nous en venons à la propriété des mesures MS qui les rend remarquables vis-à-vis des autres mesures sur les groupes de difféomorphismes : leur \emph{quasi-invariance}.

\begin{thm}[Shavgulidze]\label{thm:quasi-invariance}
Soit $G$ l'un des groupes $\Di$ ou $\Dc$, soit $\mu$ une mesure MS sur $G$ et soit $H$ le sous-groupe de $G$ constitué des difféomorphismes avec deuxième dérivée définie presque partout et dans $L^{2}$.
La mesure $\mu$ est quasi-invariante sous l'action du groupe $H$ par translations à gauche.

En outre, le cocycle de Radon-Nikodym est
\beqn{shavRD}{\frac{d(L_\vf)_*\mu}{d\mu}(f)=\exp\left \{-\frac{1}{\sigma}\left [\int\frac{\vf''(f(t))}{\vf'(f(t))}\hspace{0.3pt}df'(t)+\frac{1}{2}\int\left [\frac{\vf''(f(t))}{\vf'(f(t))}\hspace{0.3pt}f'(t)\right ]^2dt\right ]\right \},}
où $\sigma$ correspond au paramètre de la mesure MS et les intégrales sont faites sur l'espace total $I$, ou bien sur $\T$, en accord avec le cas considéré.
\end{thm}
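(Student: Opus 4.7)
Le plan est de transporter la quasi-invariance vers l'espace de Wiener via l'homéomorphisme $A:\Di\to C_0(I)$ (resp. $B:\Dc\to C_0(\T)\times \T$) établi à la proposition~\ref{prop:homeo}, puis d'appliquer le théorème~\ref{thm:ramer} de Ramer à la transformation non-linéaire induite. Dans le cas de l'intervalle, si $x=A(f)=\log f'-\log f'(0)$ et $\vf\in H$, alors $(\vf\circ f)'=\vf'(f)\cdot f'$, donc
\[
A(\vf\circ f)(t)=x(t)+\big[\log\vf'(f(t))-\log\vf'(f(0))\big].
\]
L'action $L_\vf$ se transporte ainsi en une transformation $T_\vf=I+K_\vf$ sur $C_0(I)$, avec $K_\vf(x)(t)=\log\vf'(f(t))-\log\vf'(f(0))$ et $f=A^{-1}(x)$. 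Le cas du cercle se ramène à ceci : le facteur $\T$ se déplace par $\alpha\mapsto\vf(\alpha)$, ce qui contribue au jacobien par $\vf'(\alpha)$, et le facteur dans $C_0(\T)$ se traite identiquement.

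La première étape est de vérifier les hypothèses de Ramer. On calcule
\[
K_\vf(x)'(t)=\frac{\vf''(f(t))}{\vf'(f(t))}\,f'(t)\,;
\]
comme $\vf'$ est bornée inférieurement (continuité sur le compact $I$), $f'$ est bornée, et $\vf''\in L^2$ par hypothèse, un changement de variable $u=f(t)$ entraîne $K_\vf(x)\in\mathbf{D}_0(I)$. Un calcul direct de dérivée directionnelle donne
\[
\partial_v f(t)=\int_0^t v(s)\,f'(s)\,ds-f(t)\int_0^1 v(s)\,f'(s)\,ds,
\]
d'où l'expression de $D_xK_\vf$ comme composition d'opérateurs de Volterra et d'une perturbation de rang $1$, multipliée par la fonction bornée $\vf''(f)/\vf'(f)$. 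L'hypothèse $\vf''\in L^2$ assure alors que $D_xK_\vf$ est un opérateur de Hilbert-Schmidt sur $\mathbf{D}_0(I)$. L'inversibilité de $T_\vf$ est immédiate, d'inverse $T_{\vf^{-1}}$.

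En appliquant le théorème~\ref{thm:ramer} à la mesure $\W_\sigma$ (qui introduit des facteurs $1/\sigma$ dans le produit scalaire et la norme), on obtient
\[
\frac{d(T_\vf)_*\W_\sigma}{d\W_\sigma}(T_\vf x)=\frac{1}{\mrm{det}_2(I+D_xK_\vf)}\exp\!\left(\tfrac{1}{\sigma}\langle K_\vf(x),x\rangle_{\mathbf{D}_0}+\tfrac{1}{2\sigma}\|K_\vf(x)\|^2_{\mathbf{D}_0}\right).
\]
Le produit scalaire de Paley-Wiener $\langle K_\vf(x),x\rangle_{\mathbf{D}_0}=\int_0^1 K_\vf(x)'(t)\,dx(t)$ se réécrit, via la relation formelle $dx(t)=df'(t)/f'(t)$, comme $\int\vf''(f)/\vf'(f)\,df'$ ; de même $\|K_\vf(x)\|^2_{\mathbf{D}_0}$ donne directement le second terme intégral. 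L'évaluation de la densité au point $f$ (au lieu de $T_\vf x$) et le passage à l'inverse du cocycle produisent le signe $-$ de \eqref{eq:shavRD}.

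L'obstacle principal sera de traiter le déterminant de Fredholm-Carleman $\mrm{det}_2(I+D_xK_\vf)$, qui ne figure pas explicitement dans \eqref{eq:shavRD}. La décomposition de $D_xK_\vf$ en partie strictement triangulaire (de type Volterra, de déterminant unité) et perturbation de rang fini devrait permettre de calculer $\mrm{det}_2$ explicitement ; on s'attend à ce que les contributions additionnelles s'absorbent dans les intégrales stochastiques via l'identité $\log\mrm{det}_2(I+K)=\mrm{tr}\,(\log(I+K)-K)$. Une approche alternative, d'ailleurs celle originellement suivie par Shavgulidze, consiste à procéder par approximation finie-dimensionnelle (proposition~\ref{prop:approx}), en appliquant le changement de variables classique à $T_\vf$ restreinte à un sous-espace de dimension $N$ engendré par une base de $\mathbf{D}_0(I)$, puis à passer à la limite sur $N$ : cette méthode court-circuite la machinerie du déterminant régularisé au prix d'une perte de transparence géométrique.
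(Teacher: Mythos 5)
Votre stratégie est exactement celle du texte : transporter l'action $L_\vf$ sur l'espace de Wiener par $A$ (resp. $B$), identifier la transformation non-linéaire $T_\vf=I+K_\vf$ à image dans $\mathbf{D}_0(I)$, vérifier que la différentielle est de Hilbert-Schmidt, puis invoquer le théorème~\ref{thm:ramer} et revenir aux difféomorphismes par le changement de variable formel $f'(t)\,dx(t)=df'(t)$. Mais le point que vous signalez vous-même comme « l'obstacle principal » est précisément le cœur de la preuve, et vous le laissez ouvert : tant que $\mrm{det}_2(I+D_xK_\vf)$ n'est pas évalué, la formule \eqref{eq:shavRD} n'est pas établie, car ce facteur pourrait a priori contribuer une densité supplémentaire. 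Votre heuristique (« les contributions additionnelles s'absorbent dans les intégrales stochastiques ») est d'ailleurs trompeuse : rien n'a besoin d'être absorbé. Le texte montre, par un lemme explicite, que le spectre de la restriction de $D_xS_h$ à $\mathbf{D}_0(I)$ est réduit à $\{0\}$ : on résout l'équation aux valeurs propres $D_xS_h(v)=\lambda v$ en la transformant en une équation différentielle linéaire du premier ordre, dont la solution ne peut s'annuler en $t=1$ que si elle est identiquement nulle, alors que l'équation intégrale impose $u(1)=0$. L'opérateur est donc quasi-nilpotent et $\mrm{det}_2(I+D_xS_h)=1$ identiquement ; c'est ce qui fait que le cocycle ne contient que les deux termes intégraux de \eqref{eq:shavRD}. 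Votre décomposition « Volterra plus rang un » ne suffit pas telle quelle : la perturbation de rang un n'est pas triangulaire et son effet sur $\mrm{det}_2$ doit être contrôlé — c'est exactement ce que fait l'argument spectral, qu'il faudrait rédiger.

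Deux remarques secondaires. L'alternative que vous esquissez (approximation de dimension finie via la proposition~\ref{prop:approx}) exigerait de démontrer l'uniforme intégrabilité des densités approchées, ce qui n'est pas plus court que le lemme spectral. Enfin, pour le cas du cercle, le facteur $\T$ ne se réduit pas à un jacobien $\vf'(\alpha)$ qu'on pourrait traiter séparément : le texte se restreint explicitement à l'action sur $\Di$ et il faut, pour conclure sur $\Dc$, expliquer comment ce jacobien se combine avec les termes de bord de l'intégrale — point qu'il conviendrait au moins de mentionner.
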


En effet, lorsque $\vf$ est de classe $\C{3}$, on peut faire une intégration par parties dans l'expressions donnant le cocycle :
dans le cas des mesures sur $\Di$ on a
\[
\frac{d(L_\vf)_*\nu_{\sigma}}{d\nu_{\sigma}}(f)
=\exp\left \{
	-\tfrac{1}{\sigma}\left [
		\tfrac{\vf''(f(1))}{\vf'(f(1))}\hspace{0.3pt}f'(1)
		-\tfrac{\vf''(f(0))}{\vf'(f(0))}\hspace{0.3pt}f'(0)-
		\int_0^1\cS_\vf(f(t))
		f'(t)^2 dt
	\right ]
\right \},
\]
alors que les termes de bord disparaissent dans le cas des mesures sur $\Dc$
\[\frac{d(L_\vf)_*\mu_{\sigma}}{d\mu_{\sigma}}(f)=\exp\left \{\tfrac{1}{\sigma}\hspace{0.3pt}\int_{\T}\cS_\vf(f(t))
		f'(t)^2 dt\right \}.\]

\begin{rem}
Observons la présence de la dérivée de Schwarz dans les cocycles de Radon-Nikodym. Nous rappelons que la dérivée schwarzienne d'un difféomorphisme $\vf$ se définit ainsi:
\[\cS_\vf(t)=\frac{\vf'''(t)}{\vf'(t)}-\frac{3}{2}\left (\frac{\vf''(t)}{\vf'(t)}\right )^2\]
ou bien encore
\[\cS_\vf(t)= D^2\log D\vf-\frac{1}{2}(D\log D\vf)^2,\]
et vérifie la relation de cocycle
$\cS_{\vf\psi}=\cS_\vf\circ \psi\cdot (\psi')^2+\cS_\psi$.
D'après cette relation, elle est naturellement définie sur l'espace des différentielles quadratiques sur le cercle.
De plus, la dérivée $\cS_\vf$ est identiquement nulle si et seulement si $\vf$ est une transformation de Möbius. Elle pourrait suggérer une certaine invariance projective.
\end{rem}

Pour compléter l'énoncé du théorème~\ref{thm:quasi-invariance}, nous observons que lorsque le sous-groupe $H$ agit \emph{à droite}, la classe de la mesure de $\mu$ n'est pas préservée, sauf dans un cas très particulier :

\begin{thm}[\cite{kosyak}\,]
Soit $G$ l'un des groupes $\Di$ ou $\Dc$, soit $\mu$ une mesure MS sur $G$ et soient $\vf$, $\psi\in G$. Alors les mesures $(R_\vf)_*\mu$ et $(R_\psi)_*\mu$ sont équivalentes si et seulement si $\vf$ et $\psi$ diffèrent d'une rotation.
\end{thm}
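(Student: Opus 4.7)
Notre approche passe par une reformulation : les mesures $(R_\vf)_*\mu$ et $(R_\psi)_*\mu$ sont équivalentes si et seulement si $\mu$ est quasi-invariante sous la translation à droite par $\vartheta:=\psi\vf^{-1}$, puisque pour tout borélien $B$ on a $(R_\vartheta)_*\mu(B)=\mu(B\vartheta^{-1})$, et que la relation $\mu(A\vf^{-1})=0\Leftrightarrow \mu(A\psi^{-1})=0$ se traduit, en posant $B=A\vf^{-1}$, par $\mu(B)=0\Leftrightarrow \mu(B\vartheta^{-1})=0$. Dire que $\vf$ et $\psi$ diffèrent d'une rotation (à gauche) équivaut alors à dire que $\vartheta$ est une rotation, ou bien l'identité dans le cas de $\Di$. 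Le problème se réduit donc à caractériser les éléments $\vartheta\in G$ tels que $(R_\vartheta)_*\mu\sim\mu$.

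En utilisant l'homéomorphisme $B:\Dc\to C_0(\T)\times\T$ (ou $A:\Di\to C_0(I)$), un calcul direct à partir de la relation $(f\circ\vartheta)'(t)=f'(\vartheta(t))\,\vartheta'(t)$ montre que l'action de $R_\vartheta$ sur la composante « pont brownien » $x=B_0(f)$ est donnée par
\[
x\longmapsto x\circ\vartheta-x(\vartheta(0))+h_\vartheta,\qquad h_\vartheta(t):=\log\vartheta'(t)-\log\vartheta'(0),
\]
tandis que la composante rotation $\beta=f(0)\in\T$ est translatée par une quantité qui ne dépend que de $x$. Puisque $\beta$ est uniformément distribuée sur $\T$ et indépendante de $x$, ce décalage est automatiquement absorbé par invariance de la mesure de Lebesgue sur $\T$. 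Ainsi la quasi-invariance de $\mu$ sous $R_\vartheta$ équivaut à celle de la mesure de Wiener $\W_{0,\sigma}$ sous la transformation non linéaire écrite ci-dessus.

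L'idée clef est d'exploiter la \emph{variation quadratique} comme invariant presque sûr. Pour un pont brownien $x$ de paramètre $\sigma$, la variation quadratique sur $[0,T]$ vaut presque sûrement $\sigma T$. Après la composition $x\mapsto x\circ\vartheta$, qui n'est autre qu'un changement de temps le long de l'image $\vartheta([0,T])=[\vartheta(0),\vartheta(T)]$, elle devient $\sigma(\vartheta(T)-\vartheta(0))$ ; la translation par la fonction déterministe $h_\vartheta$ (à variation finie pour $\vartheta$ suffisamment régulier) ne change rien. Si la loi transportée est équivalente à celle du pont brownien originel, l'égalité des variations quadratiques presque sûres impose $\vartheta(T)-\vartheta(0)=T$ pour tout $T$, c'est-à-dire que $\vartheta$ est une rotation dans le cas $\Dc$ (l'identité dans le cas $\Di$). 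Réciproquement, un calcul direct sur les covariances montre que la transformation $x\mapsto x(\cdot+\alpha)-x(\alpha)$ laisse invariante la loi du pont brownien, ce qui prouve même que $\mu$ est \emph{invariante} (et pas seulement quasi-invariante) sous les rotations à droite.

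La difficulté principale sera la justification rigoureuse du calcul de variation quadratique de $x\circ\vartheta$ lorsque $\vartheta$ n'est que $\C{1}$ (éventuellement $\C{1+\tau}$, $\tau<1/2$, en dehors d'un ensemble de mesure nulle) : il faudra vérifier que $h_\vartheta$ ne contribue pas à la variation quadratique et que le changement de temps commute bien avec la limite des sommes partielles. On pourra surmonter ce point par approximation, en se ramenant aux difféomorphismes de classe $\C{2}$ -- pour lesquels les termes croisés disparaissent trivialement -- puis en utilisant la continuité de la composition à droite pour conclure, ou alternativement en invoquant une dichotomie du type Feldman-Hájek appliquée aux opérateurs de covariance des processus gaussiens obtenus par changement de temps par $\vartheta$.
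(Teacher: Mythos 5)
Votre réduction à une seule translation à droite $\vartheta:=\psi\circ\vf^{-1}$, l'absorption de la coordonnée de rotation (indépendance de $\alpha$ et invariance de Lebesgue), et la réciproque par stationnarité cyclique du pont brownien ($x\mapsto x(\cdot+\alpha)-x(\alpha)$ en préserve la loi) sont correctes. En revanche, l'invariant presque sûr que vous employez n'est pas celui du texte : là où vous utilisez la variation quadratique, invariant global, le texte (suivant Kosyak) invoque la loi du logarithme itéré \eqref{eq:iteratedlog}, invariant local, qui donne sous $(R_\vf)_*\mu$ la constante $\sigma\sqrt{\vf'(t)}$ en chaque point $t$ ; l'équivalence des deux mesures force alors $\vf'\equiv\psi'$, donc $\psi=R_c\circ\vf$. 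Les deux invariants détectent le changement de temps induit par la composition à droite ; le vôtre a l'avantage de se résumer à un seul énoncé, celui du texte évite la discussion des sommes de Riemann.

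Le point que vous signalez vous-même est toutefois un vrai trou, et la première réparation proposée ne peut pas fonctionner. Comme $\vartheta$ n'est que de classe $\C{1}$, la fonction $h_\vartheta=\log\vartheta'-\log\vartheta'(0)$ est seulement continue : elle peut porter une variation quadratique non nulle le long des partitions dyadiques (prendre pour $\log\vartheta'$ une trajectoire de type brownien), et l'identification de la variation quadratique de l'image avec $\sigma(\vartheta(T)-\vartheta(0))$ n'est donc pas automatique. Surtout, on ne conclut pas par approximation $\C{2}$ puis continuité de la composition à droite : l'équivalence ou la singularité mutuelle de mesures ne passent pas à la limite de cette façon (l'absolue continuité ne se transmet qu'avec l'intégrabilité uniforme des densités, cf. proposition \ref{prop:approx}, et c'est de toute façon l'énoncé \emph{négatif} -- la singularité pour $\vartheta$ non rotation -- dont vous avez besoin). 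Deux manières de conclure rigoureusement : (i) votre seconde piste, la dichotomie de Feldman--Hájek : l'image de $\W_{0,\sigma}$ est gaussienne, de moyenne $h_\vartheta$ et de covariance donnée par le changement de temps ; l'équivalence de mesures gaussiennes exige que les covariances coïncident modulo une perturbation de Hilbert--Schmidt, ce qui échoue dès que $\vartheta'\not\equiv 1$, la moyenne $h_\vartheta$ ne jouant aucun rôle dans cette direction ; (ii) un calcul direct de variation quadratique : le long des partitions dyadiques (emboîtées, donc la variation quadratique du pont changé de temps converge presque sûrement vers $\sigma(\vartheta(T)-\vartheta(0))$), les termes croisés tendent vers $0$ dans $L^2$ car leur variance est contrôlée par $\sigma\,\omega(2^{-n})^2\,\bigl(\vartheta(T)-\vartheta(0)\bigr)$ où $\omega$ est le module de continuité de $h_\vartheta$, tandis que la contribution de $h_\vartheta$ est positive et croissante en $T$ ; la normalisation $\vartheta(1)-\vartheta(0)=1$ la force alors à être nulle, d'où $\vartheta(T)-\vartheta(0)=T$ pour tout $T$. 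Notez enfin que la même difficulté liée à la seule régularité $\C{1}$ est implicite dans l'esquisse du texte : la constante du logarithme itéré ne vaut $\sigma\sqrt{\vf'(t)}$ que si l'oscillation locale de $\log\vf'$ en $t$ est négligeable devant $\sqrt{2\ve\log\log(1/\ve)}$, ce qui demande aussi un argument pour un difféomorphisme $\C{1}$ quelconque.
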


\begin{rem}
En particulier, sur le groupe $\Di$, aucune mesure MS n'est quasi-invariante par la composition à droite par aucun difféomorphisme. On en déduit qu'il existe un très grand nombre de classes de mesures différentes sur $\Di$ et $\Dc$ qui sont invariantes par l'action à gauche de $H$.
\end{rem}

La preuve de ce dernier théorème utilise la propriété des logarithmes itérés \eqref{eq:iteratedlog} du mouvement brownien : si l'on considère la composition à droite par un difféomorphisme $\vf$, en chaque point $t$ on trouvera que l'événement
\beqn{modkosyak}{
\varlimsup_{\ve\rightarrow 0}\dfrac{|\log f'(t+\ve)-\log f'(t)|}{\sqrt{2\ve\,\log \log (1/\ve)}}=\sigma\sqrt{ \vf'(t)}
}
est de mesure totale pour la nouvelle mesure de probabilité $(R_\vf)_*\mu$.

L'une des différences majeures entre les actions à droite et à gauche est que la première induit un \emph{changement de temps} pour le brownien correspondant à $f$, alors que l'action à gauche est simplement une « translation non-linéaire » : soit $\vf$ un difféomorphisme assez régulier et $f$ un difféomorphisme aléatoire défini par un mouvement brownien $B$. En formules, l'action à droite donne pour la mesure de Wiener
\[ \log (f\circ \vf)'-\log (f\circ \vf)'(0)=B\circ \vf+\log \vf'-\log\vf',\]
alors que celle à gauche donne
\[\log (\vf\circ f)'-\log(\vf\circ f)'(0)=B+\log\vf'\circ f-\log\vf'(f(0)).\] 

\bigskip

Essayons de comprendre, grâce à ce qu'on a déjà essayé d'illustrer dans la partie \ref{sc:nonlineaire} du chapitre~\ref{sc:wiener}. Nous allons suivre l'exposition de Bogachev \cite{bogachev-differentiable}.

Notre but consiste maintenant à traiter la transformation non-linéaire que $L_\vf$ induit sur l'espace de Wiener. Pour des raisons de simplicité, nous considérerons seulement les mesures $\nu_{MS}$ et $\mu_{MS}$ (la variance $\sigma$ ne représente pas vraiment un obstacle, mais elle alourdit les calculs).

Soit $\vf\in \mrm{Diff}^{\hspace{0.8pt} 2}_+(I)$ et posons $h:=A(\vf^{-1})$, on écrira alors
\beqn{actionconj}{T_h(x)=(I+S_h)(x)=A\circ L_\vf\circ A^{-1}(x),}
avec 
\[S_h(x)(t)=h\left (c(x)\int_0^te^{x(s)} d s\right ),\quad c(x)=\frac{1}{\int_0^1e^x}.\]
Le groupe des difféomorphismes $\C{2}$ agit sur $C_0(I)$ par  $T$ défini en \eqref{eq:actionconj}.

\begin{rem}
De manière similaire, lorsque $\vf\in \mrm{Diff}^{\hspace{0.8pt} 1}_+(\T)$, on pose $h=B_0(\vf^{-1})$ et
\[B\circ L_\vf\circ B^{-1}(x,\alpha)=(T_h(x,\alpha),\vf^{-1}(\alpha)),\]
avec
\[T_h(x,\alpha)(t)=x(t)+h\left (\alpha+c(x)\int_0^te^{x(s)} d s\right )-h(\alpha),\quad c(x)=\frac{1}{\int_0^1e^x},\]
qui définit une action sur $C_0(\T)\times \T$.
\end{rem}

En fait, pour la preuve on se restreindra à l'action de $\mrm{Diff}^{\hspace{0.8pt} 2}_+(I)$ sur $\Di$.

L'image de l'opérateur non-linéaire $S_h$ est incluse dans $\mathbf{D}_0(I)$ puisque $h\in \mathbf{D}_0(I)$ et $c(x)\int_0^te^x\in \C{1}$. Sa dérivée jacobienne au point $x\in C_0(I)$, dans la direction de $v\in \mathbf{D}_0(I)$ est
\begin{align*}
D_xS_h(v)(t)=\,&\frac{d}{d\ve}h\left (
	c(x+\ve\cdot v)\int_0^te^{x+\ve\cdot v}\right )\,
\Bigg\vert_{\ve=0}\\
=\,&
h'\left (
	c(x)\int_0^te^x\right )\left [ c(x)\int_0^t(e^xv)-c(x)^2\int_0^te^x\int_0^1(e^xv)\right ].
\end{align*}
Cette expression peut sembler un peu trop complexe, donc pour aider le lecteur, rappelons qu'il faut substituer mentalement $f(t)=c(x)\int_0^te^x$ pour écrire plus simplement
\beqn{differentielleSh}{
D_xS_h(v)(t)=h'(f(t))\left[\int_0^t(f'v)-f(t)\int_0^1(f'v)\right].
}
On remarque que l'opérateur $D_xS_h$ est de type intégral avec son noyau défini par l'expression entre crochets :
\[K_{h,x}(t,s):=h'\left (f(t)\right )\left (\mathbf{1}_{[0,t]}(s)-f(t)\right )f'(s),\]
appartenant à l'espace $L^2([0,1]^2)$.
En particulier $D_xS_h$ est un opérateur compact.

\begin{lem}
Soit $h\in \mathbf{D}_0(I)$ et $S_h$ défini par \eqref{eq:actionconj}. Alors pour tout $x\in C_0(I)$, le spectre de la restriction de $D_xS_h$ à $\mathbf{D}_0(I)$ est réduit à la seule valeur propre $0$.
\end{lem}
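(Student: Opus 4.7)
Le plan est de traduire l'équation aux valeurs propres $D_xS_h(v)=\lambda v$ en une équation différentielle ordinaire du premier ordre linéaire, et d'exploiter les conditions au bord imposées par l'appartenance à $\mathbf{D}_0(I)$ pour forcer $v\equiv 0$ dès que $\lambda\neq 0$.

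Concrètement, je poserais $f=A^{-1}(x)$ et $w(t)=\int_0^t f'(s)\,v(s)\,ds$, de sorte que $w(0)=0$ et $w'(t)=f'(t)\,v(t)$. Avec cette notation, l'expression \eqref{eq:differentielleSh} de $D_xS_h(v)$ se réécrit $\lambda v(t) = h'(f(t))\bigl[w(t)-f(t)\,w(1)\bigr]$, et l'on vérifie immédiatement qu'aux points $t=0$ et $t=1$ le membre de droite s'annule (puisque $f(0)=0$, $f(1)=1$), si bien que tout vecteur propre pour $\lambda\neq 0$ satisfait automatiquement $v(0)=v(1)=0$. La première étape consisterait donc à remarquer que l'on peut substituer $v(t)=\lambda^{-1}h'(f(t))[w(t)-f(t)w(1)]$ dans $w'(t)=f'(t)v(t)$ pour obtenir, en introduisant $u(t):=w(t)-f(t)\,w(1)$ et $\phi(t):=h(f(t))$, l'équation linéaire
\[
u'(t)-\tfrac{\phi'(t)}{\lambda}\,u(t)=-f'(t)\,w(1),\qquad u(0)=u(1)=0.
\]

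La deuxième étape serait d'intégrer cette équation à l'aide du facteur intégrant $e^{-\phi(t)/\lambda}$. Comme $\phi(0)=h(0)=0$, on obtient
\[
u(t)=-w(1)\,e^{\phi(t)/\lambda}\int_0^t e^{-\phi(s)/\lambda}\,f'(s)\,ds,
\]
et après le changement de variable $r=f(s)$ (licite car $f$ est un difféomorphisme), ceci devient
\[
u(t)=-w(1)\,e^{h(f(t))/\lambda}\int_0^{f(t)} e^{-h(r)/\lambda}\,dr.
\]
La condition au bord $u(1)=0$ devient alors $w(1)\cdot e^{h(1)/\lambda}\int_0^1 e^{-h(r)/\lambda}\,dr=0$; l'intégrande étant strictement positif, on en déduit $w(1)=0$.

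Troisième et dernière étape: une fois que $w(1)=0$, on a $u=w$, et l'équation différentielle devient l'équation homogène $w'(t)=\tfrac{\phi'(t)}{\lambda}\,w(t)$ avec $w(0)=0$, dont l'unique solution est $w\equiv 0$; par suite $v=w'/f'\equiv 0$. Comme $D_xS_h$ est compact sur $\mathbf{D}_0(I)$ (son noyau $K_{h,x}$ étant dans $L^2([0,1]^2)$), son spectre est réduit à $\{0\}$ dès qu'il n'a aucune valeur propre non nulle, ce qui conclut. L'unique difficulté que j'anticipe est la vérification soignée du changement de variable et du calcul du facteur intégrant, en particulier la simplification $\int_0^t f'(s)h'(f(s))\,ds=h(f(t))$ qui est le point-clé permettant à l'argument de fonctionner; tout le reste n'est qu'algèbre linéaire.
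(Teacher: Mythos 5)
Votre preuve est correcte et suit essentiellement la même route que celle du texte : le texte pose $v=h'(f)\,u$, aboutit (au facteur $\lambda$ près) à la même équation différentielle linéaire du premier ordre pour $u=w-f\,w(1)$, la résout explicitement et utilise les conditions $u(0)=u(1)=0$ pour forcer la constante $E=-w(1)$ à s'annuler, d'où $v\equiv 0$ puis, par compacité de $D_xS_h$, un spectre réduit à $\{0\}$. La seule différence est cosmétique : vous évitez l'intégration par parties intermédiaire du texte en dérivant directement la relation, le point-clé $\int_0^t f'(s)h'(f(s))\,ds=h(f(t))$ étant le même dans les deux arguments.
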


\begin{proof}
Soit $\lambda\in \mathbf{C}-\{0\}$ et soit $v\in \mathbf{D}_0(I)$ une solution de l'équation
\[D_xS_h(v)=\lambda\,v.\]
On écrit, en utilisant un peu improprement $f(t)=c(x)\int_0^te^x$, comme dans \eqref{eq:differentielleSh},
\beqn{valeurpropre}{h'\left (
	f(t)\right )\left [ \int_0^t(f'v)-f(t)\int_0^1(f'v)\right ]=\lambda\,v(t),}
Substituons la fonction inconnue $v$ par une nouvelle inconnue $u$, liée par la relation
\[v(t)=h'\left (
	f(t)\right )\,u(t).\]
Alors l'équation \eqref{eq:valeurpropre} devient
\beqn{valeurpropre2}{\int_0^t f'(s)h'\left (
	f(s)\right )\,u(s)\,ds+E\,f(t)=\lambda\,u(t),}
avec
\[E=-\int_0^1(f'v)\]
ne dépendant pas de $t$ et que par homogeneité de l'équation, on suppose égal à $0$ ou $1$.
Remarquons qu'en posant $t=1$ dans l'équation \eqref{eq:valeurpropre2}, on trouve $u(1)=0$ (il s'agit en quelque manière de la clef de la preuve !).

Avec une intégration par parties, on trouve
\beqn{valeurpropre3}{
h\left (
	f(t)\right )\,u(t)-\int_0^th\left (
	f(s)\right )\,u'(s)\,ds+E\,f(t)=\lambda\,u(t).}
En dérivant l'expression \eqref{eq:valeurpropre3} on trouve l'égalité presque partout :
\[h'\left (
	f(t)\right )f'(t)\,u(t)+E\,f'(t)=\lambda\,u'(t).\]
Les solutions de cette équation linéaire du premier ordre sont de la forme
\[u(t)=\exp\left (\tfrac{1}{\lambda}h\left (
	f(t)\right )\right )\left (\tfrac1\lambda E\int_0^tf'(s)\exp\left (-\tfrac1\lambda h(f(s))\right )\,ds+b\right ),\quad b\in\R.\]
En fait $b$ doit forcement être égal à $0$, puisque d'après l'équation \eqref{eq:valeurpropre2} on a $u(0)=0$.
Donc
\[u(t)=E\tfrac1\lambda\exp\left (\tfrac{1}{\lambda}h\left (
	f(t)\right )\right )\int_0^tf'(s)\exp\left (-\tfrac1\lambda h(f(s))\right )\,ds\]
et en particulier, si $E\ne 0$, la fonction $u$ est non nulle pour tout $t>0$. Comme $u(1)=0$ alors $E=0$. Donc $\lambda$ n'appartient pas au spectre de $D_xS_h$.
\end{proof}

En donnant référence à la partie~\ref{sc:nonlineaire} du chapitre~\ref{sc:wiener},
on en déduit que le déterminant de Fredholm-Carleman de $T_h=I+S_h$ est égal à $1$. Donc, par le théorème~\ref{thm:ramer} sur les transformations linéaires de la mesure de Wiener, on a
\begin{multline*}
\frac{d(T_h)_*\W}{d\W}(T_h(x))=\exp\left (\langle S_h(x),x\rangle_{\mathbf{D}_0(I)}+\tfrac12\|S_h(x)\|_{\mathbf{D}_0(I)}^2\right ) \\
=
\exp\left (\scalebox{0.9}{\text{$\int_0^1h'\left (c(x)\int_0^te^x\right )c(x)e^{x(t)}dx(t)+\tfrac12\int_0^1\left (h'\left (c(x)\int_0^te^x\right )c(x)e^{x(t)}\right )^2dt$}}\right ).
\end{multline*}
En revenant à l'espace des difféomorphismes, on peut écrire
\beqn{deriveediff}{
\frac{d(L_\vf)_*\nu_{MS}}{d\nu_{MS}}(L_\vf(f))=
\exp\left (\scalebox{0.9}{\text{$\int_0^1(\log (\vf^{-1})')'\circ f\,df'+\tfrac12\int_0^1\left ((\log (\vf^{-1})')'\circ f\cdot f'\right )^2$}}\right ).
}
Nous aiderons le lecteur à transformer l'expression précédente en celle \eqref{eq:shavRD} de l'énoncé du théorème \ref{thm:quasi-invariance}. Nous notons $g$ la variable $L_\vf(f)=\vf^{-1}\circ f$. Grâce à ce changement de variable, on a
\begin{enumerate}
\item $(\log (\vf^{-1})')'\circ f=-(\log \vf')'\circ g\cdot (\vf^{-1})'\circ f=-(\log \vf')'\circ g\cdot\frac{1}{\vf'\circ g}$ ;
\item $f'=(\vf\circ g)'=\vf'\circ g\cdot g'$ ;
\item $df'=d(\vf\circ g)'=\vf''\circ g\cdot(g')^2+\vf'\circ g\,dg'$.
\end{enumerate}
Les égalités 1.~et 3.~donnent pour le premier terme dans \eqref{eq:deriveediff}
\begin{align*}
\int_0^1(\log (\vf^{-1})')'\circ f\,df'=\,& -\int_0^1 (\log \vf')'\circ g\,dg'-\int_0^1(\log \vf')'\circ g\cdot \frac{\vf''\circ g}{\vf'\circ g}(g')^2 \\
=\,&-\int_0^1 (\log \vf')'\circ g\,dg'-\int_0^1\left [(\log \vf')'\circ g\cdot g'\right ]^2,
\end{align*}
alors que pour le deuxième, en utilisant 1.~et 3.~on trouve :
\[\tfrac12\int_0^1\left ((\log (\vf^{-1})')'\circ f\cdot f'\right )^2=\tfrac12\int_0^1\left ((\log \vf')'\circ g\cdot g'\right )^2.\]
Donc on a
\[
\frac{d(L_\vf)_*\nu_{MS}}{d\nu_{MS}}(g)=
\exp\left (\scalebox{1}{\text{$-\int_0^1(\log \vf')'\circ g\,dg'-\tfrac12\int_0^1\left ((\log \vf')'\circ g\cdot g'\right )^2$}}\right ).
\]

\subsection{Ergodicité}

Soit $G$ l'un des groupes $\Di$ ou $\Dc$, soit $\mu$ une mesure MS sur $G$, le couple~$(G,\mu)$ définit un espace mesuré, sur lequel le groupe $H$ agit, non en préservant la mesure, mais en préservant les ensembles négligeables (l'action de $H$ \emph{quasi-préserve} la mesure $\mu$).

\begin{dfn} Soit $(X,\mu)$ un espace mesuré standard sur lequel un groupe $H$ agit en quasi-préservant la mesure $\mu$.

On dit que l'action de $H$ est \emph{ergodique} si tout ensemble mesurable invariant est négligeable ou bien conégligeable\footnote{Un ensemble mesurable $A\subset X$ est \emph{conégligeable} si le complémentaire dans $X$ est négligeable.}.
\end{dfn}

Dans ce contexte, pour des raisons liées à la théorie des représentations, Kosyak a étudié l'ergodicité de l'action de $H$ sur $(G,\mu)$ \cite{kosyak}. 

\begin{thm}[Kosyak]
\label{thm:kosyak}
Soit $G$ l'un des groupes $\Di$ ou $\Dc$, soit $\mu$ une mesure MS sur $G$ et soit $H$ un sous-groupe de $G$ qui contient tous les difféomorphismes de classe $\C{2+\alpha}$ ($\alpha<1/2$).

L'action de $H$ sur $(G,\mu)$ est ergodique.
\end{thm}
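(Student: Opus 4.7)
The plan is to transport the problem via the homeomorphism $A$ (respectively $B$ for the circle case) onto the Wiener space: the MS measure becomes $\W$ (resp.~$\W_0\otimes\Leb$), and the left action of $\varphi\in H$ becomes the non-linear transformation $T_h=I+S_h$ from \eqref{eq:actionconj}, with $h=A(\varphi^{-1})$. The statement to prove then becomes the ergodicity of the family $\{T_h\}$ acting on $(C_0(I),\W)$, as $\varphi$ ranges over $H$ (the extra factor $\T$ in the circle case is handled by Fubini, since the Lebesgue measure on $\T$ is plainly ergodic under the transitive action of any dense subgroup of rotations).

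I would first establish the linear skeleton: the Wiener measure $\W$ is ergodic under the honest translation action $x\mto x+v$ of any dense linear subspace $V\subset\mathbf{D}_0(I)$. Using Theorem~\ref{thm:rep_gaus} to identify $(C_0(I),\W)$ with $(\R^\N,\gamma^{\otimes\N})$ and $\mathbf{D}_0(I)$ with $\ell^2(\N)$, any $F\in L^2$ invariant under translations by a dense $V\subset\ell^2(\N)$ is in particular invariant under translations by every canonical basis vector $e_n$, hence measurable with respect to the tail $\sigma$-algebra generated by $(z_k)_{k\ne n}$. Kolmogorov's zero-one law then forces $F$ to be $\gamma^{\otimes\N}$-almost surely constant. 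The regularity hypothesis on $H$ enters here: via Proposition~\ref{prop:modcont}, the image $A(H\cap \mrm{Diff}^{2+\alpha}_+(I))$ contains a dense subset of $\mathbf{D}_0(I)$ lying inside $\C{1+\alpha}$.

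The next and crucial step is the non-linear bridge. Suppose $E\subset C_0(I)$ is $\{T_h\}$-invariant with $\W(E)>0$. At the reference point $x_0=0$ one has $S_h(0)=h$, so $T_h$ agrees infinitesimally with the linear translation by~$h$ at the identity of $\Di$; more generally the Jacobian structure exploited in the proof of Theorem~\ref{thm:quasi-invariance} shows that $T_h$ differs from a Cameron-Martin translation by a non-linear perturbation whose finite-dimensional approximations satisfy the compactness hypothesis of Proposition~\ref{prop:approx}. Combining the explicit cocycle~\eqref{eq:shavRD} with such finite-dimensional approximations along one-parameter subgroups $(\varphi_t)_{t\in\R}\subset H$ generated by compactly supported smooth vector fields, the idea is to transfer $\{T_h\}$-invariance of $\mathbf{1}_E$ into invariance under a sufficiently rich family of linear Cameron-Martin translations, so that the previous step applies.

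The principal obstacle is precisely this bridging: the non-linearity of $T_h$ prevents a direct invocation of the Gaussian zero-one law, and one must argue either by finite-dimensional approximation in the spirit of Theorem~\ref{thm:ramer} and Proposition~\ref{prop:approx}, or by following Kosyak's more representation-theoretic path, analysing the unitary action $U_\varphi F(g):=F(\varphi^{-1}g)\sqrt{\rho_\varphi(g)}$ on $L^2(G,\mu)$, with $\rho_\varphi$ given by \eqref{eq:shavRD}, and showing that it admits no non-constant invariant vector.
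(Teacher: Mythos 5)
Your framework (transporting the action to Wiener space via $A$, and reducing to a Gaussian zero--one law for the linear translation action) is the right one, but the step you yourself flag as ``the principal obstacle'' is precisely the content of the theorem, and the routes you sketch for it would not go through as stated. The difficulty is that $S_h(x)=h\circ f_x$, with $f_x(t)=c(x)\int_0^t e^{x}$, depends on $x$: invariance of a set under all the maps $T_h$ does not, by any limit or one-parameter-subgroup argument along the lines of Proposition~\ref{prop:approx}, produce invariance under translation by a \emph{fixed} Cameron--Martin vector, because no single $h$ gives the same increment at different points $x$. The paper's proof resolves this by reversing the quantifiers: one works with the \emph{orbit equivalence relation} rather than with individual group elements. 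For a.e.\ fixed $x$ (lying in the full-measure set of $\C{1/2-\ve}$ paths) and any prescribed finite perturbation $v=\sum_{n\le N}a_n\Phi_n$, one simply solves $h\circ f_x=v$, i.e.\ $h=v\circ f_x^{-1}$; since $f_x$ is a $\C{1+1/2-\ve}$ diffeomorphism and $v$ is smooth, $h$ is of class $\C{3/2-\ve}$, so the corresponding $\vf$ lies in $\mrm{Diff}^{\hspace{0.8pt}2+\alpha}_+$ with $\alpha<1/2$, hence in $H$ --- this is exactly where the regularity hypothesis enters, not (as you use it) to get a dense set of linear translation directions. Consequently every orbit of the finite-support translation group $\R^{fin}$ on $(\R^{\N},\g^{\otimes\N})$ is contained in an orbit of the $H$-action, so any $H$-invariant set is $\R^{fin}$-invariant and the Hewitt--Savage (or Kolmogorov) zero--one law concludes. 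Without this quantifier reversal your ``bridge'' is missing its load-bearing idea, and falling back on ``Kosyak's representation-theoretic path'' is not a proof.

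Two secondary points. First, in your linear skeleton, invariance under translations by a dense subspace $V\subset\ell^2(\N)$ does not literally give invariance under translations by the basis vectors $e_n$ (a dense subspace need not contain any $e_n$); you need to pass to the closure of the set of admissible translations using the $L^1$-convergence of the Radon--Nikodym densities, or, as in the paper, work directly with $\R^{fin}$. Second, for the circle case the invariant set lives on $C_0(\T)\times\T$ and need not be a product, so ``Fubini plus ergodicity of rotations'' is not enough by itself; the paper handles the factor $\T$ by choosing $\vf\in H$ that simultaneously realizes the prescribed finite perturbation of the bridge coordinates and sends the rotation coordinate $\a$ to an arbitrary prescribed $\theta$.
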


\begin{proof}
La preuve du théorème~\ref{thm:kosyak} consiste à étudier l'action non-linéaire donnée par les transformations~$S_h$ (voir \eqref{eq:actionconj}\,) sur l'espace de Wiener, mais vu de manière « hilbertienne », comme nous l'avons fait dans la partie \ref{ssc:espace-wiener} du chapitre \ref{sc:wiener}. Pour commencer, on s'intéressera à l'action de $H$ sur $(C_0(I),\W)$.

\medskip

Avec le choix de la base orthonormée de Fourier $(g_n)_{n\in \N}$ de $L^2(I)$ (voir \eqref{eq:base-fourier}\,), nous avons vu que la variable
\[x(t)=\sum_{n\in \N}Z_n\hspace{0.3pt}\Phi_n(t)\]
définit un mouvement brownien, lorsque les $Z_n$ forment une suite de variables aléatoires gaussiennes centrées de variance $1$ i.i.d., et les $\Phi_n$ sont définies par intégration des $g_n$ (voir encore la partie \ref{ssc:espace-wiener} du chapitre \ref{sc:wiener}). Nous allons nous restreindre aux fonctions $x$ appartenant à un ensemble de mesure pleine $\Omega\subs C_0(I)$ de sorte que tout $x\in \Omega$ est de classe~$\C{1/2-\ve}$ pour tout $\ve>0$.

L'expression définissant $S_h$ dit que la fonction $x$ est transformée en
\begin{multline*}
S_h(x)(t)=x(t)+h\left (c(x)\int_0^t e^{x(s)}\,ds\right )\\=
\sum_{n\in \N}\left [Z_n+\left \langle h\left (c(x)\int_0^t e^{x(s)}\,ds\right ),\Phi_n\right \rangle_{L^2}\right ]\hspace{0.3pt}\Phi_n.\end{multline*}

Fixons $x\in \Omega$ et soit $(a_n)_{n\in \N}$ une suite à support dans $\{0,\ldots,N\}$, soit $h$ tel que $h\left (c(x)\int_0^t e^{x(s)}\,ds\right )=\sum_{n\le N}a_n\hspace{0.3pt}\Phi_n(t)$, alors $h$ est une fonction de classe $\C{3/2-\ve}$ (les éléments de la base de Fourier sont de classe $\C{\infty}$), et si l'on définit~\mbox{$\vf^{-1}:=A^{-1}(h)$}, alors $\vf\in H$.

Pourquoi ce choix ? En fait, nous venons de montrer que la suite $(Z_n)_{n\in \N}$ peut être envoyée par un élément de $H$ sur la suite $(Z_n+a_n)_{n\in \N}$, qui peut être arbitrairement choisie sur toute fenêtre $\{0,\ldots,N\}$. 

En regardant les coefficients de Fourier, nous identifions $(C_0(I),\W)$ avec l'espace gaussien $(\R^{\N},\g^{\otimes\N})$, où $\gamma$ est la mesure  gaussienne standard sur $\R$. Nous avons que toute orbite de l'action sur $(\R^{\N},\gamma^{\otimes\N})$ du groupe~$\R^{fin}$ des translations à support fini, est incluse dans une orbite de l'action non-linéaire induite par $H$. Or, il est classique que l'action de $\R^{fin}$ est ergodique (pour les probabilistes, il s'agit fondamentalement de la loi du $0$--$1$ de Hewitt-Savage \cite[Lemma 1.25]{peres}\,) et on peut finalement conclure que l'action de $H$ sur $(\Di,\nu_{MS})$ est ergodique.

\medskip

La preuve pour l'action sur $(\Dc,\mu_{MS})$ suit les mêmes lignes : en notant $\tilde{\Phi}_n$ la base de $\bD_0(\T)$, pour toute suite $(a_n)_{n\in \N}$ à support dans $\{0,\ldots,N\}$ et $\theta\in \T$, on peut définir $\vf\in H$ tel que
si $h=B_0(\vf^{-1})$, alors
\begin{enumerate}
\item $h\left (\a+c(x)\int_0^t e^{x(s)}\,ds\right )=\sum_{n\le N}a_n\hspace{0.3pt}\tilde{\Phi}_n(t)$,
\item $\vf^{-1}(\alpha)=\theta$,
\end{enumerate}
où $(x,\alpha)\in C_0(\T)\times \T$ est fixé.
\end{proof}

\clearpage
\chapter{Décrire la dynamique aléatoire}
\label{chapter:2}
\section{Points périodiques et nombre de rotation aléatoire}

Pour comprendre la dynamique décrite par un difféomorphisme, le premier pas consiste à obtenir le plus d'informations possibles sur les orbites. Dans le cas d'une dynamique en dimension $1$, nous avons pu voir au début du chapitre~\ref{sc:diffeos} que la connaissance des orbites périodiques permet une bonne description des phénomènes globaux. Il s'avère que si l'on considère des difféomorphismes aléatoires par rapport aux mesures MS, les orbites périodiques sont de nature assez contrôlée.

\subsection{Difféomorphismes de l'intervalle}
\label{ssec:points-fixes-intervalle}

Puisque nos difféomorphismes préservent l'orientation, tout point périodique sur l'intervalle est nécessairement un point fixe.
Nous voudrions donc  décrire l'ensemble des points fixes d'un difféomorphisme aléatoire de~$[0,1]$ : est-il fini ou bien infini ?
Pour nous donner une réponse exhaustive, nous étudions le \emph{caractère} des points fixes : un point $t$ tel que $f(t)=t$ est \emph{parabolique} si $f'(t)=1$, et \emph{hyperbolique} dans le cas contraire. Pour $t$ point fixe hyperbolique, on dira que $t$ est \emph{attractif} si $f'(t)<1$ et \emph{répulsif} autrement. Les difféomorphismes qui possèdent uniquement des points fixes hyperboliques ne sont pas trop compliqués. Le théorème suivant affirme que l'on peut négliger les phénomènes paraboliques pour les difféomorphismes MS (nous entendons par ceci qu'une certaine propriété est vérifiée par un ensemble de difféomorphismes dans~$(\Di,\nu_{MS})$ de mesure $1$).

\begin{thm}
\label{thm:nomultiplicateur}
Pour tout $\la>0$, la probabilité que le difféomorphisme $f$ possède un point fixe avec multiplicateur $\la$ est nulle : 
\[\nu_{MS}(\exists\,t\,:\,f(t)=t,\,f'(t)=\la)=0.\]
En particulier un difféomorphisme MS ne possède pas de points fixes paraboliques.
\end{thm}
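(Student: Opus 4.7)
The plan is to adopt a covering strategy in the spirit of the argument, mentioned in the introduction, that planar Brownian motion almost surely avoids any given point. The conditions $f(t)=t$ and $f'(t)=\lambda$ each impose one real constraint, so the curve $t \mapsto (f(t)-t,\, f'(t)-\lambda)$ in $\R^2$ should generically miss the origin. First I dispose of the endpoints $t = 0$ and $t = 1$, which are always fixed by $f$: in the MS representation $f(t) = Z^{-1}\int_0^t e^{B_s}\,ds$ with $Z = \int_0^1 e^{B_s}\,ds$, the multipliers $f'(0) = 1/Z$ and $f'(1) = e^{B_1}/Z$ have absolutely continuous laws on $(0,\infty)$, so the events $\{1/Z = \lambda\}$ and $\{e^{B_1}/Z = \lambda\}$ each have $\nu_{MS}$-probability zero.

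For interior points, I invoke Corollary~\ref{cor:levy}: almost surely $f'$ is $\tau$-h\"olderian for every $\tau < 1/2$. Fix $\tau \in (0, 1/2)$, $\delta \in (0, 1/2)$ and $M > 0$, and work on the event $\Omega_{M,\tau} = \{\|f'\|_\infty \le M,\ [f']_{\tau} \le M\}$, whose probability tends to $1$ as $M \to \infty$. Partition $[\delta, 1-\delta]$ into at most $n$ consecutive subintervals $I_k$ of length $\le 1/n$ with centers $t_k$. If $t \in I_k$ satisfies $f(t) = t$ and $f'(t) = \lambda$, then on $\Omega_{M,\tau}$
\[
|f(t_k) - t_k| \le |f(t_k) - f(t)| + |t - t_k| \le (M+1)/n, \qquad |f'(t_k) - \lambda| \le M\,n^{-\tau}.
\]

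The technical heart of the argument is a uniform density estimate: for every $t_0 \in [\delta, 1-\delta]$, the joint law of $(f(t_0), f'(t_0))$ on $(0,1) \times (0, \infty)$ admits a density bounded by a constant $K_\delta$ independent of $t_0$. Writing $f(t_0) = A/(A+C)$ and $f'(t_0) = e^B/(A+C)$ with $A = \int_0^{t_0} e^{B_s}\,ds$, $C = \int_{t_0}^1 e^{B_s}\,ds$ and $B = B_{t_0}$, the fact that conditionally on $B = b$ the variables $A$ and $C$ are independent exponential functionals (of a Brownian bridge on $[0, t_0]$ and of a Brownian motion on $[t_0,1]$ respectively) each with smooth positive density on $(0,\infty)$ yields a joint density for $(A, B, C)$ on $\R \times (0,\infty)^2$. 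The change of variables $(A, B, C) \mapsto (u, v, z) = (A/(A+C),\, e^B/(A+C),\, A+C)$ has Jacobian $z^2/v$, and Gaussian decay in $b = \log(vz)$ makes the marginal over $z$ integrable.

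Granting this bound, each event $E_k = \{|f(t_k) - t_k| \le (M+1)/n,\ |f'(t_k) - \lambda| \le Mn^{-\tau}\}$ satisfies $\P(E_k) \le C_{M,\delta}\,n^{-1-\tau}$, so the union bound gives $\P\!\left(\Omega_{M,\tau} \cap \bigcup_k E_k\right) \le C_{M,\delta}\,n^{-\tau} \to 0$ as $n \to \infty$. Sending $M \to \infty$, then $\delta \to 0$, concludes the proof. The principal obstacle is the uniform density estimate: smoothness of the joint density follows either from a Malliavin-calculus argument applied to the hypoelliptic diffusion $(B_t, A_t)$, or from Bougerol--Yor-type explicit formulas for the law of exponential Brownian functionals; the delicate point is the uniformity in $t_0 \in [\delta, 1-\delta]$ and the integrability of the tail in $z$.
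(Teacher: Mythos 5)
Your covering strategy is genuinely different from the paper's argument, and its combinatorial skeleton is fine: the endpoint reduction, the use of the H\"older regularity of $f'$ (corollaire \ref{cor:levy}), the localisation of a hypothetical fixed point with multiplier $\lambda$ to the rectangle events $E_k$, and the count $n\cdot n^{-1-\tau}=n^{-\tau}\to 0$ all work, and the limits $M\to\infty$, $\delta\to 0$ are taken in the right order. The problem is the step you yourself call the technical heart: the uniform bound $K_\delta$ on the joint density of $(f(t_0),f'(t_0))$ is asserted, not proved, and the sketch you give does not establish it. Conditionally on $B_{t_0}=b$, the exponential functionals $A$ and $C$ do have densities, but these densities are \emph{not} uniformly bounded in $b$ (as $b\to-\infty$, $C=e^b\int_0^{1-t_0}e^{W}$ concentrates near $0$ and its conditional density blows up like $e^{-b}$), so the phrase ``Gaussian decay in $b$ makes the marginal over $z$ integrable'' is not an argument: one needs genuine quantitative estimates on the Hartman--Watson/Matsumoto--Yor type laws of $\int_0^{t_0}e^{B}$ given the endpoint of the bridge, uniformly in $t_0\in[\delta,1-\delta]$, to control the $z$-integral even on the compact region actually needed (you only need a local bound near $u\in[\delta,1-\delta]$, $v$ near $\lambda$, which is more plausible than the global bound you state). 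Incidentally the Jacobian of $(a,b,c)=(uz,\log(vz),(1-u)z)$ is $z/v$, not $z^2/v$ --- harmless, but a sign that the computation has not really been carried out. This is exactly the technicality the paper deliberately avoids: it remarks that a proof via the laws of exponential functionals \cite{mayo05,mayo08,yor01} is probably feasible but would be ``extr\^emement technique''.

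For comparison, the paper's proof needs no density of exponential functionals at all. It enlarges the event to $\tilde E=\{\exists\,t:\,f'(t)=\lambda,\ f'(f(t))=\lambda\}$, views this as the planar process $\bigl(B(t)-\log\int_0^1e^{B},\,B(f(t))-\log\int_0^1e^{B}\bigr)$ hitting the diagonal point $(\log\lambda,\log\lambda)$, and, since the (random) starting point has a density on the diagonal, uses translation invariance of $B$ and two applications of Fubini to reduce everything to showing $\P\bigl(B(f(t))=B(t)\bigr)=0$ for each fixed $t\in(0,1)$; this follows from the Markov property on $\{f(t)>t\}$ (the law of $B(f(t))-B(t)$ has no atom, because $f(t)$ is measurable with respect to the past of $B$ up to time $t$ together with $\int_0^1e^{B}$) and from the time-reversal lemma \ref{lem:temps} on $\{f(t)<t\}$. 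Only qualitative non-atomicity is used. If you want to salvage your route, you must actually prove the local uniform density estimate, e.g.\ from the explicit joint law of $\bigl(\int_0^te^{B_s}\,ds,\,B_t\bigr)$; as it stands, the crux of your proof is missing.
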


Nous pouvons en déduire un premier corollaire :

\begin{cor} \label{prop:points-fixes}
Un difféomorphisme MS possède un nombre fini de points fixes, tous hyperboliques.
\end{cor}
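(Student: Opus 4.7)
Le plan est d'exploiter directement le théorème~\ref{thm:nomultiplicateur}, qui constitue évidemment le cœur du résultat. Ce théorème fournit les deux ingrédients nécessaires : il interdit presque sûrement l'existence de tout point fixe de multiplicateur $\lambda$ donné (avec $\lambda=1$ on élimine en particulier les points fixes paraboliques), et il sera le seul outil probabiliste requis ; le reste est une déduction topologique à partir de la régularité $\C{1}$ de $f$.

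L'hyperbolicité de tous les points fixes est immédiate : c'est précisément la dernière ligne de l'énoncé du théorème~\ref{thm:nomultiplicateur}. Pour la finitude, je procéderais par l'absurde. Supposons que, sur un ensemble de mesure MS strictement positive, il existe un difféomorphisme $f$ ayant une infinité de points fixes. Par compacité de $[0,1]$, on peut extraire une suite $(t_n)$ de points fixes deux à deux distincts convergeant vers un point $t^*\in [0,1]$. La continuité de $f$ donne alors $f(t^*)=\lim f(t_n)=\lim t_n=t^*$, donc $t^*$ est lui-même un point fixe. En utilisant la différentiabilité de $f$ en $t^*$, on calcule
\[
f'(t^*)=\lim_{n\to\infty}\frac{f(t_n)-f(t^*)}{t_n-t^*}=\lim_{n\to\infty}\frac{t_n-t^*}{t_n-t^*}=1,
\]
c'est-à-dire que $t^*$ est un point fixe \emph{parabolique}. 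Ceci contredit directement le théorème~\ref{thm:nomultiplicateur} (appliqué avec $\lambda=1$) qui garantit, presque sûrement, l'absence de points fixes paraboliques.

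Il n'y a essentiellement aucun obstacle dans cette preuve : toute la difficulté probabiliste est encapsulée dans le théorème~\ref{thm:nomultiplicateur}, dont la démonstration, elle, requiert les propriétés fines du mouvement brownien. L'argument ci-dessus est une traduction déterministe élémentaire : tout difféomorphisme $\C{1}$ avec une infinité de points fixes possède nécessairement un point fixe parabolique par accumulation, et la quasi-invariance de la mesure MS (via~\ref{thm:nomultiplicateur}) permet d'exclure ce phénomène.
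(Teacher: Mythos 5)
Votre preuve est correcte et suit essentiellement la même démarche que celle du texte : ramener la finitude à l'absence de points fixes paraboliques (théorème~\ref{thm:nomultiplicateur} avec $\lambda=1$), en montrant qu'une accumulation de points fixes produit un point fixe de dérivée $1$. La seule différence est anecdotique : vous obtenez $f'(t^*)=1$ directement par le quotient différentiel le long de la suite de points fixes, là où le texte invoque la continuité de la dérivée et le théorème des valeurs intermédiaires.
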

	
\begin{proof}
Il suffit de montrer que tout point fixe est isolé.
Si $\{t_k\}$ est une suite de points fixes qui s'accumule sur un point fixe $t$, par continuité de la dérivée et d'après le théorème des valeurs intermédiaires, on doit avoir~$f'(t)=1$. Donc $t$ est parabolique.
\end{proof}

La preuve du théorème~\ref{thm:nomultiplicateur} utilise fortement les propriétés du mouvement brownien et la définition des difféomorphismes MS. Nous avons recherché une démonstration qui pourtant n'utilise pas les expressions explicites des densités des variables considérées : il est assez probable que les résultats présentés dans \cite{mayo05,mayo08,yor01} puissent permettre de déduire l'énoncé du théorème, mais une telle preuve serait extrêmement technique.

\subsubsection{Cadre et notations}

Pour cette partie, on peut ne pas préciser l'espace de probabilité $(\Omega,\cB,\P)$ qui sera donc arbitraire. Pour tout mouvement brownien standard $\left (B(t)\right )_{t\in [0,1]}$ sur $\left (\Omega,\cB, \P\right )$ on définit le difféomorphisme de $[0,1]$ dans lui-même
\beqn{diffeoB}{f\,:\,t\longmapsto \dfrac{\int_0^te^{B(s)}\,ds}{\int_0^1e^{B(s)}\,ds}}
qui a la même loi qu'un difféomorphisme MS.

La définition de $f$ nous oblige à considérer une filtration plus grande que $\cF_t=\sigma\left ((B(s))_{s\le t}\right )$, la filtration naturelle associée à $B$ : pour définir $f$ au temps $t$, il faut connaître l'évolution de $B$ entre $0$ et $t$ et la moyenne totale $\int_0^1e^{B(s)}\,ds$. Il s'ensuit que si l'on définit, par exemple, la filtration régulière
\beqn{filtration}{
\cG_t=\bigcap_{t'>t}\left (\cF_{t'}\vee \sigma\left (\int_0^1e^{B(s)}\,ds\right )\right ),}
alors $f$ définit un processus sur $\left (\Omega,\cB,\P,\cG\right )$.

\subsubsection{Invariance par inversion du temps}

Il est bien connu que si $B$ est un mouvement brownien sur $(\Omega,\cB,\P)$, alors le processus «~inversé »
\[\ovl{B}(s):=B(1-s)-B(1)\]
défini encore un mouvement brownien sur $[0,1]$, mais qui est un processus par rapport à la tribu inversée \mbox{$\ovl{\cF}_t=\sigma\left ((B(s))_{1-t \le s \le 1}\right )$}. On étend cette remarque aux difféomorphismes de Malliavin-Shavgulidze : 
\begin{lem}\label{lem:temps}
Si $f$ est un difféomorphisme MS défini par l'expression \eqref{eq:diffeoB}, le difféomorphisme aléatoire
\[\ovl{f}(t)=1-\frac{\int_0^{1-t}e^{B(s)}\,ds}{\int_0^1e^{B(s)}\,ds},\]
a la même loi que $f$.
\end{lem}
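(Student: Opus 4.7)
The strategy is a direct computation: rewrite $\bar f$ in terms of the time-reversed Brownian motion $\bar B(s):=B(1-s)-B(1)$, and observe that the expression thus obtained is exactly the definition \eqref{eq:diffeoB} of an MS diffeomorphism associated to $\bar B$. Since $\bar B$ is again a standard Brownian motion on $[0,1]$, the lemma will follow immediately from the fact that the law of $f$ depends only on the law of the driving Brownian motion.

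Concretely, the plan is to perform in the numerator of $\bar f(t)$ the change of variables $u=1-s$, so that
\[
\int_0^{1-t}e^{B(s)}\,ds=\int_t^1 e^{B(1-u)}\,du=e^{B(1)}\int_t^1 e^{\bar B(u)}\,du,
\]
where in the last step we use $B(1-u)=\bar B(u)+B(1)$ and pull the constant $e^{B(1)}$ out of the integral. The same substitution applied with $t=0$ gives
\[
\int_0^1 e^{B(s)}\,ds=e^{B(1)}\int_0^1 e^{\bar B(u)}\,du.
\]
The factor $e^{B(1)}$ then cancels in the ratio defining $\bar f(t)$, and after writing $1=\tfrac{\int_0^1 e^{\bar B}}{\int_0^1 e^{\bar B}}$, one finds
\[
\bar f(t)=\frac{\int_0^t e^{\bar B(u)}\,du}{\int_0^1 e^{\bar B(u)}\,du}.
\]

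This is exactly the formula \eqref{eq:diffeoB} with $B$ replaced by $\bar B$. To conclude, I would invoke the classical fact that $\bar B$ is a standard Brownian motion on $[0,1]$ (it is a centred Gaussian process with the same covariance $\min\{s,t\}$, and has almost surely continuous trajectories). Hence the random diffeomorphism $\bar f$, being a measurable deterministic functional of $\bar B$ with the same form as the functional defining $f$ from $B$, has the same distribution as $f$.

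There is no real obstacle here: the only subtlety worth mentioning is that the computation is most transparent if one first writes everything in terms of the Brownian path $B$ and only afterwards rewrites in terms of $\bar B$, to make clear that the $e^{B(1)}$ factor really does cancel between numerator and denominator (so that the non-trivial endpoint value $B(1)\neq 0$ of $B$ does not affect the identity). Once this cancellation is observed, the equality in law is immediate.
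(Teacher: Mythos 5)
Your proof is correct and follows essentially the same route as the paper: both rewrite the expression in terms of the time-reversed process $\ovl{B}(s)=B(1-s)-B(1)$, observe that the factor $e^{B(1)}$ cancels, and conclude by the invariance in law of Brownian motion under time reversal. The only (cosmetic) difference is that the paper carries out the computation on $\log\ovl{f}'(t)$ while you work directly with $\ovl{f}(t)$ via the substitution $u=1-s$; the two computations are equivalent.
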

\begin{proof}
En effet soit $t\in [0,1]$, on a
\begin{align*}
\log \ovl{f}'(t)&\,=B(1-t)-\log \int_0^1e^{B(s)}ds\\
&\,=B(1-t)-B(1)-\log e^{-B(1)}\int_0^1e^{B(s)}ds\\
&\,=\ovl{B}(t)-\log \int_0^1e^{B(s)-B(1)}ds\\
&\,=\ovl{B}(t)-\log \left (-\int_1^0 e^{B(1-u)-B(1)}du\right )\\
&\,=\ovl{B}(t)-\log \int_0^1e^{\ovl{B}(u)}du.
\end{align*}
Comme $\ovl{B}$ est encore un mouvement brownien, la preuve du lemme est terminée.
\end{proof}

Si l'on définit la nouvelle filtration régulière
\[\ovl{\cG}_t=\bigcap_{t'>t}\left (\ovl{\cF}_{t'}\vee \sigma\left (\int_0^1e^{\ovl{B}(u)}\,du\right )\right ),\]
alors $\ovl{f}$ définit un processus sur $(\Omega,\cB,\P,\ovl{\cG})$.

\subsubsection{Preuve du théorème~\ref{thm:nomultiplicateur}}

La preuve suivante s'inspire de \cite[corollaire 2.26]{peres}, où l'on montre qu'un mouvement brownien planaire ne passe presque sûrement pas par un point donné (dans un intervalle de temps fini).

Notons  $E$ l'événement 
\[E=\{\exists\in [0,1]\,:\,f(t)=t,\,f'(t)=\la\}.\]
Il s'avère plus simple montrer que la probabilité de l'événement
\[\tilde{E}:=\left \{\exists\,t\in [0,1]\,:\,f'(t)=\lambda,f'(f(t))=\lambda\right \}\]
est nulle. Le résultat en suivra puisque $\tilde{E}\supset E$.

\smallskip

D'après \eqref{eq:diffeoB},
\[\log f'(t)=B(t)-\log\int_0^1e^{B(s)}\,ds,\]
et nous voulons démontrer que le processus planaire
\[Z(t):=\left (B(t)-\log\int_0^1e^{B(s)}\,ds,B(f(t))-\log\int_0^1e^{B(s)}\,ds\right )\]
ne passe pas par le point de la diagonale $(\log \la,\log \la)$: l'événement $\tilde{E}$ est égal à l'événement 
\[
\left \{\exists\,t\in [0,1]\,:\,Z(t)=(\log\la,\log\la)\right \}.
\]
Le processus planaire $\left (Z(t)\right )_{t\in[0,1]}$ part du point \emph{aléatoire} 
\[\left (-\log\int_0^1e^{B(s)}\,ds,-\log\int_0^1e^{B(s)}\,ds\right )\] et on veut connaître la probabilité qu'il atteigne le point \emph{déterministe} $(\log \lambda,\log\lambda)$. Bien évidemment la loi du point de départ possède une densité par rapport à la mesure de Lebesgue sur la diagonale $\Delta\subset \R^2$. Il nous suffira donc de montrer le résultat suivant :
\begin{lem}
Soit $x\in \R$ un point donné et $f$ un difféomorphisme de Malliavin-Shavgulidze défini par un mouvement brownien $B$ comme à la ligne~\eqref{eq:diffeoB}. Alors pour Lebesgue-presque tout $y\in\R$, on a
\[\P\left (\exists\,t\in [0,1]\,:\,(B(t)-y,B(f(t))-y)=(x,x)\right)=0.\]
\end{lem}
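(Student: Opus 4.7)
Mon plan est de commencer par une réduction par Fubini. Après le changement de variable $u=x+y$, l'énoncé équivaut à
\[\int_{\R} \P\left(\exists\,t\in[0,1],\ B(t)=u=B(f(t))\right)\,du = 0,\]
soit $\E\left[\Leb(B(S))\right]=0$, où $S:=\{t\in[0,1]\,:\,B(t)=B(f(t))\}$. Il suffira donc de montrer que, presque sûrement, $\Leb(B(S))=0$.

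J'écrirai alors $S=S_{\mrm{fix}}\sqcup S^*$, avec $S_{\mrm{fix}}:=\{t\,:\,f(t)=t\}$ et $S^*:=\{t\,:\,f(t)\ne t,\ B(t)=B(f(t))\}$. Pour la partie \emph{fixe}, j'observerai que $S_{\mrm{fix}}$ est l'ensemble des zéros de la fonction $g(t):=\int_0^t e^{B(s)}\,ds - t\int_0^1 e^{B(s)}\,ds$, qui est de classe $\C{1}$ avec $g'(t)=e^{B(t)}-\int_0^1 e^{B(s)}\,ds$. Les zéros isolés de $g$ sont au plus dénombrables ; quant aux zéros non isolés, ils sont nécessairement contenus dans $\{t\,:\,g'(t)=0\}=\{t\,:\,B(t)=\log\int_0^1 e^{B(s)}\,ds\}$, ligne de niveau sur laquelle $B$ est constante, et ont donc tous la même image par $B$. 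Il s'ensuit que $B(S_{\mrm{fix}})$ est au plus dénombrable, de mesure de Lebesgue nulle.

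Pour la partie restante $S^*$, je m'inspirerai de la preuve dans \cite[corollaire 2.26]{peres} que le mouvement brownien planaire évite presque sûrement tout point donné : il s'agira d'étudier la courbe planaire $t\mto(B(t),B(f(t)))$ et de montrer qu'elle évite presque tout point de la diagonale. Je poserai
\[V_\ve:=\left\{u\in\R\,:\,\exists\,t\in[0,1],\ \max\big(|B(t)-u|,|B(f(t))-u|\big)<\ve\right\},\]
de sorte que, par compacité de $[0,1]$ et continuité, $B(S)=\bigcap_{\ve>0}V_\ve$. La convergence monotone ramènera la preuve à l'affirmation $\lim_{\ve\to 0}\E[\Leb(V_\ve)]=0$. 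Par Fubini et un découpage de $[0,1]$ en $N\sim \ve^{-2}$ intervalles (compatible avec le module de continuité de Lévy, théorème~\ref{thm:modcontinuite}, et après localisation sur une borne $\C{0}$ de $f'$), $\E[\Leb(V_\ve)]$ se majorera par une somme de probabilités de la forme $\P(|B(t_j)-B(f(t_j))|<C\ve)$. L'\emph{obstacle principal} sera alors le contrôle fin de cette probabilité : puisque $f(t)$ est une fonctionnelle compliquée de $B$, l'accroissement $B(f(t))-B(t)$ n'est pas gaussien standard mais un mélange dépendant de toute la trajectoire ; pour l'estimer, je conditionnerai sur la tribu $\cG_t$ définie en~\eqref{eq:filtration}, qui rend $f(t)$ mesurable tout en préservant la gaussianité des accroissements postérieurs de $B$, et j'utiliserai la symétrie par inversion du temps (lemme~\ref{lem:temps}) pour traiter de manière symétrique les régimes $t$ proche de $0$ et de $1$.
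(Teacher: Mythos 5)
Votre réduction initiale (invariance par translation, puis Fubini pour se ramener à $\E\left[\Leb\left(B(S)\right)\right]=0$ avec $S=\{t\,:\,B(t)=B(f(t))\}$) est exactement celle du texte, et votre traitement de $S_{\mathrm{fix}}$ est correct et élégant : les zéros non isolés de la fonction $\C{1}$ $g(t)=\int_0^te^{B}-t\int_0^1e^{B}$ sont contenus dans $\{g'=0\}$, donc leur image par $B$ est réduite au point $\log\int_0^1e^{B}$, et $B(S_{\mathrm{fix}})$ est au plus dénombrable. (Le texte n'a pas besoin de cette décomposition : il passe par un \emph{second} Fubini, en temps, majorant la mesure des valeurs diagonales atteintes par $\E\left[\Leb\left(\{t\,:\,B(f(t))=B(t)\}\right)\right]=\int_0^1\P\left(B(f(t))=B(t)\right)dt$, de sorte qu'il suffit de l'énoncé \emph{qualitatif} « pour chaque $t$ fixé, la loi de $B(f(t))-B(t)$ n'a pas d'atome », obtenu en conditionnant par $\cG_t$ et en traitant le cas $f(t)<t$ par le couplage de retournement du temps du lemme~\ref{lem:temps}.)

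La lacune se trouve dans votre traitement de $S^*$ : l'argument de recouvrement au premier moment ne peut pas conclure, et c'est précisément la raison pour laquelle \cite{peres} ne démontre pas que le brownien plan évite les points par recouvrement, mais via le théorème d'aire nulle (argument de scaling et d'additivité). Quantitativement : pour que l'oscillation de $B$ sur chaque intervalle de la subdivision soit d'ordre $\ve$, le module de Lévy impose $N$ de l'ordre de $\ve^{-2}\log(1/\ve)$ ; chaque intervalle $I_j$ contribue à $\Leb(V_\ve)$ une longueur d'ordre $\ve$ sur l'événement $\{|B(t_j)-B(f(t_j))|\le C\ve\}$ ; même en admettant l'estimée optimale $\P\left(|B(t_j)-B(f(t_j))|\le C\ve\right)\le C\ve$ (densité conditionnelle bornée — ce qui de plus n'est pas uniforme, car la variance conditionnelle de $B(f(t))-B(t)$ est d'ordre $|f(t)-t|$ et dégénère près des points fixes de $f$), la majoration obtenue est de l'ordre de $N\ve^2\approx\log(1/\ve)$, qui ne tend pas vers $0$ ; sans le facteur logarithmique on obtiendrait au mieux $O(1)$. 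Aucun choix de $N$ n'y remédie, puisque $N\,\omega(1/N)^2$ croît comme $\log N$ : la borne d'union surcompte massivement les intervalles consécutifs proches d'un même temps de coïncidence. Les ingrédients que vous invoquez (conditionnement par la filtration \eqref{eq:filtration}, retournement du temps) sont les bons, mais il faut les employer comme dans le texte, après le Fubini en temps, pour établir l'absence d'atome de $B(f(t))-B(t)$ à $t$ fixé — et non pour une estimée de petite boule qui, même optimale, ne suffit pas à faire tendre $\E[\Leb(V_\ve)]$ vers $0$.
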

\begin{proof}
D'après l'invariance par translations du mouvement brownien, on peut rendre le point de départ \emph{déterministe} en rendant \emph{aléatoire} le point ciblé :
\begin{multline*}
\P\left (\exists\,t\in [0,1]\,:\,(B(t)-y,B(f(t))-y)=(x,x)\right)\\ = \P\left (\exists\,t\in [0,1]\,:\,(B(t),B(f(t)))=(x+y,x+y)\right).\end{multline*}
Ce petit changement nous permet de simplifier le problème en faisant un aller-retour par le théorème de Fubini. En effet, en intégrant par rapport à $z=x+y$, la question équivaut à se demander si l'intégrale
\[\int_{\R}\P\left (\exists\,t\in [0,1]\,:\,(B(t),B(f(t)))=(z,z)\right)\,dz\]
est nulle, et on peut alors échanger l'ordre d'intégration :
\begin{align*}
&\int_{\R}\P\left (\exists\,t\in [0,1]\,:\,(B(t),B(f(t)))=(z,z)\right)\,dz\\=\,&\E\left [\int_{\R}\mathbf{1}_{\{\exists\,t\in [0,1]\,:\,(B(t),B(f(t)))=(z,z)\}}\,dz \right ]\\
=\,&\E \left [
	\Leb_\Delta\left (
		Im\left (
			B(t),B(f(t))
		\right )_{t\in[0,1]} \cap \Delta
	\right )
\right ],\numberthis{}
\label{eq:esperance5}
\end{align*}
où $\Leb_\Delta$ dénote la mesure de Lebesgue sur la diagonale $\Delta\subs \R^2$ et $Im$ l'image du processus. L'espérance \eqref{eq:esperance5} est inférieure ou égale à
\[ \E\left [\Leb\left (\mrm{Z\acute{e}ros}\left (B(f(t))-B(t)\right )_{t\in [0,1]}\right )\right ],\]
en notant $\mrm{Z\acute{e}ros}\left (B(f(t))-B(t)\right )_{t\in [0,1]}$ l'ensemble des $t\in[0,1]$ tels que $B(f(t))=B(t)$. En ayant obtenu cette majoration, on peut revenir en arrière par le théorème de Fubini :
\[ \E\left [\Leb\left (\mrm{Z\acute{e}ros}\left (B(f(t))-B(t)\right )_{t\in [0,1]}\right )\right ]=\int_0^1\P\left (B(f(t))=B(t)\right )\,dt.\]
On va montrer que pour tout $t\in(0,1)$
\begin{equation}\label{eq:bftbt}
\P\left (B(f(t))=B(t)\right )=0.
\end{equation}
Or, la probabilité que l'on ait $f(t)=t$ étant nulle, on va montrer que pour $t\in(0,1)$ fixé
\beqn{claim}{\P\left (
\{B(f(t))=B(t)\}\cap \,\{f(t)>t\}
\right )=0,}
puis le fait que la probabilité
\[\P\left (
\{B(f(t))=B(t)\}\cap\{f(t)<t\}
\right )\]
est nulle suivra par méthode de couplage (en utilisant le lemme \ref{lem:temps}\,) : si~$f(t)<t$, en « renversant le temps »,  on obtient $\ovl{f}(1-t)>1-t$, ce qui nous permet de déduire le deuxième cas à partir du premier.

Les événements $\{f(t)>t\}$ et $\{f(t)<t\}$ appartiennent à $\cG_t$ pour tout $t\in [0,1]$. Donc on peut effectivement se restreindre au cas $f(t)>t$, avec le grand avantage que sur cet événement, la loi de $B(f(t))-B(t)$ n'a pas d'atome :
on peut le constater facilement, puisque pour calculer la valeur $f(t)$ on doit avoir connaissance de $B$ uniquement jusqu'à l'instant $t$ qui est inférieure à $f(t)$ (outre que $\int_0^1e^{B(s)}\,ds$). En utilisant par exemple la propriété de Markov du mouvement brownien, on en déduit l'assertion \eqref{eq:claim}, qui permet de conclure la preuve du lemme.
\end{proof}

\subsubsection{Sur le nombre de points fixes}

On peut améliorer notre description en étudiant la probabilité qu'un difféomorphisme possède exactement~$m$ points fixes. Si l'on définit la variable aléatoire $M=\#\mrm{Fix}(f)$, on voudrait décrire la distribution de $M$. On peut facilement montrer que la probabilité que $M$ soit égal à $2$ est positive ($0$ et $1$ sont toujours des points fixes) : en effet, supposons que l'intégrale totale du mouvement brownien $B$ sur $[0,1]$ soit presque $1$ et que $B$ croisse beaucoup près de $0$, puis reste presque constant ; on trouvera ainsi un difféomorphisme qui n'a pas d'autres points fixes. Plus généralement :

\begin{prop}
Pour chaque entier positif $m\ge 2$, la probabilité que $M$ soit égal à $m$ est strictement positive.
\end{prop}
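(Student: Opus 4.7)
Le plan repose sur un principe désormais classique en théorie des mesures de type Haar : pour montrer qu'un événement a probabilité strictement positive, il suffit d'exhiber un point intérieur dans une topologie adaptée et d'invoquer la positivité de la mesure sur les ouverts non-vides. Ici, l'événement $\{M=m\}$ est stable dans la topologie $\C{1}$ (grâce à l'hyperbolicité), alors que la mesure MS se manie commodément en topologie $\C{0}$ sur l'espace de Wiener ; la Proposition~\ref{prop:homeo} assurera la compatibilité entre les deux topologies.

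Premièrement, je construirais un difféomorphisme modèle $f_0\in\Di$ ayant exactement $m$ points fixes hyperboliques. Je fixerais $m-2$ points intérieurs $0<t_1<\cdots<t_{m-2}<1$, puis je choisirais une fonction $g\in \C{\infty}([0,1])$ dont l'ensemble des zéros est exactement $\{0,t_1,\ldots,t_{m-2},1\}$, chaque zéro étant transverse (c'est-à-dire $g'(t_i)\neq 0$) et vérifiant $\|g'\|_0<1$. Alors $f_0:=\mrm{id}+g$ appartient à $\Di$, possède exactement $m$ points fixes, et chacun est hyperbolique puisque $f_0'(t_i)=1+g'(t_i)\neq 1$.

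Deuxièmement, je démontrerais la stabilité $\C{1}$ de cette configuration. Le théorème des fonctions implicites appliqué à $(t,f)\mapsto f(t)-t$ au voisinage de chaque couple $(t_i,f_0)$ fournit $\delta>0$ et un voisinage $\C{1}$-ouvert de $f_0$ tel que tout $f$ dans ce voisinage admette exactement un point fixe dans l'intervalle $(t_i-\delta,t_i+\delta)$. D'autre part, la fonction continue $|f_0(t)-t|$ admet une borne inférieure $\eta>0$ sur le compact $K:=[0,1]\setminus \bigcup_i (t_i-\delta,t_i+\delta)$, donc tout $f$ qui est $\C{0}$-proche de $f_0$ à distance inférieure à $\eta$ ne peut avoir de point fixe sur $K$. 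L'intersection de ces deux conditions fournit un voisinage $\C{1}$-ouvert $U\ni f_0$ dans $\Di$ tel que $\#\mrm{Fix}(f)=m$ pour tout $f\in U$.

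Enfin, je conclurais en invoquant la Proposition~\ref{prop:homeo} : l'application $A:\Di\to C_0(I)$ étant un homéomorphisme entre les topologies $\C{1}$ et $\C{0}$, l'image $A(U)$ est un ouvert non-vide de $C_0(I)$. Comme la mesure de Wiener $\W$ charge tout ouvert non-vide de $C_0(I)$, on obtient
\[\nu_{MS}(\{M=m\})\;\ge\;\nu_{MS}(U)\;=\;\W(A(U))\;>\;0.\]
Le principal piège à éviter est l'emploi de la topologie $\C{0}$ pour contrôler le nombre de points fixes : des perturbations $\C{0}$-petites de $f_0$ peuvent très bien créer plusieurs points fixes au voisinage d'un $t_i$, et c'est précisément la régularité $\C{1}$ combinée à l'hyperbolicité qui verrouille leur nombre. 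Aucune obstruction sérieuse n'est à attendre au-delà de ce choix topologique.
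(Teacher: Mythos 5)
Votre démonstration est correcte et suit essentiellement la même voie que le texte : celui-ci se contente d'affirmer que l'ensemble des difféomorphismes à $m$ points fixes hyperboliques est d'intérieur non vide pour la topologie $\C{1}$ et que la mesure MS charge tout ouvert, ce que vous détaillez en construisant le modèle $f_0$, en vérifiant la stabilité $\C{1}$ et en passant par l'homéomorphisme $A$ et la positivité de la mesure de Wiener sur les ouverts. Rien à redire, votre texte explicite simplement les étapes laissées implicites dans la preuve du papier.
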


\begin{proof}
En effet, l'ensemble des difféomorphismes avec $m$ points fixes hyperboliques est d'intérieur non vide dans la topologie $C^1$. L'affirmation suit du fait que la mesure MS est strictement positive sur tout ouvert.
\end{proof}

\begin{rem}
Des simulations numériques laissent penser que la loi de $M$ est proche d'une loi géométrique de paramètre $1/2$. Cependant les statistiques changent remarquablement lorsque le paramètre $\sigma$ de la variance du brownien change : par exemple, pour $\sigma=2$ la distribution de $M$ est encore proche d'une loi géométrique de paramètre $1/2$, mais elle ne l'est plus pour des valeurs de $\sigma$ plus grandes que $4$.
\end{rem}

Une façon possible d'obtenir une estimation sur la décroissance de la loi de la variable $M$ pourrait être un simple lemme de dynamique hyperbolique $\C{1+\alpha}$ en dimension $1$, qui dit que le nombre de points fixes d'un difféomorphisme $f$ est contrôlé par la distance de $\log f'$ à $0$ sur l'ensemble des points fixes de $f$.

\begin{lem}\label{lem:estimee_fixes}
Soit $f\in\mrm{Diff}^{\hspace{0.8pt} 1+\alpha}_+(I)$ et $t_0$ un point fixe de $f$. On note $t_1=\inf\{ t>t_0\text{ t.q. }f(t)=t\}$. Alors la distance entre $t_0$ et $t_1$ est minorée :
\beqn{distance_minoree}{t_+-t_-\ge \frac{|\log f'(t_0)|^{1/\alpha}+|\log f'(t_1)|^{1/\alpha}}{\|\log f'\|_\alpha^{1/\alpha}}.}
\end{lem}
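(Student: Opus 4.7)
The strategy is to locate an intermediate point where $\log f'$ vanishes and then apply the $\alpha$-Hölder bound on each of the two subintervals separately.

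First, I would extract sign information at the endpoints. Since $t_0$ and $t_1$ are \emph{consecutive} fixed points of the orientation-preserving diffeomorphism $f$, the continuous function $f(t)-t$ is nonvanishing on the open interval $(t_0,t_1)$ and hence has constant sign there. Assume first that $f(t)>t$ on $(t_0,t_1)$. Writing the difference quotient at $t_0$ gives $\frac{f(t)-f(t_0)}{t-t_0}>1$ for $t>t_0$, and passing to the limit $t\downarrow t_0$ yields $f'(t_0)\ge 1$; similarly the quotient at $t_1$ from the left yields $f'(t_1)\le 1$. The other case $f(t)<t$ on $(t_0,t_1)$ is symmetric. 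In both situations one has $\log f'(t_0)$ and $\log f'(t_1)$ of opposite (weak) signs, so by the intermediate value theorem there exists $t^{*}\in[t_0,t_1]$ with $\log f'(t^{*})=0$.

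Next I would apply the $\alpha$-Hölder hypothesis on each side of $t^{*}$:
\[
|\log f'(t_0)|=|\log f'(t_0)-\log f'(t^{*})|\le \|\log f'\|_\alpha\,(t^{*}-t_0)^\alpha,
\]
and analogously $|\log f'(t_1)|\le \|\log f'\|_\alpha\,(t_1-t^{*})^\alpha$. Inverting each estimate gives
\[
t^{*}-t_0\ge \frac{|\log f'(t_0)|^{1/\alpha}}{\|\log f'\|_\alpha^{1/\alpha}},\qquad t_1-t^{*}\ge \frac{|\log f'(t_1)|^{1/\alpha}}{\|\log f'\|_\alpha^{1/\alpha}},
\]
and summing the two bounds yields the inequality announced in the lemma.

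There is no real obstacle here: the proof is essentially a mean-value-type sign analysis at the endpoints, combined with Hölder regularity of $\log f'$. The only conceptual point worth underlining is that one really needs the existence of an interior zero of $\log f'$, which is precisely what the ``consecutive fixed points'' hypothesis provides; applying the Hölder estimate directly to $\log f'(t_0)-\log f'(t_1)$ without splitting at $t^{*}$ would yield the (in fact stronger, by $(a+b)^{1/\alpha}\ge a^{1/\alpha}+b^{1/\alpha}$) bound $t_1-t_0\ge \bigl((|\log f'(t_0)|+|\log f'(t_1)|)/\|\log f'\|_\alpha\bigr)^{1/\alpha}$, but the split form above is exactly what is claimed and is what will be convenient in the subsequent estimates on $M=\#\mathrm{Fix}(f)$.
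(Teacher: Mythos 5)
Your proof is correct and follows essentially the same route as the paper: both arguments produce an interior point $t^{*}$ with $f'(t^{*})=1$ (the paper invokes continuity of the derivative, essentially the mean value theorem applied to $f(t)-t$, while you do a sign analysis at the endpoints plus the intermediate value theorem) and then apply the $\alpha$-Hölder bound on $\log f'$ on each of the two subintervals and sum. The only detail the paper adds is the degenerate case $t_0=t_1$, where the right-hand side of the inequality vanishes because the fixed point is then parabolic, so the statement is trivial.
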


\begin{proof}
Si $t_0$ et $t_1$ coïncident, le terme de droite dans \eqref{eq:distance_minoree} est nulle puisqu'alors $t_0$ est un point fixe parabolique (cf.~le corollaire~\ref{prop:points-fixes}). Nous pouvons donc supposer $t_0< t_1$.

Par continuité de la dérivée, il existe un point $t_*$ entre $t_0$ et $t_1$ auquel la dérivée de $f$ vaut $1$.
Puisque la fonction $\log f'$ est $\C{1+\alpha}$, on a pour tout $t$ (en particulier pour $t=t_0,t_1$) :
\[|\log f'(t)|=|\log f'(t_*)-\log f'(t)|\le \|\log f'\|_\alpha \,|t-t_*|^{\alpha}\]
et donc
\[
|t-t_*|\ge \left (\frac{|\log f'(t)|}{\|\log f'\|_{\alpha}}\right )^{1/\alpha}.
\]
En écrivant $t_1-t_0=(t_1-t_*)+(t_*-t_0)$, l'inégalité précédente nous donne~\eqref{eq:distance_minoree}.
\end{proof}

Des conséquences plus intéressantes du théorème~\ref{thm:nomultiplicateur} interviendront dans la suite.

\subsection{Difféomorphismes du cercle}

Soit $(\Omega,\cB,\P)$ un espace de probabilité, soit $B=\left (B_t\right )_{t\in \T}$ un pont brownien sur $\Omega$ et $\a\in \T$ une variable de loi uniforme et indépendante de $B$. Nous définissons le difféomorphisme du cercle aléatoire de Malliavin-Shavgulidze :
\[f\,:\,t\longmapsto \dfrac{\int_0^te^{B(s)}\,ds}{\int_0^1e^{B(s)}\,ds}+\a.\]

Il est facile de remarquer que la loi de $f$ est invariante lorsque l'on fait une conjugaison par une rotation.

\subsubsection{Nombre de rotation rationnel}

Rappelons (voir partie \ref{ssc:poincare} dans l'introduction) qu'un difféomorphisme du cercle possède une orbite périodique si et seulement si son nombre de rotation est rationnel. Nous avons

\begin{prop}
Soit $q$ un entier strictement positif. La probabilité qu'un difféomorphisme de Malliavin-Shavgulidze possède une orbite de période $q$ est positive.

Plus précisément, la probabilité que le nombre de rotation soit égal à $p/q$ est positive pour tout $p/q\in \Q$.
\end{prop}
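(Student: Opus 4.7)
The plan is to reduce the statement to two ingredients: first, the set $\{\rho = p/q\}$ has non-empty interior in $\mathrm{Diff}^1_+(\T)$; second, the Malliavin-Shavgulidze measure assigns positive mass to every non-empty open set. Combining these immediately gives $\mu_{MS}(\rho = p/q) > 0$, and summing over admissible $p$ (or simply choosing one) gives the statement about period $q$.

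For the first ingredient, I would build an explicit Morse-Smale example with rotation number $p/q$. Start from the rigid rotation $R_{p/q}$, whose $q$-th iterate is the identity. By a classical perturbation argument (choose a smooth bump supported on a fundamental domain of $R_{p/q}$ and compose suitably), one produces a diffeomorphism $f_0 \in \mathrm{Diff}^{\infty}_+(\T)$ such that $f_0^q$ has finitely many hyperbolic fixed points, alternately attracting and repelling. Such an $f_0$ is Morse-Smale, so its periodic orbits persist under $C^1$-small perturbations: any diffeomorphism $g$ in a small $C^1$-neighborhood $U$ of $f_0$ has periodic orbits of period $q$ whose rotation behavior is unchanged, hence $\rho(g) = p/q$ for all $g \in U$. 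This uses only that $\rho$ is continuous and takes the value $p/q$ precisely when $q$-periodic orbits exist (cf. the discussion after Proposition \ref{prop:nombrerotation}).

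For the second ingredient, recall from the Cameron-Martin-Shavgulidze theorem \ref{thm:quasi-invariance} that $\mu_{MS}$ is quasi-invariant under the dense subgroup $H$ of $C^2$-diffeomorphisms, and moreover that it is a Radon measure of full topological support: any non-empty open set has positive measure. This is explicitly the content of the proposition preceding the definition of the MS measures (applied to $G = \mathrm{Diff}^1_+(\T)$, $G_0 = H$): the support, being a closed $H$-invariant set, must coincide with $\mathrm{Diff}^1_+(\T)$ because $H$ is dense. Applied to the open set $U$ found above, this yields $\mu_{MS}(U) > 0$, and since $U \subset \{\rho = p/q\}$ we conclude $\P(\rho(f) = p/q) \geq \mu_{MS}(U) > 0$.

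The main (minor) obstacle is the explicit Morse-Smale construction: one must exhibit a $C^1$-open set of diffeomorphisms whose rotation number is exactly $p/q$. This is entirely standard once one notices that the set of Morse-Smale diffeomorphisms forms a $C^1$-open set on which $\rho$ is locally constant, but it should be stated carefully since all the dynamical content of the statement lives in this step. The measure-theoretic step is essentially a tautology given the support property of $\mu_{MS}$.
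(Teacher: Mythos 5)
Your proposal is correct and follows essentially the same route as the paper: exhibit a diffeomorphism with a hyperbolic periodic orbit of combinatorial type $p/q$ (so that $\{\rho=p/q\}$ has non-empty interior by $C^1$-persistence), then invoke the fact that the MS measure, being quasi-invariant under the dense subgroup of $C^2$ diffeomorphisms, charges every non-empty open set. The paper's proof is just a compressed version of exactly these two steps.
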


\begin{proof}
On fixe $p/q$ rationnel. L'ensemble des difféomorphismes avec nombre de rotation $p/q$ est d'intérieur non vide, puisqu'il contient les difféomorphismes avec une orbite périodique hyperbolique, dont la combinatoire est de type $p/q$. On conclut encore grâce aux propriétés de la mesure.
\end{proof}

Comme analogue à la proposition \ref{prop:points-fixes}, on peut dire que les orbites périodiques sont presque toujours hyperboliques.

\begin{prop}
\label{prop:noparabolicorbit}
La probabilité qu'un difféomorphisme MS possède une orbite périodique \emph{parabolique} est nulle.

Il en résulte que les orbites périodiques sont en nombre fini, pair, et la probabilité d'en avoir exactement $m$ est positive pour tout $m$ pair.
\end{prop}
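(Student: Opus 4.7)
The plan is to decompose the event ``$f$ has a parabolic periodic orbit'' according to the rotation number and to exploit the product structure $\mu_{MS} = B^{-1}_*(\W_0 \otimes \Leb)$, which exhibits the rotation parameter $\alpha \in \T$ as an independent uniform variable. Writing $E = \bigcup_{p/q \in \Q/\Z} E_{p/q}$, where $E_{p/q}$ is the set of diffeomorphisms with rotation number $p/q$ admitting at least one parabolic periodic orbit, countable subadditivity reduces the task to proving $\mu_{MS}(E_{p/q}) = 0$ for each reduced fraction.

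Fix $p/q$ and condition on the Brownian bridge $B$. As $\alpha$ varies over $\T$, the family $f_{B,\alpha}$ consists of rotation-translates of a single diffeomorphism, so $\alpha \mapsto \rho(f_{B,\alpha})$ is a continuous, non-decreasing, degree-one self-map of $\T$, and the preimage $T_{p/q}(B) := \rho^{-1}(p/q) \subseteq \T$ is a closed (possibly degenerate) interval, the Arnol'd tongue. The heart of the argument is that a parabolic periodic orbit can occur only for $\alpha \in \partial T_{p/q}(B)$. Indeed, set $\Phi(t,\alpha) := F_{B,\alpha}^q(t) - t - p$ and suppose $\Phi(t_*,\alpha_0) = 0$ with $\partial_t \Phi(t_*,\alpha_0) = 0$. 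A direct chain-rule computation using $\partial_\alpha F_{B,\alpha} = 1$ gives
\[
\partial_\alpha F_{B,\alpha}^q(t_*) \,=\, 1 + f'(F^{q-1}(t_*)) + f'(F^{q-1}(t_*))\,f'(F^{q-2}(t_*)) + \cdots + \prod_{i=1}^{q-1} f'(F^{i}(t_*)) \,>\, 0,
\]
so the gradient of $\Phi$ at $(t_*,\alpha_0)$ is non-zero and points in the $\alpha$-direction. By the implicit function theorem the zero set of $\Phi$ is, near $(t_*,\alpha_0)$, a $C^1$ graph $\alpha = \alpha(t)$ with $\alpha'(t_*) = 0$; hence $\alpha_0$ is a local extremum of $\alpha(t)$ and two zeros of $\Phi(\cdot,\alpha)$ merge at $\alpha_0$ from one side, which forces $\alpha_0 \in \partial T_{p/q}(B)$. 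Consequently, for every $B$ the set $\{\alpha \in \T : f_{B,\alpha} \in E_{p/q}\}$ has at most two elements, and Fubini yields $\mu_{MS}(E_{p/q}) = 0$.

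The remaining statements follow from standard circle dynamics. Absence of parabolic orbits makes each periodic orbit isolated, hence finitely many in total; attractors and repellers of rotation type $p/q$ must alternate along the circle, so their number is even. For each $m = 2k \geq 2$, Morse-Smale diffeomorphisms with exactly $m$ periodic orbits form a non-empty open subset of $\Dc$ in the $C^1$-topology, which therefore carries positive $\mu_{MS}$-measure. The main technical obstacle will be the implicit function step, which relies on the $C^1$-regularity of $(t,\alpha) \mapsto F_{B,\alpha}^q(t)$ ensured almost surely by Corollary \ref{cor:levy}. Pathological Brownian bridges (for instance those for which $f_\alpha^q$ would agree with the identity on a nontrivial arc) are absorbed in the $\W_0$-null set that Fubini already discards.
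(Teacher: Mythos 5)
Your overall strategy (condition on the bridge $B$, use the product structure $\W_0\otimes\Leb$ and Fubini in the uniform variable $\alpha$) is sound, and it is genuinely different from the paper's proof, which instead fixes the multiplier $\lambda$ and adapts the argument of Theorem \ref{thm:nomultiplicateur} (the ``planar Brownian motion misses a given point'' scheme, with the Markov property and a coupling by rotations to handle the combinatorics of the orbit). But your key step is wrong as stated: from $\Phi(t_*,\alpha_0)=0$, $\partial_t\Phi(t_*,\alpha_0)=0$ and $\partial_\alpha\Phi>0$ you only get that $\alpha_0$ is a \emph{critical value} of the implicitly defined $\C{1}$ function $t\mapsto\alpha(t)$; a critical point need not be a local extremum (think of $\alpha(t)=\alpha_0+(t-t_*)^3$), so no merging of zeros is forced. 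Worse, even a genuine saddle-node of one pair of periodic orbits does not push $\alpha_0$ to $\partial T_{p/q}(B)$: a parabolic periodic orbit can coexist with hyperbolic periodic orbits of the same rotation number, and these lock $\rho$ at $p/q$ on a whole neighbourhood of $\alpha_0$, so $\alpha_0$ lies in the \emph{interior} of the tongue. Consequently the set $\{\alpha:\ f_{B,\alpha}\in E_{p/q}\}$ need not have at most two elements (for a fixed $\C{1}$ diffeomorphism it can be uncountable), and asserting that this pathology only happens for $\W_0$-exceptional $B$ would be circular -- it is precisely the kind of statement the proposition is meant to prove.

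The good news is that your reduction can be repaired without touching its architecture. For fixed $B$ (in fact for \emph{every} $\C{1}$ circle diffeomorphism $f_0$), $p$ and $q$, the monotonicity $\partial_\alpha\Phi\ge 1$ shows that the zero set of $\Phi$ is globally the graph of a $\C{1}$ function $t\mapsto\alpha(t)$, and a parameter $\alpha_0$ carries a parabolic orbit of this type exactly when $\alpha_0$ is a critical value of $\alpha(\cdot)$. The one-dimensional Sard theorem (valid for $\C{1}$ maps of an interval) says the set of critical values is Lebesgue-null; summing over the countably many $(p,q)$ and applying Fubini as you propose gives $\mu_{MS}$-measure zero. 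Note that this route only handles the multiplier $\lambda=1$, whereas the paper's probabilistic argument excludes every fixed multiplier $\lambda>0$; for the present proposition $\lambda=1$ suffices, and your deduction of finiteness, evenness and positive probability of each even number of orbits from the absence of parabolic orbits is correct and matches the paper. Also, for the repaired argument you only need $f$ to be $\C{1}$, so the appeal to Corollary \ref{cor:levy} is superfluous.
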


\begin{proof}[Esquisse de la preuve]
Rappelons qu'une orbite $\{t,f(t),\ldots,f^{q-1}(t)\}$ est parabolique si et seulement si
\[f^q(t) = t\textrm{ et }\log Df^q(t)=\sum_{k=0}^{q-1} B(f^k(t))-q\,\log \int_0^1 e^B=0.\]

On appelle \emph{multiplicateur} de l'orbite la quantité $Df^q(t)$ (qui ne dépend pas du point $t$ dans l'orbite). On montrera que pour tout $\lambda>0$, la probabilité qu'il existe une orbite périodique de multiplicateur égal à $\la$ est nulle.

Pour cela, on peut procéder comme dans la preuve du théorème \ref{thm:nomultiplicateur}, en regardant les événements
\begin{align*}
E=&\,\left \{\exists\,t\,:\,f^q(t)=t\trm{ et }Df^{q}(t)=\la\right \} \\
\tilde{E}=&\,\left \{\exists\, t\in [0,1]\,:\, Df^{q}(t)=\la\trm{ et }Df^q(f(t))=\la\right \}.
\end{align*}
La preuve procède de manière semblable jusqu'à la ligne  \eqref{eq:bftbt} : on veut montrer que pour tout~$t\in [0,1]$
\[\P\left (B(f^q(t))-B(t)\right )=0.\]
 Au lieu de considérer seulement deux cas différents ($\{f(t)>t\}$ et $\{f(t)<t\}$), nous devons faire face à une combinatoire plus complexe. En effet, si l'on souhaite appliquer la propriété de Markov pour $B$, il faut que la condition suivante soit vérifiée :
\[\max_{k<q}f^k(t)<f^q(t)<1.\]
Il est encore possible, par méthode de couplage, de se ramener toujours à ce cas : il suffit en fait d'opérer une conjugaison par une rotation appropriée (voir la figure \ref{fig:combi}\,).
\begin{figure}[ht]
\[
\includegraphics[scale=.5]{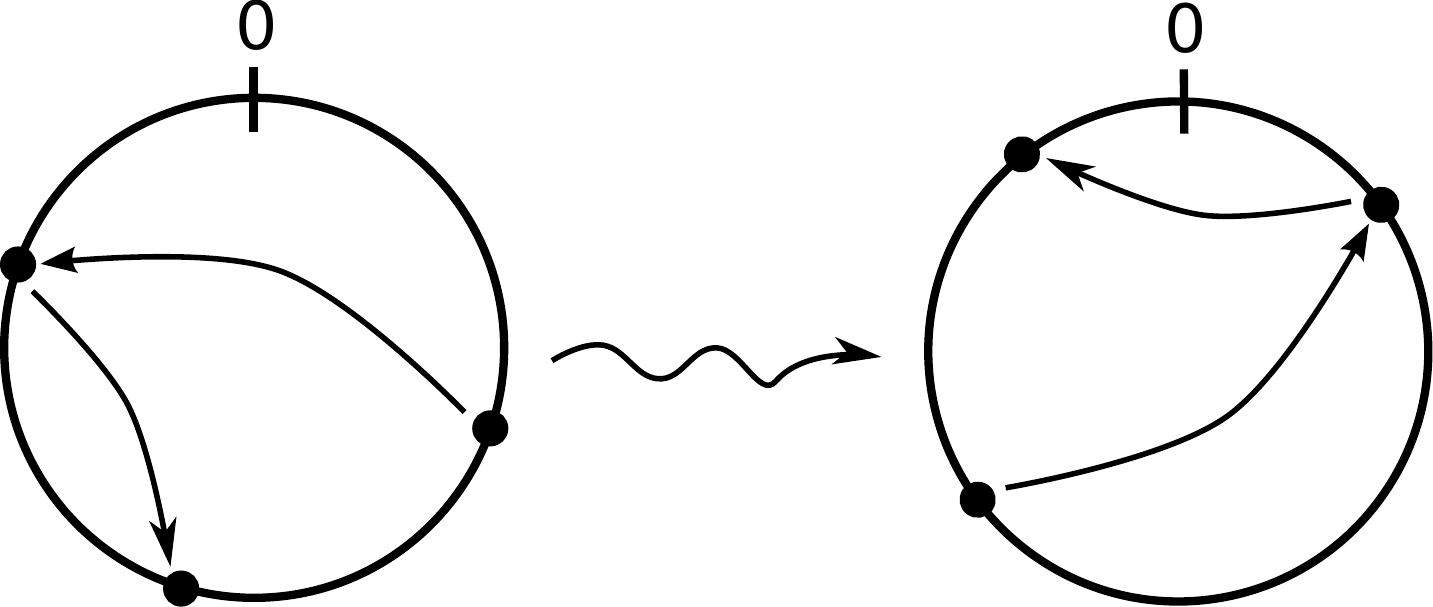}
\]
\caption{Preuve du lemme \ref{prop:noparabolicorbit} : combinatoire d'une orbite et choix convenable de la rotation}\label{fig:combi}
\end{figure}
\end{proof}

Par les mêmes arguments, on peut démontrer le résultat suivant qui est intéressant en soi :
\begin{thm}\label{thm:free}
Soient $f_1,\ldots,f_n$ des difféomorphismes de Malliavin-Shavgulidze indépendants. Alors le sous-groupe qu'ils engendrent est presque sûrement libre.
\end{thm}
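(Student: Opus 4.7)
Le plan s'inspire de la remarque suivant la Proposition~\ref{prop:noparabolicorbit}~: pour chaque mot réduit non-trivial $w$ dans le groupe libre abstrait $F_n=\langle x_1,\ldots,x_n\rangle$, il s'agit de montrer que le difféomorphisme $W:=w(f_1,\ldots,f_n)$ n'est pas l'identité presque sûrement. Le sous-groupe $\langle f_1,\ldots,f_n\rangle\subset\Dc$ n'est pas libre si et seulement s'il existe un tel $w$ pour lequel $W=\mrm{id}$~; comme $F_n$ est dénombrable, le théorème se réduit, par union dénombrable et pour chaque $w$ non-trivial fixé, à l'égalité $\P(W=\mrm{id})=0$. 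Puisque $W=\mrm{id}$ entraîne en particulier $W(0)=0$, il suffit en fait d'établir $\P\bigl(W(0)=0\bigr)=0$.

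On exploite pour cela la décomposition $f_k(t)=g_k(t)+\lambda_k$, où $g_k(t)=c_k\int_0^te^{B_k(s)}\,ds$ est déterminé par le pont brownien $B_k$ et où $\lambda_k$, uniforme sur~$\T$, est indépendant de $B_k$. Soit $k$ un indice tel que $f_k$ apparaisse dans $w$~; on conditionne par rapport à l'ensemble des ponts browniens $B_1,\ldots,B_n$ et aux translations $\lambda_j$ pour $j\neq k$. Sous ce conditionnement, $W(0)$ devient une fonction $\C{1}$ déterministe $\Lambda:\T\to\T$ du seul paramètre restant $\lambda_k$, qui demeure uniforme. L'énoncé se réduit à la négligeabilité Lebesgue de l'image réciproque $\Lambda^{-1}(\{0\})$~; intégrer ensuite par rapport à la loi de $\lambda_k$ donne la conclusion.

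Lorsque $f_k$ n'apparaît qu'une seule fois dans $w$, $\Lambda$ est un difféomorphisme $\C{1}$ du cercle et $\Lambda^{-1}(\{0\})$ est réduit à un point. Dans le cas général, l'obstacle principal consiste à exclure le scénario où $\Lambda$ serait constamment nulle sur un sous-intervalle de~$\T$, ce qui est le seul frein à la négligeabilité. Comme dans la preuve de la Proposition~\ref{prop:noparabolicorbit}, on traduit la condition «~$\Lambda'\equiv 0$ sur un intervalle~» (obtenue par la règle de la chaîne à partir de l'écriture explicite de $W(0)$) en une identité déterministe non-triviale liant des valeurs et intégrales des ponts browniens $B_1,\ldots,B_n$~; un argument de type Fubini, combiné à la propriété de Markov pour les ponts browniens et au fait qu'une trajectoire brownienne n'est presque sûrement constante sur aucun intervalle, montre que cette identité a probabilité nulle. À titre d'illustration, pour le commutateur $w=[f_1,f_2]$, la condition $W=\mrm{id}$ équivaut à $f_1f_2=f_2f_1$~; évaluée en $0$, elle donne $g_2(\lambda_1)-\lambda_1\equiv g_1(\lambda_2)-\lambda_2\pmod{1}$, dont la dérivée en $\lambda_1$ vaut $c_2e^{B_2(\lambda_1)}-1$ et ne s'annule que sur le niveau $\{B_2=-\log c_2\}$, lequel est presque sûrement Lebesgue-négligeable, ce qui conclut dans ce cas.
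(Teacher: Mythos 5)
Votre stratégie — réduction à un mot réduit $w$ fixé, puis à l'événement $W(0)=0$, puis Fubini par rapport au paramètre uniforme $\lambda_k$ conditionnellement aux ponts et aux autres translations — est une route réellement différente de celle du texte : la preuve donnée ici fixe un point $t$, écrit $\log DW(t)$ comme somme signée de valeurs des ponts browniens aux points $\psi_j(w_j(t))$, conditionne par les intégrales $\int_\T e^{B_i}$ et montre que cette somme a une loi sans atome, en isolant le dernier point d'évaluation à gauche de $0$ et en s'y ramenant par conjugaison par une rotation (propriété de Markov). Votre argument est complet lorsque le générateur choisi $f_k$ n'apparaît qu'une fois dans $w$ et, plus généralement, lorsqu'il n'y apparaît qu'avec un seul signe : chaque occurrence de $f_k$ (resp. $f_k^{-1}$) contribue à $\Lambda'$ un terme strictement positif (resp. négatif), de sorte que $\Lambda'$ garde un signe et le niveau $\{\Lambda=0\}$ est fini ; le commutateur est traité par un calcul ad hoc qui exploite le fait que la dérivée en $\lambda_1$ ne dépend que de $B_2$.

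En revanche, dans le cas général où chaque générateur présent dans $w$ apparaît avec les deux signes, il y a deux lacunes. D'abord la réduction annoncée est fausse telle quelle : pour une fonction $\C{1}$ du cercle, un ensemble de niveau de mesure de Lebesgue strictement positive n'entraîne nullement que la fonction soit nulle sur un intervalle — elle peut s'annuler exactement sur un Cantor « gras », avec $\Lambda'=0$ sur cet ensemble ; « $\Lambda$ constamment nulle sur un sous-intervalle » n'est donc pas « le seul frein à la négligeabilité ». La bonne cible, que votre exemple du commutateur utilise d'ailleurs implicitement, est de montrer que $\{\Lambda'=0\}$ (ou $\{\Lambda=0\}\cap\{\Lambda'=0\}$) est presque sûrement $\Leb$-négligeable. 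Ensuite, et surtout, cette étape n'est pas démontrée : pour un mot général, $\Lambda'(\lambda_k)$ est une somme signée, indexée par les occurrences de $f_k^{\pm1}$, de produits de dérivées évaluées le long d'orbites aléatoires qui dépendent toutes des mêmes ponts $B_1,\ldots,B_n$ (et de $\lambda_k$ lui-même) ; établir que son lieu d'annulation est négligeable est précisément le cœur du problème, et l'invocation de « Fubini, propriété de Markov, non-constance du brownien » ne constitue pas un argument — c'est exactement la difficulté (une somme signée de quantités browniennes dépendantes) que la preuve du texte résout par un dispositif concret, l'isolement du point d'évaluation extrémal et le couplage par rotation, et qui manque à votre esquisse. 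En l'état, la démonstration n'est donc complète que pour les mots dont un générateur apparaît avec un seul signe, plus le commutateur.
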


\begin{proof}
Soit $w=w(f,g)$ un mot non-trivial en $f$ et $g$. On veut montrer que la probabilité que $w(f,g)=id$ est nulle, lorsque $f$ et $g$ sont deux difféomorphismes MS indépendants. Soit $\ell$ la longueur du mot $w$ et notons $w_j$ le suffixe de $w$ de longueur $j$ (avec $j\le \ell$).
Il nous suffit de démontrer que la probabilité~$\P(\log Dw\equiv 0)$ est égale à $0$.

Or, on peut écrire $\log Dw$ sous la forme
\[
\log Dw=\sum_{j=0}^{\ell-1}\epsilon_j \left [ B_{i_j}\circ \psi_{j} \circ w_j-\log \int_{\T}\exp(B_{i_j})\right ],
\]
avec $i_j\in\{1,2\}$, $\epsilon_j\in \{\pm 1\}$, $\psi_j\in\{id, f^{-1}, g^{-1}\}$ et $B_1$, $B_2$ sont deux ponts browniens indépendants qui définissent $f$ et~$g$. Pour tous $C_1$, $C_2\in\R$, on définit l'événement 
\[A_{C_1,C_2}=\left \{\int_{\T}\exp(B_i)=\exp(C_i),\,i=1,2 \right\}.\]
En moyennant sur les possibles valeurs des variables $\int_{\T}\exp(B_i)$, on peut écrire
\begin{align*} 
&\P\left (\forall\, t\in \T, \sum_{j=0}^{\ell-1}\epsilon_j \left [B_{i_j}(\psi_j(w_j(t)))-\log \int_{\T}\exp(B_{i_j})\right ]=0\right )\\
=\,& \E\left[\P
\left (
\forall t\in \T, 
\sum_{j=0}^{\ell-1}\epsilon_j B_{i_j}(\psi_j(w_{j}(t)))=K(w;C_1,C_2)
\,\middle\vert \,A_{C_1,C_2}
\,\right )\right ]
\end{align*}
avec 
\[K(w;C_1,C_2)=\left (\#_{f^{-1}}w-\#_fw\right )\cdot C_1+\left (\#_{g^{-1}}w-\#_g w\right )\cdot C_2\] 
et où $\#_\gamma w$ dénote  le nombre de $\gamma\in \{f^{\pm 1}, g^{\pm 1}\}$ dans le mot $w$.

Or, pour $t$ fixé et presque tous les $C_1$, $C_2$,
\[ 
\P\left (\sum_{j=0}^{\ell-1}\epsilon_j  B_{i_j}(\psi_j w_{j}(t)))=K(w;C_1,C_2)\,\middle\vert\,A_{C_1,C_2}\right )=0.
\]
Ceci est facile à observer : supposons que $\psi_{\ell-1}(w_{\ell-1}(t))$ soit le dernier point sur le cercle à la gauche de $0$, parmi les $\{\psi_j(w_j(t))\}$ (sinon, on fait un couplage avec le processus que l'on obtient par la conjugaison avec une rotation appropriée). Alors la loi de $B_{i_{\ell-1}}(\psi_{\ell-1}(w_{\ell-1}(t)))$ n'as pas d'atome.
\end{proof}

\subsubsection{Nombre de rotation irrationnel}

Une question plus délicate, qui certainement mériterait une réponse, est la suivante :
\begin{q}\label{q:irrational}
Les difféomorphismes avec nombre de rotation \emph{irrationnel}, forment-ils un ensemble de mesure strictement positive ?
\end{q}
La topologie dans ce cas ne peut pas nous venir en aide, car l'ensemble des difféomorphismes avec nombre de rotation $\alpha$ irrationnel fixé forment un fermé (contractile), d'intérieur vide dans $\Dc$ (voir \cite{herman}\,). En utilisant le théorème de conjugaison différentiable (théorème \ref{thm:conjugaison-differentiable} dans le chapitre introductif), Herman \cite{herman_lebesgue} démontre le résultat suivant :

\begin{thm}
\label{thm:herman_lebesgue}
Soit $f$ un difféomorphisme du cercle de classe $\C{3}$, on définit $K_f\subset \T$ comme étant la réunion des paramètres $\la\in \T$ pour lesquels le nombre de rotation de $R_\la\circ f$ est irrationnel. Alors, la mesure de Lebesgue de $K_f$ est strictement positive.
\end{thm}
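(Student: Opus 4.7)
The plan is to introduce the rotation-number function $\hat{\rho}:\T\to\T$, $\hat{\rho}(\lambda)=\rho(R_\lambda\circ f)$, and then, by applying Theorem~\ref{thm:conjugaison-differentiable} \emph{uniformly} over a positive-measure set of Diophantine numbers, to exhibit a bi-Lipschitz selection map whose image lies in $K_f$. By the theory recalled in \S\ref{ssc:poincare}, $\hat{\rho}$ is continuous, its lift to $\R\to\R$ is non-decreasing and satisfies $\hat{\rho}(\lambda+1)=\hat{\rho}(\lambda)+1$; in particular it is surjective onto $\T$, and by construction $K_f=\hat{\rho}^{-1}(\T\setminus\Q)$.

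Next I would fix an exponent $\delta\in(0,1)$, so that $\delta+2<r=3$. The set $D_C\subset\T$ of $\alpha$ satisfying $|\alpha-p/q|\ge C/q^{2+\delta}$ for every $p/q\in\Q$ has Lebesgue measure tending to $1$ as $C\to 0^+$; pick $C$ so that $|D_C|>0$. For each $\alpha\in D_C$, surjectivity of $\hat{\rho}$ lets me select $\lambda(\alpha)\in\hat{\rho}^{-1}(\alpha)$. The quantitative form of Theorem~\ref{thm:conjugaison-differentiable}---which is what its KAM-type proof actually delivers---yields a conjugating diffeomorphism $h_\alpha\in\Dc$ with $h_\alpha\circ(R_{\lambda(\alpha)}\circ f)\circ h_\alpha^{-1}=R_\alpha$ and with $\|h_\alpha\|_1$ bounded \emph{uniformly} in $\alpha\in D_C$ by a constant depending only on $C$, $\delta$, and $\|f\|_3$. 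Setting $m_0:=\inf_{\alpha\in D_C}\inf_{\T}h_\alpha'>0$ and $M_0:=\sup_{\alpha\in D_C}\sup_{\T}h_\alpha'<\infty$, the mean-value theorem supplies the uniform bound $m_0\epsilon\le h_\alpha(y+\epsilon)-h_\alpha(y)\le M_0\epsilon$ for $\epsilon\ge 0$.

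The heart of the argument is the conjugation identity
\[
h_\alpha\circ(R_{\lambda(\alpha)+\epsilon}\circ f)\circ h_\alpha^{-1}\;=\;(h_\alpha\circ R_\epsilon\circ h_\alpha^{-1})\circ R_\alpha,
\]
which gives $\hat{\rho}(\lambda(\alpha)+\epsilon)=\rho\bigl((h_\alpha R_\epsilon h_\alpha^{-1})\circ R_\alpha\bigr)$. For $\epsilon\ge 0$, the uniform estimate on $h_\alpha$ translates pointwise on lifts to
\[
x+\alpha+m_0\epsilon\;\le\;(h_\alpha R_\epsilon h_\alpha^{-1}R_\alpha)(x)\;\le\;x+\alpha+M_0\epsilon,
\]
and monotonicity of the rotation number with respect to the pointwise order of lifts yields $\alpha+m_0\epsilon\le\hat{\rho}(\lambda(\alpha)+\epsilon)\le\alpha+M_0\epsilon$, with symmetric inequalities for $\epsilon\le 0$. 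Applied at $\alpha_1$ to any pair $\alpha_1<\alpha_2$ in $D_C$ (so that $\epsilon=\lambda(\alpha_2)-\lambda(\alpha_1)\ge 0$ by monotonicity of $\hat{\rho}$), these bounds give
\[
\frac{\alpha_2-\alpha_1}{M_0}\;\le\;\lambda(\alpha_2)-\lambda(\alpha_1)\;\le\;\frac{\alpha_2-\alpha_1}{m_0},
\]
so that $\lambda:D_C\to[0,1]$ is bi-Lipschitz. Hence $\lambda(D_C)\subset K_f$ has Lebesgue measure at least $|D_C|/M_0>0$, which proves the theorem.

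The hard part will be the uniform $C^1$-bound on the family $(h_\alpha)_{\alpha\in D_C}$: this is a quantitative sharpening of Theorem~\ref{thm:conjugaison-differentiable} not visible in its statement but intrinsic to its proof in \cite{katz-ornstein,herman-revisited}, where the derivative bounds on $h_\alpha$ depend on $\alpha$ only through the Diophantine constants $(C,\delta)$. If one wishes to avoid invoking this sharper form, an alternative is to argue qualitatively that $\alpha\mapsto\|h_\alpha\|_1$ is Borel-measurable on the full-measure set of Diophantine $\alpha$'s, then apply Lusin's theorem to extract a compact subset of positive measure on which these norms are uniformly bounded, and rerun the bi-Lipschitz argument there.
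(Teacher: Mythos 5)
The paper does not actually reprove this statement: it attributes it to Herman \cite{herman_lebesgue} and only recalls the mechanism, namely that $\rho$ is Gateaux-differentiable, with differential \eqref{eq:gateaux}, at diffeomorphisms that are $\C{1}$-conjugate to an irrational rotation, the conjugacy being supplied at Diophantine parameters by Theorem~\ref{thm:conjugaison-differentiable}. Your proof is a quantitative reworking of exactly that mechanism: the two-sided estimate $\alpha+m_0\ve\le\hat{\rho}(\lambda(\alpha)+\ve)\le\alpha+M_0\ve$ is the integrated form of the finiteness and positivity of that derivative, and the bi-Lipschitz selection $\alpha\mapsto\lambda(\alpha)$ on $D_C$ then pushes the positive measure of $D_C$ into $K_f$. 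The chain of reductions is correct (uniqueness of $\lambda(\alpha)$ for irrational $\alpha$, cf.~proposition~\ref{prop:arnold}; monotonicity of the rotation number under pointwise comparison of lifts; the conjugation identity; compactness of $D_C$, so that $\lambda(D_C)$ is compact hence measurable), and in fact only the upper constant $M_0$ is needed: the lower bound $m_0$ is decorative.

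The one step not contained in the statement of Theorem~\ref{thm:conjugaison-differentiable} is, as you note yourself, the uniformity of $\sup_{\T} h_\alpha'$ over $\alpha\in D_C$. This is indeed what the proofs in \cite{herman,katz-ornstein,herman-revisited} deliver (estimates depending only on $(C,\delta)$ and on the $\C{3}$ data of $f$, unchanged under $f\mapsto R_\lambda\circ f$), but as written it is an assertion, not an argument, so your fallback is what carries the proof. It can be made even softer, with no Lusin and no measurability of $\alpha\mapsto\|h_\alpha\|_1$: writing $D_C=\bigcup_N A_N$ with $A_N=\{\alpha\in D_C\,:\,\sup_{\T}h_\alpha'\le N\}$, countable subadditivity of outer measure gives some $A_N$ with $|A_N|^*>0$; the local Lipschitz bound for $\hat{\rho}$ at the fibres over $A_N$ yields $|A_N|^*\le N\,|\lambda(A_N)|^*$, and since $K_f=\hat{\rho}^{-1}(\T\setminus\Q)$ is Borel and contains $\lambda(A_N)$, it has Lebesgue measure at least $|A_N|^*/N>0$. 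With that patch your argument is complete and is, in substance, Herman's.
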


\begin{figure}[ht!]
\centering
\includegraphics[scale=.4]{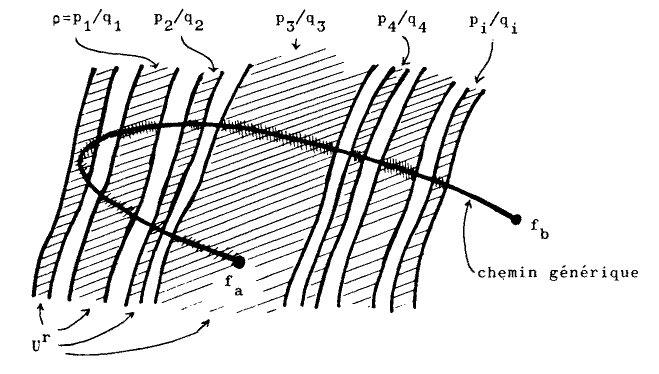}
\caption{Image prise de \cite{herman_lebesgue}. Elle représente un chemin générique dans $\Dc$ qui traverse des zones où le nombre de rotation est constant.}
\end{figure}

Pour montrer ce théorème, Herman explique que la fonction \emph{nombre de rotation} $\rho$ est différentiable (au sens de Gateaux) aux points constitués par le difféomorphismes $f$ avec nombre de rotation irrationnel $\alpha$ et $C^1$-conjugués à la rotation, et sa différentielle, définie sur l'algèbre de Lie des champs de vecteurs, est donnée par
\beqn{gateaux}{v\in \mrm{Vect}^1(\T)\mto \int_{\T}(Dh\circ f\circ h^{-1})\,v\circ h^{-1},}
où $h$ est le difféomorphisme conjuguant $f$ à $R_\alpha$ : $f=h^{-1}\circ R_\alpha \circ h$.

\begin{ex}
\label{ex:arnold}
Ce phénomène s'observe par exemple dans la célèbre \emph{famille d'Arnol'd}. Pour tout~$a\in ]-1/2\pi, 1/2\pi[$ et~$\la\in \T$, on définit la fonction $f_{a,\la}(x)=x+\la+a\,\sin(2\pi\,x)$. Les ensembles~$A_{p/q}$ des paramètres~$a$ et~$\la$ tels que~$\rho(f_{a,\la})=p/q$ ont la forme de petites langues. Pour des petites valeurs de $a$, les $t$ pour lesquels $\rho(f_{a,t})$ est irrationnel est presque $1$.

Cette dernière affirmation suit du fait que le bord des régions sur lesquelles le nombre de rotation est constant et rationnel, est Lipschitz au voisinage des pointes ($a\sim 0$, $\rho(f_{a,t})$ rationnel).
\end{ex}

Nous avons fait des simulations numériques (voir appendice \ref{app:simulations}\,), qui montrent des langues quand on regarde les mesures MS : pour tout couple $(\alpha,\sigma)$, on peut calculer $\mu_{\sigma}\left (\rho^{-1}(\alpha)\right )$. Nous pensons que, contrairement à ce que l'on constate dans l'exemple d'Arnol'd, la mesure des difféomorphismes avec nombre de rotation irrationnel est nulle pour tout $\sigma>0$. 
L'argument géométrique donné par Herman \cite{herman_lebesgue} ne pourra pas être exploité, car l'on s'attend à ce que la fonction $\rho$ ne soit pas Lipschitz aux difféomorphismes avec nombre de rotation diophantien, mais $(1/2-\ve)$-H\"older.

En revanche, il est facile de voir que les difféomorphismes avec nombre de rotation irrationnel fixé, forment une partie de mesure nulle (il est en soi évident que les nombres irrationnels pour lesquels la mesure est positive, doivent être en nombre au plus dénombrable). Pour cela, nous rappellerons un résultat dû à Arnol'd (voir \cite{herman}\,) :

\begin{prop}
\label{prop:arnold}
Soit $f$ un homéomorphisme du cercle qui fixe $0$ et soit $\alpha$ un nombre irrationnel. Il existe un seul~$\lambda=\lambda(\alpha)\in\T$ tel que le nombre de rotation de $R_\la\circ f$ soit égal à~$\alpha$. En outre, la fonction $\la(\alpha)$ est continue en la variable $f$.
\end{prop}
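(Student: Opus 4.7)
The plan is to split the statement into three claims: existence, uniqueness at irrational $\alpha$, and continuity in $f$. I will work on the universal cover: let $F:\R\to\R$ be the lift of $f$ determined by $F(0)=0$, and define $\Phi(\lambda) := \rho(F+\lambda)$. The existence of $\lambda(\alpha)$ follows from elementary properties of $\Phi$. Indeed, $\Phi$ is continuous in $\lambda$ (continuity of the rotation number on $\mathrm{Hom\acute{e}o}_+(\T)$), non-decreasing (a pointwise inequality of lifts propagates to the rotation number by the Poincaré formula), and satisfies $\Phi(\lambda+1)=\Phi(\lambda)+1$. The intermediate value theorem then gives at least one $\lambda\in\T$ with $\Phi(\lambda)=\alpha$.

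For uniqueness at irrational $\alpha$, the preimage $\Phi^{-1}(\alpha)$ is a closed interval $[\lambda_-,\lambda_+]$ by monotonicity of $\Phi$; I want to show $\lambda_-=\lambda_+$. Suppose otherwise and set $\delta := \lambda_+-\lambda_-$; after replacing $F$ by $F+\lambda_-$ one may assume $\rho(F)=\rho(F+\delta)=\alpha$. Write $g := R_\delta\circ f$. By Poincaré's semi-conjugacy theorem (since $\alpha$ is irrational), there exist continuous, non-decreasing circle maps $h,k:\T\to\T$ of degree one with $h\circ f = R_\alpha\circ h$ and $k\circ g = R_\alpha\circ k$; let $H,K$ denote their lifts. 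Set $\psi := K-H$, which is continuous and $1$-periodic, hence bounded. Using both semi-conjugacy relations, a direct computation yields
\[
\psi(x)-\psi(F(x)) \;=\; K(F(x)+\delta)-K(F(x)) \;\ge\; 0.
\]
Telescoping along the $f$-orbit of a point $x$ and dividing by $n$, the left-hand side tends to $0$ while, by Birkhoff's ergodic theorem applied to the unique $f$-invariant probability measure $\mu$, the right-hand side converges to $\int_\T [K(y+\delta)-K(y)]\,d\mu(y)$. This non-negative integral must therefore vanish, forcing $K(y+\delta)=K(y)$ for $\mu$-almost every $y$.

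I then split into the two classical cases. If $f$ is minimal (so $\mathrm{supp}(\mu)=\T$), the continuous function $y\mapsto K(y+\delta)-K(y)$ vanishes identically; hence $K$ is $\delta$-periodic while also satisfying $K(y+1)=K(y)+1$, which is impossible regardless of whether $\delta$ is rational (using $K(p)=K(0)=K(0)+p$) or irrational (using density of $\{n\delta\bmod 1\}$ and continuity). If instead $f$ has a Cantor minimal set $K_f=\mathrm{supp}(\mu)$, the vanishing on $K_f$ means that for every $y\in K_f$, the arc $[y,y+\delta]$ lies in a wandering interval of $g$. Since $K_f$ is compact and these wandering intervals are open with total Lebesgue measure at most $1$, only finitely many of them (each of length at least $\delta$) can meet $K_f$; carrying out the symmetric computation with the roles of $F$ and $G=F+\delta$ exchanged yields an analogous constraint placing (a translate of) $K_g$ inside $f$-wandering intervals. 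Combining both constraints with the fact that wandering intervals of an orientation-preserving homeomorphism with irrational rotation number must have lengths that go to zero under iteration produces a contradiction. I expect this Denjoy-type case to be the main technical obstacle.

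Finally, for continuity in $f$, the map $(f,\lambda)\mapsto\rho(R_\lambda\circ f)$ is continuous on $\mathrm{Hom\acute{e}o}_+(\T)\times\T$. Given $\alpha$ irrational and $f_n\to f$ with $f_n(0)=0$, compactness of $\T$ lets me extract a subsequential limit $\lambda_\infty$ of $\lambda(\alpha,f_n)$; passing to the limit gives $\rho(R_{\lambda_\infty}\circ f)=\alpha$, and uniqueness forces $\lambda_\infty=\lambda(\alpha,f)$, so the whole sequence converges. Thus $f\mapsto\lambda(\alpha,f)$ is continuous.
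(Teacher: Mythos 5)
Your existence and continuity arguments are correct, and so is the minimal case of the uniqueness step: the identity $\psi(x)-\psi(F(x))=K(F(x)+\delta)-K(F(x))\ge 0$, the telescoping, and the support argument all work (what you really invoke is unique ergodicity of $f$, or Birkhoff at a $\mu$-generic point; either suffices). The genuine gap is the exceptional (Denjoy) case, which you leave open, and the plan you sketch for it cannot work as stated. Since $g=R_\delta\circ f$ preserves its minimal set, on the circle $f(K_g)=K_g-\delta$; hence your ``symmetric'' constraint --- for $z\in f(K_g)$ the interval $(z,z+\delta)$ misses $K_f$ --- becomes, after this substitution, exactly the statement that no point of $K_f$ lies within $\delta$ to the left of a point of $K_g$, which is the \emph{same} statement as your first constraint (no point of $K_g$ within $\delta$ to the right of a point of $K_f$). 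So ``combining both constraints'' yields nothing new, and the remark that lengths of iterates of a wandering interval tend to $0$ is never actually connected to a contradiction. As written, uniqueness is unproved in the exceptional case.

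The case can in fact be closed from your first constraint alone. For $y\in K_f$ you obtained $K(y+\delta)=K(y)$; applying this at the point $f(y)\in K_f$ gives $K(f(y))=K(f(y)+\delta)=K(g(y))=K(y)+\alpha$, so $K$ semiconjugates $f\vert_{K_f}$ to $R_\alpha$. On the other hand, your covering argument shows that $K_f$ is contained in the finitely many closures of gaps of $K_g$ of length at least $\delta$, on each of which $K$ is constant; hence $K(K_f)$ is a finite set mapped into itself by $x\mapsto x+\alpha$, impossible for $\alpha$ irrational. For comparison, the paper gives no proof of this proposition, referring to Herman \cite{herman}, and the classical argument is shorter and avoids the minimal/exceptional dichotomy altogether: with $\mu$ the $f$-invariant probability measure one has $\int_{\T}(F^q(x)-x)\,d\mu(x)=q\alpha$ for every $q$; choosing $p/q$ with $0<p-q\alpha<\delta$ produces $x_0$ with $F^q(x_0)-x_0>p-\delta$, and since $G\ge F+\delta$ implies $G^q\ge F^q+\delta$, one gets $G^q(x_0)-x_0>p$, hence $\rho(G)\ge p/q>\alpha$, i.e.\ strict monotonicity of $\lambda\mapsto\rho(R_\lambda\circ f)$ at irrational values.
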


\begin{rem}
On peut aussi montrer que pour tout nombre rationnel $p/q$, il existe deux fonctions $\la_+$ et $\la_-$, telles que $\rho(R_{\la_+}\circ f)=\rho(R_{\la_-}\circ f)=p/q$ et pour tout $\la>\la_+$ (resp. $<\la_-$) le nombre de rotation de $R_{\la}\circ f$ est strictement supérieur (resp. inférieur). Les difféomorphismes en ces paramètres sont dits \emph{semi-stables}.

La proposition \ref{prop:noparabolicorbit} montre en particulier que la probabilité qu'un difféomorphisme de Malliavin-Shavgulidze avec nombre de rotation $p/q$ soit semi-stable est nulle. Nous en déduisons, par dénombrabilité des nombres rationnels, que les difféomorphismes semi-stables forment une partie négligeable dans $\Dc$.
\end{rem}

\medskip

D'après la proposition \ref{prop:arnold}, on peut penser à $\rho^{-1}(\alpha)$ comme à une hypersurface dans $\Dc$, non lisse en général, mais avec une régularité qui dépend de la régularité du difféomorphisme et, surtout, de la condition diophantienne que $\alpha$ vérifie : ces deux paramètres donnent un contrôle sur la différentielle du nombre de rotation (voir \eqref{eq:gateaux}\,).

Pour ce qui nous concerne, il suffit de remarquer que l'on peut voir $\rho^{-1}(\alpha)$ comme l'image d'une section continue $s_\alpha$ du fibré en cercles $\Dc$ sur $\T \backslash\Dc$,  où $\T\backslash\Dc$ s'identifie, via l'application $B$ donnée par \eqref{eq:B}, à $B^{-1}\left (C_0(\T)\times \{0\}\right )$. Par définition de la mesure MS (voir \eqref{eq:MSc}\,), on peut appliquer le théorème de Fubini à l'intégrale
\[\int_{C_0(\T)}\int_{\T}\mathbf{1}_{\{s_\alpha(x)\}}(t)\,dt\,d\W_{0,\sigma}(x)=\mu_{\sigma}\left (s_\alpha\left (C_0(\T)\right )\right ),\]
qui est pourtant nulle.

\begin{prop}
Pour tout nombre irrationnel $\alpha$, la mesure des difféomorphismes avec nombre de rotation $\alpha$ est nulle :~$\mu_\sigma\left (\rho^{-1}(\alpha)\right )=0$. \vspace{-1.5em}\cvd
\end{prop}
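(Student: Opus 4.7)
The plan is to exploit the product structure of the MS measure on $\Dc$ combined with the uniqueness statement from Arnol'd recalled in the preceding proposition. The key observation is that in the coordinates provided by the homeomorphism $B$ of \eqref{eq:B}, the set $\rho^{-1}(\alpha)$ has a very simple description: it is the graph of a continuous function over the first factor $C_0(\T)$, and graphs over the Lebesgue factor have measure zero by Fubini.

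More precisely, I would proceed as follows. First, I use the identification $B\colon \Dc \to C_0(\T)\times \T$ which writes every diffeomorphism as $f = B^{-1}(x,t)$ with $t=f(0)$; by \eqref{eq:MSc} the MS measure is the pushforward of the product measure $\W_{0,\sigma}\otimes\Leb$. Under this identification, the second coordinate $t$ encodes exactly the rotation factor: $B^{-1}(x,t) = R_t\circ B^{-1}(x,0)$, and the subgroup of rotations $\{R_\lambda\}$ acts on $\T\backslash\Dc \cong B^{-1}(C_0(\T)\times\{0\})$ by translation on the second factor. Thus the MS measure is literally the product of Wiener measure on the "fixing-$0$" slice with the uniform measure along the rotation orbits.

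Second, I invoke Arnol'd's proposition: for any homeomorphism $f_0$ fixing $0$ and any irrational $\alpha$, there exists a \emph{unique} $\lambda(\alpha,f_0)\in\T$ with $\rho(R_{\lambda(\alpha,f_0)}\circ f_0)=\alpha$, and $\lambda$ depends continuously on $f_0$. Reading this in the $(x,t)$-coordinates, it says that the set $\rho^{-1}(\alpha)\subset C_0(\T)\times\T$ meets each fiber $\{x\}\times\T$ in exactly one point $(x,s_\alpha(x))$, where $s_\alpha\colon C_0(\T)\to\T$ is continuous (hence Borel). In other words, $\rho^{-1}(\alpha)$ is precisely the graph of the continuous section $s_\alpha$.

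Third, Fubini closes the argument: by the product decomposition,
\[
\mu_\sigma(\rho^{-1}(\alpha)) \;=\; \int_{C_0(\T)} \Leb\bigl(\{s_\alpha(x)\}\bigr)\,d\W_{0,\sigma}(x) \;=\; \int_{C_0(\T)} 0 \, d\W_{0,\sigma}(x) \;=\;0,
\]
since each horizontal slice is a single point in $\T$, of Lebesgue measure zero. There is no real obstacle here beyond checking measurability, which follows immediately from the continuity of $s_\alpha$ given by Arnol'd; the entire argument is structural, and the irrationality of $\alpha$ enters only through the uniqueness part of Arnol'd's proposition (for rational $\alpha$ the fiber is a nontrivial interval, and the analogous statement fails, as expected).
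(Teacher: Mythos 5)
Your argument is correct and is essentially the paper's own proof: both identify $\Dc$ with $C_0(\T)\times\T$ via $B$, use Arnol'd's uniqueness/continuity to realize $\rho^{-1}(\alpha)$ as the graph of a continuous section $s_\alpha$ over the Wiener factor, and conclude by Fubini since each fiber of the graph is a single point of $\T$, hence Lebesgue-negligible. No substantive difference to report.
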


\subsubsection{Renormalisation et mesures}

L'une des clefs pour comprendre la nature des difféomorphismes du cercle est la \emph{renormalisation}. Cachée dans les travaux de Herman et Yoccoz \cite{herman,yoccoz}, elle permet de mieux expliquer pourquoi le résultat de conjugaison différentiable est vrai. Il s'agit de regarder de plus en plus localement la dynamique décrite par $f$. Il est possible de définir la renormalisation $\cR f$ de tout difféomorphisme $f$ sans point fixe : en prenant l'intervalle $I_1$ contenant $0$ et dont les extrémités sont $f(0)$ et $f^{q_1}(0)$ (rappelons de la partie \ref{ssc:arith} que l'entier $q_1$ est choisi comme le plus petit $k$ tel que $f^k(0)$ appartient à l'intervalle de gauche délimité par $0$ et $f(0)$ et tel que $f^{k+1}(0)$ appartient à celui de droite), on considère le difféomorphisme du cercle $I_1/_{f^{q_1}(0)\sim f(0)}$ qui est défini par $f$ sur l'intervalle d'extrémités $f^{q_1}(0)$ et $0$ et par $f^{q_1}$ sur l'autre (voir figure \ref{fig:reno}\,). On opère par la suite une conjugaison affine pour envoyer $I_1/_{f^{q_1}(0)\sim f(0)}$ sur le cercle $\R/\Z$ standard : nous avons défini le renormalisé $\cR f$. 

\begin{figure}[ht]
\[
\includegraphics[scale=.6]{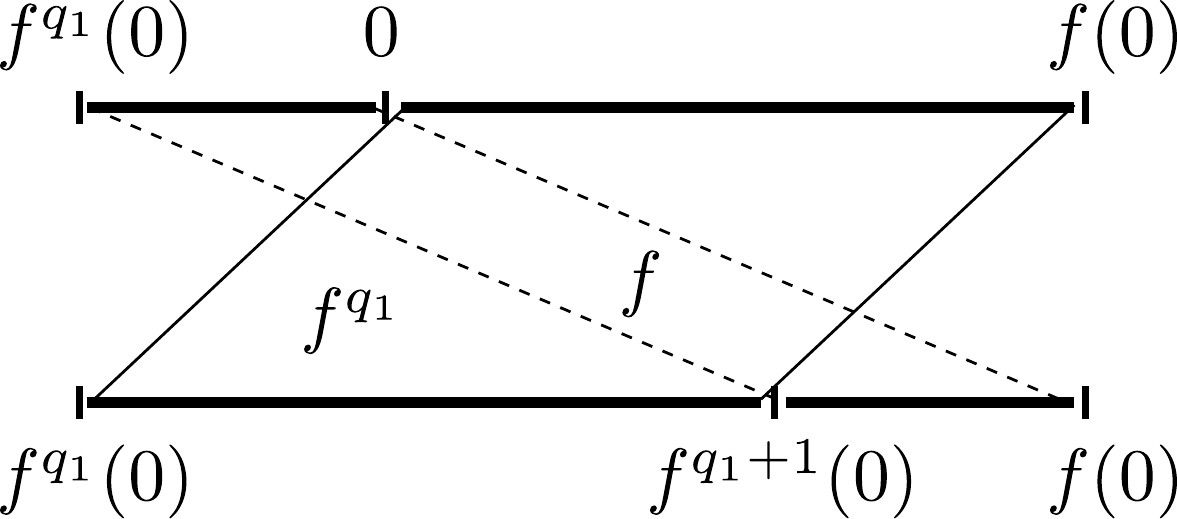}
\]
\caption{Construction de la renormalisation de $f$}\label{fig:reno}
\end{figure}

Notons $T$ l'homothétie $T(t)=\left (f(0)+1-f^q(0)\right )\,t$ qui envoie $[0,1]$ sur $[-f^{q}(0),f(0)]$ en fixant $0$. Avec une terminologie plus récente, $\cR f$ est défini par la \emph{paire d'applications qui commutent} $(T^{-1}fT, T^{-1}f^qT)$, qui est l'échange d'intervalles généralisé sur $[0,1]$ en figure \ref{fig:reno}.

La théorie de Herman-Yoccoz peut s'expliquer en disant que les renormalisations successives $\cR^n
f$ se rapprochent d'une rotation lorsque le nombre de rotation est diophantien (les difféomorphismes infiniment renormalisables sont exactement ceux avec nombre de rotation irrationnel).

\begin{q}
Quelle est l'image de la mesure $\mu_{MS}$ sous l'opérateur $\cR$ ?
\end{q}

Nous ne savons pas répondre à cette question. Nous pouvons cependant faire une remarque facile. Nous avons vu, dans le chapitre précédent, que lorsque l'on compose à droite par un difféomorphisme $\vf$, l'événement~\eqref{eq:modkosyak} est de mesure $1$ pour la probabilité image. Lorsque l'on renormalise $f$, on fait apparaitre une itération $f^q$ dans la définition de $\cR f$, qui modifie cette loi des logarithmes itérés : pour tout $t\in [0,1]$ on a
\begin{align*}
&\varlimsup_{\ve\rightarrow 0}\frac{\log D(\cR f)(t+\ve)-\log D(\cR f)(t)}{\sqrt{2\ve\log\log(1/\ve)}}\\
=\,&\text{\scalebox{0.8}{$\begin{cases}
\displaystyle \sqrt{T'(t)} \\ \trm{si }t\in\left  ]-\dfrac{f^q(0)}{f(0)+1-f^q(0)},0\right [ ,\\
&\\
\displaystyle \sum_{k=0}^{q-1}\varlimsup_{\ve\rightarrow 0}\frac{\log Df(f^k(T(t+\ve)))-\log Df(f(T(t)))}{\sqrt{2\ve \log\log (1/\ve)}}
\\ \trm{si }t\in \left] 0,\dfrac{f(0)}{f(0)+1-f^{q}(0)}\right [.\\
\end{cases}$}}\\
\\
=\,&
\text{\scalebox{0.8}{$\begin{cases}
\displaystyle \sqrt{f(0)+1-f^{q}(0)} \\ \trm{si }t\in\left  ]-\dfrac{f^q(0)}{f(0)+1-f^q(0)},0\right [ ,\\
&\\
\displaystyle \sqrt{f(0)+1-f^{q}(0)}\sum_{k=0}^{q-1}\sqrt{Df^{k}\left ((f(0)+1-f^{q}(0))\,t\right )} \\ \trm{si }t\in \left] 0,\dfrac{f(0)}{f(0)+1-f^{q}(0)}\right [.\\
\end{cases}$}}
\end{align*}

Nous en déduisons 

\begin{prop}
Les mesures $\cR_*\mu_{MS}$ et $\mu_{MS}$ ne sont pas équivalentes.\vspace{-1.5em}\cvd
\end{prop}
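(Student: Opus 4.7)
Le plan est d'utiliser la loi des logarithmes itérés \eqref{eq:iteratedlog} comme invariant mesurable permettant de distinguer les classes d'équivalence de mesures. Pour tout $t_0\in\T$ fixé, la fonctionnelle
\[
L(g,t_0) := \varlimsup_{\ve\rightarrow 0^+}\frac{|\log Dg(t_0+\ve)-\log Dg(t_0)|}{\sqrt{2\ve\,\log\log (1/\ve)}}
\]
est mesurable sur $\Dc$, et d'après \eqref{eq:iteratedlog} appliquée au pont brownien $B_0(g)=\log Dg-\log Dg(0)$, on a $L(g,t_0)=1$ pour $\mu_{MS}$-presque tout $g\in\Dc$. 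L'événement $\{g\,:\,L(g,t_0)=1\}$ est donc de $\mu_{MS}$-mesure totale, et par équivalence de mesures cette propriété doit se transférer à toute mesure dans la classe de $\mu_{MS}$.

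Supposons par l'absurde que $\cR_*\mu_{MS}\sim\mu_{MS}$. Alors l'événement $\{L(\cdot,t_0)=1\}$ serait aussi de $\cR_*\mu_{MS}$-mesure pleine, c'est-à-dire que l'on aurait $L(\cR f,t_0)=1$ pour $\mu_{MS}$-presque tout $f$. Or le calcul explicite rappelé juste avant l'énoncé montre que, dès que $t_0$ appartient à l'intervalle de gauche de la partition définissant $\cR f$, on a
\[
L(\cR f,t_0)=\sqrt{f(0)+1-f^{q}(0)}, \quad\text{avec }q=q_1(f).
\]
Cette expression vaut $1$ si et seulement si $f^q(0)=f(0)$, c'est-à-dire si $f(0)$ est un point périodique de $f$ de période divisant $q-1$ --- condition manifestement non-générique au vu de la proposition~\ref{prop:noparabolicorbit} et du corollaire~\ref{prop:points-fixes} adaptés au cercle.

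Pour transformer cette remarque en contradiction rigoureuse, il faut exhiber un ouvert $U\subs\Dc$ de $\mu_{MS}$-mesure strictement positive (la mesure $\mu_{MS}$ étant strictement positive sur tout ouvert non vide) sur lequel, simultanément : (i) le temps de premier retour $q_1(\cdot)$ est localement constant, égal à un certain $q_0\ge 2$ ; (ii) le point $t_0$ fixé tombe dans l'intervalle $\left]-f^{q_0}(0)/(f(0)+1-f^{q_0}(0)),0\right[$ ; et (iii) $f^{q_0}(0)\ne f(0)$. Un tel $U$ se construit aisément, par exemple à partir d'une petite perturbation $\C{1}$ d'une rotation irrationnelle convenablement choisie, pour laquelle ces trois propriétés sont stables. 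L'obstacle principal réside donc dans la gestion de la dépendance en $f$ des paramètres aléatoires $q_1(f)$, $f(0)$ et $f^{q_1}(0)$ intervenant dans la définition de $\cR f$ ; une fois cette dépendance rendue localement constante sur $U$, la conclusion est immédiate : $\mu_{MS}\bigl(\{f\,:\,L(\cR f,t_0)\ne 1\}\bigr)\ge \mu_{MS}(U)>0$, ce qui contredit l'hypothèse d'équivalence. Les mesures $\cR_*\mu_{MS}$ et $\mu_{MS}$ sont donc mutuellement singulières.
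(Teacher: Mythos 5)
Votre démonstration est correcte et suit essentiellement la même voie que le texte : l'argument du livre est précisément le calcul de la loi des logarithmes itérés pour $\cR f$ affiché juste avant l'énoncé, et vous explicitez la déduction (valeur $1$ presque sûre de l'invariant sous $\mu_{MS}$ en un point $t_0$ fixé, puis un ouvert $U$ de mesure positive sur lequel $L(\cR f,t_0)=\sqrt{f(0)+1-f^{q_0}(0)}\ne 1$). Seule la phrase finale dépasse ce qui est démontré : votre argument établit que $\cR_*\mu_{MS}$ n'est pas absolument continue par rapport à $\mu_{MS}$ (ce qui suffit pour la non-équivalence demandée), mais pas la singularité mutuelle.
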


La proposition précédente donne seulement une réponse partielle au problème : si l'on s'attend à ce que la renormalisation « lisse » les difféomorphismes, il faut croire que ce module de continuité local donné par la loi de logarithmes itérés doit tendre vers $0$ lorsque l'on applique $\cR$ une infinité de fois. En d'autres mots, on doit penser que l'opérateur $\cR$ concentre le support de la mesure $\mu_{MS}$ de plus en plus dans un voisinage~$\C{1+1/2-\ve}$ des rotations : soit $n$, disons, un nombre impair, et regardons encore la loi des logarithmes itérés après $n$ renormalisations d'un difféomorphisme  renormalisable $n$ fois. On a
\begin{align*}
&\varlimsup_{\ve\rightarrow 0}\frac{\log D(\cR^n f)(t+\ve)-\log D(\cR^n f)(t)}{\sqrt{2\ve\log\log(1/\ve)}}\\
=\,&\text{\scalebox{0.8}{$
\begin{cases}
\displaystyle \sqrt{f^{q_{n-1}}(0)+1-f^{q_n}(0)}\sum_{k=0}^{q_{n-1}-1}\sqrt{Df^{k}\left ((f^{q_{n-1}}(0)+1-f^{q_{n}}(0))\,t\right )} 
 \\ \trm{si }t\in\left  ]-\dfrac{f^{q_n}(0)}{f^{q_{n-1}}(0)+1-f^{q_n}(0)},0\right [ ,\\
\\
\displaystyle \sqrt{f^{q_{n-1}}(0)+1-f^{q_n}(0)}\sum_{k=0}^{q_n-1}\sqrt{Df^{k}\left ((f^{q_{n-1}}(0)+1-f^{q_{n}}(0))\,t\right )} \\ \trm{si }t\in \left]0,\dfrac{f^{q_{n-1}}(0)}{f^{q_{n-1}}(0)+1-f^{q_n}(0)}\right [.\\
\end{cases}$}}
\end{align*}
Si le nombre de rotation de $f$ est irrationnel, on peut regarder le comportement lorsque $n$ tend vers l'infini. Si $f$ n'est pas minimal, alors le facteur 
\[\sqrt{f^{q_{n-1}}(0)+1-f^{q_n}(0)}\]
converge vers un nombre strictement positif, et on devrait penser que les sommes convergent (d'après le résultat de Norton expliqué dans la partie \ref{ssc:Denjoy} de ce chapitre). En revanche, si $f$ est minimal, il faut regarder finement la compétition entre ces deux limites : le facteur~$\sqrt{f^{q_{n-1}}(0)+1-f^{q_n}(0)}$ tend vers $0$, mais la somme diverge.

\medskip

Pourquoi sommes-nous convaincus que le nombre de rotation doit être rationnel presque sûrement ? Le fait est que pour construire $\cR f$ on se retrouve toujours avec beaucoup de choix. Nous allons être plus précis.

Considérons le processus $(f,\cG_t)$, où $\cG_t$ est la filtration introduite à la ligne  \eqref{eq:filtration} (avec les modifications appropriées en considérant que l'on travaille avec un \emph{pont} brownien). Pour définir $\cR f$, nous n'avons pas besoin de connaitre entièrement le pont brownien entre $0$ et $1$.
\begin{enumerate}
\item Le premier point de l'orbite est $f(0)=\a$ : il est choisi uniformément par rapport à Lebesgue et il est indépendant de $\cG_t$ pour tout $t$.
\item Ensuite la définition de $f(\a)$ nécessite la connaissance de $\a$ et de $B$ entre $0$ et $\a$. Supposons 
\begin{equation}
\label{eq:condition_direction}
\tag{$\ast$}
\a<f(\a)<1
\end{equation}
(dans le cas contraire on pourra faire un couplage avec $1-f(1-t)$). Au lieu de considérer $(B_t)_{t\in[0,\a]}$, on peut se restreindre à la variable
\[\dfrac{\int_0^\a e^B}{\int_0^1 e^B}+\a\,:\]
nous avons que $f^2(0)$ est mesurable par rapport à la tribu 
\[\sigma\left (B_1,\int_0^1e^B , \a, \int_0^\a e^B\right ).\]
\item Sous la condition précédente, $q_1$ définit un temps d'arrêt qui est mesurable par rapport à la tribu
\[\sigma\left (B_1,\int_0^1e^B , \a, \int_0^\a e^B, (B_{t})_{t\in [\a, 1[} \right ).\]
Le temps $q_1$ est fini si et seulement si $f$ n'a pas de point fixe. On en déduit que la variable  $f^{q_1+1}(0)$ est mesurable par rapport à la tribu
\[\mathcal{T}_1=\sigma\left (B_1,\int_0^1e^B , \a, \int_0^\a e^B, (B_{t})_{t\in [\a, f^{q_1}(0)]} \right ).\]
\item Par conséquent, l'intervalle $I_1$ sur lequel on définit la renormalisation $\cR f$, ainsi que le couple d'applications~$\left (f,f^{q_1}\right )$, sont mesurables par rapport à la tribu $\cT_1$. Ce qui est important est qu'ils sont presque indépendants de l'évolution de $B$ dans l'intervalle délimité par $f^{q_1}(0)$ et $f(0)$ qui contient $0$ : il faut seulement connaitre
\[\int_{f^{q_1}(0)}^{1} e^B \trm{ et }\int_0^{f(0)} e^B.\]
Le point $f^{q_1+1}(0)$ joue le rôle de $\a$ dans le premier point.
\item En répétant les étapes précédentes, on en déduit que l'intervalle $I_n$ d'extrémités $f^{q_{n-1}}(0)$ et $f^{q_n}(0)$ qui contient $0$ et le couple $\left (f^{q_{n-1}},f^{q_n}\right )$ sont \emph{presque indépendants} de l'évolution de $B$ dans l'intervalle $I_n$, modulo deux conditions de bords comme dans le point précédent.
\end{enumerate}

En conclusion, le fait que le difféomorphisme aléatoire $f$ soit renormalisable une infinité de fois est indéterminé si l'on connait l'évolution de $B$ partout, sauf en un petit intervalle autour de $0$. On aimerait alors disposer d'une espèce de propriété de Markov pour pouvoir appliquer une loi du $0$--$1$. Les images de la figure \ref{fig:qdistrib1} sont assez suggestives sous ce point de vue.
\newpage
\section{Centralisateurs}

\subsection{Quelques rappels sur les centralisateurs des difféomorphismes de l'intervalle}

Rappelons d'abord un résultat classique, dû originairement a Sternberg et dans cette version $\C{1+\alpha}$ par Chaperon \cite{chaperon}, mais présenté sous cette forme par Yoccoz (voir \cite{navasEM,booknavas,diviseurs}\,).

\begin{dfn}
On dit que deux difféomorphismes $f$ et $g$ de classe $\C{1+\alpha}$ définissent le même \emph{germe} s'ils coïncident dans un voisinage suffisamment petit de l'origine. On définit $\cG^{\hspace{0.8pt}1+\alpha}_+(\R,0)$ comme étant l'\emph{espace des germes des difféomorphismes~$\C{1+\alpha}$} de la droite qui fixent $0$ et préservent l'orientation.

On dit qu'un germe $[g]$ est \emph{hyperbolique} si la dérivée en $0$ de $g$ définissant $[g]$ est différente de $1$ (observer que cette notion est toujours bien définie).
\end{dfn}

\begin{thm}[Théorème de linéarisation de Sternberg]
\label{thm:sternberg}
Soit $[g]\in \cG^{\hspace{0.8pt}1+\alpha}_+(\R,0)$ un germe hyperbolique et $a:=g'(0)$. Il existe alors un germe $[h]\in \cG^{\hspace{0.8pt}1+\alpha}_+(\R,0)$ tel que 
\beqn{sternberg}{
\tag{$\ast$}
h'(0)=1  \trm{ et } h(g(x))=a\hspace{0.3pt}h(x) \trm{ dans un voisinage de l'origine.}}

En outre, si $[h_1]$ est un germe $\C{1}$ qui vérifie \eqref{eq:sternberg}, alors $[h_1]$ est de classe~$\C{1+\alpha}$ et définit le même germe que $h$.
\end{thm}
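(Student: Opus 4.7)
Mon approche consiste à construire explicitement $h$ à la Koenigs, comme limite d'itérés renormalisés. Quitte à remplacer $g$ par $g^{-1}$ (ce qui échange $a$ et $a^{-1}$ et préserve la relation cherchée), je supposerai $0<a<1$, de sorte que $g$ est localement contractant en $0$. L'hypothèse $\C{1+\alpha}$ fournit $g'(x)=a+O(|x|^\alpha)$, d'où par intégration $g(x)=ax+r(x)$ avec $|r(x)|=O(|x|^{1+\alpha})$ et $|r'(x)|=O(|x|^\alpha)$ au voisinage de l'origine. Je définirai alors
\[h(x) := \lim_{n\to\infty} a^{-n}\,g^n(x),\]
toute la preuve consistant à justifier l'existence et la régularité de cette limite. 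Pour la convergence, j'exploiterai l'identité télescopique
\[a^{-(n+1)}g^{n+1}(x)-a^{-n}g^n(x) = a^{-(n+1)}\,r(g^n(x)).\]
Sur un voisinage $[-\delta,\delta]$ assez petit, une itération standard de $g'=a+O(|\cdot|^\alpha)$ donne le contrôle de distorsion $|g^n(x)|\le C a^n$ ; combiné à $|r(y)|=O(|y|^{1+\alpha})$, ceci fournit une majoration géométrique $O(a^{n\alpha})$ pour la différence, série absolument convergente. La convergence étant uniforme, $h$ est continue, et la relation $h\circ g=a\hspace{0.3pt}h$ suit d'un simple décalage d'indice.

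Le point délicat sera la régularité $\C{1+\alpha}$ de $h$ et l'identité $h'(0)=1$. J'utiliserai pour cela la décomposition logarithmique
\[\log (g^n)'(x) - n\log a = \sum_{k=0}^{n-1}\bigl[\log g'(g^k(x)) - \log a\bigr].\]
Le caractère $\alpha$-höldérien de $\log g'$ et l'identité $\log g'(0)=\log a$ majorent chaque terme par $\|\log g'\|_\alpha\,|g^k(x)|^\alpha = O(a^{k\alpha})$ ; la série converge donc uniformément vers une limite continue, nulle en $0$, ce qui montre que $a^{-n}(g^n)'$ converge uniformément vers $h'$ avec $h'(0)=1$. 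En sommant les mêmes estimations, on obtient en prime le contrôle uniforme $(g^k)'(z)\le M a^k$, d'où $|g^k(x)-g^k(y)|\le M a^k|x-y|$. Le caractère höldérien de $\log h'$ en résulte par
\[|\log h'(x)-\log h'(y)|\le \sum_{k\ge 0}\|\log g'\|_\alpha\,|g^k(x)-g^k(y)|^\alpha \le \|\log g'\|_\alpha\,M^\alpha\sum_{k\ge 0}a^{k\alpha}\,|x-y|^\alpha.\]
L'obstacle principal me paraît ici essentiellement combinatoire : il faut orchestrer proprement les constantes de distorsion pour conserver exactement l'exposant $\alpha$ sans perte, ce qui est du folklore mais mérite d'être écrit soigneusement.

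Pour l'unicité, je partirai d'un second germe $[h_1]$ solution de \eqref{eq:sternberg}, supposé seulement $\C{1}$, et considérerai $\phi := h\circ h_1^{-1}$. Ce germe $\C{1}$ fixe $0$, vérifie $\phi'(0)=1$ et commute avec la dilatation $x\mapsto a\hspace{0.3pt}x$. L'identité $\phi(x)=a^{-n}\phi(a^n x)$ permet alors un passage à la limite $n\to\infty$ : puisque $a^n x\to 0$ et que $\phi$ est différentiable en $0$, le membre de droite tend vers $\phi'(0)\cdot x=x$, d'où $\phi=\mrm{id}$. Ainsi $[h_1]=[h]$ comme germes, et la régularité $\C{1+\alpha}$ de $h_1$ s'hérite automatiquement de celle de $h$.
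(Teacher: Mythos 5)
Votre preuve est correcte, mais elle suit, pour la partie existence, une route réellement différente de celle du texte. Dans le texte, l'équation de conjugaison est linéarisée sous la forme $\psi_0=\tfrac1a\,\psi_0\circ g+\tfrac1a\,\phi$ et résolue par le théorème du point fixe de Banach dans l'espace $E_\delta$ des fonctions $C^{1+\alpha}$ nulles à l'ordre $1$ en $0$, muni de la norme $\|\psi'\|_\alpha$ : tout le travail consiste à vérifier que $S_\delta[\psi]=\tfrac1a\,\psi\circ g$ est contractant pour $\delta$ assez petit. Vous construisez au contraire $h$ par la limite de Koenigs $a^{-n}g^n$ (après la même réduction au cas $0<a<1$), la convergence et la régularité provenant d'estimations de distorsion : $|g^n(x)|\le C\,a^n$, puis sommation géométrique des termes $\|\log g'\|_\alpha\,|g^k(x)|^\alpha$ pour obtenir la convergence uniforme de $a^{-n}(g^n)'$ vers $h'$, avec $h'(0)=1$ et $\log h'$ höldérien via la majoration $(g^k)'\le M\,a^k$. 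Les deux arguments sont de difficulté comparable ; le vôtre a l'avantage de fournir une formule explicite pour $h$ et, en prime, le contrôle uniforme de distorsion $(g^k)'\le M\,a^k$, utile par ailleurs, au prix d'une comptabilité de constantes — que vous signalez vous-même — pour obtenir la décroissance exactement géométrique de raison $a$ ; l'approche par point fixe évite ces itérations et place d'emblée la solution dans le bon espace fonctionnel, ce qui rend la régularité $C^{1+\alpha}$ automatique. Pour l'unicité, votre argument ($\phi=h\circ h_1^{-1}$ commute avec la multiplication par $a$, puis $\phi(x)=a^{-n}\phi(a^nx)\to\phi'(0)\,x=x$) est exactement celui du texte.
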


\begin{proof} On va transformer le problème de la recherche d'un tel $h$ en un problème de point fixe. Pour cela, linéarisons l'équation \eqref{eq:sternberg} : soit $\delta>0$ suffisamment petit tel que $g$ soit défini sur l'intervalle $[-\delta,\delta]$, alors on peut écrire $g(x)=ax+\phi(x)$ et $h(x)=x+\psi_0(x)$, avec les fonctions $\phi$ et $\psi_0$ de classe $\C{1+\alpha}$ et nulles en $0$ jusqu'au premier ordre. L'équation \eqref{eq:sternberg} s'écrit sur $[-\delta,\delta]$ :
\[ax+\phi(x)+\psi_0(g(x))=ax+a\,\psi_0(x)\]
ou bien
\beqn{sternberg_lin}{\psi_0=\tfrac1a \psi_0\circ g+\tfrac1a \phi.}
On définit l'espace vectoriel $E_\delta$ des fonctions $\psi$ de classe $\C{1+\alpha}$ sur $[-\delta,\delta]$, telles que $\psi(0)=\psi'(0)=0$. On munit $E_\delta$ de la norme
\[\|\psi\|=\|\psi'\|_{\alpha}.\]
L'espace $E_\delta$ est alors un espace de Banach et la fonction $\phi$ appartient à $E_\delta$.
L'équation \eqref{eq:sternberg_lin} nous suggère de définir l'endomorphisme $S_\delta$ de $E_\delta$ :
\[S_\delta[\psi]=\tfrac{1}{a}\psi\circ g,\]
de sorte que $\psi_0$ doit être solution de l'équation 
\beqn{sternberg_lin2}{\psi=S_\delta[\psi]+\tfrac1a \phi.}
Quitte à passer à $g^{-1}$, on peut supposer $a<1$, puis on pose
$C(\delta):=\sup_{[-\delta,\delta]}g'$.
Placés maintenant dans le bon cadre analytique pour résoudre \eqref{eq:sternberg_lin2}, on montre que pour $\delta$ suffisamment petit, $S_\delta$ est une contraction. Soient $t$ et $s$ dans $[-\delta,\delta]$, on a
\begin{multline*}
\left \vert S_\delta[\psi]'(t)-S_\delta[\psi]'(s)\right \vert = \frac{1}{a}\left \vert (\psi\circ g)'(t)-(\psi\circ g)'(s)\right \vert \\=\frac{1}{a}\left \vert \psi'\left (g(t)\right )\,g'(t)-\psi'\left (g(s)\right )\,g'(s)\right \vert\\
\le \frac{1}{a}\left \vert \psi'\left (g(t)\right )\,g'(t)-\psi'\left (g(t)\right )\,g'(s)\right \vert+\frac{1}{a}\left \vert \psi'\left (g(t)\right )\,g'(s)-\psi'\left (g(s)\right )\,g'(s)\right \vert\\
\le\frac{\left \vert \psi'\left (g(t)\right )\right \vert}{a}\,\|g'\|_\alpha\,|t-s|^{\alpha}+\frac{g'(s)}{a}\,\|\psi\|\, |g(t)-g(s)|^{\alpha}\\
=\frac{\left \vert \psi'\left (g(t)\right )-\psi'(0)\right \vert}{a}\,\|g'\|_\alpha\,|t-s|^{\alpha}+\frac{g'(s)}{a}\,\|\psi\|\, |g(t)-g(s)|^{\alpha}\\
\le\frac{\|\psi\|\,|g(t)|^\alpha}{a}\,\|g'\|_\alpha\,|t-s|^{\alpha}+\frac{C(\delta)}{a}\,\|\psi\|\, C(\delta)^{\alpha}\,|t-s|^{\alpha}\\
\le\frac{\|\psi\|}{a}\,\left [C(\delta)^\alpha |t|^\a \,\|g'\|_\alpha+C(\delta)^{1+\a}\right ]\,|t-s|^\a\\ \le \frac{\|\psi\|}{a}\,\left [C(\delta)^\alpha \delta^\a \,\|g'\|_\alpha +C(\delta)^{1+\a}\right ]\,|t-s|^\a.
\end{multline*}
Il suffira donc de prendre $\delta$ tel que
\[C(\delta)^\alpha \,\delta^\a \,\|g'\|_\alpha +C(\delta)^{1+\a}<a,\]
choix bien possible car le premier terme tend vers $0$ et $C(\delta)$ tend vers $a<1$ lorsque $\delta$ tend vers $0$.

\medskip

Il reste à montrer l'unicité. Soit $h_1$ un germe $\C{1}$ qui vérifie les bonnes propriétés, alors $g\circ h_1^{-1}=h_1^{-1}\circ M_a$ (où~$M_a$ est la multiplication par $a$), d'où
\[h\circ h_1^{-1}\circ M_a=h\circ g\circ h_1^{-1}=M_a \circ h\circ h_1^{-1},\]
qui veut dire que $h\circ h_1^{-1}$ commute avec $M_a$. Soit $t$ un point suffisamment près de $0$, alors
\begin{align*}
h\left (h_1^{-1}(t)\right )&=\lim_{n\rightarrow \infty}\frac{h\left (h_1^{-1}(a^nt)\right )}{a^n}\\
&=t\,\lim_{n\rightarrow\infty}\frac{h\left (h_1^{-1}(a^nt)\right )}{a^nt}\\
&=t \left (h\circ h_1^{-1}\right )'(0)=t.
\end{align*}
\end{proof}

\begin{cor}
Soit $f$ un difféomorphisme $\C{1+\a}$ de l'intervalle avec $0$ et $1$ seuls points fixes. On suppose $a:=f'(0)\ne 1$, alors il existe un difféomorphisme $h$ de $[0,1[$ qui est de classe $\C{1+\a}$ sur $[0,1[$ et vérifie
\beqn{sternberg2}{
\tag{$\ast\ast$}
h'(0)=1\trm{ et }h(f(x))=a\hspace{0.3pt}h(x)\trm{ pour tout }x\in [0,1[.
}
En outre, si $h_1$ est un difféomorphisme $\C{1}$ qui vérifie \eqref{eq:sternberg2}, alors $h_1$ est de classe $\C{1+\a}$ et coïncide avec $h$.
\end{cor}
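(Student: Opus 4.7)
Le plan est de globaliser le résultat local fourni par le théorème~\ref{thm:sternberg} en exploitant la dynamique de $f$ sur tout $[0,1[$. Quitte à remplacer $f$ par $f^{-1}$, on peut supposer $a=f'(0)<1$. Comme $0$ et $1$ sont les seuls points fixes de $f$ et que $f$ préserve l'orientation, on a nécessairement $f(x)<x$ pour tout $x\in\,]0,1[$, donc pour tout $x\in[0,1[$ la suite $(f^n(x))_{n\ge 0}$ décroît vers $0$, la convergence étant uniforme sur les compacts de $[0,1[$.

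Premier pas, linéarisation locale : en appliquant le théorème~\ref{thm:sternberg} au germe de $f$ en $0$, on obtient un intervalle $[0,\delta]$ avec $f([0,\delta])\subs[0,\delta]$ et un difféomorphisme $\C{1+\a}$, que l'on notera encore $h$, vérifiant $h'(0)=1$ et $h(f(x))=a\,h(x)$ pour tout $x\in[0,\delta]$.

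Deuxième pas, extension globale. Pour $x\in[0,1[$, soit $n(x)\in\N$ le plus petit entier tel que $f^{n(x)}(x)\in[0,\delta]$ ; on pose
\[
h(x):=a^{-n(x)}\,h\bigl(f^{n(x)}(x)\bigr).
\]
L'équation fonctionnelle $h\circ f=a\,h$ déjà valable sur $[0,\delta]$ assure que la valeur $a^{-n}\,h(f^{n}(x))$ est indépendante du choix de $n\ge n(x)$, et donc que $h$ est bien défini et satisfait la même équation sur $[0,1[$ tout entier. Localement, $n(x)$ est constant sur un voisinage ouvert de chaque point intérieur, et aux points de transition entre $n$ et $n+1$ les deux expressions coïncident encore par l'équation fonctionnelle ; $h$ s'y écrit comme la composition $\C{1+\a}$ de $f^n$ et de la solution locale, et la formule $h'(x)=a^{-n}(h'\circ f^n)(x)\cdot(f^n)'(x)$ montre que $h'>0$ partout. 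Il en résulte que $h$ est un difféomorphisme $\C{1+\a}$ de $[0,1[$ sur $[0,+\infty[$.

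Pour l'unicité, si $h_1$ est un difféomorphisme $\C{1}$ de $[0,1[$ satisfaisant les mêmes conditions, son germe en $0$ est une solution $\C{1}$ du problème local de Sternberg ; d'après la partie unicité du théorème~\ref{thm:sternberg}, ce germe est $\C{1+\a}$ et coïncide avec celui de $h$, et l'équation fonctionnelle propage aussitôt l'égalité à tout l'intervalle $[0,1[$. Le principal point technique est la vérification de la cohérence $\C{1+\a}$ au deuxième pas lors du passage de $n$ à $n+1$, mais celle-ci est immédiate dès que l'on utilise $h\circ f=a\,h$ comme relation de recollement.
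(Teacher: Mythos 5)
Votre démonstration est correcte et suit essentiellement la même stratégie que celle du texte : linéarisation locale par le théorème de Sternberg, puis extension à $[0,1[$ tout entier au moyen de l'équation fonctionnelle $h\circ f=a\,h$ le long des orbites (le texte normalise $a>1$ et utilise un domaine fondamental $[x_0,f(x_0)[$, vous normalisez $a<1$ et prenez le plus petit $n$ avec $f^{n}(x)\in[0,\delta]$, ce qui revient au même). Vous explicitez en outre la régularité $\C{1+\a}$ du recollement et la propagation de l'unicité, points que le texte laisse implicites.
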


\begin{proof}
Par le théorème de linéarisation de Sternberg, il existe un voisinage de $0$ et un difféomorphisme~$\tilde{h}$ défini sur ce voisinage tel que \eqref{eq:sternberg} soit satisfait. Ceci entraîne que la définition du difféomorphisme $h$ souhaité est entièrement déterminée : en effet, tout voisinage de $0$ contient un domaine fondamental pour l'action de~$f$, de la forme $J_0=[x_0,f(x_0)[$ (si l'on suppose, sans perte de généralité, $a>1$). Soit maintenant $x\in [0,1[$ un point quelconque. Il existe un unique entier $n=n(x)$ tel que $f^{n(x)}(x)\in J_0$. On définit $h(x):=a^{-n(x)}\hspace{0.3pt}\tilde{h}(f^{n(x)}(x))$. La fonction~$n=n(x)$ vérifie $n(f(x))=n(x)-1$. Il en découle l'égalité
\[h(f(x))=a^{-n(f(x))}\hspace{0.3pt}\tilde{h}\left (f^{n(f(x))}(f(x))\right )=a^{-n(x)+1}\hspace{0.3pt}\tilde{h}\left (f^{n(x)}(x)\right )=a\hspace{0.3pt}h(x).\]
\end{proof}

Puisque tout difféomorphisme $f$ vérifiant les hypothèses du corollaire précédent est conjugué à $M_a$, la multiplication linéaire par $a=f'(0)$, il s'ensuit, en particulier, que le centralisateur $\C{1}$ de $f$ sur $[0,1[$ est conjugué par le difféomorphisme $h$ au centralisateur $\C{1}$ de la multiplication par $f'(0)$ sur $[0,+\infty[$. La description de ce dernier est assez élémentaire.

Soit $\vf$ un difféomorphisme sur $[0,+\infty[$ qui commute avec $M_a$. Supposons, sans perte de généralité, que $a$ soit inférieur à $1$. Pour tout $x\in[0,+\infty[$ et $n>0$ entier, 
\beqn{centmult}{
\vf(x)=\frac{\vf\left (a^n\hspace{0.3pt}x\right )}{a^n} \trm{ et }\vf'(x)=\vf'\left (a^n\hspace{0.3pt}x\right ).
}
Puisque, pour tout $x$ fixé, la suite $\left (a^n\hspace{0.3pt}x\right )_{n\in \N}$ tend vers $0$, \eqref{eq:centmult} force la dérivée de $\vf$ à être constante.

On vient de démontrer que le centralisateur $\C{1}$ de $M_a$ est égal à $\R^+$, qu'on peut voir comme étant le flot à un paramètre du champ de vecteurs $a\hspace{0.3pt}\frac{\rd}{\rd x}$.

On en déduit que le centralisateur $\C{1}$ de $f$ sur $[0,1[$ est égal au groupe à un paramètre $\{f_t\}$ engendré par le \emph{champ de vecteurs de Szekeres} $X_f$ associé à $f$, défini par $X_f(x)=h(x)\,Dh^{-1}(x)\,\log a$. On remarquera que le flot de $X_f$ au temps $t$ est défini par $f_t(x)=h^{-1}\left (a^t\hspace{0.3pt}h(x)\right )$.

\medskip

Nous nous rapprochons de l'étude de l'\emph{invariant de Mather}. Même si Kopell dans \cite{kopell} avait entièrement expliqué ce dont nous avons principalement besoin, nous allons encore suivre l'exposition de Yoccoz \cite{diviseurs}.

Soit $f$ un difféomorphisme de l'intervalle avec $0$ et $1$ comme uniques points fixes, supposés hyperboliques. On note $a=f'(0)$ et $b=f'(1)$. Prenons en exemple le cas $a>1>b$. Soient $X_f$ et $Y_f$ les champs de vecteurs de Szekeres associés à $f$ sur les intervalles $[0,1[$ et $]0,1]$ respectivement. Si $g\in \Di$ commute avec $f$, d'après ce qui précède, il doit exister deux temps $\tau$ et $\tau'$ tels que $g=f_\tau=f^{\tau'}$, où $\{f_t\}_{t\in \R}$ est le flot engendré par $X_f$ et $\{f^t\}_{t\in \R}$ celui par $Y_f$. Soient $x$, $y\in ]0,1[$, on définit $W^{x,y}$ comme étant la fonction de « raccord » entre $\{f^t\}$ et $\{f_t\}$ :
\beqn{mather}{
f^t(y)=f_{W^{x,y}(t)}(x).
}
Si $h$ et $k$ sont les difféomorphismes qui linéarisent $f$ sur $[0,1[$ et $]0,1]$ respectivement, l'équation \eqref{eq:mather} devient 
\[k^{-1}\left (b^t\hspace{0.3pt}k(y)\right )=h^{-1}\left (a^{W^{x,y}(t)}\hspace{0.3pt}h(x)\right ),\]
d'où
\[W^{x,y}(t)=\log_a\frac{h\left (k^{-1}\left (b^t\hspace{0.3pt}k(y)\right )\right )}{h(x)}.\]
La fonction $W^{x,y}(t)$ est un difféomorphisme $\C{1+\a}$ de $\R$ dans $\R$ qui vérifie de plus
\begin{align*}
W^{x,y}(t+1) & = \log_a\frac{h\left (k^{-1}\left (b\hspace{0.3pt}\,b^t\hspace{0.3pt}k(y)\right )\right )}{h(x)}\\
&=\log_a \frac{h\left (f\left (k^{-1}\left (b^t\hspace{0.3pt}k(y)\right )\right )\right )}{h(x)}\\
&=\log_a\frac{a\hspace{0.3pt}h\left (k^{-1}\left (b^t\hspace{0.3pt}k(y)\right )\right )}{h(x)}=1+W^{x,y}(t) 
\end{align*}
En d'autres termes, $W^{x,y}$ commute avec les translations entières et, par passage au quotient de cette action, il définit un difféomorphisme du cercle de classe $\C{1+\a}$, que l'on note encore $W^{x,y}$.

On remarquera aussi, avec des vérifications analogues, que pour tous $s$, $t$, $t'$,
\[W^{f_t(x),f^{t'}(y)}(s)=W^{x,y}(s+t')-t.\]
Puisque $g=f_\tau=f^{\tau'}$, pour tous $x$, $y\in ]0,1[$, $W^{x,y}$ doit vérifier
\beqn{mather2}{W^{x,y}(s+\tau')-\tau=W^{g(x),g(y)}(s).}

\begin{prop}[\cite{diviseurs}\,]
\label{prop:mather}
Le difféomorphisme $g=f_\tau=f^{\tau'}$ commute avec $f$ sur $[0,1]$ si et seulement si, pour tous~$x$,~$y\in ]0,1[$,~\eqref{eq:mather2} est vérifiée. C'est-à-dire, le centralisateur $\C{1}$ de $f$ dans $\Di$ est isomorphe au stabilisateur de~$W^{x,y}$ sous l'action de $\R^2$ : 
\beqn{mather3}{(t,t')\cdot W^{x,y}(s)=W^{x,y}(s+t')-t.}
\end{prop}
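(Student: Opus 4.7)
\textbf{Plan for the proof of Proposition~\ref{prop:mather}.}

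My plan is to leverage the dual description of the centralizer $C$ already established: every $g \in C$ appears simultaneously as an element $f_\tau$ of the Szekeres flow at $0$ and as $f^{\tau'}$ of the Szekeres flow at $1$. The content of the proposition is to decide precisely which pairs $(\tau,\tau') \in \R^2$ produce a genuine global diffeomorphism, and to recast this gluing condition as the equivariance identity \eqref{eq:mather2} for the transition function $W^{x,y}$.

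For the direct implication, the plan is to start from $g = f_\tau = f^{\tau'} \in C$ and compute $W^{g(x),g(y)}$ from its defining equation $f^s(g(y)) = f_{W^{g(x),g(y)}(s)}(g(x))$. Substituting $g(y) = f^{\tau'}(y)$ on the left reduces the LHS, via the defining identity of $W^{x,y}$ applied at time $s+\tau'$, to $f_{W^{x,y}(s+\tau')}(x)$; substituting $g(x) = f_\tau(x)$ on the right reduces the RHS to $f_{W^{g(x),g(y)}(s)+\tau}(x)$. Matching exponents yields \eqref{eq:mather2} at once.

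The converse is the subtler direction. Given a pair $(\tau,\tau')$ satisfying \eqref{eq:mather2}, the task is to show that $f_\tau|_{[0,1[}$ and $f^{\tau'}|_{]0,1]}$ coincide on the overlap $]0,1[$ and hence glue into a single diffeomorphism. To extract a pointwise identity from the functional identity \eqref{eq:mather2}, one specializes at $s=0$ with the consistent reading $g(x)=f_\tau(x)$, $g(y)=f_\tau(y)$. Denoting by $T(p,q)$ the $f_\cdot$-travel time from $p$ to $q$, one has $W^{p,q}(0)=T(p,q)$, and the flow property yields $T(f_\tau(x),f_\tau(y))=T(x,y)$; similarly $W^{x,y}(\tau')=T(x,f^{\tau'}(y))$. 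The specialized equation then becomes $T(x,f^{\tau'}(y))=T(x,y)+\tau=T(x,f_\tau(y))$, and the injectivity of $T(x,\cdot)$ forces $f^{\tau'}(y)=f_\tau(y)$ for every $y\in{]0,1[}$. The $C^{1+\alpha}$ regularity of the resulting diffeomorphism at the endpoints $0$ and $1$ then follows from the uniqueness clause of Sternberg's linearization theorem.

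The isomorphism statement follows once both implications are in hand: combining \eqref{eq:mather2} with the gluing identity yields $W^{g(x),g(y)}=W^{x,y}$, so $(\tau,\tau')$ lies in the stabilizer of $W^{x,y}$ under the action \eqref{eq:mather3}; conversely, any stabilizing pair produces an element of $C$ via the converse direction above. The map $g\mapsto(\tau,\tau')$ is an injective homomorphism because the Szekeres flows are one-parameter subgroups and $g$ is determined by its germ at $0$. The main technical obstacle is passing from the functional identity \eqref{eq:mather2} to a pointwise coincidence on all of $]0,1[$; this reduces to the fact that the Szekeres flow acts freely and transitively on $]0,1[$, ensuring the injectivity of $y\mapsto T(x,y)$ needed to conclude.
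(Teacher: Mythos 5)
Your proposal is correct, but there is no proof in the paper to compare it with: Proposition~\ref{prop:mather} is quoted from Yoccoz \cite{diviseurs}, and the text only establishes the necessity direction, as an immediate consequence of the covariance identity $W^{f_t(x),f^{t'}(y)}(s)=W^{x,y}(s+t')-t$ stated just above; your direct implication is exactly this computation, unwound from the defining relation \eqref{eq:mather} of $W^{x,y}$ together with the freeness of the flow $t\mapsto f_t(x)$. The genuine content is the converse, which you supply yourself, and your mechanism is sound: you are right that \eqref{eq:mather2} must be read with a consistent $g$ (with the mixed reading $g(x)=f_\tau(x)$, $g(y)=f^{\tau'}(y)$ it is the covariance identity and holds for \emph{every} pair $(\tau,\tau')$, so the equivalence would be vacuous), and once this is done, specializing at $s=0$ and using the injectivity of the travel time $T(x,\cdot)$ -- that is, freeness and transitivity of the Szekeres flow on $]0,1[$ -- forces $f_\tau(y)=f^{\tau'}(y)$ for all $y$, which is the gluing; the same argument run from the stabilizer condition for the action \eqref{eq:mather3} gives the second half of the isomorphism statement. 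Two small remarks. First, the appeal to the uniqueness clause of Sternberg's theorem for regularity at the endpoints is superfluous: $f_t=h^{-1}(a^{t}\,h(\cdot))$ is already of class $\C{1+\alpha}$ up to $0$, and $f^{t'}$ up to $1$, so once they agree on $]0,1[$ the glued map is automatically a $\C{1+\alpha}$ diffeomorphism of $[0,1]$ commuting with $f=f_1$. Second, in the isomorphism step the identity $W^{g(x),g(y)}=W^{x,y}$ for $g$ in the centralizer is most cleanly obtained by observing that such a $g$ commutes with both Szekeres flows (being a flow element on each side), which combined with the covariance form of \eqref{eq:mather2} yields precisely the stabilizer condition.
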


On peut voir que le stabilisateur de $W^{x,y}$ est toujours soit cyclique, engendré par $\left ( \frac{1}{n},\frac{1}{n}\right )$, où $\frac{1}{n}$ correspond à la sous-période de $W^{x,y}-id$, ou bien isomorphe à $\R$ entier.

\begin{rem}
Les difféomorphismes de classe $\C{1+\a}$ qui ont un centralisateur $\C{1}$ trivial forment une partie ouverte et dense.
\end{rem}

Étendons maintenant l'étude précédente aux difféomorphismes hyperboliques avec un nombre arbitraire de points fixes, $0=p_0<q_0<p_1<q_1<\ldots<p_m<q_m=1$. Le raisonnement est valable pour tout intervalle de la forme $[p_i,q_i]$ ou $[q_i,p_{i+1}]$. Remarquons que pour tout point $p_i$ (ou $q_i$), le théorème de linéarisation de Sternberg \ref{thm:sternberg} permet de trouver un difféomorphisme $h_i$ de classe $\C{1+\a}$ défini sur $]q_{i-1},q_i[$ (ou $]p_i,p_{i+1}[$) à valeurs dans $\R$, tel que
\beqn{mather4}{h_i'(p_i)=1\trm{ et }h_i(f(x))=f'(p_i)\,h_i(x)\trm{ pour tout }x\in]q_{i-1},q_i[.}
Cela permet d'associer à $f$ un champ de vecteurs de Szekeres sur l'intervalle $]q_{i-1},q_i[$, de sorte que les flots $f^t$ sur~$]q_{i-1},p_i]$ et $f_t$ sur~$[p_i,q_i[$ se raccordent en $p_i$.

Si $g$ est un difféomorphisme de classe
$\C{1}$ qui centralise $f$, il correspond à un certain $f^t$ sur~$]q_{i-1},p_i]$ et donc, par \eqref{eq:mather4}, à $f_t$ sur~$[p_i,q_i[$. Nous en déduisons que la définition d'un tel $g$ sur un quelconque des intervalles~$[p_i,q_i]$ ou~$[q_i,p_{i+1}]$ force la définition de $g$ sur l'intervalle entier. Ainsi, la proposition \ref{prop:mather} implique que le centralisateur $\C{1}$ de $f$ sur $[0,1]$ est isomorphe au stabilisateur de 
$(W_{0,0}^{x_0,y_0},W_{0,1}^{x_0',y_1},W_{1,1}^{x_1,y_1'},\ldots)$ par l'action diagonale de $\R^2$ définie sur chaque composante par \eqref{eq:mather3} (nous avons noté $W_{i,i}^{x,y}$, $W_{i,i+1}^{x,y}$ les $W^{x,y}$ correspondant aux intervalles $[p_i,q_i]$ et~$[q_i,p_{i+1}]$\,).

\subsection{Centralisateur $\C{1}$ d'un difféomorphisme hyperbolique du cercle}

Soit $f$ un difféomorphisme de classe $\C{1+\a}$ qui possède uniquement des orbites périodiques hyperboliques. On suppose que le nombre de rotation de $f$ est égal à $p/q$. Par souci de simplicité on suppose que les multiplicateurs de chaque orbite sont différents l'un de l'autre. Alors le difféomorphisme $F:=f^q$ est un difféomorphisme du cercle avec uniquement des points fixes hyperboliques. Un résultat de Kopell \cite{kopell} (voir aussi \cite{diviseurs}\,) montre que les centralisateurs $\C{1}$ de $f$ et $F$ coïncident. La preuve de ce fait n'est pas trop difficile, mais elle ne rajoute aucun élément nouveau vraiment instructif pour nous  à présent. Ainsi, nous pouvons nous restreindre au cas où le nombre de rotation de $f$ vaut $0$. Il est clair que pour tout point fixe $x_*\in \T$, le difféomorphisme $f$ définit un difféomorphisme $f_*$ de l'intervalle $I_*:=[x_*,x_*+1]$, et tout élément du centralisateur de $f$ sur $\T$, centralise aussi $f_*$ sur $I_*$, si l'on suppose qu'un tel élément fixe $x_*$. La grande différence dans l'étude des centralisateurs  sur le cercle et sur l'intervalle se manifeste par une partie de torsion : il peut exister un difféomorphisme $g$ qui permute cycliquement les points attractifs et les points répulsifs de $f$, tout en commutant avec $f$. Remarquons que sous notre hypothèse sur les multiplicateurs des orbites (donc sur les valeurs de la dérivée de $f$ aux points fixes), ceci ne peut cependant pas se produire.

\begin{prop}[\cite{diviseurs}\,]
Avec les notations précédentes, le centralisateur $\C{1}$ de $f$ est isomorphe à un sous-groupe fermé de $\R$. De plus, il existe un ouvert dense de $f$ pour lequel le centralisateur $\C{1}$ est trivial.
\end{prop}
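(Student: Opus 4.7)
Le plan consiste à ramener la situation circulaire à l'analyse de l'intervalle esquissée juste avant l'énoncé, en traitant soigneusement la contrainte de « fermeture du cercle ». Je commencerais par invoquer le résultat de Kopell cité au début de cette sous-section pour remplacer $f$ par $F:=f^q$ sans altérer le centralisateur $\C{1}$ : on peut donc supposer $\rho(f)=0$, de sorte que $f$ possède $2m$ points fixes hyperboliques alternativement attractifs et répulsifs, notés $p_0<q_0<\ldots<p_{m-1}<q_{m-1}$. L'hypothèse de multiplicateurs tous distincts garantit que tout $g\in\mrm{Cent}^{\C{1}}(f)$ fixe ponctuellement chacun de ces points : $g$ permute la liste finie des points fixes en préservant leurs multiplicateurs, et la seule telle permutation est l'identité. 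Ceci élimine d'emblée toute partie de torsion dans le centralisateur.

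Le cœur de l'argument consiste à considérer l'application $\Phi:\mrm{Cent}^{\C{1}}(f)\to\R$ définie par $\Phi(g):=\log_{a_0} g'(p_0)$, avec $a_0:=f'(p_0)$, et à montrer que $\Phi$ est un homomorphisme injectif d'image fermée. L'injectivité proviendra de l'unicité dans le théorème de linéarisation de Sternberg~\ref{thm:sternberg} : si $g'(p_0)=1$, alors $g$ coïncide avec l'identité dans un voisinage de $p_0$ ; la commutation avec $f$ propage cette égalité sur tout l'intervalle $]q_{m-1},q_0[$ (bassin d'attraction de $p_0$), puis la continuité de $g$ en $q_0$ donne $g'(q_0)=1$, et on itère l'argument de proche en proche en alternant $f$ et $f^{-1}$, jusqu'à couvrir $\T$ tout entier. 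La fermeture de l'image se déduit de ce que, sur chacun des $2m$ intervalles entre points fixes consécutifs, les paires de paramètres de temps admissibles sont contraintes à appartenir au stabilisateur de l'invariant de Mather correspondant sous l'action \eqref{eq:mather3} (un sous-groupe fermé de $\R^2$), et de ce que la condition de cohérence au bout d'un tour de cercle est elle-même fermée.

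Pour la seconde affirmation, j'observerais que le centralisateur est trivial dès que l'un des invariants de Mather $W_i$ possède un stabilisateur trivial sous \eqref{eq:mather3}, propriété ouverte et dense dans l'espace des $W_i$ admissibles (comme pour le cas de l'intervalle rappelé juste avant l'énoncé). La principale difficulté, et l'étape que je prévois comme la plus délicate, sera de traduire proprement cette généricité sur les invariants de Mather en généricité dans l'espace des difféomorphismes hyperboliques eux-mêmes : il faudra vérifier qu'une perturbation $\C{1+\alpha}$ arbitraire des $W_i$ se réalise par une perturbation $\C{1+\alpha}$ de $f$ préservant l'hyperbolicité ainsi que la distinction des multiplicateurs. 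Cela devrait résulter du fait que, modulo conjugaison, les $W_i$ paramètrent effectivement les difféomorphismes hyperboliques dans la classe considérée.
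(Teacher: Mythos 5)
Le texte lui-même ne démontre pas cette proposition : elle est citée de Yoccoz \cite{diviseurs}, le paragraphe qui la précède se bornant à mettre en place la réduction (passage à $F=f^{q}$ via Kopell, absence de torsion grâce aux multiplicateurs distincts) que vous reproduisez correctement. Votre première moitié est d'ailleurs juste : l'application $\Phi(g)=\log_{a_0}g'(p_0)$ est bien un morphisme injectif, l'injectivité découlant de l'unicité dans le théorème de linéarisation de Sternberg et de la propagation de $g=\mathrm{id}$ le long des bassins, en alternant $f$ et $f^{-1}$.

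En revanche, votre argument de \emph{fermeture} de l'image comporte une vraie lacune. Ce que vous obtenez, c'est que l'ensemble des familles de temps admissibles $(\tau_{p_0},\tau_{q_0},\ldots)$ est un sous-groupe fermé de $\R^{2m}$ (chaque paire appartenant au stabilisateur, fermé, de l'invariant de Mather de l'intervalle correspondant, plus les conditions de recollement). Mais l'image de $\Phi$ en est la projection sur la coordonnée $\tau_{p_0}$, et une projection injective d'un sous-groupe fermé de $\R^{2m}$ dans $\R$ n'est pas fermée en général : pensez à $\Z+\alpha\Z$, projection du réseau $\{(n+m\alpha,m)\}\subset\R^2$. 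Le point structurel qui manque est que chaque stabilisateur pour l'action \eqref{eq:mather3} est contenu dans la diagonale $\{t=t'\}$ : en intégrant la relation $W(s+t')-t=W(s)$ sur une période et en utilisant $W(s+1)=W(s)+1$, on trouve $t=t'$, autrement dit $(t,t)\in\mathrm{Stab}(W)$ si et seulement si $t$ est une période de $W-\mathrm{id}$. Ainsi tous les temps d'un même élément du centralisateur coïncident avec un seul $\tau$, et l'image de $\Phi$ est exactement $\bigcap_i\{\tau\,:\,(\tau,\tau)\in\mathrm{Stab}(W_i)\}$ — à condition d'établir aussi la réciproque (tout tel $\tau$ se recolle en un élément $\C{1}$ du centralisateur, ce qui est vrai car les deux flots de Szekeres adjacents à un point fixe proviennent de la même linéarisation locale et $f$ est le temps $1$ de chacun) ; c'est alors une intersection de sous-groupes fermés de $\R$, donc fermée. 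Enfin, pour la partie « ouvert dense », vous laissez vous-même en suspens l'étape substantielle : la densité s'obtient bien par une perturbation à support loin des points fixes détruisant toute sous-période de $W_i-\mathrm{id}$, mais l'ouverture exige la continuité de $f\mapsto W_i$ (donc la dépendance continue des linéarisants de Sternberg en $f$) et le fait que « stabilisateur réduit à $\langle(1,1)\rangle$ » est ouvert sur des familles équicontinues de $W$, la dégénérescence à exclure étant une suite de sous-périodes tendant vers $0$, qui forcerait $W$ à être une rotation ; telle quelle, votre esquisse ne couvre pas ce point.
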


\begin{thm}\label{thm:central}
Pour $\mu_{MS}$-presque tout difféomorphisme $f$ du cercle avec nombre de rotation rationnel, le centralisateur $\C{1}$ de~$f$ est trivial.
\end{thm}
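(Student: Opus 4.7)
The plan is to combine Proposition~\ref{prop:noparabolicorbit}, an analogue of Theorem~\ref{thm:nomultiplicateur} ruling out coincidences of multipliers, and the Kopell--Yoccoz description of centralizers via Mather invariants recalled in the previous section.

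First I would localize the claim. By countable additivity it suffices to show, for each fixed rational $p/q$, that $\mu_{MS}$-almost every $f\in\rho^{-1}(p/q)$ has trivial $C^1$-centralizer. By Proposition~\ref{prop:noparabolicorbit}, such an $f$ almost surely has finitely many periodic orbits, all hyperbolic; set $F=f^q$, which then has only hyperbolic fixed points $p_0<q_0<\ldots<p_m<q_m$. Next I would show that the multipliers $Df^q(x)$ of the distinct periodic orbits are almost surely pairwise distinct. Adapting the strategy of Theorem~\ref{thm:nomultiplicateur}, one bounds
\[
\P\bigl(\exists\,x,y\ \text{in distinct orbits},\ Df^q(x)=Df^q(y)\bigr)
\]
by a Fubini-type argument coupled with the Markov property of the Brownian bridge defining $f$, after a finite combinatorial case analysis to arrange the order of the relevant orbit points on the circle (as in the proof of Proposition~\ref{prop:noparabolicorbit}, cf. Figure~\ref{fig:combi}). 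Distinctness of multipliers prevents any element of the centralizer from cyclically permuting orbits, so any centralizing $C^1$ diffeomorphism must fix every $F$-fixed point. By Kopell's result cited in the section, the $C^1$-centralizers of $f$ and of $F$ then coincide.

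Third, the Kopell--Yoccoz analysis identifies this common centralizer with the simultaneous stabilizer, under the diagonal $\R^2$-action~\eqref{eq:mather3}, of the collection of Mather invariants $(W_{i,j})$ attached to the intervals between consecutive fixed points of $F$. Writing each $W_{i,j}(s)=s+u_{i,j}(s)$ with $u_{i,j}$ a $C^{1+\alpha}$ one-periodic function, the individual stabilizer of $W_{i,j}$ equals $\frac{1}{n_{i,j}}\Z\cdot(1,1)$, where $1/n_{i,j}$ is the minimal positive period of $u_{i,j}$ (and equals $\R$ if $u_{i,j}$ is constant). Since
\[
\bigcap_{i,j}\tfrac{1}{n_{i,j}}\Z=\tfrac{1}{\gcd_{i,j}n_{i,j}}\Z,
\]
the joint stabilizer reduces to $\Z\cdot(1,1)=\langle f\rangle$ exactly when $\gcd_{i,j}n_{i,j}=1$. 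Hence non-triviality of the centralizer is equivalent to the existence of an integer $d>1$ such that \emph{every} $u_{i,j}$ is $1/d$-periodic.

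It remains, and this is the core technical point, to show that for each fixed $d>1$ the event
\[
A_d=\{\text{every }u_{i,j}\text{ is }1/d\text{-periodic}\}
\]
is $\mu_{MS}$-negligible. My proposed approach exploits the quasi-invariance of $\mu_{MS}$ under $C^2$ left-translations (Theorem~\ref{thm:quasi-invariance}): choose an attracting fixed point $p_0$ of $F$ and construct a non-trivial one-parameter family $(\vf_t)_{t\in\R}$ of $C^2$ diffeomorphisms of $\T$ supported in an arbitrarily small neighbourhood of $p_0$, each fixing $p_0$ with derivative $1$ and tangent to the identity at $p_0$ up to order one, so that $\vf_t\circ f$ preserves the combinatorial structure of periodic points and their multipliers while genuinely perturbing the Sternberg linearization $h_0$, and therefore the Mather invariant $W_{0,0}^{(t)}$. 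Since $1/d$-periodic functions form a closed, nowhere-dense subspace of $C^{1+\alpha}(\T/\Z)$, a careful choice of $\vf_t$ ensures that the translates $L_{\vf_t}(A_d)$ are pairwise disjoint as $t$ varies, while quasi-invariance implies they all have the same $\mu_{MS}$-class; were $\mu_{MS}(A_d)>0$ one would obtain uncountably many disjoint positive-measure sets, contradicting $\sigma$-finiteness. The main obstacle is precisely this last step: constructing the family $(\vf_t)$ explicitly and proving continuous, genuinely non-constant dependence of the Mather invariant on the germ of $f$ at $p_0$, so that the translation orbits escape the $1/d$-periodic locus within the space of Mather invariants.
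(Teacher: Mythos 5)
Your reduction (localize to a fixed $p/q$, use Proposition~\ref{prop:noparabolicorbit} to get finitely many hyperbolic orbits, rule out coincidence of multipliers by adapting Theorem~\ref{thm:nomultiplicateur}, then identify the centralizer with the joint stabilizer of the Mather invariants under the action \eqref{eq:mather3}) is a legitimate reformulation, but it leaves the entire content of the theorem in the step you yourself flag as ``the main obstacle'', and that step is not a detail: it \emph{is} the theorem. To make the translates $L_{\vf_t}(A_d)$ pairwise disjoint you need that for every nontrivial $\vf_u$ in your family and \emph{every} $f\in A_d$, left composition by $\vf_u$ destroys the common $1/d$-periodicity of at least one Mather invariant. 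The Mather invariant is a global object built from the two Szekeres linearizations on each gap, and its dependence on a germ-perturbation of $f$ at one fixed point is exactly the kind of rigidity statement one has no a priori control over (for some $f$ the perturbed invariant could a priori stay $1/d$-periodic); nothing in the quasi-invariance Theorem~\ref{thm:quasi-invariance} helps you prove this non-constancy. There are also uniformity problems (the support and size of $\vf_t$ must be adapted to the fixed-point configuration of $f$, and for $q>1$ the relation between $(\vf_t\circ f)^q$ and $f^q$ is not a localized perturbation), but the essential gap is the unproved injectivity/disjointness claim. As it stands the proposal is a programme, not a proof.

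For comparison, the paper avoids any direct control of the Mather invariant. Following Kopell, it conditions the MS measure on the data of $f$ outside one fundamental gap $[x_2,x_1]$ of a contracted interval $[x,y]$ (so the Szekeres flows $f_t$, $f^t$ near both ends are frozen), writes $f=\psi^{-1}h\psi$ for a fixed flowable model $h$ with $\psi=h\circ f^{-1}$ on the gap, and observes that a nontrivial $C^1$ centralizer forces, for some integer $n>0$, the exact coincidence $f\left (h^{1/n}(x_0)\right )=h^{1/n}(x_1)$. Lemma~\ref{l:simple} (the conditional law of $f$ at a point has no atoms) kills each of these countably many events, and countable additivity concludes. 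This ``soft'' use of the measure — non-atomicity of conditional laws rather than quasi-invariance plus a rigidity statement — is what makes the argument close; if you want to salvage your route, you would need an analogous randomness input showing that, conditionally on everything determining the stabilizer except one arc of the Brownian bridge, the event that all $u_{i,j}$ admit a common sub-period has conditional probability zero.
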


Nous utiliserons le lemme suivant dont la preuve est très simple.
\begin{lem}\label{l:simple}
Soit $f^q:[x_0,x_1]\to [x_1,x_2]$ un difféomorphisme aléatoire obtenu en itérant $q$ fois un difféomorphisme MS $f$ avec dérivées aux extrémités fixées. Soient $(a_n)_{n\in \N}\subs ]x_0,x_1[$ et $(b_n)_{n\in \N}\subs ]x_1,x_2[$ deux suites (décroissantes). Alors la probabilité qu'il existe $n$ tel que $f^q(a_n)=b_n$ est nulle.
\end{lem}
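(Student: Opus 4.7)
The plan is to reduce the lemma, by countable sub\-additivity, to an atomless-distribution statement for a single fixed index, and then to prove that atomlessness by the same strategy used in the proof of Theorem~\ref{thm:nomultiplicateur}.

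First, $\sigma$-sub-additivity of $\P$ gives
\[
\P\bigl(\exists\,n\in\N :\; f^q(a_n)=b_n\bigr)\;\le\;\sum_{n\in\N}\P\bigl(f^q(a_n)=b_n\bigr),
\]
so it suffices to show that for any fixed pair of points $a\in\,]x_0,x_1[$ and $b\in\,]x_1,x_2[$, one has $\P(f^q(a)=b)=0$; equivalently, that the conditional distribution of $f^q(a)$ on $]x_1,x_2[$ has no atoms.

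Second, I would establish atomlessness by the method of Theorem~\ref{thm:nomultiplicateur}. Writing $f$ through its Brownian representation
\[
f(t)=\frac{\int_0^t e^{B(s)}\,ds}{\int_0^1 e^{B(s)}\,ds}+\alpha,
\]
and denoting the orbit $p_k:=f^k(a)$ for $k=0,\dots,q$, the event $\{f^q(a)=b\}$ is equivalent to the single scalar equation
\[
\int_0^{p_{q-1}}e^{B(s)}\,ds\;=\;(b-\alpha)\int_0^1 e^{B(s)}\,ds,
\]
in which the ``unknown'' $p_{q-1}$ is itself measurable with respect to $(B,\alpha)$. One then selects a short arc $J\subset\T$ disjoint from $\{x_0,x_1,x_2\}$ and from the $q$ orbit points $\{p_0,\dots,p_{q-1}\}$, and conditions the Brownian bridge on everything outside $J$ (together with the value of $\int_0^1 e^B$). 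Under this conditioning the orbit $(p_k)_{k<q}$ is frozen, and the displayed equation collapses to a single affine constraint on $\int_J e^{B(s)}\,ds$. Since the conditional law of the Brownian bridge restricted to $J$ is still a (shifted) Brownian bridge, the integral $\int_J e^B$ admits a continuous density, so the conditional probability of the event vanishes; integrating over the conditioning gives $\P(f^q(a)=b)=0$.

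The main obstacle I foresee is the combinatorial/geometric selection of the auxiliary arc $J$: it must simultaneously avoid the conditioned endpoints $\{x_0,x_1,x_2\}$, avoid the $q$ orbit points $\{p_0,\dots,p_{q-1}\}$, and enter the affine equation in a non-trivial way. For a generic configuration of the orbit such a $J$ manifestly exists, and pathological configurations can be handled in the same spirit as in Theorem~\ref{thm:nomultiplicateur}: either by a time-reversal coupling via Lemma~\ref{lem:temps}, or by conjugating with a suitable rotation to bring the orbit into a favourable position. Finally, the conditioning ``derivatives at the endpoints fixed'' only prescribes finitely many linear functionals of $B$ at the three points $x_0,x_1,x_2$, hence leaves the conditional law of $\int_J e^B$ absolutely continuous, so the argument goes through.
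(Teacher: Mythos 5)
The paper never actually prints a proof of this lemma (it is merely asserted to be \og très simple \fg{}), so your proposal can only be measured against the general method of Theorem~\ref{thm:nomultiplicateur}, which is indeed the right model. Your first reduction, by countable subadditivity, to the atomlessness of the law of $f^q(a)$ for a single fixed pair $(a,b)$ is correct and is certainly the intended first step.

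The second step, however, contains a genuine circularity. To freeze the orbit $(p_k)_{k<q}$ you condition simultaneously on $B$ outside the arc $J$ and on the value of $\int_0^1 e^{B}$. But $\int_0^1 e^{B}=\int_{J^c}e^{B}+\int_J e^{B}$, and $\int_{J^c}e^{B}$ is measurable with respect to $\sigma(B|_{J^c})$; hence under your conditioning the quantity $\int_J e^{B}$ is \emph{deterministic}, and the claim that it \og admits a continuous density \fg{} is false --- the affine constraint you exhibit is then satisfied or not with conditional probability $0$ or $1$, and nothing can be concluded. If instead you drop the conditioning on $\int_0^1 e^{B}$, the orbit is no longer frozen (each $p_k$, $k\ge 1$, involves the normalising constant $\int_0^1 e^{B}$ and hence $B|_J$), and the equation is no longer affine in $\int_J e^{B}$. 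The difficulty is compounded by the lemma's own hypothesis: since $\log f'(x_0)=B(x_0)-\log\int_0^1 e^{B}$, fixing the derivative at an endpoint together with $B|_{J^c}$ (which contains $B(x_0)$) already determines $\int_0^1 e^{B}$, so your closing remark that the endpoint conditioning \og leaves the conditional law of $\int_J e^{B}$ absolutely continuous \fg{} also fails. A workable argument must isolate a random quantity that is \emph{not} a function of the conditioning data --- for $q=1$, for instance, the ratio $\int_{x_0}^{a}e^{B}\,/\int_{x_0}^{x_1}e^{B}$, which depends only on the increments of $B$ inside $]x_0,x_1[$ and equals $(f(a)-x_1)/(x_2-x_1)$ once the endpoints are matched --- and one would then show that its conditional law is diffuse, e.g.\ via a Cameron--Martin perturbation supported in $]x_0,a[$ adjusted to preserve the constraints. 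Your localisation on $J$ does not achieve this as written.
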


Pour montrer le théorème \ref{thm:central}, nous suivons la preuve de Kopell dans \cite{kopell}, du fait que les contractions avec centralisateur $\C{1}$ trivial constituent une partie dense parmi les contractions.

\begin{proof}
Sans perte de généralité, on peut supposer que le nombre de rotation de $f$ soit $0$. Considérons l'ensemble des $f$ tels que
\begin{enumerate}
\item $f$ préserve un intervalle $I=[x,y]$ et sa restriction à $[x,y[$ est une contraction. Fixons $x_0\in ]x,y[$ et posons, pour tout $n\in \Z$, $x_n=f^n(x_0)$.
\item On connait $f$ (et donc $Df$) sur $J=[x,x_{1}]$ et $J'=[x_{0},y]$ et on sait que $f(x_0)=x_1$.
\end{enumerate}
Nous allons conditionner la mesure de \ms par rapport à cet événement.

La deuxième condition implique que l'on connait les champs de vecteurs $X$ et $Y$ associés à $f$ autour de $x$ et de $y$, sur les intervalles $J$ et $J'$. En particulier on connait les flots à un paramètre $f_t$ et $f^t$ sur les intervalles $J$ et $J'$ respectivement.

\medskip

Soit $h$ un difféomorphisme de $I$ tel que $h_{\vert{J\cup J'}}=f_{\vert{J\cup J'}}$ et tel que $h$ appartient à un flot à un paramètre $\{h^t\}$ de difféomorphismes $\C{1}$.

Tout difféomorphisme aléatoire $f$ détermine un homéomorphisme $\psi=\psi_f$ de $]x,y]$ tel que
\begin{enumerate}
\item $\psi_{\vert [x_1,y]}=id$,
\item $\psi h = h\psi$ ,
\item $f=\psi^{-1}h\psi$.
\end{enumerate} 
En effet, il suffit de définir le difféomorphisme $\beta=h\circ f^{-1}$ de $[x_2,x_1]$ dans lui-même, tangent à l'identité en $x_1$ et~$x_2$ : si l'on pose $\psi _{\vert [x_2,x_1]}=\beta$, on détermine $\psi$ entièrement, en utilisant la propriété 2. 

Le lemme \ref{l:simple} implique que presque sûrement, il n'existe pas de $n>0$ tel que $f(h^{1/n}(x_0))=h^{1/n}(x_1)$. Donc presque sûrement, il n'existe pas de $n>0$ tel que
\beqn{comm}{\psi(h^{1/n}(x_1))=h^{1/n}\left (\psi(x_1)\right ),}
puisque $\psi=h\circ f^{-1}$ sur $[x_2,x_1]$.

Supposons que le centralisateur $\C{1}$ de $f$ ne soit pas trivial : il existe $n>0$ tel que $f^{1/n}$ est différentiable en $x$ et par unicité du flot à un paramètre on doit avoir 
$f^{1/n}=\psi^{-1}h^{1/n}\psi$. Alors $f^{1/n}$ commute avec $f=\psi^{-1}h\psi$, qui est égal à $h$ sur $[x,x_1]$. Cela implique que sur l'intervalle $[x,x_1]$, on a $f^{1/n}=h^{1/n}$.

Donc $\psi h^{1/n}=h^{1/n}\psi$ sur $[x,x_1]$, qui est possible uniquement avec probabilité nulle d'après \eqref{eq:comm}.
\end{proof}

\begin{rem}
La preuve du théorème précédent est assez « molle » : elle ne requiert pas d'expliciter la mesure MS, en utilisant seulement ses propriétés qualitatives. Notamment, Les propriétés données par la proposition \ref{prop:noparabolicorbit} et le lemme \ref{l:simple}, avec la propriété de régularité Hölder presque sûre, suffisent.
\end{rem}
\newpage
\section{Exemples de Denjoy}
\label{ssc:Denjoy}

Nous abordons ici l'étude des difféomorphismes avec nombre de rotation irrationnel. On doit rappeler qu'à présent, on ne sait pas si ces difféomorphismes forment une partie de mesure positive, par rapport à la mesure MS (question \ref{q:irrational}\,). Cependant, pour traiter, au sens de la mesure, l'ensemble des difféomorphismes avec nombre de rotation irrationnel $\a$ fixé, nous introduisons des mesures MS «~locales ».

La proposition \ref{prop:arnold} nous a permis de voir $\rho^{-1}(\a)$ comme étant une hypersurface dans $\Dc$. Nous noterons aussi $\mrm{Diff}^{\hspace{0.8pt}r}_{+,\alpha}(\T)$ l'ensemble des difféomorphismes de classe $\C{r}$ et nombre de rotation irrationnel $\a$ fixé.

\begin{dfn}
La mesure MS \emph{locale} sur $\mrm{Diff}^{\hspace{0.8pt}1}_{+,\alpha}(\T)$ est définie comme étant la mesure image de~$\W_{0,\sigma}$ par rapport à la section $s_\a$ qui à $f_0(t)=\frac{\int_0^te^{x(s)}\,ds}{\int_0^1e^{x(s)}\,ds}$ associe $s_{\a}(f_0)=R_{\la(\a)}\circ f_0$, l'unique translatée de $f_0$ avec nombre de rotation $\a$. On notera $\mu_{\sigma,\a}$ la mesure $(s_\a)_*\W_{0,\sigma}$.
\end{dfn}

Par rapport aux mesures $\mu_\sigma$, les mesures locales $\mu_{\sigma,\a}$ ont l'avantage de nous faire travailler avec des ponts browniens. Si l'on conditionnait un difféomorphisme de \ms à avoir nombre de rotation irrationnel, la mesure que l'on obtiendrait sur $\mrm{Diff}^{\hspace{0.8pt}r}_{+,\alpha}(\T)$ ne serait pas pour autant « homogène » : la densité en un point $x=\log Df-\log Df(0)$ dépendrait de la mesure de Lebesgue du Cantor $K_f$ défini dans l'énoncé du théorème de Herman \ref{thm:herman_lebesgue}.

\begin{rem}
Pour tout $\a\in \R-\Q$, la fonction $s_\a$ est un homéomorphisme qui préserve les classes de régularité. Il s'ensuit que presque sûrement, un difféomorphisme $f\in \mrm{Diff}^{\hspace{0.8pt}r}_{+,\alpha}(\T)$  est de classe $\C{1+w}$ ($w$ module de Lévy \eqref{eq:levy0}\,), que sa variation n'est pas bornée, etc.
\end{rem}

D'après la remarque précédente, presque sûrement le théorème de Denjoy (théorème \ref{thm:denjoy} dans l'introduction) ne peut pas s'appliquer. La question à laquelle nous souhaiterions répondre est la suivante :

\begin{q}
Pour chaque $\a$ irrationnel fixé, quelle est la mesure $\mu_{\sigma,\a}$ des difféomorphismes qui sont conjugués topologiquement à la rotation ? De quelle manière cette mesure dépend de la condition diophantienne satisfaite par $\a$ ? Est-il vrai, par exemple, que si $\a$ satisfait une condition diophantienne d'ordre $\delta$, alors, presque sûrement, tout difféomorphisme est $\C{\frac{1}{2}-\ve-\delta}$-conjugué à la rotation ?
\end{q}

\smallskip

Pour tout $\a$ irrationnel et $\tau<1/2$, il est possible de construire des exemples de Denjoy avec nombre de rotation $\a$ et de classe $\C{1+\tau}$. Ces exemples ont été traités systématiquement pour la première fois par Herman \cite[Chapitre X]{herman}. Nous les représentons ici, en nous inspirant plutôt de Norton \cite{norton2}.

On pose $\sigma=1/\tau$ et on choisit $M=M(\sigma)$, entier suffisamment grand tel que $\left (\frac{M+1}{M}\right )^{\sigma}<\frac{5}{4}$. Pour tout $n\in \Z$, on définit $\ell_n:=\frac{A}{(|n|+M)^\sigma}$, où $A$ a été choisi pour que $\sum_{n\in \Z}\ell_n=1$ (cette condition assurera que le Cantor minimal invariant est un ensemble de mesure de Lebesgue nulle).

Pour tout $n$ fixé, soit $I_n$ un intervalle fermé dans $\T$ et de longueur $\ell_n$. On force les intervalles $I_n$ à être ordonnés combinatoirement sur le cercle comme une orbite de la rotation d'angle $\a$ (on peut par exemple faire correspondre $n\a$ à $I_n$).
Ce choix, et la densité de l'orbite $(n\a)_{n\in \Z}$, implique que $K:=\T-\bigcup_{n\in \Z}I_n$ est un ensemble de Cantor (de mesure de Lebesgue nulle).

On passe ensuite à la construction du difféomorphisme. Pour tout $n$, on définit un difféomorphisme $f_n$ de classe $\C{1+\tau}$ de $I_n$ sur $I_{n+1}$ et on pose $f\big\vert_{I_n}=f_n$ ; la densité de $\bigcup_{n\in \Z}I_n$ implique que $f$ s'étend à un homéomorphisme de $\T$ dans $\T$. Afin que $f$ soit globalement de classe $\C{1+\tau}$ sur $\T$, il faut prendre quelques précautions pour définir les $f_n$.

Soit $\vf:[0,1]\to [0,4]$ une fonction $\C{\infty}$ de support $\left [\frac{1}{3},\frac{2}{3}\right ]$ et de moyenne globale $1$. Si $x\in I_n=[a_n,b_n]$, on définit
\[g_n(x)=1+\frac{\ell_{n+1}-\ell_n}{\ell_n}\,\vf\left (\frac{x-a_n}{\ell_n}\right ).\]
Alors $g_n$ est une fonction strictement positive de classe $\C{\infty}$ et $g_n(a_n)=g_n(b_n)=1$. En outre, le choix du $M$ implique que $g_n$ est toujours inférieur, disons, à $2$. Il s'ensuit que tout difféomorphisme
\[f_n(x):=a_{n+1}+\int_{a_n}^{x}g_n(y)\,dy\]
a une dérivée uniformément bornée par $2$ et elle tend vers $1$ lorsque $|n|$ tend vers l'infini.

Ces considérations permettent de définir globalement $f$, comme expliqué plus haut, et alors $f$ sera un difféomorphisme de classe $\C{1}$ en tout point $x\in \T$. Remarquons que la dérivée de $f$ vaut $1$ sur le Cantor $K$.

Il reste à vérifier que $f$ est de classe $\C{1+\tau}$. Pour cela, il suffit de montrer que les normes $\C{\tau}$ des $g_n$ sont uniformément bornées. Des estimations désagréables, mais élémentaires, montrent que pour $n$ assez grand en module, la constante de Hölder $\C{\tau}$ de $g_n$ est bornée par $\frac{2\sigma}{A^\tau}\|\vf\|_{\mrm{Lip}}$ ($\|\vf\|_{\mrm{Lip}}$ est la constante de Lipschitz de $\vf$ sur $[0,1]$).

\medskip

Ces exemples ont de plus la propriété, pour nous remarquable, que la dimension de boîte supérieure du Cantor $K$ est exactement $\tau$. À ce propos, nous rappelons que si $(X,d)$ est un espace métrique compact et $Y\subs X$ une partie bornée, on définit la \emph{dimension de boîte supérieure} de $Y$ de la manière suivante : soit, pour tout~$\ve>0$,~$N(\ve)$ le nombre minimal  de boules de rayon $\ve$ nécessaires pour recouvrir $Y$ ; alors la dimension de boîte supérieure~$\overline{\dim}_B(Y)$ de $Y$ est définie comme étant la limite supérieure
\[\overline{\dim}_B(Y)=\varlimsup_{\ve\rightarrow 0}\frac{\log N(\ve)}{\log(1/\ve)}.\]
La limite à droite dans l'expression n'existe pas forcément. Si cette limite existe, on l'appelle plus simplement \emph{dimension de boîte} de $Y$.

Il est connu (voir \cite{norton2}\,) que pour une partie compacte 
$Y$ sur le cercle $\T$, de mesure de Lebesgue nulle, la dimension de boîte supérieure de $Y$ est égale à l'\emph{infimum} des $s>0$ tel que $\sum \Leb(I)^s<\infty$, où la somme est faite sur les composantes connexes $I$ du complémentaire de $Y$ dans $\T$.

Dans notre exemple, la somme $\sum \Leb(I)^s=\sum \ell_n^s$ est finie si et seulement si $s>\tau$, d'où la validité de notre affirmation. Plus généralement, Norton montre qu'il y a des contraintes pour qu'un Cantor dans $\T$ soit l'ensemble minimal invariant d'un difféomorphisme de classe $\C{1+\tau}$ :

\begin{thm}[Norton]\label{t:norton}
Soit $f$ un difféomorphisme du cercle avec un ensemble minimal qui est un Cantor $K$, tel que la dimension de boîte supérieure de $K$ soit égale à $\sigma\in ]0,1[$. Alors $f$ n'est pas de classe $\C{1+\tau}$ pour tout $\tau>\sigma$.
\end{thm}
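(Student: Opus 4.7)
Mon approche consiste à raisonner par l'absurde. Je supposerai $f$ de classe $\C{1+\tau}$ avec $\tau>\sigma$. L'ensemble minimal étant un Cantor, $f$ n'admet aucune orbite périodique et son nombre de rotation $\a$ est irrationnel. Par minimalité de $K$, les composantes connexes de $\T-K$ forment une unique $f$-orbite d'intervalles errants $\{I_n\}_{n\in\Z}$, avec $I_n=f^n(I_0)$, ordonnés sur le cercle selon la combinatoire de l'orbite de $0$ sous $R_\a$ et vérifiant $\sum_n\Leb(I_n)\le 1$. La caractérisation de la dimension de boîte rappelée juste avant l'énoncé donne $\sigma=\overline{\dim}_B(K)=\inf\{s>0\,:\,\sum_n\Leb(I_n)^s<\infty\}$, d'où $\sum_n\Leb(I_n)^\tau<\infty$.

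L'étape clef sera un contrôle de distorsion remplaçant, dans notre cadre, l'inégalité de Denjoy~\eqref{eq:inegalite-denjoy} du cas $\C{1+vb}$. La $\tau$-hölderianité de $\log f'$ donne, pour tous $x,y\in I_0$ et tout $n\ge 1$,
\[\left|\log Df^{n}(x)-\log Df^{n}(y)\right|\le\sum_{k=0}^{n-1}\|\log f'\|_\tau\,|f^k(x)-f^k(y)|^\tau\le\|\log f'\|_\tau\sum_{k\in\Z}\Leb(I_k)^\tau=:M<\infty,\]
puisque $f^k(x)$ et $f^k(y)$ appartiennent tous deux à $I_k$. Par équivariance, cette borne de distorsion $M$ s'applique à $f^n$ sur n'importe quel intervalle errant $I_j$, uniformément en $n$. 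Combinée au théorème des accroissements finis, elle fournit l'encadrement $Df^n(x)\in[e^{-M},e^M]\cdot\Leb(I_n)/\Leb(I_0)$ pour tout $x\in I_0$.

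Pour conclure, je reprendrai l'argument classique de Denjoy. Pour $q=q_k$ dénominateur d'une approximation rationnelle de $\a$, la sommabilité de $(\Leb(I_n))$ entraîne $\Leb(I_{q_k})\rightarrow 0$, donc l'encadrement précédent force $Df^{q_k}\rightarrow 0$ uniformément sur $I_0$. La contradiction s'obtiendra en transportant cette information sur une portion macroscopique du cercle : les itérés $\{f^j(I_0)\}_{0\le j<q_k}$ sont deux à deux disjoints, de somme des longueurs $\le 1$, et, par distorsion bornée propagée le long de leur orbite, les longueurs $\Leb(f^j I_0)$ doivent rester comparables entre elles, ce qui empêcherait $\Leb(I_{q_k})/\Leb(I_0)$ de tendre vers zéro. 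La principale difficulté technique sera précisément cette dernière étape de propagation de la distorsion au-delà d'un seul intervalle errant, où la sommabilité $\sum_n\Leb(I_n)^\tau<\infty$ jouera le rôle que la variation totale de $\log f'$ assure dans le cadre $\C{1+vb}$ ; cet argument reprend essentiellement la preuve classique présentée dans \cite[chapitre~I]{demelo-strien}.
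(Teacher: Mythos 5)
Le texte ne démontre pas cet énoncé : il est attribué à Norton et renvoyé à \cite{norton2} sans preuve, je ne peux donc juger votre tentative que sur ses propres mérites. Votre mise en place est correcte jusqu'à la borne de distorsion : de $\tau>\sigma$ et de la caractérisation de la dimension de boîte on tire bien $\sum_n\Leb(I_n)^\tau<\infty$ le long de l'orbite d'un trou $I_0$ (l'affirmation que les composantes de $\T-K$ forment une \emph{unique} orbite est en général fausse, mais sans conséquence : une seule orbite suffit), et l'estimée $|\log Df^n(x)-\log Df^n(y)|\le\|\log f'\|_\tau\sum_k\Leb(I_k)^\tau$ sur $I_0$, uniforme en $n$, est juste. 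En revanche l'étape finale, que vous signalez vous-même comme la difficulté principale, est erronée telle quelle : l'affirmation que « les longueurs $\Leb(f^jI_0)$ doivent rester comparables entre elles » est incompatible avec la disjonction des $f^jI_0$, qui force $\Leb(I_n)\rightarrow 0$ ; et le fait que $Df^{q_k}\rightarrow 0$ uniformément sur un intervalle errant n'a rien de contradictoire en soi — c'est exactement ce qui se produit dans les exemples de Denjoy de classe $\C{1+\tau}$ construits juste avant l'énoncé (cas limite $\sigma=\tau$).

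Le renvoi à la preuve classique de \cite{demelo-strien} ne se transpose pas non plus avec la seule substitution de $\sum_n\Leb(I_n)^\tau$ à la variation totale. Toutes les variantes de l'argument classique tirent leur contradiction d'un contrôle de distorsion (ou de l'inégalité de Denjoy-Koksma) sur des intervalles qui ne sont pas des trous : l'intervalle $T_k$ joignant $f^{-q_k}(I_0)$ à $I_0$, ou les intervalles de la partition dynamique, lesquels contiennent des portions du Cantor $K$. Dans le cas $\C{1+vb}$, la disjonction des $f^jT_k$ ($0\le j<q_k$) borne la somme des variations par $\mrm{Var}(\log f')$ ; dans votre cadre höldérien il faudrait borner $\sum_{j<q_k}\Leb(f^jT_k)^\tau$ uniformément en $k$, ce que la disjonction seule ne donne pas pour $\tau<1$ (elle autorise une somme d'ordre $q_k^{1-\tau}$) et que la sommabilité des puissances $\tau$ des longueurs des trous ne contrôle pas davantage. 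Pour refermer votre schéma il manque donc un ingrédient qui exploite réellement $\overline{\dim}_B(K)=\sigma<\tau$ sur la partie Cantor, par exemple le lemme suivant : toute famille d'arcs deux à deux disjoints rencontrant chacun $K$ vérifie $\sum(\trm{longueur})^\tau\le C(K,\tau)$, ce que l'on obtient en regroupant les arcs par échelles dyadiques et en utilisant $N(\ve)\le\ve^{-s}$ pour $\sigma<s<\tau$ et $\ve$ petit. Avec un tel lemme, la contradiction du type $\Leb(f^{q_k}I_0)\,\Leb(f^{-q_k}I_0)\ge e^{-2M'}\Leb(I_0)^2$ redevient accessible ; sans lui, l'étape décisive de votre proposition manque.
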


Observons que dans notre contexte aléatoire, si $f$ est un difféomorphisme tiré au hasard par rapport à la mesure MS locale, et si l'on suppose les deux propriétés supplémentaires suivantes :
\begin{enumerate}
\item $f$ possède un Cantor invariant $K$,
\item $Df=1$ sur $K$,
\end{enumerate}
alors, la dimension de boîte supérieure de $K$ doit être exactement $1/2$.

En effet, la dimension de boîte des zéros d'un pont brownien est presque sûrement  égale à $1/2$ (voir \cite{peres}\,), et donc il en est de même pour tout ensemble de niveau non-vide. Il n'est pas alors possible d'exclure l'existence d'un ensemble de probabilité non-nulle de difféomorphisme qui vérifient les propriétés 1.~et 2.~ci-dessus.

\medskip

Un raffinement célèbre de la dimension de boîte est la dimension de Hausdorff.
Dans \cite{kra} le résultat de Norton est amélioré, sous la condition que $Df$ soit égal à $1$ partout sur le Cantor invariant.

\begin{thm}[Kra~--~Schmeling]\label{thm:kra-sch}
Pour tout $\tau>0$, si $K$ est un Cantor minimal invariant pour un difféomorphisme du cercle $\C{1+\tau}$ et de nombre de rotation $\a$ qui satisfait une condition diophantienne d'ordre $\delta$, alors la dimension de Hausdorff de $K$ est supérieure ou égale à $\tau/(1+\delta)$.
\end{thm}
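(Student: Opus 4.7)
The plan is to apply the mass distribution principle to the unique $f$-invariant probability measure $\mu_f$, which is supported on $K$ and satisfies $h_*\mu_f=\mathrm{Leb}_\T$ via the semi-conjugacy $h$ to the rotation $R_\alpha$. It then suffices to prove, for every $s<\tau/(1+\delta)$, a local estimate $\mu_f(B(x,r))\le Cr^s$ for every $x\in K$ and every small $r>0$.

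Combinatorial setup. Let $(p_n/q_n)$ denote the convergents of $\alpha$; the Diophantine condition of order $\delta$ is equivalent to $q_{n+1}\le Cq_n^{1+\delta}$. For $x\in K$ introduce the Denjoy interval $J_n(x)$ bounded by $x$ and $f^{q_n}(x)$: the three-distance theorem gives
\[
\mu_f(J_n(x))=\|q_n\alpha\|_\T \;\asymp\; \frac{1}{q_{n+1}},
\]
and the iterates $\{f^k(J_n(x))\}_{k=0}^{q_n-1}$ are pairwise disjoint, so $\sum_{k<q_n}|f^k J_n|\le 1$ and also $\sum_{k<q_n}|f^k J_n| \asymp q_n/q_{n+1}$ (by pushing Lebesgue back through $h$ together with bounded distortion, to be established next).

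Distortion and lower bound on $|J_n|$. Since $\log Df\in C^\tau$ and $Df\equiv 1$ on $K$ by the hypothesis singled out before the theorem, one has the pointwise bound $|\log Df(y)|\le C\,\mathrm{dist}(y,K)^\tau$. Summing this along the orbit of a point of $J_n(x)$ and applying the power-mean inequality ($\tau<1$ and $\sum_k|f^k J_n|\le 1$ imply $\sum_k|f^k J_n|^\tau\le q_n^{1-\tau}$) yields the $C^{1+\tau}$ analogue of the Denjoy inequality
\[
\bigl\|\log Df^{q_n}\bigr\|_{L^\infty(J_n)} \;\le\; C\,q_n^{1-\tau}.
\]
Bounded distortion $e^{Cq_n^{1-\tau}}$ on $J_n(x)$ then forces the Euclidean sizes of the iterates to be comparable, and combined with $\sum_{k<q_n}|f^k J_n|\asymp q_n/q_{n+1}$ gives the geometric lower bound $|J_n(x)|\ge c\,q_{n+1}^{-1}\,e^{-Cq_n^{1-\tau}}$. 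For $x\in K$ and $r>0$, pick $n$ with $|J_{n+1}(x)|\le r<|J_n(x)|$; then $B(x,r)$ meets only a bounded number of adjacent scale-$n$ intervals, so $\mu_f(B(x,r))\le C\,\mu_f(J_n(x))\le C/q_{n+1}$, and comparing to $|J_n(x)|^s$ using the Diophantine bound $q_{n+1}\le Cq_n^{1+\delta}$ should yield $\mu_f(B(x,r))\le Cr^s$ for every $s<\tau/(1+\delta)$; the mass distribution principle then delivers $\dim_H K\ge s$.

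Main obstacle. The naive closing of the loop fails: the required inequality $q_{n+1}^{1-s}\ge c\,e^{sCq_n^{1-\tau}}$ pits a polynomial left-hand side against an exponential (in $q_n^{1-\tau}$) right-hand side, so the crude distortion estimate above is far too weak. Sharpening it is the crux of the Kra–Schmeling argument and the reason the hypothesis $Df\equiv 1$ on $K$ enters essentially: it allows one to replace the $\ell^\tau$-norm over orbit displacements $|f^k J_n|^\tau$ by the much smaller $\ell^\tau$-norm over distances $\mathrm{dist}(f^k J_n,K)^\tau$, and, exploiting the thinness of $K$ (Norton's theorem \ref{t:norton} gives $\overline{\dim}_B K\le \tau$), to iterate a bootstrapped distortion estimate across scales so that the exponent $q_n^{1-\tau}$ collapses to something subpolynomial in $q_n$, absorbable into constants when computing the Hausdorff dimension. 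This iterative control, playing the analytic $C^\tau$-regularity of $\log Df$ against the geometric thinness of the invariant Cantor, is where all the work is concentrated.
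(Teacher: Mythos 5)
This statement is not proved in the memoir: it is quoted from Kra--Schmeling \cite{kra}, so your attempt can only be judged on its own merits, and as it stands it is a strategy outline rather than a proof. The frame you set up (mass distribution principle for the unique invariant measure $\mu_f$, dynamical intervals $J_n(x)$ bounded by $x$ and $f^{q_n}(x)$ with $\mu_f(J_n(x))=\|q_n\alpha\|\asymp q_{n+1}^{-1}$, a Denjoy-type estimate $\|\log Df^{q_n}\|_{L^\infty(J_n)}\le C q_n^{1-\tau}$ from the $C^\tau$ regularity of $\log Df$) is the natural one, but the decisive step is missing, and you acknowledge it: the distortion factor $e^{Cq_n^{1-\tau}}$ is exponentially large in $q_n^{1-\tau}$, while the Diophantine hypothesis only provides the polynomial information $q_{n+1}\le Cq_n^{1+\delta}$, so the comparison $\mu_f(B(x,r))\le Cr^s$ cannot be closed. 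The announced remedy --- replacing orbit lengths $|f^kJ_n|^\tau$ by distances $\mathrm{dist}(f^kJ_n,K)^\tau$ using $Df\equiv 1$ on $K$, playing this against the thinness of $K$ (Norton's bound $\overline{\dim}_B K\le\tau$), and bootstrapping the distortion estimate across scales --- is exactly the content of the Kra--Schmeling argument, and none of it is carried out: no inequality is proved, no induction on scales is even formulated. Declaring that ``this is where all the work is concentrated'' does not discharge it; the heart of the proof is absent.

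Two secondary points. First, the claim $\sum_{k<q_n}|f^kJ_n|\asymp q_n/q_{n+1}$ is unjustified: pushing Lebesgue measure through the semi-conjugacy $h$ only controls the $\mu_f$-masses of these intervals; their Euclidean lengths also include the gaps of $K$ they contain, whose total length is precisely the unknown geometric quantity and is in no way determined by $\mu_f$ (the correct easy facts are disjointness, hence $\sum_k|f^kJ_n|\le 1$, and nothing more). Consequently even your weak lower bound $|J_n(x)|\ge c\,q_{n+1}^{-1}e^{-Cq_n^{1-\tau}}$ is not established as written. Second, you use the hypothesis $Df\equiv 1$ on $K$, which does not appear in the statement of the theorem but only in the surrounding discussion; since your argument (and the Kra--Schmeling proof) needs it, it must be stated explicitly as a hypothesis. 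Both points are repairable, but the essential gap remains the quantitative distortion bootstrap, i.e.\ the actual proof.
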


Par suite, des exemples aléatoires de difféomorphismes de Denjoy avec $Df=1$ sur le Cantor sont possibles seulement si le nombre de rotation $\a$ est de type Roth. Ces nombres $\a$ forment une partie de mesure pleine par rapport à la mesure de Lebesgue, et donc nous ne rencontrons encore aucune contrainte.
\clearpage
\section{Appendice : Simulations numériques}
\label{app:simulations}
En commençant l'étude des difféomorphismes aléatoires, nous avons eu une sensation de dépaysement : quel type de difféomorphisme devons-nous imaginer ?

Nous avons alors eu recours à une aide « numérique » : l'un des avantages à travailler avec les mouvements browniens est en effet leur simplicité quand on souhaite faire des simulations !

Dans la suite, nous présentons quelques images, en rajoutant après les lignes de code qui nous ont permis de les obtenir : elles présentent principalement deux parties, une pour générer pseudo-aléatoirement un difféomorphisme qui suit la loi de Malliavin-Shavgulidze et une autre pour calculer le nombre de rotation. Le langage est Python. Les simulations ont été effectuées sur les serveurs du PSMN de l'École Normale Supérieure de Lyon.

\newpage

\begin{figure}[!htp]
\begin{minipage}[b]{0.45\linewidth}
\[
\includegraphics[scale=.3]{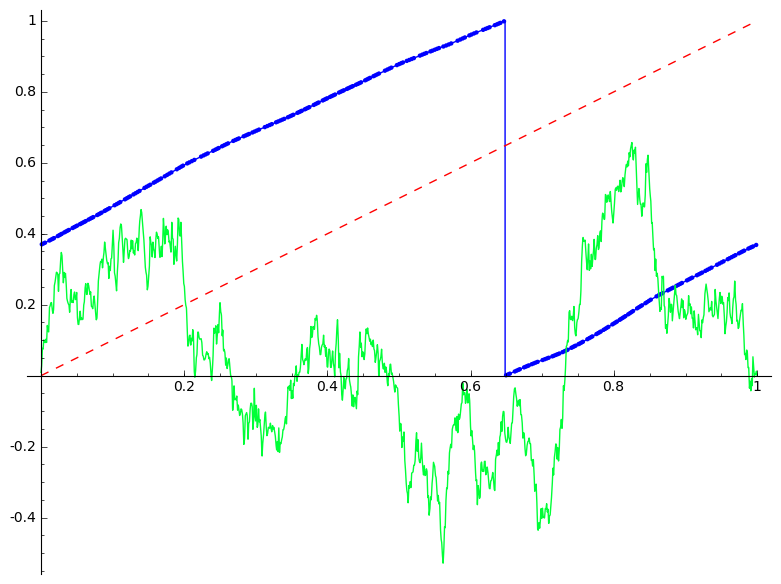}
\]
\caption{Paramètre $\sigma = 0.6$\\ Nombre de rotation $[0,2,1,2,2,1,1,1,\ldots]\sim \frac{27}{73}$}
\end{minipage}
\quad
\begin{minipage}[b]{0.45\linewidth}
\[
\includegraphics[scale=.3]{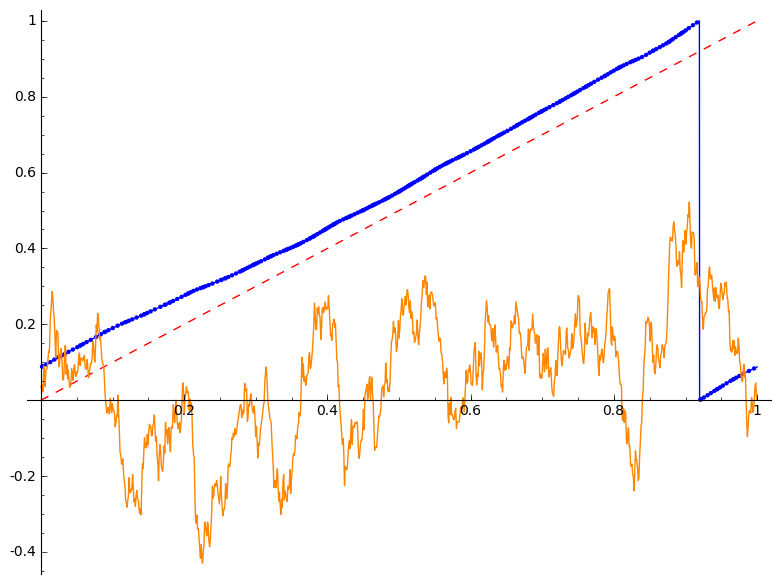}
\]
\caption{Paramètre $\sigma = 0.8$\\ Nombre de rotation $[0,15,3,4,1,3,1,15,\ldots]\sim \frac{1216}{18619}$}
\end{minipage}
\end{figure}

\begin{figure}[ht]
\begin{minipage}[b]{0.45\linewidth}
\[
\includegraphics[scale=.3]{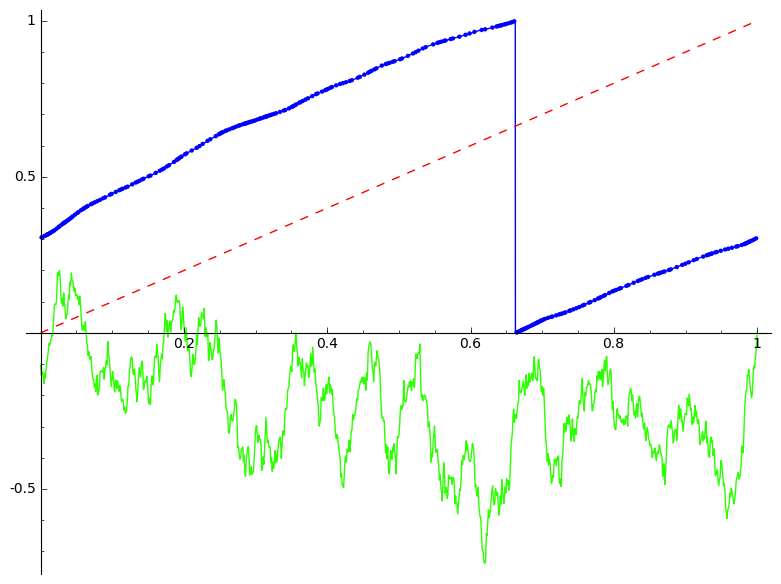}
\]
\caption{Paramètre $\sigma = 1.6$ \\ Nombre de rotation $[0,2,1,6,2,4,1,2,\ldots]\sim \frac{231}{662}$}
\end{minipage}
\quad
\begin{minipage}[b]{0.45\linewidth}
\[
\includegraphics[scale=.3]{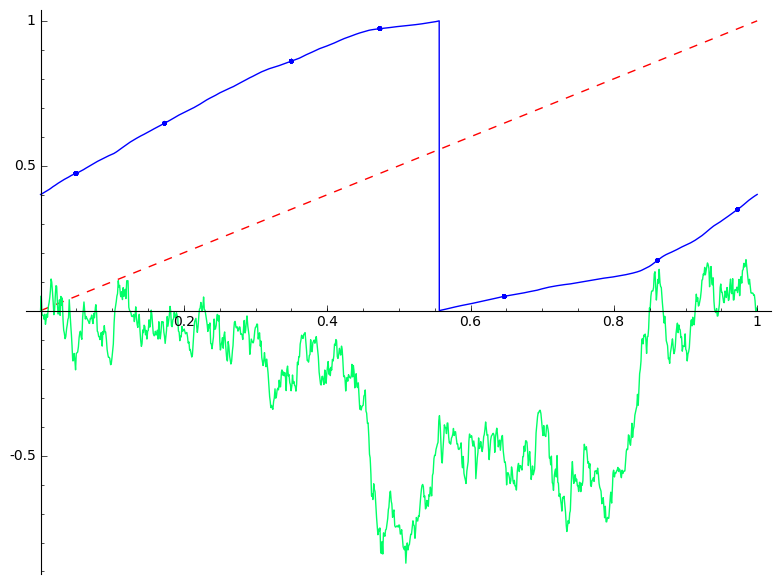}
\]
\caption{Paramètre $\sigma = 2.3$\\
Nombre de rotation $\frac{3}{7}$}
\end{minipage}
\end{figure}

\begin{figure}[ht]
\begin{minipage}[b]{0.45\linewidth}
\[
\includegraphics[scale=.3]{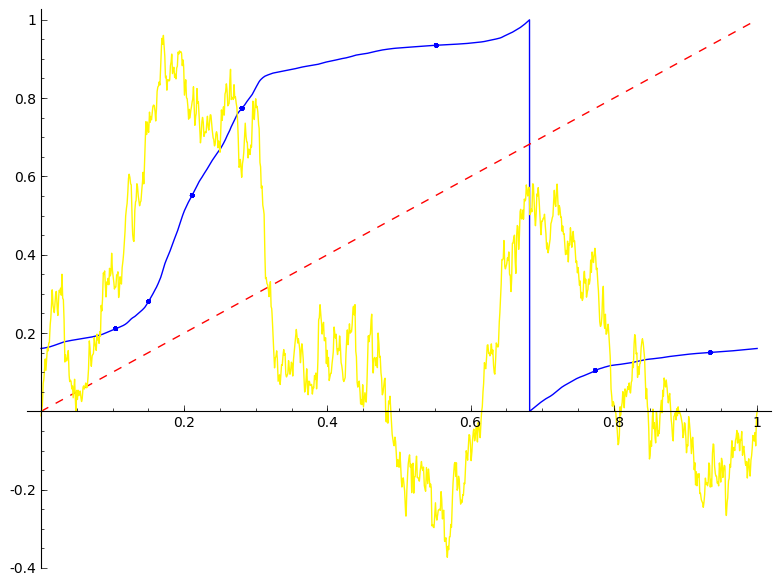}
\]
\caption{Paramètre $\sigma = 3.4$\\ Nombre de rotation $\frac{3}{7}$}
\end{minipage}
\quad
\begin{minipage}[b]{0.45\linewidth}
\[\includegraphics[scale=.3]{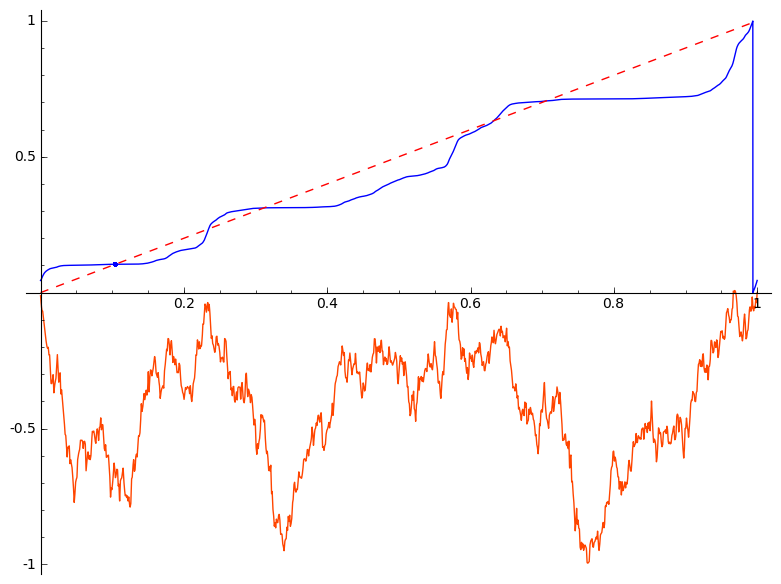} \]
\caption{Paramètre $\sigma = 9.1$\\ Nombre de rotation $0$}
\end{minipage}
\end{figure}

\newpage

Dans les images nous avons tracé le graphe du difféomorphisme pseudo-aléatoire $f$, avec le pont brownien utilisé dans sa définition. Le graphe du pont brownien est colorié par une couleur entre $0$ et $1$ qui correspond à l'image du point $0$ par $f$. Sur le graphe de $f$ nous avons aussi marqué les points sur l'orbite de $0$ : $f^k(0)$, pour~$k\in \{200,\ldots,499\}$.

\bigskip

Cet algorithme nous a permis de générer $1050$ difféomorphismes aléatoires pour chaque loi $\mu_\sigma$, avec~$\sigma\in \{0.25, 0.5, \ldots, 7.75\}$, dont nous avons calculé le nombre de rotation. La figure \ref{fig:rotation} se lit ainsi : chaque ligne correspond à un paramètre $\sigma$, pour lequel nous avons ordonné de manière croissante les nombres de rotation obtenus pour cette valeur. Pour distinguer les rationnels des irrationnels, nous avons reporté chaque nombre de rotation $\rho$ par une couleur entre $0$ et $1$, qui correspond à la partie fractionnaire de $\rho$ multiplié par un très grand nombre rationnel ($2\cdot 3\cdot 4\cdot 5\cdot 6\cdot 7\cdot 8\cdot 9\cdot 10\cdot 11\cdot 13\cdot 14\cdot 15\cdot 17\cdot 19\cdot 23$).

\begin{figure}[ht]
\[
\includegraphics[scale=.65]{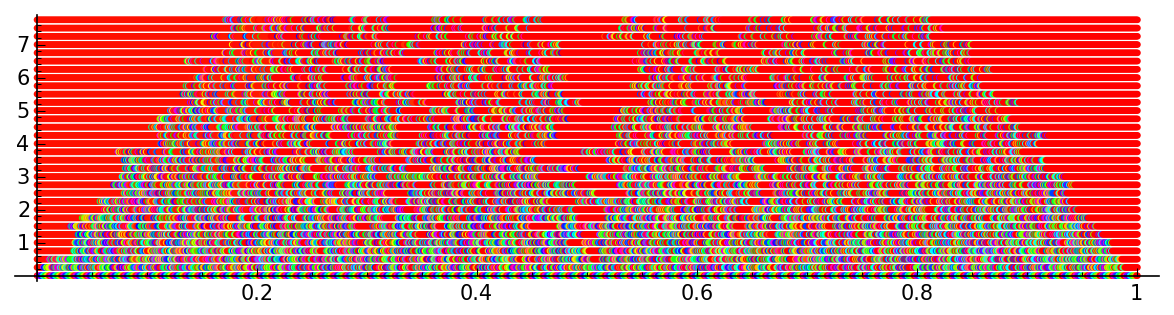}
\]
\caption{\emph{Rouge rationnel}}\label{fig:rotation}
\end{figure}

Même si l'on observe beaucoup de points qui ne sont pas rouges (surtout pour des petites valeurs de $\sigma$), nous ne croyons pas que les nombres de rotation irrationnels aient $\mu_\sigma$-mesure positive : il est numériquement impossible de distinguer un nombre de rotation \emph{irrationnel} d'un nombre de rotation \emph{rationnel} avec très grand dénominateur. Pour faire une comparaison avec ce qui se passe dans le même type de figure provenant de la famille d'Arnol'd (exemple \ref{ex:arnold}\,), nous croyons que les graphes qui décrivent les bords des langues sont donnés par des fonctions non-Lipschitz mais plutôt $1/2$-H\"older.

Dans la figure \ref{fig:escaliers} nous montrons les « escaliers du diable » que l'on trouve en lisant la figure \ref{fig:rotation} ligne par ligne. On peut observer que pour $\sigma=0.5$ la distribution du nombre de rotation est très proche de l'identité, ce qui suggérerait comme conjecture que pour des petites valeurs de $\sigma$, le fait d'avoir nombre de rotation irrationnel est un événement de mesure strictement positive.

\smallskip

Remarquons en outre que notre algorithme permet aussi de déterminer numériquement les distributions des $\{a_n\}_{n\le L}$, lorsque $\sigma=1$. Dans la figure \ref{fig:qdistrib1} on représente les fonctions de répartition des $a_n$ pour $n=1,\ldots,6$, comparées avec la distribution classique de Gauss-Kuzmin. Il semblerait que les distributions soient presque les mêmes : un tel fait devrait impliquer que le nombre de rotation est rationnel presque sûrement. Cependant cette ressemblance est moins évidente pour des différentes valeurs de $\sigma$ (dans la figure \ref{fig:qdistrib2} on peut voir les statistiques pour $\sigma=2$).

\newpage

\begin{figure}[!htp]
\[
\includegraphics[scale=.2]{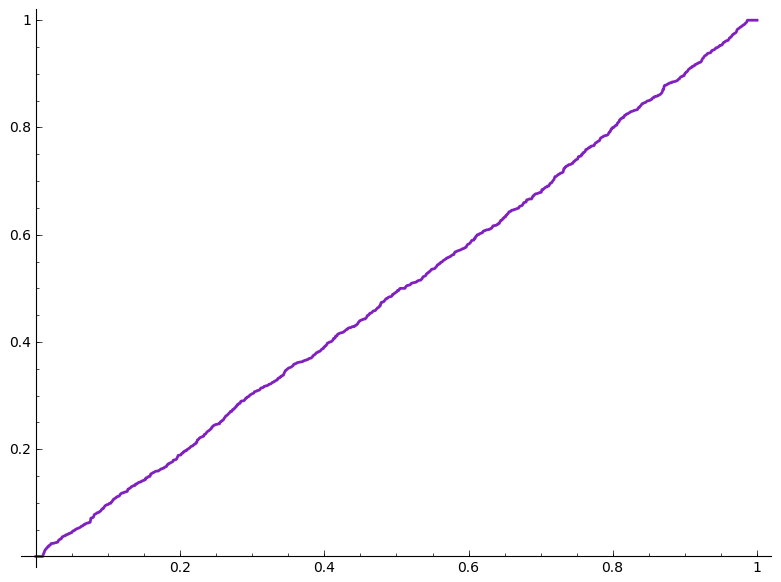}\quad
\includegraphics[scale=.2]{escalier10.png}
\quad\includegraphics[scale=.2]{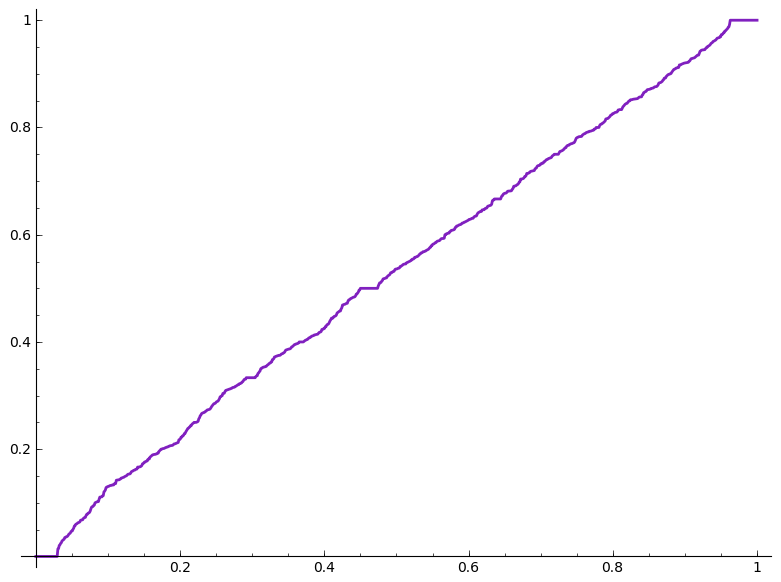}
\]
\[
\includegraphics[scale=.2]{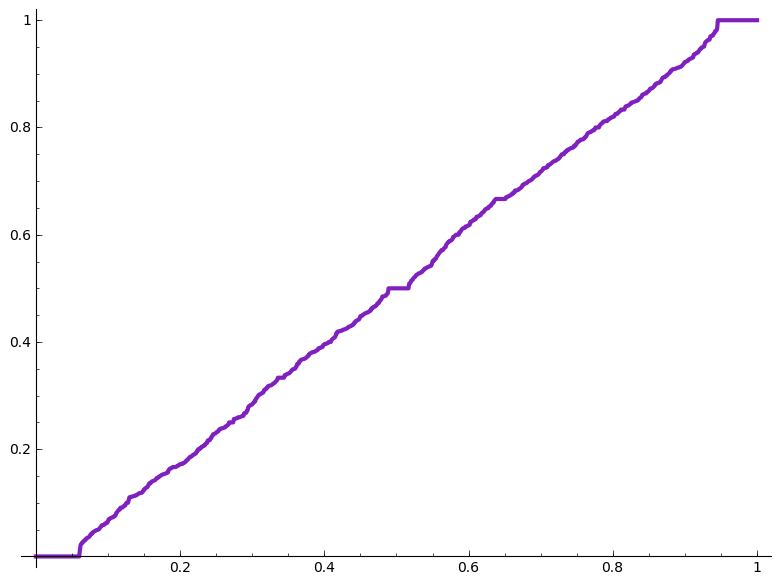}\quad
\includegraphics[scale=.2]{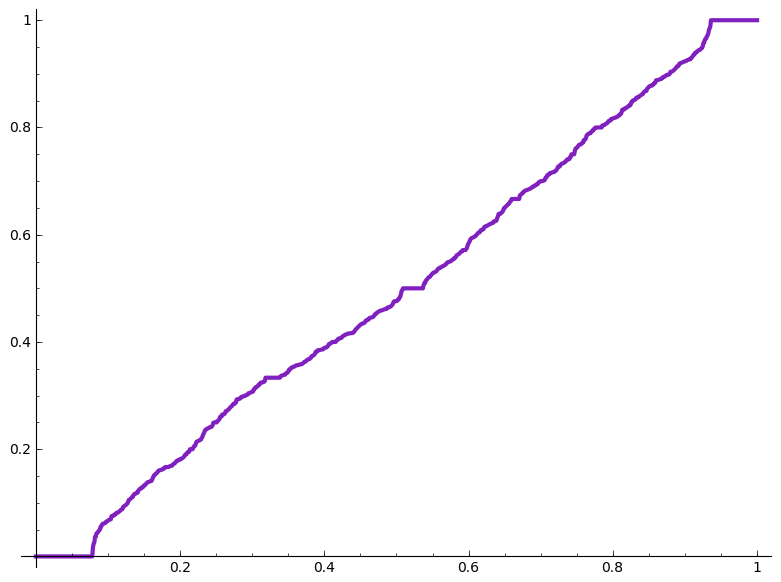}
\quad\includegraphics[scale=.2]{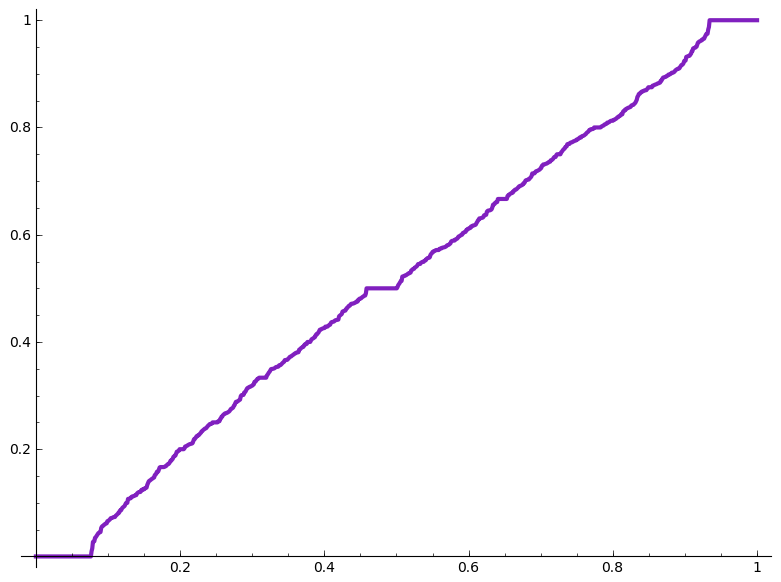}
\]
\[
\includegraphics[scale=.2]{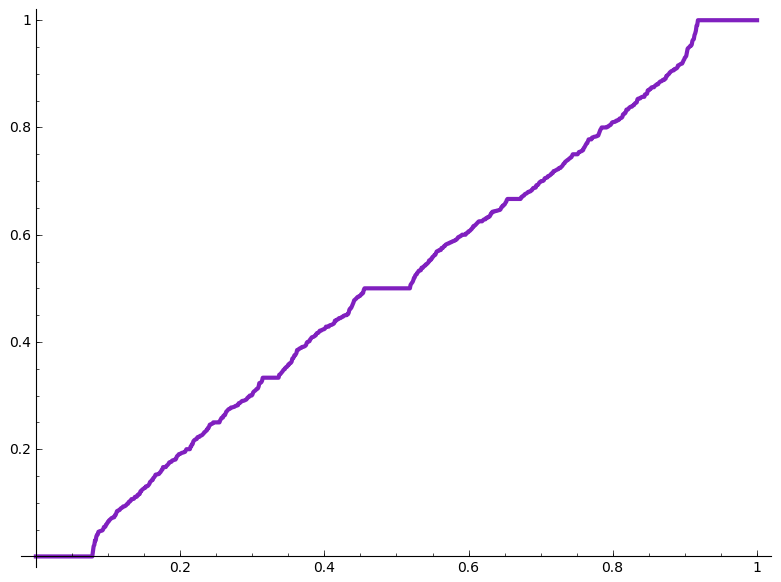}\quad
\includegraphics[scale=.2]{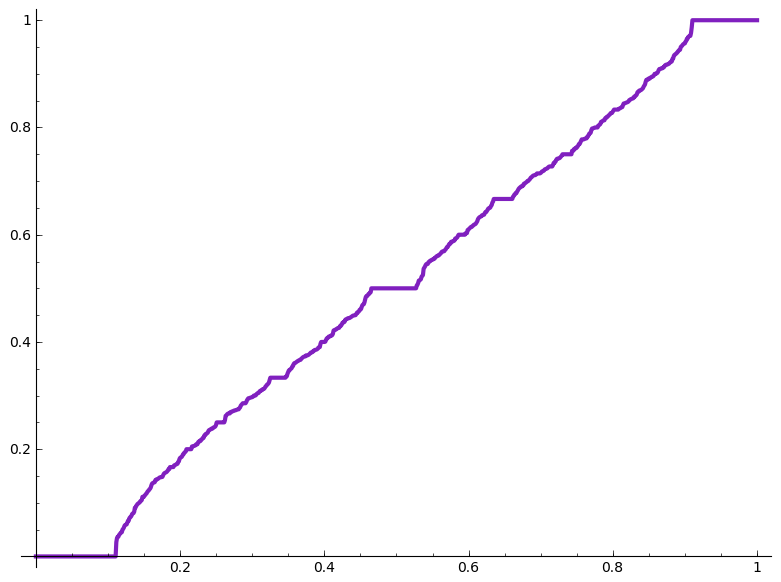}
\quad\includegraphics[scale=.2]{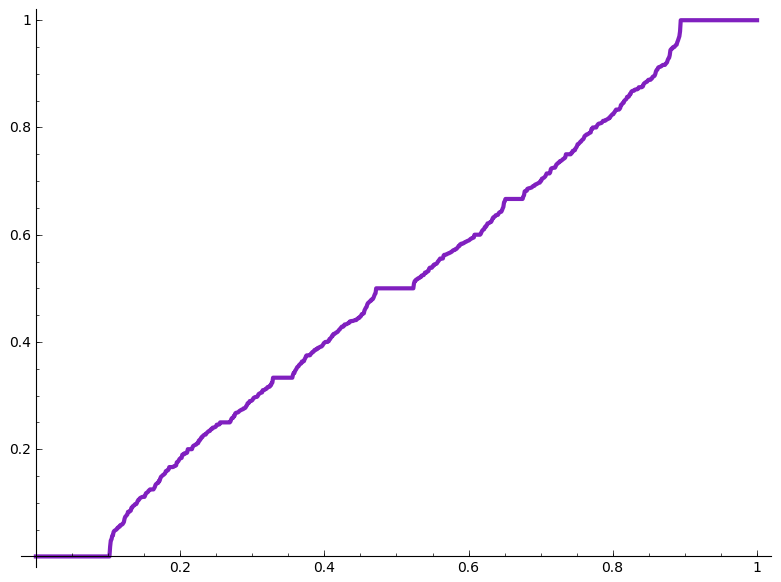}
\]
\[
\includegraphics[scale=.2]{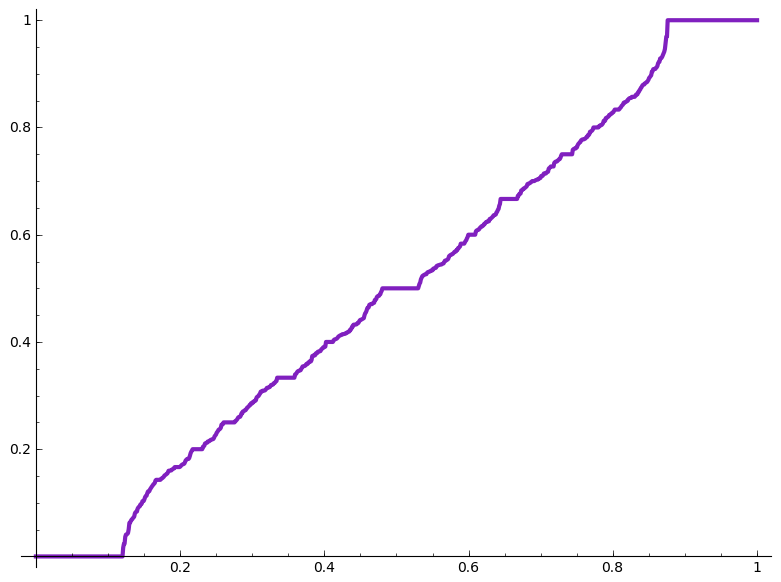}\quad
\includegraphics[scale=.2]{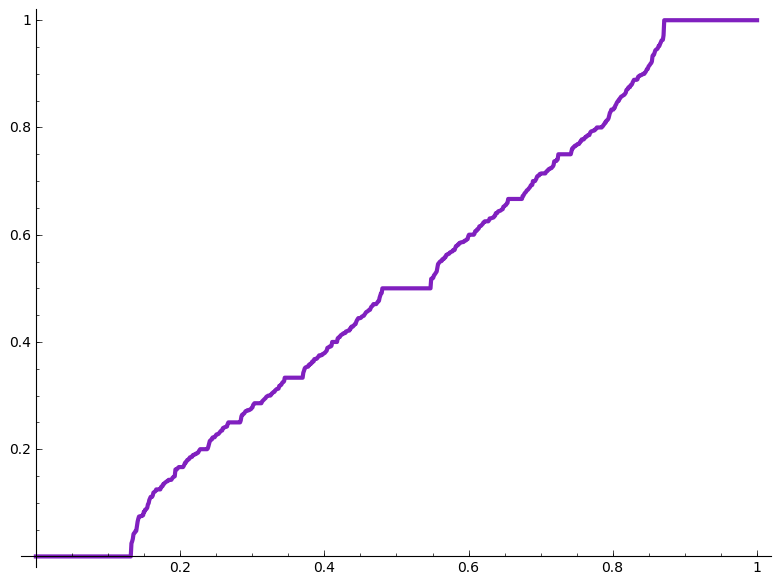}
\quad\includegraphics[scale=.2]{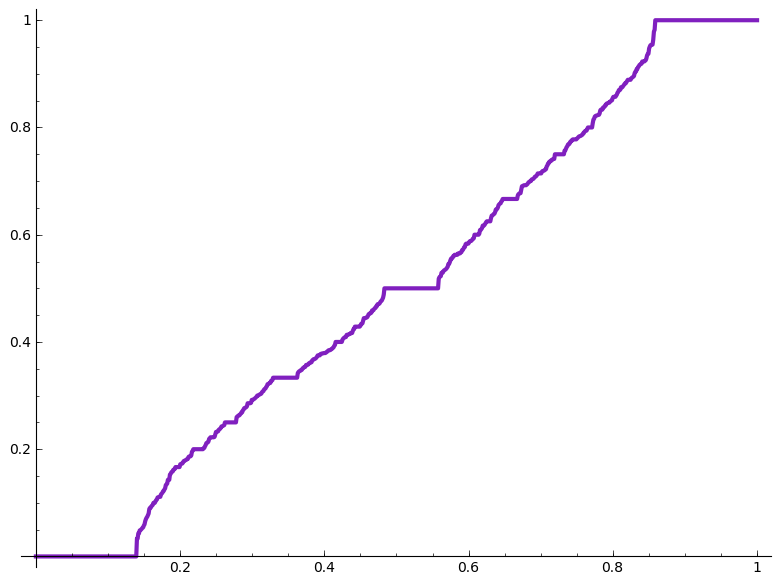}
\]
\[
\includegraphics[scale=.2]{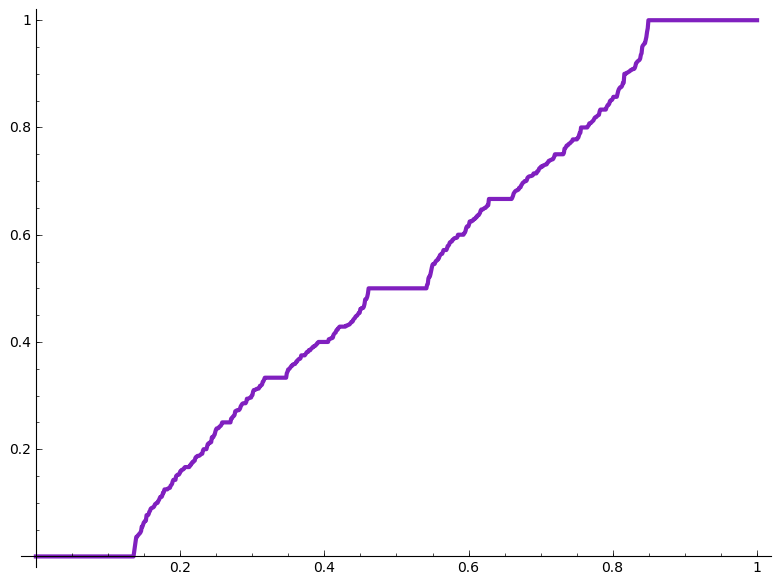}\quad
\includegraphics[scale=.2]{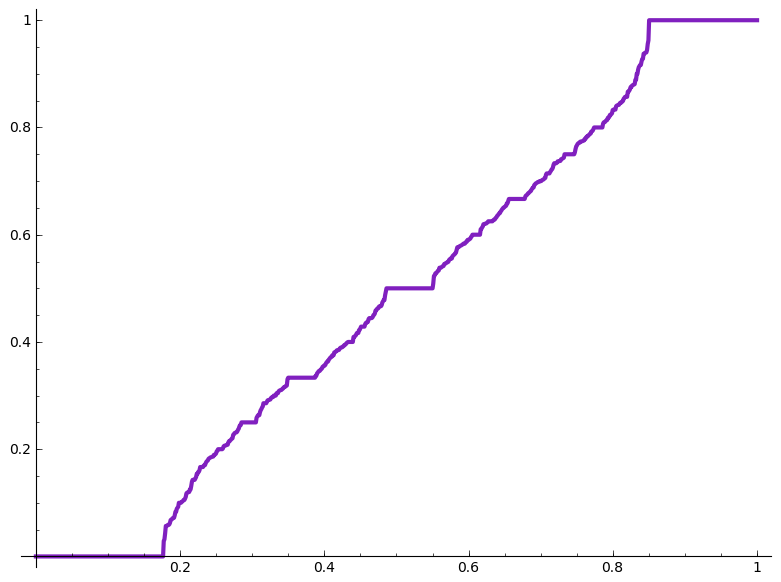}
\quad\includegraphics[scale=.2]{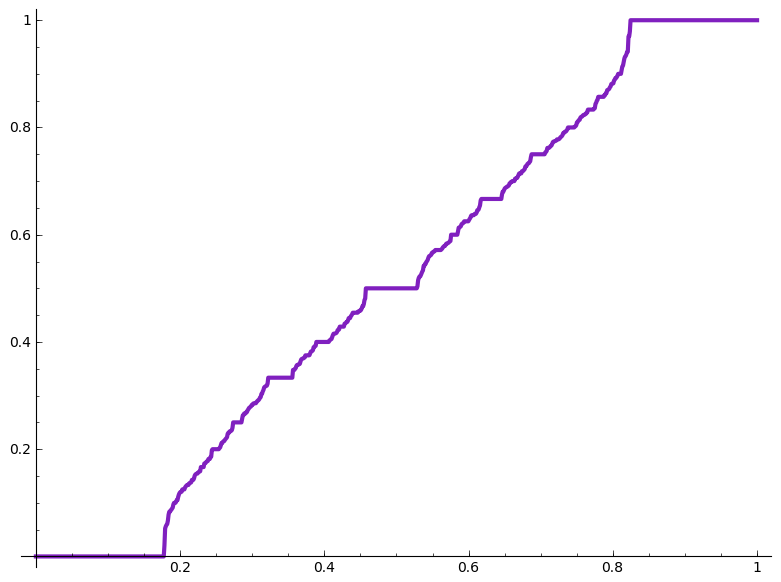}
\]
\caption{Distribution du nombre de rotation selon les mesures $\mu_\sigma$, avec $\sigma\in \{0.5, 1,\ldots, 7.5\}$ (lecture ligne par ligne)}\label{fig:escaliers}
\end{figure}

\begin{figure}[!htp]
\[
\includegraphics[scale=.24]{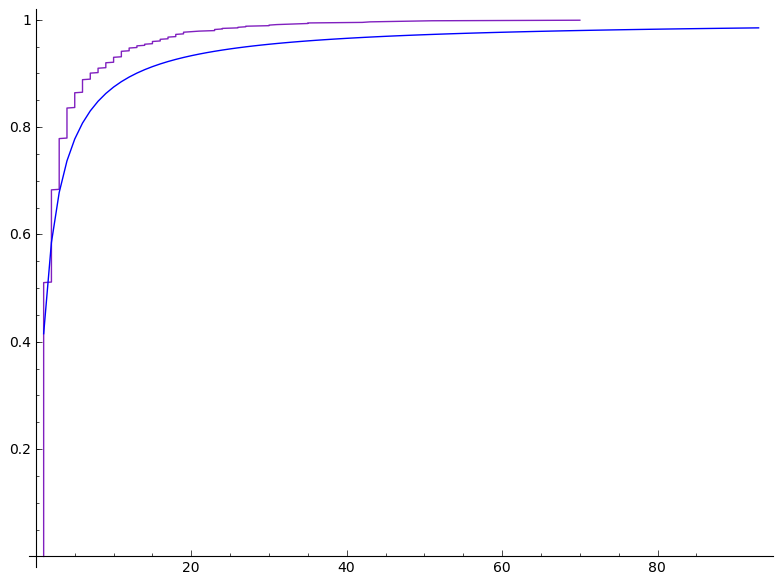}\quad
\includegraphics[scale=.24]{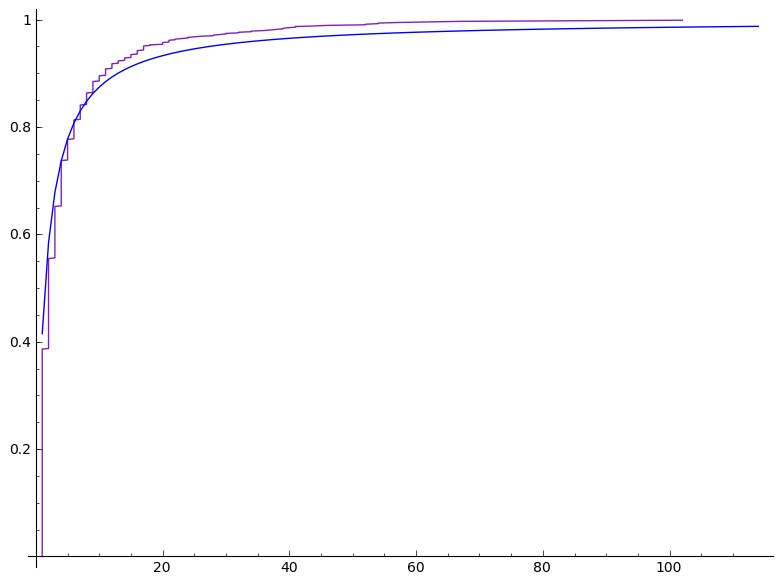}\]\[
\includegraphics[scale=.24]{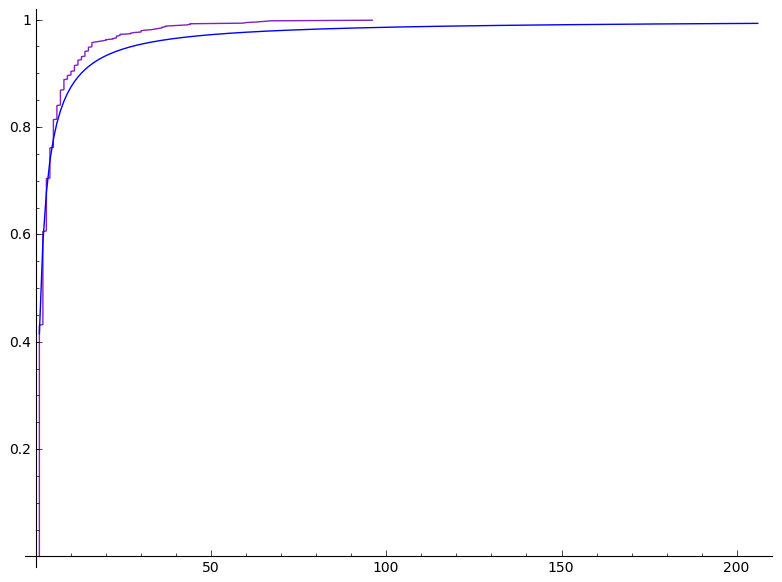}\quad
\includegraphics[scale=.24]{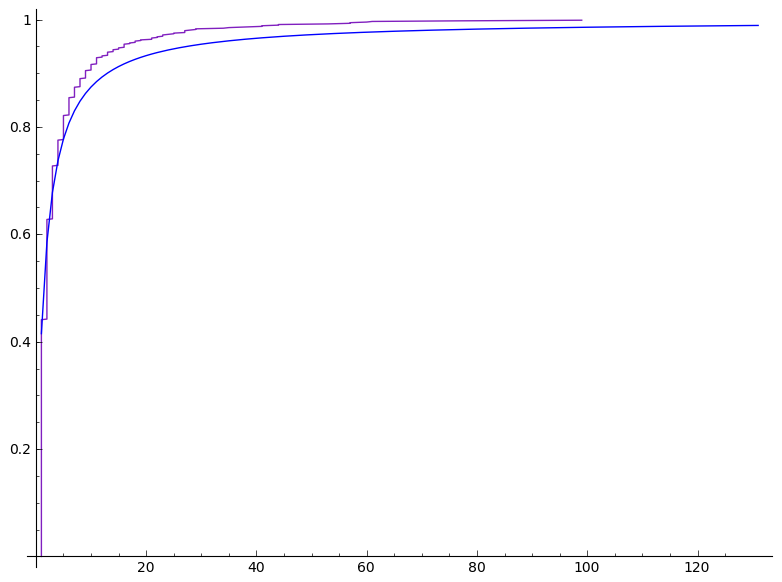}\]\[
\includegraphics[scale=.24]{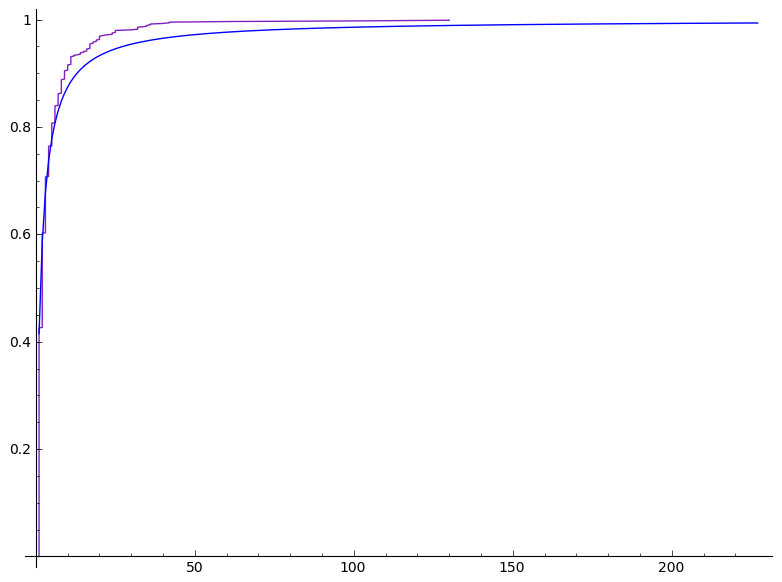}\quad
\includegraphics[scale=.24]{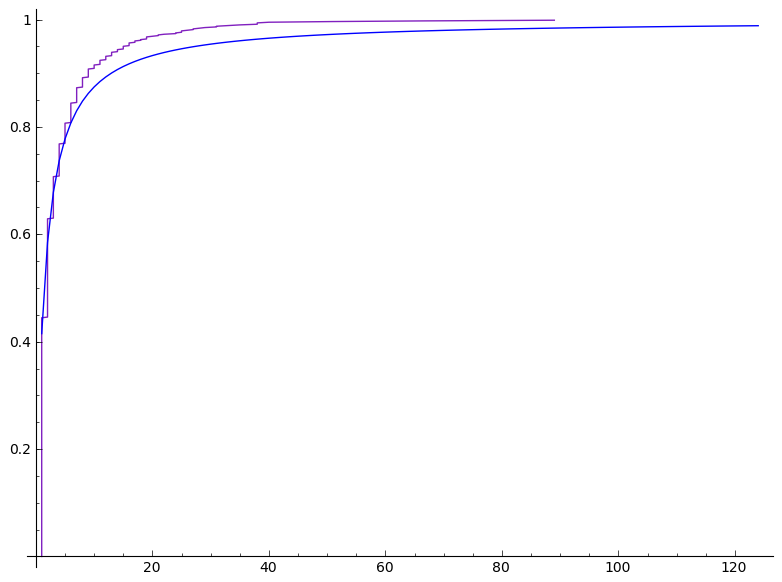}
\]
\caption{Fonctions des répartitions des $a_n$ ($n=1,\ldots,6$) lorsque $\sigma=1$, comparées avec la distribution classique de Gauss-Kuzmin}\label{fig:qdistrib1}
\end{figure}

\begin{figure}
\[
\includegraphics[scale=.24]{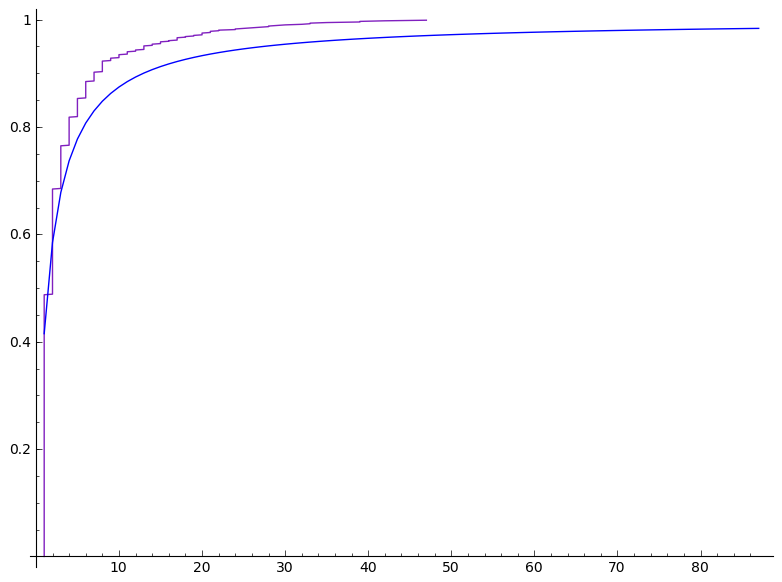}\quad
\includegraphics[scale=.24]{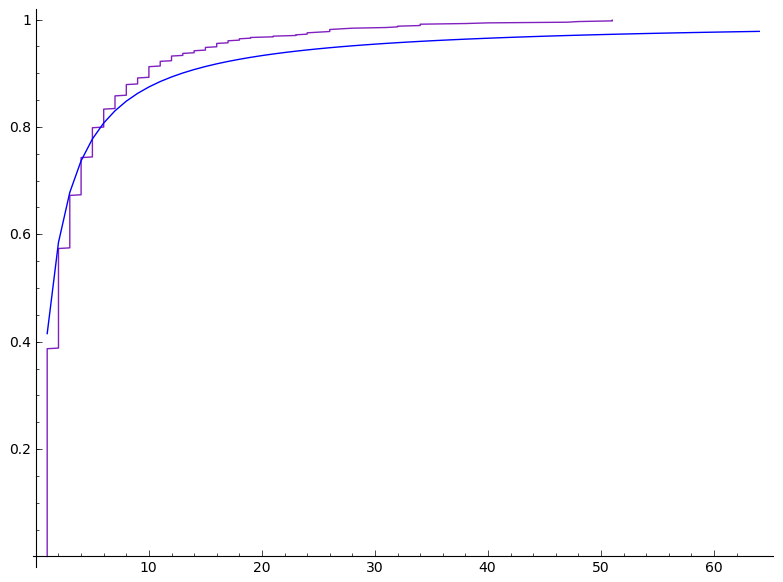}
\]
\[
\includegraphics[scale=.24]{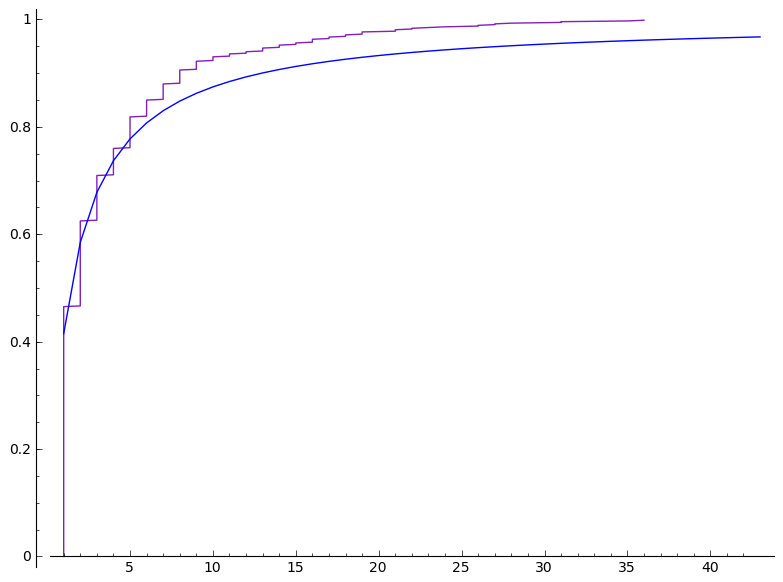}
\quad
\includegraphics[scale=.24]{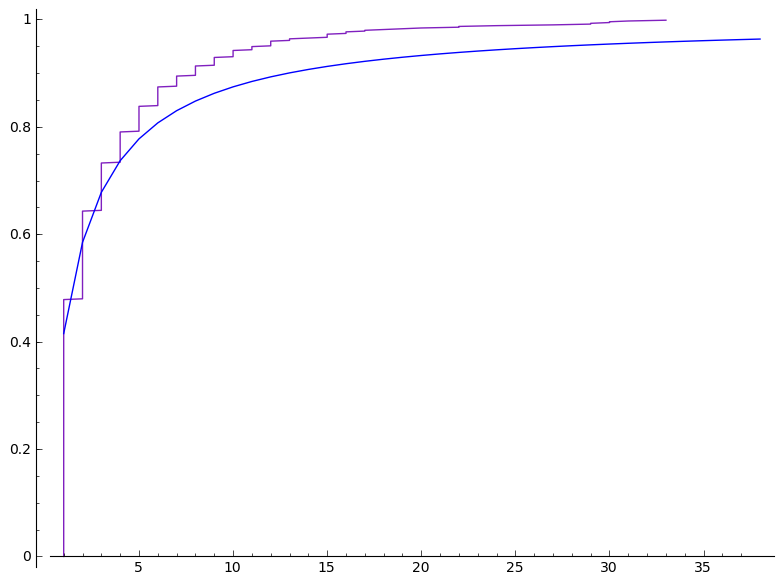}\]\[
\includegraphics[scale=.24]{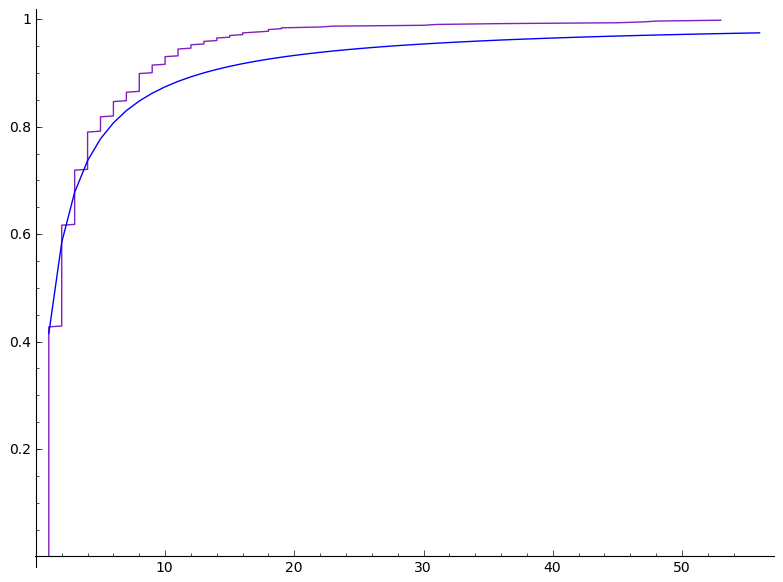}\quad
\includegraphics[scale=.24]{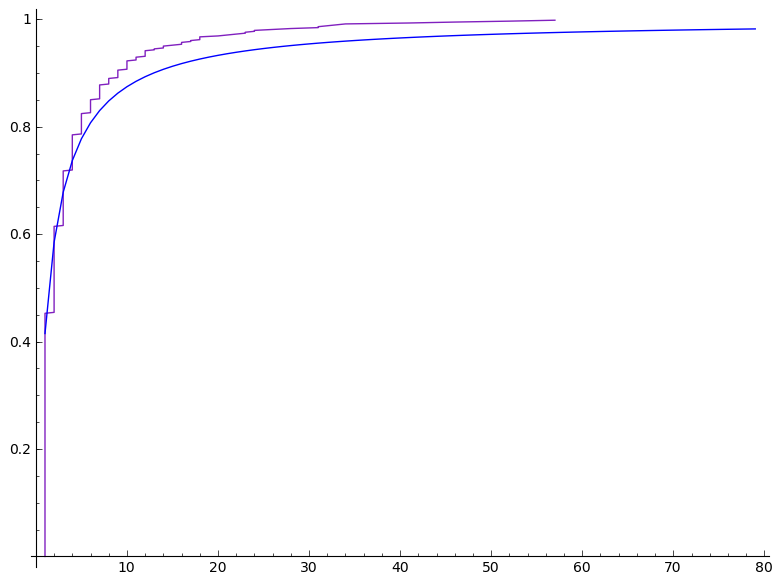}
\]
\caption{Fonctions des répartitions des $a_n$ ($n=1,\ldots,6$) lorsque $\sigma=2$, comparées avec la distribution classique de Gauss-Kuzmin}\label{fig:qdistrib2}
\end{figure}

\newpage

\lstinputlisting{rotationalea.py}
\clearpage

\phantomsection 
\addcontentsline{toc}{chapter}{Références}

\bigskip
\bsc{Michele Triestino}\\
Departamento de Matem\'atica PUC-Rio\\
Rua Marqu\^es de S\~ao Vicente, 225\\
G\'avea, Rio de Janeiro CEP 22451-900, Brasil\\
E-mail : mtriestino@mat.puc-rio.br
\end{document}